\def\Xint#1{\mathchoice
   {\XXint\displaystyle\textstyle{#1}}%
   {\XXint\textstyle\scriptstyle{#1}}%
   {\XXint\scriptstyle\scriptscriptstyle{#1}}%
   {\XXint\scriptscriptstyle\scriptscriptstyle{#1}}%
   \!\int}
\def\XXint#1#2#3{{\setbox0=\hbox{$#1{#2#3}{\int}$}
     \vcenter{\hbox{$#2#3$}}\kern-.5\wd0}}
\def\dashint{\Xint-}
\DeclareFontFamily{U}  {MnSymbolC}{}
\DeclareFontShape{U}{MnSymbolC}{m}{n}{
    <-6>  MnSymbolC5
   <6-7>  MnSymbolC6
   <7-8>  MnSymbolC7
   <8-9>  MnSymbolC8
   <9-10> MnSymbolC9
  <10-12> MnSymbolC10
  <12->   MnSymbolC12}{}
\DeclareFontShape{U}{MnSymbolC}{b}{n}{
    <-6>  MnSymbolC-Bold5
   <6-7>  MnSymbolC-Bold6
   <7-8>  MnSymbolC-Bold7
   <8-9>  MnSymbolC-Bold8
   <9-10> MnSymbolC-Bold9
  <10-12> MnSymbolC-Bold10
  <12->   MnSymbolC-Bold12}{}
\DeclareSymbolFont{MnSyC}         {U}  {MnSymbolC}{m}{n}
\DeclareMathSymbol{\restrict}{\mathbin}{MnSyC}{'267}
\newcommand\restr[2]{\left.#1\right|_{#2}}
\newtheorem{theorem}{Theorem}[section]
\newtheorem{proposition}[theorem]{Proposition}
\newtheorem{lemma}[theorem]{Lemma}
\newtheorem{corollary}[theorem]{Corollary}
\theoremstyle{definition}
\newtheorem{definition}[theorem]{Definition}
\newtheorem{notation}[theorem]{Notation}
\theoremstyle{remark}
\newtheorem{remark}[theorem]{Remark}
\numberwithin{equation}{section}
\newcommand{\Lip}{\operatorname{Lip}}
\newcommand{\dom}{\operatorname{Dom}}
\newcommand{\im}{\operatorname{Im}}
\newcommand{\graph}{\operatorname{Graph}}
\renewcommand{\llcorner}{\restrict}
\begin{document}

\begin{abstract}
We prove the equivalence of two seemingly very different ways of
generalising Rademacher's theorem to metric measure spaces.  One such
generalisation is based upon the notion of forming partial derivatives along
a very rich structure of Lipschitz curves in a way analogous to the
differentiability theory of Euclidean spaces.  This approach to
differentiability in this generality appears here for the first time
and by examining this structure further, we naturally arrive to
several descriptions of Lipschitz differentiability spaces.
\end{abstract}

\thanks{I would like to thank David Preiss for his dedicated reading
  of this manuscript and for our insightful conversations throughout
  my time as his student.  I would also like to thank Guy David (UCLA)
  for spotting a technical error in the first version of Section
  \ref{sec:spanreps}.  This work was supported by the EPSRC}

\date{11th June 2013}
\author{David Bate}
\address{Mathematics Institute\\
Zeeman Building\\
University of Warwick\\
Coventry \\
CV4 7AL
UK}
\curraddr{}
\email{david@bate.org.uk}

\title{Structure of measures in Lipschitz differentiability spaces}
\maketitle

%Introduction
\section{Introduction}\label{sec:intro}
One of our goals is to show the equivalence of two seemingly very
different ways of generalising Rademacher's theorem on the almost
everywhere differentiability of (real valued) Lipschitz functions
defined on Euclidean spaces to metric measure spaces $(X,d,\mu)$.  The
first stems from the work of Cheeger \cite{cheeger-diff} and further
developed into the concept of a \emph{Lipschitz differentiability
  space} following the work of several authors, most notably Keith
\cite{keith}.  It is based upon the notion of differentiability with
respect to suitable \emph{charts} - Lipschitz functions $\varphi\colon
X \to \mathbb R^n$.  The second originates in some ideas of Alberti
\cite{alberti-rankone} and is based on the immediate consequence of
Lebesgue's theorem that one may always differentiate a Lipschitz
function almost everywhere along a Lipschitz curve.  For measures
represented by integration over such curves, this leads to a notion
akin to partial differentiability $\mu$ almost everywhere and, if
$\mu$ has more such representations, can lead to a notion of
differentiability.

To describe our results in more detail, we now informally introduce
the two main concepts studied in this paper.

An \emph{$n$-dimensional Lipschitz differentiability space} may be
thought of as a metric measure space $(X,d,\mu)$ for which there is a
Lipschitz $\varphi\colon X\to\mathbb R^n$ (which is termed a
\emph{chart}) with respect to which every real valued Lipschitz
function is differentiable $\mu$ almost everywhere (a precise
definition that allows a countable decomposition into such charts is
given in Definition \ref{def:chart}).  Here differentiability of a
function $f$ at $x_0$ requires the approximation of $f(x)-f(x_0)$ by a
linear form of $\varphi(x)-\varphi(x_0)$ for $x$ close to $x_0$.  More
precisely, it requires the existence of a linear $L:\mathbb R^n\to
\mathbb R$ such that
\[f(x)=f(x_0)+L(\varphi(x)-\varphi(x_0))+o(d(x,x_0)).\]

Perhaps the best known non-trivial example of Lipschitz
differentiability spaces is the Heisenberg group (see
\cite{heinonen}).  In fact, by the result of \cite{cheeger-diff}, any
doubling space satisfying the Poincar\'e inequality is a Lipschitz
differentiability space.  However the converse does not hold, for
example subspaces of Lipschitz differentiability spaces are
Lipschitz differentiability spaces, while an analogous statement fails
for spaces satisfying the Poincar\'e inequality.

An \emph{Alberti representation} of a measure $\mu$ (see Definition
\ref{defn:albrep}) is an integral combination of 1-rectifiable
measures, each supported on a Lipschitz curve in $X$ (whose domain is
not necessarily connected), that calculates
the $\mu$ measure of any Borel subset of $X$.  Further, we distinguish
different Alberti representations by considering their
\emph{direction}; given a Lipschitz function $\varphi\colon
X\to\mathbb R^n$ we say that an Alberti representation is in the
direction of a cone $C\subset \mathbb R^n$ if, for almost every
Lipschitz curve $\gamma$ (with respect to the integrating measure) and
almost every $t_0$, $(\varphi\circ\gamma)'(t_0)\in C$.  We then say
that a collection of $n$ Alberti representations are
\emph{independent} if there exists a Lipschitz $\varphi\colon
X\to\mathbb R^n$ and a collection of $n$ independent
cones in $\mathbb R^n$ such that each Alberti representation is in the
direction of its own cone (see Definition \ref{defn:phidir}).

Fubini's theorem provides a simple example of an Alberti
representation: it represents the $n$-dimensional Lebesgue measure as
the integral of one dimensional Lebesgue measures on lines parallel to
the $x$-axis.  In a general metric measure space, as for this example,
if the measure $\mu$ has an Alberti representation, it is possible to
define the notion of a partial derivative of a Lipschitz function in
the direction of this representation at almost every point.
Therefore, if a measure has $n$ independent Alberti representations,
there exists $n$ independent partial derivatives.  This can lead to a
new notion of the derivative of a Lipschitz function.

In this generality, the approach to differentiability via
representations of the underlying measure by integration over curves
appears here for the first time.  However, we will not consider it as
a separate notion, but rather as a description of the usual notion of
Lipschitz differentiability spaces.  One advantage of this is that it
naturally leads to several descriptions of Lipschitz differentiability
spaces, and emphasises the key point that such spaces possess a very
rich structure of curves.

The research presented here began with an observation of Preiss that,
from a theorem of Cheeger and Kleiner, one may obtain an Alberti
representation of any doubling Lipschitz differentiability space that
satisfies the Poincar\'e inequality (see Section \ref{sec:cheeger} for
details).  We significantly strengthen this observation by showing
that (even in an arbitrary Lipschitz differentiability space) there
exist so many Alberti representations that they completely describe
the derivative of a Lipschitz function.  This gives a natural
realisation of the possibility of describing such derivatives by
partial derivatives which was first pointed out in
\cite{cheegerkleiner-rnp} (see Corollary \ref{cor:tangentcurves}).

We note that the notion of an Alberti representation is similar to,
but more general than, the notion of a \emph{test plan} recently
studied by Ambrosio, Gigli and Savar\'e in \cite{ags-calculusheatflow} and many other
papers by the same authors.  Indeed, it is easy to see that any test
plan defines an Alberti representation.

In the first half of this paper we show that an $n$-dimensional
Lipschitz differentiability space possesses $n$ independent Alberti
representations and that the derivative obtained from the partial
derivatives agrees with the existing derivative.  Further, this
implies that, for every Lipschitz function, at almost every point one
of the partial derivatives must be comparable to the pointwise
Lipschitz constant.  That is, this collection of Alberti
representations is \emph{universal} (see Definition
\ref{def:universal}).

Since our only hypothesis on Lipschitz differentiability spaces is
that Lipschitz functions are differentiable almost everywhere, we
prove these results by first reducing the problem to showing that a
certain class of sets must all have measure zero.  Then, for each set
$S$ in this class, we construct a Lipschitz function that
is differentiable almost nowhere on $S$, so that $S$ has measure zero.

In the second half of the paper we characterise Lipschitz
differentiability spaces via Alberti representations and related null
sets.  First we see that a collection of Alberti representations being
universal is exactly the condition required for the partial
derivatives to form a derivative and so obtain our first
characterisation.  We observe that many subsets of Lipschitz
differentiability space must necessarily have measure zero, namely the
set of non-differentiability points of any Lipschitz function.  One may then
ask if there exists a collection of subsets of a metric measure space
such that, if each of these sets has measure zero, the space is a
Lipschitz differentiability space.  We answer this positively by
giving several characterisations of Lipschitz differentiability spaces
via null sets that arise naturally when constructing many independent
Alberti representations of a measure.

The existence of Alberti representations, and the
ability to calculate the derivative from them, gives us a strong
connection to the differentiability theory in Euclidean spaces.
However, the theory of Lipschitz differentiability spaces remains more
exotic than the Euclidean case as this theory cannot be improved beyond
representations by 1-rectifiable measures.  Indeed, it is known that
the Heisenberg group is a Lipschitz differentiability space consisting
of a single chart of dimension two.  By our characterisations, we
may therefore conclude that there exists a large collection of pairs
of independent Alberti representations.  However, it is also known that
the Heisenberg group is 2-purely unrectifiable (see \cite{ambker-rectifiablesets},
Theorem 7.2) and so there cannot exist a representation of its measure
by 2-rectifiable measures.  We therefore see that, although there
exists a very rich collection of 1 dimensional representations, they
cannot be combined to produce a higher dimensional structure.

The paper is organised as follows.  In Section \ref{sec:albertireps}
we define an Alberti representation and deduce the elementary
properties of measures with an Alberti representation.  In section
\ref{sec:firstanalysis} we recall the notion of a derivative with
respect to a chart given in \cite{cheeger-diff} and \cite{keith}.  We
also give a characterisation, that first appeared in
\cite{porousdiff}, describing the uniqueness of such a derivative
as a property inherent to the chart.  While this observation is
simple, it's consequences will be essential to our construction of a
non-differentiable Lipschitz function.

We then proceed to prove the existence of $n$ independent Alberti
representations of any $n$-dimensional Lipschitz differentiability
space (see Theorem \ref{thm:spanrep}).  This proof consists of the
following three constructions which are valid for any chart in a metric
measure space.

In Section \ref{sec:construction} we present a general construction of
a non-differentiable Lipschitz function (see Proposition
\ref{prop:cons}).  Given a set $S$ and a sequence of 1- Lipschitz
functions that behave, at some scale, like non-differentiable
functions at points of $S$, we are able to produce a Lipschitz
function that is not differentiable almost everywhere on $S$.  More
precisely, suppose that for each function $f_m$ of this sequence and
each $x\in S$, there exist $y,z\in X$ with $0<d(y,x),d(z,x)<1/m$ such
that
\[\frac{|f_m(x)- f_m(y)|}{d(x,y)} \leq \frac{1}{m} \text{ and }
\frac{|f_m(x)- f_m(z)|}{d(x,z)} \geq \frac{1}{2}.
\]
Then we modify and combine the functions of this sequence to construct
a Lipschitz function that is not differentiable almost everywhere on $S$.

In Section \ref{sec:reps} we define a class of subsets of a metric
measure space and show that the measure has many independent Alberti
representations if and only if each of these sets has measure zero
(see Theorem \ref{thm:manyrep}).  This class is a natural
generalisation of the class of sets considered in
\cite{acp-structurenullsets} (see \cite{acp-diffoflip} for an announcement), in
which the authors investigate measures in Euclidean space with respect
to which every Lipschitz function is differentiable almost everywhere.
In this paper, such sets will play a fundamental role in our
description of Lipschitz differentiability spaces, via Alberti
representations.

In Section
\ref{sec:spanreps}, we provide a
construction of a sequence of Lipschitz functions that satisfy the
hypotheses of the construction from Section \ref{sec:construction} for
any set in the above class.
In particular, there exists a Lipschitz function that is
differentiable almost nowhere on such a set, establishing the
existence of many independent Alberti representations of a Lipschitz
differentiability space.

We use the existence of Alberti representations to give complete
descriptions of Lipschitz differentiability spaces.  In Section
\ref{sec:albertichar} we show how the partial derivatives obtained from a
universal collection of Alberti representations form a derivative.
For example, suppose that a Lipschitz function has all partial derivatives
with respect to a universal collection of Alberti representations
equal to zero.  Then it must have derivative zero almost everywhere.
By using the linearity of the derivative and investigating universal
Alberti representations further, we are able to generalise this
argument to show that the partial derivatives of any Lipschitz
function form a derivative almost everywhere.  By combining this with
our previous results, we characterise Lipschitz differentiability
spaces as those metric measure spaces with a universal collection of
Alberti representations (see Theorem \ref{thm:albchar}).

In Section \ref{sec:char} we show that, for any metric measure space
with $n$ independent Alberti representations, there exists a
$\delta>0$ (depending on the properties of the Alberti
representations) such that, for almost every $x$ and every
sufficiently small $r$, there exists a $\delta r$ separated subset of
$B(x,r)$ of size $n$.  In particular, if the metric measure space is
doubling, there exists a bound on the total number of independent
Alberti representations.  When combined with the results from Section
\ref{sec:reps} that produce many independent Alberti representations,
such a bound imposes a derivative of every Lipschitz function at
almost every point.  In particular, we characterise Lipschitz
differentiability spaces as those doubling metric measure spaces in
which certain subsets have measure zero (see Theorem
\ref{thm:doublingchar}).

In Section \ref{sec:arbreps} we prove the existence of many additional
Alberti
representations of a Lipschitz differentiability space.  In fact, for
any chart $\varphi$ in an $n$-dimensional differentiability space and
any cone $C\subset\mathbb R^n$, we show the existence of an Alberti
representation in the direction of $C$.

In Section \ref{sec:cheeger} we relate our characterisations to the
previous works of Cheeger and Keith.

Throughout this paper we will denote by $(X,d)$ a complete,
separable metric space and by $(X,d,\mu)$ the metric measure space
obtained by equipping $(X,d)$ with a finite Borel regular measure
$\mu$, so that $\mu$ is Radon.  However, by applying the results
proved here to any compact subset, the main results hold for any
metric space equipped with a Radon measure (that must necessarily be
doubling).  Further, since a derivative is not defined at an isolated
point, we also suppose that the set of isolated points has measure
zero.

For $(X,d)$ a metric space, $Y\subset X$, $x\in X$ and $r>0$, we
denote by $B(x,r)$ and $\overline B(x,r)$ the open and closed balls
centred at $x$ of radius $r$ and by $B(Y,r)$ and $\overline B(Y,r)$
the open and closed $r$-neighbourhood of $Y$ of radius $r$.  For a
function $f\colon X\to\mathbb R$ we write $\Lip(f,x)$ for the
pointwise Lipschitz constant of $f$ at $x$ given by
\[\Lip(f,x):= \limsup_{r\to  0}\left\{\frac{|f(x)-f(y)|}{d(x,y)} : 0<d(x,y)<r\right\}.\]
For two measures $\mu$ and $\nu$ on $X$ we say that
$\mu\ll\nu$ if $\mu$ is absolutely continuous with respect to $\nu$.

%%% Local Variables: 
%%% mode: latex
%%% TeX-master: "structure"
%%% End: 

%A section describing the geometric properties that a space with an Alberti representation possesses.

\section{Alberti representations}\label{sec:albertireps}
This section first defines an Alberti representation and
proves some of their basic properties.  We then demonstrate how the
existence of an Alberti representation provides a metric measure space
with a notion of almost everywhere differentiability of Lipschitz
functions.

\subsection{Basic definitions and properties}
Before defining an Alberti representation we define the set of curves
that form a representation.

\begin{definition}\label{defn:curvespace}
Let $(X,d)$ be a metric space.  We denote by $\Gamma(X)$ the set of
bi-Lipschitz functions
\[\gamma\colon \dom\gamma\to X\]
with $\dom\gamma\subset \mathbb R$ non-empty and compact.  For a
function $f\colon Y\to Z$ we denote
\[\graph f=\{(y,z)\in Y\times Z:f(y)=z\}.\]
Finally, for $Y$ non-empty and compact, $\graph f$ is non-empty and
compact and so we may equip $\Gamma(X)$ with the Hausdorff metric:
\[d(\gamma,\gamma')=\text{Hausdorff}(\graph\gamma,\graph\gamma').\]
We also define $\Pi(X)$ to be the set of all $\gamma\in\Gamma(X)$ with
$\dom\gamma$ a closed interval.
\end{definition}

We will refer to elements of $\Gamma(X)$ as \emph{Lipschitz curves}.
Note that, as in most metric spaces, $\Pi(X)$ may only consist of
the Lipschitz curves whose domain is a single point.

Suppose that $(X,d)$ is a metric space, $f\colon X\to\mathbb R^n$ is
Lipschitz and $\gamma\in\Gamma(X)$.  Then $f\circ\gamma\colon
\dom\gamma\subset \mathbb R\to\mathbb R^n$ is Lipschitz and so may be
extended to a Lipschitz function $F$ defined on an interval.
Therefore, by Lebesgue's theorem, $F$ is differentiable almost
everywhere.  However, at any non-isolated point $t_0$ of $\dom\gamma$,
if $F'(t_0)$ exists then it's value is determined by the value of
$f\circ\gamma$ near to $t_0$, independently of the extension $F$.  We
may therefore define the derivative of $f\circ\gamma$ almost
everywhere in $\dom\gamma$ to be equal to the derivative of any Lipschitz
extension to an interval.

With these constructions, we define an Alberti representation as follows.

\begin{definition}\label{defn:albrep}
Let $(X,d,\mu)$ be a metric measure space, $\mathbb P$ a Borel
probability measure on $\Gamma(X)$ and, for each $\gamma\in\Gamma(X)$,
let $\mu_\gamma$ be a Borel measure on $X$ that is absolutely
continuous with respect to $\mathcal H^1\llcorner \im\gamma$, the
restriction of $\mathcal H^1$ to $\im\gamma$.  For measurable
$A\subset X$ we say that $(\mathbb P,\{\mu_\gamma\})$ is an
Alberti representation of $\mu\llcorner A$ if, for every Borel
$Y\subset A$, $\gamma\mapsto \mu_\gamma(Y)$ is Borel measurable and
\[\mu(Y)=\int_{\Gamma(X)}\mu_\gamma(Y)\text{d}\mathbb P(\gamma).\]

Given an Alberti representation $\mathcal A=(\mathbb
P,\{\mu_\gamma\})$ we will write ``almost every $\gamma\in\mathcal
A$'' to mean ``$\mathbb P$-almost every $\gamma\in\Gamma(X)$'' and
given a curve $\gamma\in\Gamma(X)$ we write ``almost every
$t\in\dom\gamma$'' to mean ``$\gamma^{-1}_{\#}(\mu_\gamma)$-almost every
$t\in\dom\gamma$''.
\end{definition}

First observe that, if $\mu\llcorner A$ has an Alberti representation
$\mathcal A=(\mathbb P, \{\mu_\gamma\})$ and $f\colon X\to\mathbb R$ is a positive simple function,
then
\[\int_A f\text{d}\mu = \int_{\Gamma(X)}\int_{\operatorname{Im}\gamma} f\text{d}\mu_\gamma \text{d}\mathbb P(\gamma).\]
Therefore, by the monotone convergence theorem, this formula holds for
any positive Borel function $f\colon X\to \mathbb R$.  Also note that,
if $N\subset X$ is $\mu$-null, then $\mathcal A$ is also an Alberti
representation of $\mu\llcorner A\cup N$.  Finally, given a measurable
set $B\subset A$, we may define $\mathcal A\llcorner B$, the
restriction of $\mathcal A$ to $B$, given by $(\mathbb
P,\{\mu_\gamma\llcorner B\})$.

We may also form new Alberti representations from existing representations.

\begin{lemma}\label{lem:albertinikodym}
Let $(X,d,\mu)$ be a metric measure space and $A\subset X$ measurable.
Suppose that there exists a finite measure $\mathbb M$ on $\Gamma(X)$
and for each $\gamma\in\Gamma(X)$ a measure $\nu_\gamma\ll \mathcal
H^1\llcorner \im\gamma$ such that $\mu\llcorner A$ is absolutely continuous with respect to the Borel measure
\[\nu(B)=\int_{\Gamma(X)}\nu_\gamma (B) \text{d}\mathbb M\]
and such that $\nu$ is finite.  Then $\mu\llcorner A$ has an Alberti representation.
\end{lemma}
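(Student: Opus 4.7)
The plan is to obtain the Alberti representation by weighting the given kernel $\nu_\gamma$ by the Radon--Nikodym derivative of $\mu\llcorner A$ with respect to $\nu$, and then normalising $\mathbb M$ to a probability measure.

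Concretely, since $\mu\llcorner A$ and $\nu$ are both finite Borel measures on $X$ with $\mu\llcorner A \ll \nu$, the Radon--Nikodym theorem supplies a Borel $h\colon X \to [0,\infty)$ with $\mu\llcorner A = h\nu$. I set $c = \mathbb M(\Gamma(X))$; the trivial case $c=0$ forces $\nu\equiv 0$ and hence $\mu\llcorner A \equiv 0$, so any probability measure on $\Gamma(X)$ paired with the zero kernel works. Otherwise I define $\mathbb P = \mathbb M/c$ and $\mu_\gamma = c\,h\,\nu_\gamma$. Since $\nu_\gamma \ll \mathcal H^1\llcorner \im\gamma$, the same holds for $\mu_\gamma$, so the Borel measures $(\mu_\gamma)_{\gamma\in\Gamma(X)}$ meet the structural requirements of Definition \ref{defn:albrep}.

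The integral identity then follows from a Fubini-type computation: for a Borel $Y\subset A$,
\[
\int_{\Gamma(X)}\mu_\gamma(Y)\,\mathrm d\mathbb P(\gamma)
= \int_{\Gamma(X)}\int_Y h\,\mathrm d\nu_\gamma\,\mathrm d\mathbb M(\gamma)
= \int_Y h\,\mathrm d\nu
= \mu\llcorner A(Y) = \mu(Y),
\]
where the second equality uses the definition of $\nu$ together with a standard approximation of $h$ by nonnegative Borel simple functions (monotone convergence). The same approximation yields the required measurability of $\gamma \mapsto \mu_\gamma(Y)$, since for indicator $h$ it reduces to the measurability of $\gamma \mapsto \nu_\gamma(B)$ for Borel $B$, which is built into the hypothesis that $\nu$ is a well-defined Borel measure, and the class of $h$'s for which measurability holds is closed under positive linear combinations and monotone limits.

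The only real obstacle is bookkeeping: one must justify the interchange of integrals and the extension from simple to general nonnegative Borel $h$. Because $\mu$, $\nu$, and $\mathbb M$ are all finite, the usual monotone convergence / monotone class arguments apply without qualification, so no delicate estimates are needed.
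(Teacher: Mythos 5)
Your proposal is correct and follows essentially the same route as the paper: both take the Radon--Nikodym derivative of $\mu\llcorner A$ with respect to $\nu$, multiply it into the kernel $\nu_\gamma$, and normalise $\mathbb M$ to a probability measure (you scale the kernel by $c$ instead of normalising first, which is immaterial). Your extra attention to the degenerate case $\mathbb M(\Gamma(X))=0$ and to the measurability of $\gamma\mapsto\mu_\gamma(Y)$ only makes the argument more complete than the paper's.
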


\begin{proof}
By dividing by $\mathbb M(\Gamma(X))$ we may suppose that $\mathbb M$
is a probability measure.  Since $\mu$ and $\nu$ are both finite, by
the Radon-Nikodym Theorem there exists a Borel measurable $F\colon
A\to\mathbb R$ such that, for each Borel $B\subset A$,
\[\mu(B)=\int_B F\text{d}\nu=\int_{\Gamma(X)}\int_\gamma \mathbf{1}_B F\text{d}\nu_\gamma\text{d}\mathbb M.\]
Therefore, if we define $\text{d}\mu_\gamma=F \text{d}\nu_\gamma$,
\[\mu(B)=\int_{\Gamma(X)}\mu_\gamma(B)\text{d}\mathbb P\]
and so $\mu\llcorner A$ has an Alberti representation.
\end{proof}

\begin{lemma}\label{lem:sumreps}
Let $(X,d,\mu)$ be a metric measure space and for each $k\in\mathbb N$ let
$A_k\subset X$ be measurable such that $\mu\llcorner A_k$ has an Alberti
representation.  Then $\mu\llcorner \cup_k A_k$ has an Alberti
representation.
\end{lemma}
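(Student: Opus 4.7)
The plan is to glue the individual Alberti representations together by combining the probability measures on $\Gamma(X)$ into a single finite measure and then invoking Lemma~\ref{lem:albertinikodym}. First I would reduce to the pairwise disjoint case by setting $B_k := A_k \setminus \bigcup_{j<k} A_j$, so that $\bigcup_k B_k = \bigcup_k A_k$ and the $B_k$ are measurable and disjoint. Since $B_k \subset A_k$, the restriction operation described just before this lemma yields an Alberti representation $(\mathbb P_k, \{\mu^k_\gamma\})$ of $\mu\llcorner B_k$ for which every $\mu^k_\gamma$ is concentrated on $B_k$.

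Next, I would fix weights $c_k > 0$ with $\sum_k c_k < \infty$ and set $\mathbb M := \sum_k c_k \mathbb P_k$, a finite Borel measure on $\Gamma(X)$. Since $c_k > 0$ and each $\mathbb P_k$ is a probability measure, $\mathbb P_k \ll \mathbb M$, so the Radon-Nikodym theorem supplies Borel densities $f_k \colon \Gamma(X) \to [0,\infty)$ with $d\mathbb P_k = f_k\, d\mathbb M$. For each $\gamma \in \Gamma(X)$ I would define
$$\nu_\gamma := \sum_{k=1}^\infty f_k(\gamma)\, \mu^k_\gamma,$$
a countable sum of Borel measures, each absolutely continuous with respect to $\mathcal H^1 \llcorner \im\gamma$, and thus itself such a measure (setting $\nu_\gamma := 0$ on the $\mathbb M$-null set where the sum would be improper is harmless).

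Applying Lemma~\ref{lem:albertinikodym} will then finish the argument. For any Borel $B \subset \bigcup_k B_k$, monotone convergence together with the fact that each $\mu^k_\gamma$ is concentrated on $B_k$ gives
$$\int_{\Gamma(X)} \nu_\gamma(B)\, d\mathbb M = \sum_k \int f_k(\gamma)\, \mu^k_\gamma(B)\, d\mathbb M = \sum_k \int \mu^k_\gamma(B)\, d\mathbb P_k = \sum_k \mu(B\cap B_k) = \mu(B),$$
so the auxiliary measure $\nu$ of Lemma~\ref{lem:albertinikodym} agrees with $\mu\llcorner \bigcup_k B_k$ on all such Borel sets; it is finite with total mass $\mu(\bigcup_k B_k)$, and $\mu\llcorner \bigcup_k B_k$ is trivially absolutely continuous with respect to it. Lemma~\ref{lem:albertinikodym} then produces an Alberti representation of $\mu\llcorner \bigcup_k A_k$. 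No step is genuinely hard; the only subtlety is that distinct $\mathbb P_k$ may have overlapping supports in $\Gamma(X)$, which is why the Radon-Nikodym densities $f_k$ are needed to apportion weights correctly, while the disjointness of the $B_k$ rules out any corresponding double-counting on the $X$-side.
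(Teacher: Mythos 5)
Your argument is correct, and it reaches Lemma \ref{lem:albertinikodym} by a genuinely different route than the paper does. The paper also forms $\mathbb P=\sum_k2^{-k}\mathbb P_k$, but rather than apportioning the fibre measures with Radon--Nikodym densities on $\Gamma(X)$, it discards them entirely: for each $\gamma$ it takes $\nu_\gamma$ to be the \emph{normalised} $\mathcal H^1$ measure on $\gamma(S_\gamma)$, where $S_\gamma$ collects the supports of the densities of the $\gamma^{-1}_{\#}(\mu_{\gamma,k})$, and then verifies only the absolute continuity $\mu\llcorner\cup_kA_k\ll\nu$ (a $\nu$-null set is $\nu_\gamma$-null for $\mathbb P_k$-a.e.\ $\gamma$, hence $\mu_{\gamma,k}$-null, hence $\mu\llcorner A_k$-null), leaving the actual construction of the fibre measures to the Radon--Nikodym step inside Lemma \ref{lem:albertinikodym}. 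Your construction keeps the original $\mu^k_\gamma$ and instead normalises on the curve space via $f_k=d\mathbb P_k/d\mathbb M$; the payoff is that $\nu$ comes out \emph{equal} to $\mu\llcorner\cup_kA_k$, so the absolute-continuity hypothesis of Lemma \ref{lem:albertinikodym} is immediate, and your claim that $\nu_\gamma(X)<\infty$ for $\mathbb M$-a.e.\ $\gamma$ is justified by your displayed computation applied with $B=X$. The price is the preliminary disjointification $B_k=A_k\setminus\bigcup_{j<k}A_j$, which is genuinely needed in your set-up: without it $\nu(X)=\sum_k\mu(A_k)$ could be infinite, breaking the finiteness hypothesis of Lemma \ref{lem:albertinikodym}. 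Two housekeeping points, neither fatal: the identity $\int\mu^k_\gamma(B)\,\mathrm d\mathbb P_k=\mu(B\cap B_k)$ applies the representation formula to the set $B\cap B_k$, which is Borel only after replacing each $A_k$ by a Borel set modulo a $\mu$-null set (or else one appeals to Lemma \ref{lem:measurablerep}, which appears after this lemma in the paper but does not depend on it); and the Borel measurability of $\gamma\mapsto\nu_\gamma(Y)$ is taken for granted to exactly the same extent as in the paper's own proof.
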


\begin{proof}
For each $k\in\mathbb N$ let $\mathcal A_k=(\mathbb
P_k,\{\mu_{\gamma,k}\})$ be an Alberti representation of $\mu\llcorner
A_k$ and set $\mathbb P = \sum_k 2^{-k}\mathbb P_k$, a Borel
probability measure on $\Gamma(X)$.  For each $\gamma\in\Gamma(X)$ and
each $k\in\mathbb N$ let $F_{\gamma,k}$ be the Radon-Nikodym
derivative of $\gamma^{-1}(\mu_\gamma)$ with respect to Lebesgue
measure and set
\[S_\gamma=\{t\in\dom\gamma : \exists\ k\in\mathbb
N,\ F_{\gamma,k}(t)>0\}.
\]
Then $S_\gamma$ is a
bounded subset of $\mathbb R$ of positive measure and so we may
define the measure
\[\nu_\gamma(Y) := \begin{cases}\mathcal H^1\llcorner
\gamma(S_\gamma)/\mathcal H^1(\gamma(S_\gamma)) & \text{if } \mathcal
H^1(\gamma(S_\gamma))>0\\
0 & \text{otherwise}\end{cases}
\]
on $X$.  Further, we
may define the finite measure $\nu$ on $X$ given by
\[\nu(Y) = \int_{\Gamma(X)} \nu_\gamma(Y) \text{d}\mathbb P\]
for each Borel $Y\subset X$.

Now let $Y\subset X$ be Borel with $\nu(Y)=0$ and let $k\in\mathbb N$.
Then for $\mathbb P_k$-a.e.\ $\gamma\in\Gamma(X)$ we have
$\nu_\gamma(Y)=0$.  However, by definition, $\mu_{\gamma,k} \llcorner
S_\gamma \ll \nu_\gamma$ for every $\gamma\in\Gamma(X)$.  Therefore,
for $\mathbb P_k$-a.e.\ $\gamma\in\Gamma(X)$, $\mu_{\gamma,k}(Y)=0$.
Since $\mu\llcorner A_k$ has the above Alberti representation, we must
have $\mu(A_k\cap Y)=0$.  In particular $\mu(Y\cap \cup_k A_k)=0$ and
so $\mu\llcorner \cup_k A_k$ is absolutely continuous with respect to
$\nu$.  By applying the previous Lemma we obtain the required Alberti
representation $(\mathbb P,\{\mu_\gamma\})$ of $\mu\llcorner\cup_k
A_k$.
\end{proof}

The previous two Lemmas preserve many of the properties of the Alberti
representations in their hypotheses.  More precisely, suppose under
the hypotheses of Lemma \ref{lem:sumreps} that $\tilde \Gamma \subset
\Gamma(X)$ is a set of full $\mathbb P_k$-measure for each
$k\in\mathbb N$ and that, for each $\gamma\in \tilde\Gamma$,
$\tilde\gamma \subset \im\gamma$ is a set of full
$\mu_{\gamma,k}$-measure, for each $k\in\mathbb N$.  Then
$\tilde\Gamma$ is a set of full $\mathbb P$-measure and for each
$\gamma\in \tilde\Gamma$, $\tilde\gamma$ is a set of full
$\mu_\gamma$-measure.  We will make particular use of this fact in the
following form.  Suppose that $f\colon X\to\mathbb R$ is Lipschitz
such that, for each $k\in\mathbb N$, almost every $\gamma\in\mathcal
A_k$ and almost every $t\in\dom\gamma$, $(f\circ\gamma)'(t)>0$.  Then
for almost every $\gamma\in\mathcal A$ and almost every
$t\in\dom\gamma$, $(f\circ\gamma)'(t)>0$.

In the definition of an Alberti representation we only require the
representation to hold for all Borel subsets of $X$.  However, as we will
now see, it is easy to extend this representation to all
$\mu$-measurable subsets of $X$.

\begin{lemma}\label{lem:measurablerep}
Let $(X,d,\mu)$ be a metric measure space and $A\subset X$ measurable such
that $\mu\llcorner A$ has an Alberti representation $(\mathbb
P,\{\mu_\gamma\})$.  Then for every $\mu$-measurable $Y\subset A$,
$\gamma\mapsto \mu_\gamma (Y)$ is $\mathbb P$-measurable and
\[\mu(Y)=\int_{\Gamma(X)}\mu_\gamma(Y)\text{d}\mathbb P.\]
\end{lemma}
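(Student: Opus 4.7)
The plan is to exploit the Borel regularity of $\mu$ to sandwich a $\mu$-measurable set $Y$ between two Borel sets of equal $\mu$-measure, and then push the resulting null set through the Alberti representation to conclude the fibrewise measures $\mu_\gamma$ also see $Y$ almost surely.

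More concretely, I would first invoke Borel regularity (together with $\mu(X)<\infty$) to produce Borel sets $Y_1\subset Y\subset Y_2\subset A$ with $\mu(Y_2\setminus Y_1)=0$. Applying the Alberti representation to the Borel set $Y_2\setminus Y_1$ gives
\[0=\mu(Y_2\setminus Y_1)=\int_{\Gamma(X)}\mu_\gamma(Y_2\setminus Y_1)\,\mathrm{d}\mathbb P(\gamma),\]
so there is a $\mathbb P$-conull set $\tilde\Gamma\subset\Gamma(X)$ on which $\mu_\gamma(Y_2\setminus Y_1)=0$. For every $\gamma\in\tilde\Gamma$ the set $Y$ is $\mu_\gamma$-measurable (in the completion of the Borel measure $\mu_\gamma$) and $\mu_\gamma(Y)=\mu_\gamma(Y_1)=\mu_\gamma(Y_2)$. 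On the $\mathbb P$-null exceptional set one may simply assign $\mu_\gamma(Y):=\mu_\gamma(Y_1)$ by convention, so that $\gamma\mapsto\mu_\gamma(Y)$ is defined everywhere and agrees $\mathbb P$-a.e.\ with the Borel-measurable map $\gamma\mapsto \mu_\gamma(Y_1)$; the latter is $\mathbb P$-measurable by the hypothesis that $(\mathbb P,\{\mu_\gamma\})$ is an Alberti representation, hence so is the former (we can also complete $\mathbb P$ if necessary, which does not alter the integrals below).

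Finally, integrating and using the representation on the Borel sets $Y_1$ and $Y_2$ gives
\[\int_{\Gamma(X)}\mu_\gamma(Y)\,\mathrm{d}\mathbb P=\int_{\Gamma(X)}\mu_\gamma(Y_1)\,\mathrm{d}\mathbb P=\mu(Y_1)=\mu(Y),\]
which is the desired identity. The argument is essentially a routine completion-of-measure argument; the only mild subtlety is deciding what $\mu_\gamma(Y)$ means for those $\gamma$ at which $Y$ fails to lie in the $\mu_\gamma$-completion, but this ambiguity lives on a $\mathbb P$-null set and is harmless. No substantive obstacle is anticipated.
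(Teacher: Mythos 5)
Your argument is correct and is essentially the paper's own proof: sandwich $Y$ between Borel sets $Y_1\subset Y\subset Y_2$ of equal $\mu$-measure, use the representation of the Borel sets to see that $\mu_\gamma(Y_1)=\mu_\gamma(Y_2)$ for $\mathbb P$-a.e.\ $\gamma$, deduce measurability of $\gamma\mapsto\mu_\gamma(Y)$, and integrate. The only cosmetic difference is that you phrase the null-set step via $\mu_\gamma(Y_2\setminus Y_1)=0$ and a completion convention, whereas the paper compares $\int\mu_\gamma(Y_1)\,\mathrm{d}\mathbb P$ with $\int\mu_\gamma(Y_2)\,\mathrm{d}\mathbb P$ directly; this changes nothing of substance.
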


\begin{proof}Let $Y\subset A$ be $\mu$-measurable and $B\subset Y\subset C$ be Borel with $\mu(B)=\mu(C)$.  Then for every $\gamma\in\Gamma(X)$,
\[\mu_\gamma(B)\leq\mu_\gamma(Y)\leq\mu_\gamma(C).\]
However, since $\mu(B)$ and $\mu(C)$ are given by the
Alberti representation, we know that $\mu_\gamma(B)=\mu_\gamma(C)$ for
almost every $\gamma\in\mathcal A$.  Therefore
$\gamma\mapsto\mu_\gamma(Y)$ is $\mathbb P$-measurable and
\begin{align*}\mu(Y)=\mu(B)&=\int_{\Gamma(X)}\mu_\gamma(B)\text{d}\mathbb P\\
&\leq \int_{\Gamma(X)}\mu_\gamma(Y)\text{d}\mathbb P\\
&\leq \int_{\Gamma(X)}\mu_\gamma(C)\text{d}\mathbb P=\mu(C)=\mu(Y).\end{align*}
Thus $\mu(Y)$ is given by the Alberti representation.
\end{proof}

\subsection{Relationship with differentiability}
We now demonstrate how an Alberti representation provides a metric
measure space with a notion of almost everywhere differentiability of
Lipschitz functions.

\begin{definition}\label{def:curvespace}
For a metric space $(X,d)$ we define $H(X)$ to be
the collection of non-empty compact subsets of $\mathbb R\times X$
with the Hausdorff metric, so that $H(X)$ is complete and separable.  We
also identify $\Gamma(X)$ with it's isometric image in $H(X)$ via
$\gamma\mapsto\graph\gamma$ and set
\[A(X)=\{(x,\gamma)\in X\times\Gamma(X):\exists\ t\in\dom\gamma,\ \gamma(t)=x\}.\]
Finally, for any $K\subset X$, we define the set
\[DP(K):=\{(x,\gamma)\in A(X): \gamma^{-1}(x) \text{ is a density
  point of } \gamma^{-1}(K)\}.\]
\end{definition}

\begin{lemma}\label{lem:gammaborel}
Let $(X,d)$ be a metric space and $L>0$.  Then the set of all
$L$-bi-Lipschitz $\gamma\in\Gamma(X)$ is a closed subset of $H(X)$, and
$\Gamma(X)$ is a Borel subset of $H(X)$.  Further, $A(X)$ is a Borel
subset of $X\times H(X)$.  Finally, for any
$a_1=(\gamma_1(t_1),\gamma_1)$ and $a_2=(\gamma_2(t_2),\gamma_2)$ in
$A(X)$,
\[|t_1-t_2|\leq (2\min_{i=1, 2}\operatorname{biLip}\gamma_i+1)d(a_1,a_2)).\]
\end{lemma}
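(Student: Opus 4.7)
The plan is to treat the four assertions in order. For closedness of the $L$-bi-Lipschitz subset of $\Gamma(X)$, suppose $\graph\gamma_n$ converges in Hausdorff distance to a non-empty compact $K \subset \mathbb{R} \times X$. The delicate step is single-valuedness: if $(t, x), (t, y) \in K$, then I would approximate each by $(t_n, \gamma_n(t_n))$ and $(s_n, \gamma_n(s_n))$ in $\graph\gamma_n$; since $|t_n - s_n| \to 0$, the upper $L$-Lipschitz bound on $\gamma_n$ forces $d(\gamma_n(t_n), \gamma_n(s_n)) \to 0$, so $x = y$. Writing the resulting function as $\gamma$, the two-sided $L$-bi-Lipschitz estimate at any pair of points of $\graph\gamma = K$ is obtained by the same approximate-and-pass-to-the-limit argument, and compactness of $\dom\gamma$ is immediate from continuity of the first-coordinate projection.

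The second claim then follows by writing $\Gamma(X) = \bigcup_{L \in \mathbb{N}} \Gamma_L$, where $\Gamma_L$ denotes the closed set of $L$-bi-Lipschitz elements; this makes $\Gamma(X)$ an $F_\sigma$, hence Borel. For $A(X)$, let $\pi_X\colon \mathbb{R} \times X \to X$ denote the projection, which is $1$-Lipschitz for either standard product metric. This implies that $K \mapsto \pi_X(K)$ is Hausdorff-continuous, so $\Phi(x, K) := \operatorname{dist}(x, \pi_X(K))$ is continuous on $X \times H(X)$, and $A(X) = (X \times \Gamma(X)) \cap \Phi^{-1}(0)$ is Borel.

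The final estimate is a direct calculation. Set $L := \operatorname{biLip}\gamma_1$ (the argument is symmetric, so I may assume this realises the minimum), and write $h$ for the Hausdorff distance between $\graph\gamma_1$ and $\graph\gamma_2$; for any of the usual product metrics on $X \times H(X)$ one has $d(x_1, x_2) \leq d(a_1, a_2)$ and $h \leq d(a_1, a_2)$, while the Hausdorff distance on $\mathbb{R} \times X$ controls each coordinate separately. Choosing $s \in \dom\gamma_1$ with $(s, \gamma_1(s))$ approximating $(t_2, x_2)$ within $h$ in each coordinate (compactness provides such an $s$), the triangle inequality gives $d(x_1, \gamma_1(s)) \leq d(x_1, x_2) + h$, the lower $L$-Lipschitz bound on $\gamma_1$ yields $|t_1 - s| \leq L(d(x_1, x_2) + h)$, and combining with $|s - t_2| \leq h$ produces
\[ |t_1 - t_2| \leq L\, d(x_1, x_2) + (L + 1) h \leq (2L + 1)\, d(a_1, a_2). \]

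The only real obstacle I anticipate is the single-valuedness argument in the first step; the Borel-ness claims reduce to continuity of standard hyperspace operations, and the final inequality is pure bookkeeping with the bi-Lipschitz bound and the triangle inequality.
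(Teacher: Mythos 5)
Your proposal is correct and follows essentially the same route as the paper: closedness of the $L$-bi-Lipschitz class by approximating pairs of points of the limit set from the graphs $\graph\gamma_m$ and passing the two-sided Lipschitz inequality to the limit, $\Gamma(X)$ as a countable union of these closed sets, $A(X)$ as a relatively closed subset of $X\times\Gamma(X)$ (your zero-set-of-$\Phi$ formulation and the paper's direct openness-of-the-complement argument are the same observation), and the final estimate by picking a point of $\graph\gamma_1$ within Hausdorff distance of $(t_2,\gamma_2(t_2))$ and applying the lower Lipschitz bound plus the triangle inequality, yielding the identical $(2L+1)$ constant.
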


\begin{proof}
For any $L>0$ suppose that
$\gamma_m\in\Gamma(X)$ are $L$-bi-Lipschitz and $\gamma_m\to\gamma$,
for some $\gamma\in H(X)$.  We will show that $\gamma$ is the graph of
some $L$-bi-Lipschitz function.

Indeed, let $(s,x)$ and $(t,y)$ belong to $\gamma$ and, for each
$m\in\mathbb N$, $(s_m,x_m)$ and $(t_m,y_m)\in\gamma_m$ with
$(s_m,x_m)\to (s,m)$ and $(t_m,y_m)\to (t,x)$.  Then since each
$\gamma_m$ is $L$-bi-Lipschitz,
\[d(x,y)=\lim_{m\to\infty}d(\gamma_m(s_m),\gamma_m(t_m))\leq \lim_{m\to\infty} L|s_m-t_m| = L|s-t|\]
and similarly
\[d(x,y)\geq |s-t|/L.\]
Therefore $\gamma$ is the graph of some $L$-bi-Lipschitz function
$\gamma\colon\dom\gamma\to X$.  In particular, the set of
$L$-bi-Lipschitz elements of $\Gamma(X)$ is a closed subset of $H(X)$.  By
taking a union over $L\in\mathbb N$ we conclude that $\Gamma(X)$ is a
Borel subset of $H(X)$.

Now suppose that $(x,\gamma)\in X\times \Gamma(X)$ does not belong to
$A(X)$.  Then since $\im\gamma$ is a compact set there exists a
$\delta>0$ such that $d(x,\im\gamma)>\delta$.  In particular
$B((x,\gamma),\delta/2)$ is disjoint from $A(X)$ and so $A(X)$ is a
closed subset of $X\times\Gamma(X)$.

Finally, given $a_1=(\gamma_1(t_1),\gamma_1),a_2=(\gamma_2(t_2),\gamma_2)\in A(X)$, suppose that
\[L=\operatorname{biLip}\gamma_1\leq \operatorname{biLip}\gamma_2.\]
There exists a $t\in\dom\gamma_1$ with
\[\max\{|t_2-t|, d(\gamma_1(t),\gamma_2(t_2))\}\leq d(a_1,a_2)\]
and so
\begin{align*}|t_1-t_2| &\leq |t_1-t|+|t-t_2|\\
&\leq L d(\gamma_1(t_1),\gamma_1(t))+ |t-t_2|\\
&\leq Ld(\gamma_1(t_1),\gamma_2(t_2))+Ld(\gamma_2(t_2),\gamma_1(t_1))+|t-t_2|\\
&\leq (2L+1)d(a_1,a_2)\end{align*}
as required.
\end{proof}

We will describe an Alberti representation using a Borel set $B\subset
A(X)$ whose projection into $\Gamma(X)$ is a set of full measure.  We
now demonstrate the particular Borel sets that we will use.

\begin{lemma}\label{lem:measurability}
Let $(X,d,\mu)$ be a metric measure space.
\begin{enumerate}\item For $g\colon X\to\mathbb R$ continuous, the
function $f \colon A(X)\to\mathbb R\cup \{\infty\}$ defined by
\[f(x,\gamma)=\begin{cases}(g\circ\gamma)'(\gamma^{-1}(x)) & \text{if it exists}\\
\infty & \text{otherwise}\end{cases}\]
is Borel measurable.
\item \label{eq:compactdp} For any compact $K\subset X$, $DP(K)$ is a Borel subset of $A(X)$.
\end{enumerate}
\end{lemma}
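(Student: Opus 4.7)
My approach for both parts is to decompose $A(X) = \bigcup_{L \in \mathbb N} A_L(X)$, where $A_L(X)$ consists of those $(x,\gamma) \in A(X)$ with $\gamma$ being $L$-bi-Lipschitz. By Lemma \ref{lem:gammaborel} this is a Borel decomposition, and on each piece the estimate in that same lemma makes the parameter map $t(x,\gamma) := \gamma^{-1}(x)$ $(2L+1)$-Lipschitz. A short argument using Hausdorff convergence of graphs then shows that the evaluation $(x,\gamma,s) \mapsto \gamma(s)$ is continuous on the subset of $A_L(X) \times \mathbb R$ where $s \in \dom\gamma$: approximating $(s,\gamma(s))\in\gamma$ by elements $(s'_m,y_m) \in \gamma_m$ and using the uniform $L$-bi-Lipschitz bound forces $\gamma_m(s_m) \to \gamma(s)$ whenever $s_m \to s$.

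For (1), I plan to introduce, for each $n \in \mathbb N$,
\[\Phi_n^+(x,\gamma) := \sup\Big\{\frac{g(\gamma(s))-g(x)}{s-t(x,\gamma)} : s \in \dom\gamma,\ 0 < |s - t(x,\gamma)| < 1/n\Big\}\]
(understood as $-\infty$ if the set is empty) and the analogous infimum $\Phi_n^-$. The observations above give lower semi-continuity of $\Phi_n^+$ on $A_L(X)$: any admissible $s$ at $(x,\gamma)$ is approximated, via Hausdorff convergence, by admissible $s_m$ at $(x_m,\gamma_m)$ for large $m$, and continuity of $g$ together with the evaluation map above makes the difference quotients converge. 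Hence the monotone limits $u := \lim_n \Phi_n^+$ and $l := \lim_n \Phi_n^-$ are Borel, and $(g\circ\gamma)'(t(x,\gamma))$ exists (finitely) exactly when $u = l \in \mathbb R$, with common value $u$. Thus $f$ equals $u$ on the Borel set $\{u = l\} \cap \{u \in \mathbb R\}$ and $\infty$ on its complement, so $f$ is Borel.

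For (2), the plan is to reduce to upper semi-continuity, for each fixed $r>0$, of
\[\rho(x,\gamma,r) := \frac{\mathcal L^1\big(\gamma^{-1}(K) \cap [t(x,\gamma) - r,\, t(x,\gamma) + r]\big)}{2r}\]
on $A_L(X)$. The central point is that for any open $U$ containing the compact set $\gamma^{-1}(K) \cap [t(x,\gamma) - r, t(x,\gamma)+r]$, one has $\gamma_m^{-1}(K) \cap [t_m - r, t_m + r] \subset U$ for large $m$: extract a convergent subsequence from a putative sequence $s_m \in \gamma_m^{-1}(K) \cap [t_m - r, t_m + r] \setminus U$, and combine Hausdorff convergence of $\gamma_m$ with closedness of $K$ to show its limit lies in $\gamma^{-1}(K) \cap [t - r, t + r] \subset U$, a contradiction. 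Outer regularity of $\mathcal L^1$ then delivers the upper semi-continuity, hence Borel measurability, of $\rho(\cdot,\cdot,r)$. Since $r \mapsto \rho(x,\gamma,r)$ is continuous on $(0,\infty)$ (its numerator is absolutely continuous in $r$), the condition $\lim_{r\to 0^+}\rho = 1$ is testable along a countable set of radii, so
\[DP(K) \cap A_L(X) = \bigcap_{k\in\mathbb N}\bigcup_{m\in\mathbb N}\bigcap_{n \geq m}\big\{(x,\gamma) : \rho(x,\gamma,1/n) > 1 - 1/k\big\}\]
is Borel; a union over $L$ finishes the proof.

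The single technical obstacle shared by both parts is the interaction between Hausdorff convergence $\gamma_m \to \gamma$ and the moving base point $t(x_m,\gamma_m) \to t(x,\gamma)$, which precludes any naive application of Fatou- or portmanteau-type arguments. The uniform bi-Lipschitz bound on each piece $A_L(X)$ is exactly what permits simultaneous approximation of $(x,\gamma)$ together with an auxiliary point (a nearby $s \in \dom\gamma$ for (1), or a nearby point of $\gamma^{-1}(K)$ for (2)) by data extracted from $(x_m,\gamma_m)$; once this is in place both semi-continuity claims reduce to routine verifications.
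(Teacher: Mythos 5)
Your proposal is correct and follows essentially the same route as the paper: both parts reduce to semicontinuity, on sets of curves with a uniform bi-Lipschitz bound, of quantities built from difference quotients (respectively from $\mathcal L^1(\gamma^{-1}(K)\cap \overline B(\gamma^{-1}(x),r))$) under Hausdorff convergence of graphs with the moving basepoint controlled exactly by the estimate in Lemma \ref{lem:gammaborel}, followed by countable Boolean operations. The differences are only presentational: in (1) you package the closed conditions as upper and lower Dini derivatives where the paper uses closed sets indexed by $(q,\epsilon,\delta)$, and in (2) your ``eventually inside any open neighbourhood of a compact set'' argument plus outer regularity replaces the paper's compactness argument for the restrictions of $\gamma_m$ to $\gamma_m^{-1}(K)$ in the Hausdorff metric.
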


\begin{proof}
Let $\delta,\epsilon>0$ and $q\in\mathbb R$.  Then the set of
$(\gamma(t_0),\gamma)\in A(X)$ with
\[|g(\gamma(t))-g(\gamma(t_0))-q(t-t_0)|\leq\epsilon |t-t_0|\]
for all $t\in \dom\gamma\cap B(t_0,\delta)$, is closed.  Therefore,
after taking suitable countable intersections and unions of such sets,
the set where $(g\circ\gamma)'(\gamma^{-1}(x))$ exists and
belongs to some open subset of $\mathbb R$ is Borel.  Thus the set of
points where the derivative does not exist is Borel as is the set of
points where the derivative belongs to some given Borel set.

To prove the second statement, let $\gamma_m\to\gamma$ in $H(X)$,
$t\in\mathbb R$ and $r>0$.  Then $\dom\gamma_m\to\dom\gamma$ in the
Hausdorff metric.  In particular, for every $\epsilon>0$,
$\dom\gamma_m\subset \overline B(\dom\gamma,\epsilon)$ for
sufficiently large $m$.  Therefore
\begin{align}\nonumber\limsup_{m\to\infty}\mathcal L^1(\overline B(t,r)\cap\dom\gamma_m)&\leq \lim_{\epsilon\to 0}\mathcal L^1(\overline B(t,r)\cap \overline B(\dom\gamma,\epsilon))\\
\label{eq:albreps1}&=\mathcal L^1(\overline B(t,r)\cap\dom\gamma)
\end{align}
since $\dom\gamma$ is compact.

Now let $K\subset X$ be compact.  We first show that
\[\gamma\mapsto
\mathcal L^1(\overline B(t,r)\cap\gamma^{-1}(K))
\]
is upper semicontinuous on $\Gamma(X)$.  For this let
$\gamma_m\to\gamma$ in $H(X)$ such that each $\gamma_m^{-1}(K)$ is
non-empty and for every $m\in\mathbb N$ define
$\dom\tilde\gamma_m=\gamma_m^{-1}(K)$ and $\tilde\gamma_m$ to be the
restriction of $\gamma_m$ to $\dom\tilde\gamma_m$.  Then since
$\gamma_m\to\gamma$ we know that all of the $\dom\tilde\gamma_m$
belong to some compact subset $A$ of $\mathbb R$.  In particular each
$\tilde\gamma_m$ is contained within $A\times K$, a compact set,
inside which the Hausdorff metric on its non-empty subsets is also
compact.  Therefore, for any sequence $m(k)\to\infty$ there exists a
subsequence $m(k_i)\to\infty$ such that the $\gamma_{m(k_i)}$ converge
to some non-empty compact $B\subset A\times K$.

However, since $\gamma_m\to\gamma$, we know that $B$ must be a subset
of $\gamma$ restricted to $K$ and so, by equation \eqref{eq:albreps1},
\[\limsup_{i\to\infty}\mathcal L^1(\overline B(t,r)\cap\dom\tilde\gamma_{m(k_i)})\leq \mathcal L^1(\overline B(t,r)\cap\gamma^{-1}(K)).\]
This is true for any sequence $m(k)\to\infty$ and so
\[\gamma\mapsto\mathcal L^1(\overline B(t,r)\cap\gamma^{-1}(K))\]
is upper semicontinuous on $\Gamma(X)$.

Further, if $(\gamma_m(t_m),\gamma_m)\to (\gamma(t),\gamma)$ in
$A(X)$, then by Lemma \ref{lem:gammaborel} we know that $|t_m-t|\to 0$ and
so
\begin{align*}\limsup_{m\to\infty}\mathcal L^1(\overline B(t_m,r)\cap \gamma_m^{-1}(K))&\leq \limsup_{m\to\infty}\mathcal L^1(\overline B(t,r)\cap\gamma_m^{-1}(K))+2|t_m-t|\\
&\leq \mathcal L^1(\overline B(t,r)\cap\gamma^{-1}(K)). \end{align*}
In particular
\[(x,\gamma)\mapsto \mathcal L^1(B(\gamma^{-1}(x),r)\cap \gamma^{-1}(K))\]
is upper semicontinuous on $A(X)$.  By taking a suitable countable
collection of intersections and unions we see that $DP(K)$ is a Borel
subset of $A(X)$.
\end{proof}

By combining these results we now show, for almost every point $x$ in
a metric measure space with an Alberti representation, the existence
of a Lipschitz curve of which $x$ is a density point.  Moreover, such
a Lipschitz curve may be taken from a set of full measure, with respect
to the integrating measure of the Alberti representation.

\begin{proposition}\label{prop:curvefullmeas}
Let $(X,d,\mu)$ be a metric measure space and $M\subset X$ measurable
such that $\mu\llcorner M$ has an Alberti representation $\mathcal A$.
Suppose that $(Y,\rho)$ is a complete, separable metric space,
$B\subset Y$ is Borel and $f\colon A(X)\to Y$ is a Borel function such
that, for almost every $\gamma\in\mathcal A$ and almost every
$t\in\dom\gamma$, $f(\gamma(t),\gamma)\in B$.  Then the set
\[P(M):=\{x\in M:\exists\ \gamma\in\Gamma(X),\ (x,\gamma)\in DP(M),\ f(x,\gamma)\in B\}
\]
is a set of full measure in $M$ and for each $x\in P(M)$ there exists a
$\gamma^x\in\Gamma(X)$ that satisfies the conditions given in the
definition of $P(M)$ such that $x\mapsto f(x,\gamma^x)$ is measurable.
\end{proposition}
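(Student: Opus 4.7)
The plan is to establish both assertions by combining the Alberti representation with the Lebesgue density theorem, mediated by the sets
\[ N_K := \{(x,\gamma) \in DP(K) : f(x,\gamma) \in B\} \subset A(X) \]
indexed over compact $K \subset M$.  By Lemma \ref{lem:measurability} and the Borel measurability of $f$, each $N_K$ is Borel; moreover, since $\mu$ is Radon, we may pick compact sets $K_j \subset M$ with $\mu(M \setminus \bigcup_j K_j) = 0$ and work separately on each $K_j$.  The advantage of passing to compact subsets is that Lemma \ref{lem:measurability} provides $DP(K_j)$ Borel, something not obvious for $DP(M)$ itself.

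Fix $j$ and write $G_j := \pi_X(N_{K_j})$, where $\pi_X \colon X \times \Gamma(X) \to X$ is the projection.  Since $\Gamma(X)$ is standard Borel, $G_j$ is analytic in $X$ and hence universally measurable; note also that $DP(K_j) \subset DP(M)$, so $M \cap G_j \subset P(M)$.  To bound $\mu(K_j \setminus G_j)$, fix $\gamma$ for which $\mu_\gamma \ll \mathcal{H}^1 \llcorner \im\gamma$ and $f(\cdot,\gamma) \in B$ holds $\mu_\gamma$-almost everywhere on $\im\gamma$ --- conditions satisfied for $\mathbb{P}$-almost every $\gamma$ by hypothesis.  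Since $\gamma$ is bi-Lipschitz, $\gamma^{-1}_\# \mu_\gamma \ll \mathcal{L}^1$, so applying the Lebesgue density theorem to the compact set $\gamma^{-1}(K_j) \subset \dom\gamma$ shows that $\mu_\gamma$-almost every $x \in K_j \cap \im\gamma$ has $\gamma^{-1}(x)$ a density point of $\gamma^{-1}(K_j)$, i.e.\ $(x,\gamma) \in N_{K_j}$ and hence $x \in G_j$.  Integrating via Lemma \ref{lem:measurablerep} yields $\mu(K_j \setminus G_j) = 0$, and summing over $j$ gives $\mu(M \setminus P(M)) = 0$.

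For the selector, I would invoke the Jankov--von Neumann uniformization theorem on each Borel set $N_{K_j} \subset X \times \Gamma(X)$ to obtain a universally measurable $\sigma_j \colon G_j \to \Gamma(X)$ with $(x,\sigma_j(x)) \in N_{K_j}$, and then set $\gamma^x := \sigma_{j(x)}(x)$ with $j(x) := \min\{j : x \in G_j\}$, choosing any admissible curve on the $\mu$-null remainder of $P(M)$.  Then $x \mapsto (x,\gamma^x)$ is universally measurable on $P(M)$, so its composition with the Borel map $f$ produces the required measurability of $x \mapsto f(x,\gamma^x)$.  The principal subtleties are the non-Borelness of $DP(M)$ for general measurable $M$ (circumvented by compact exhaustion) and the need for an analytic rather than Borel selection theorem, since nothing in the setup guarantees the existence of a Borel-measurable choice of $\gamma^x$.
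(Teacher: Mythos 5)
Your proof is correct and follows essentially the same route as the paper's: compact exhaustion of $M$ to exploit the Borelness of $DP(K)$ from Lemma \ref{lem:measurability}, the Lebesgue density theorem together with $\gamma^{-1}_{\#}\mu_\gamma\ll\mathcal L^1$ and Lemma \ref{lem:measurablerep} to get the full-measure statement, and the Jankov--von Neumann theorem on an analytic set for the measurable selection. The only cosmetic differences are that you argue the null-set claim in the ``positive'' direction ($\mu_\gamma$-a.e.\ point of $K_j$ lies in $G_j$) rather than via the complement $M\setminus P(M)$, and you apply the selection theorem to $N_{K_j}\subset X\times\Gamma(X)$ and compose with $f$ afterwards instead of uniformizing the graph in $B\times M$ directly; both variants are sound.
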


\begin{proof}
We first prove the statement under the additional hypotheses that $M$
is compact.  If so, by Lemma \ref{lem:measurability}
\eqref{eq:compactdp}, the set
\[G:=\{(f(x,\gamma),x,\gamma)\in B\times A(X):x\in M,\ (x,\gamma)\in DP(M)\}\]
is the graph of a Borel function restricted to a Borel set, and so is
Borel.  In particular, it's projection onto $M$ is a Suslin set and so
is measurable.  This projection equals $P(M)$.  Further, the
projection of $G$ onto $B\times M$ given by
\[\{(y,x)\in B\times M:\exists\ \gamma\in\Gamma(X),\ (x,\gamma)\in DP(M),\ y=f(x,\gamma)\in B\}\]
is also a Suslin set.  This set is the graph of a function defined on
$M$ and so, by the Jankov-von Neumann Selection Theorem (see
\cite{classicaldescset}, Theorem 18.1), for every $x\in P(M)$ there
exists a $\gamma^x\in \Gamma(X)$ such that $(x,\gamma^x)\in DP(M)$,
$f(x,\gamma^x)\in B$ and such that $x\mapsto f(x,\gamma^x)$ is
measurable.

Suppose that $\mathcal A=(\mathbb P,\{\mu_\gamma\})$ and let us
consider $C=M\setminus P(M)$, a measurable set.  Then for any
$\gamma\in\Gamma(X)$ and $t\in\dom\gamma$ with $\gamma(t)\in C$, since
$(\gamma(t),\gamma)\in A(X)$, we must have either
$f(\gamma(t),\gamma)\not\in B$ or $t$ not a density point of
$\gamma^{-1}(C)$.  Therefore, from our hypotheses on $\mathcal A$,
there exists a $\mathbb P$-null set $N$ such that either $\gamma \in N$
or $\mathcal L^1(\gamma^{-1}(C))=0$.  By Lemma \ref{lem:measurablerep},
\[\mu(C)=\int_{\Gamma(X)\setminus N}\mu_\gamma(C)\text{d}\mathbb P +
\int_{N}\mu_\gamma(C)\text{d}\mathbb P=0\]
so that $P(M)$ is a set of full measure in $M$.

Now suppose that $M\subset X$ is measurable and let $N$ be a
$\mu$-null set such that $M\setminus N$ is a countable union of compact
sets $K_i$.  Then by the above, $\cup_i P(K_i)$ is a set of full
measure in $M$.  Moreover, for every $i\in\mathbb N$, $K_i\subset M$
and so $P(K_i)\subset P(M)$.  In particular $\mu(M\setminus P(M))=0$
so that such a $\gamma^x$ exists for almost every $x\in M$.
\end{proof}

In particular we may use this Proposition to find a \emph{partial
  derivative} of a Lipschitz function at almost every point of a
metric measure space with an Alberti representation.  In fact, we may
find a \emph{gradient} of partial derivatives, whenever a metric
measure space has many \emph{independent} Alberti representations,
each distinguished in the following way.

\begin{definition}\label{defn:phidir}
For $w\in\mathbb S^{n-1}$ and $0<\theta<1$, define the \emph{cone of
  width $\theta$ centred on $w$} to be the set
\[C(w,\theta)=\{v\in \mathbb R^n: v\cdot w\geq (1-\theta)\|v\|\}\]
and the \emph{open cone of width $\theta$ centred on $w$} to be
\[C^\circ(w,\theta)=\{v\in\mathbb R^n: v\cdot w>(1-\theta)\|v\|\}.\]
Also, for a cone $C$ we denote the open cone with the same centre and
width by $C^\circ$.

Now let $(X,d,\mu)$ be a metric measure space and
$\varphi\colon\ X\to\mathbb R^n$ Lipschitz.  For a cone $C\subset
\mathbb R^n$ we say that a Lipschitz curve
$\gamma\in\Gamma(X)$ is in the
\emph{$\varphi$-direction of $C$} if, for almost every
$t\in\dom\gamma$,
\[(\varphi\circ\gamma)'(t)\in C\setminus\{0\}.\]
Further, we say that an Alberti representation $\mathcal A$ is in the
\emph{$\varphi$-direction of $C$} if almost every $\gamma\in\mathcal
A$ is in the $\varphi$-direction of $C$.

Finally, we say that closed cones $C_1,\ldots,C_m\subset\mathbb R^n$
are \emph{independent} if, for any choice of $v_i\in
C_i\setminus\{0\}$, the $v_i$ are linearly independent and that a
collection $\mathcal A_1,\ldots,\mathcal A_m$ of Alberti
representations is \emph{$\varphi$-independent} if there exists
independent cones $C_1,\ldots,C_m$ such that each $\mathcal A_i$ is in
the $\varphi$-direction of $C_i$.
\end{definition}

\begin{definition}\label{def:gradient}
Let $(X,d,\mu)$ be a metric measure space, $x_0\in X$ and $f\colon
X\to\mathbb R$ and $\varphi\colon X\to\mathbb R^n$ Lipschitz.  Suppose
that $\gamma_1,\ldots,\gamma_n\in\Gamma(X)$ such that each
$\gamma_i^{-1}(x_0)=0$ is a density point of $\dom\gamma_i$ and that the
$(\varphi\circ\gamma_i)'(0)$ exist and form a linearly independent
set.  We define the \emph{gradient of $f$ at $x_0$ with respect to $\varphi$
  and $\gamma_1,\ldots,\gamma_n$} to be the unique $\nabla
f(x_0)\in\mathbb R^n$ such that
\[(f\circ\gamma_i)'(0)= \nabla f(x_0)\cdot
(\varphi\circ\gamma_i)'(0).\]

Further, we say that $\nabla f(x_0)\in\mathbb R^n$ is \emph{a gradient
  of $f$ at $x_0$ with respect to $\varphi$} if there exist such
$\gamma_1,\ldots,\gamma_n\in\Gamma(X)$ such that $\nabla f(x_0)$
is the gradient of $f$ at $x_0$ with respect to $\varphi$ and
$\gamma_1,\ldots,\gamma_n$.
\end{definition}

\begin{remark}A very easy construction of a Lipschitz function on the
  plane with both partial derivatives equal to zero and a non-zero
  directional derivative at the origin shows that a gradient with
  respect to a fixed $\varphi$ need not be unique.
\end{remark}

We obtain the following Corollary of Proposition \ref{prop:curvefullmeas}.

\begin{corollary}
Let $(X,d,\mu)$ be a metric measure space and $\varphi\colon
X\to\mathbb R^n$ Lipschitz such that $\mu$ has $n$
$\varphi$-independent Alberti representations.  Then for any Lipschitz
$f\colon X\to\mathbb R$ there exists a gradient $\nabla f$ of $f$
almost everywhere with respect to $\varphi$.  Further, this gradient
may be chosen such that $x_0\mapsto \nabla f(x_0)$ is measurable.
\end{corollary}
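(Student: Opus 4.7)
The plan is to invoke Proposition \ref{prop:curvefullmeas} once for each of the $n$ independent Alberti representations $\mathcal A_1,\ldots,\mathcal A_n$, with cones $C_1,\ldots,C_n$, to produce for almost every $x$ a Lipschitz curve $\gamma_i^x$ through $x$, for each $i$, along which both $f$ and $\varphi$ are differentiable and $(\varphi\circ\gamma_i^x)'(0)\in C_i\setminus\{0\}$. Once these curves are in hand, $\nabla f(x)$ is read off by solving an $n\times n$ linear system.

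Fix $i$. Let $Y_i$ denote the one-point compactification of $\mathbb R\times\mathbb R^n$ with its standard metric, a complete separable metric space, and let $B_i=\mathbb R\times(C_i\setminus\{0\})\subset Y_i$, which is Borel. By applying Lemma \ref{lem:measurability} to $f$ and to each of the $n$ coordinates of $\varphi$, the assignment
\[
F_i(x,\gamma)=\bigl((f\circ\gamma)'(\gamma^{-1}(x)),\,(\varphi\circ\gamma)'(\gamma^{-1}(x))\bigr),
\]
set equal to the point at infinity whenever any of these derivatives fails to exist, is Borel measurable on $A(X)$. Because $\mathcal A_i$ is in the $\varphi$-direction of $C_i$ and $(f\circ\gamma)'$ exists $\mathcal L^1$-a.e.\ for every $\gamma\in\Gamma(X)$, for almost every $\gamma\in\mathcal A_i$ and almost every $t\in\dom\gamma$ we have $F_i(\gamma(t),\gamma)\in B_i$. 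Proposition \ref{prop:curvefullmeas} (applied with $M=X$) thus yields a full-measure set $P_i\subset X$ and, for each $x\in P_i$, a curve $\gamma_i^x\in\Gamma(X)$ with $(x,\gamma_i^x)\in DP(X)$ and $F_i(x,\gamma_i^x)\in B_i$, such that $x\mapsto F_i(x,\gamma_i^x)$ is measurable. After translating the parameter of $\gamma_i^x$ (which preserves measurability in $x$), we may assume $(\gamma_i^x)^{-1}(x)=0$.

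Set $P=\bigcap_{i=1}^n P_i$, again of full measure. For $x\in P$ the vectors $(\varphi\circ\gamma_i^x)'(0)$ lie in $C_i\setminus\{0\}$, so by independence of the cones they form a basis of $\mathbb R^n$. The system
\[
(f\circ\gamma_i^x)'(0)=\nabla f(x)\cdot(\varphi\circ\gamma_i^x)'(0),\qquad i=1,\ldots,n,
\]
therefore has a unique solution $\nabla f(x)\in\mathbb R^n$, which is by Definition \ref{def:gradient} a gradient of $f$ at $x$ with respect to $\varphi$. Measurability of $x\mapsto\nabla f(x)$ follows from measurability of the $F_i(x,\gamma_i^x)$ together with continuity of the map that sends an invertible matrix and a right-hand side to the solution of the corresponding linear system. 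The only step requiring any real care is arranging the hypotheses of Proposition \ref{prop:curvefullmeas} so that the selection of $\gamma_i^x$ is Borel-measurable in $x$; once the target $Y_i$, the Borel set $B_i$ and the Borel function $F_i$ above are in place, the remainder is linear algebra.
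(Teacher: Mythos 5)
Your proposal is correct and is exactly the argument the paper intends: the Corollary is stated as an immediate consequence of Proposition \ref{prop:curvefullmeas}, applied once per representation with the Borel derivative maps from Lemma \ref{lem:measurability} and the Borel target $\mathbb R\times(C_i\setminus\{0\})$, followed by solving the resulting linear system (compare the proof of Corollary \ref{cor:tangentcurves}, which runs the same way). Your handling of the measurable selection and of the reparametrisation so that $(\gamma_i^x)^{-1}(x)=0$ is a density point of $\dom\gamma_i^x$ matches what the paper leaves implicit.
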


%%% Local Variables: 
%%% mode: latex
%%% TeX-master: "structure"
%%% End: 

\section{A first analysis of Lipschitz differentiability spaces}\label{sec:firstanalysis}
We begin by recalling the notion of differentiability in metric spaces
introduced by Cheeger \cite{cheeger-diff} and Keith \cite{keith}.

\begin{definition}\label{def:chart}
Let $(X,d)$ be a metric space and $n\in\mathbb N$.  We say that a
Borel set $U\subset X$ and a Lipschitz function $\varphi\colon
X\to\mathbb R^n$ form a \emph{chart of dimension $n$}, $(U,\varphi)$
and that a function $f\colon X\to\mathbb R$ is \emph{differentiable at
  $x_0\in U$} with respect to $(U,\varphi)$ if there exists a unique
$Df(x_0)\in\mathbb R^n$ (the \emph{derivative of $f$ at $x_0$}) such
that
\[\limsup_{X\ni x\to x_0}\frac{|f(x)-f(x_0)-Df(x_0)\cdot(\varphi(x)-\varphi(x_0))|}{d(x,x_0)}=0.\]

Further, we say that a metric measure space $(X,d,\mu)$ is a
\emph{Lipschitz differentiability space} if there exists a countable
decomposition of $X$ into charts such that any Lipschitz function
$f\colon X\to\mathbb R$ is differentiable at almost every point of
every chart.
\end{definition}

Whenever it will not cause confusion, we will say ``a chart in a
Lipschitz differentiability space'' to mean a chart in a Lipschitz
differentiability space with respect to which every real valued
Lipschitz function is differentiable almost everywhere.

For a survey on the existing theory of Lipschitz differentiability
spaces, the reader is referred to the primer written by Kleiner and
Mackay, \cite{kleinermackay-diffstructures}.  (Note that, in this paper and the papers
of Cheeger and Keith, a Lipschitz differentiability space is referred
to as a \emph{metric measure space that admits a measurable
  differentiable structure}.)

\begin{remark}
One may wonder about the notion of a \emph{zero dimensional chart},
where every Lipschitz function has derivative zero almost everywhere.
In Corollary \ref{cor:zerodim} we see that such a chart must have
measure zero.
\end{remark}

We first give a characterisation of the uniqueness of derivatives in a
chart that will lead us to necessary conditions for a function to be
differentiable at a point.  This characterisation first appeared in
\cite{porousdiff}.

\begin{lemma}[\cite{porousdiff}, Lemma 2.1]\label{lem:discuniq}
Let $(U,\varphi)$ be an $n$-dimensional chart in a metric measure space $(X,d,\mu)$ and $x_0\in U$.  The following are equivalent:
\begin{enumerate}\item \label{item:uniqueness1}There exists a $\lambda>0$ and $X\ni x_m\to x_0$ such that, for any $v\in\mathbb S^{n-1}$,
\begin{equation}\label{eq:uniqueness}\liminf_{m\to\infty}\max_{0\leq
    i<n}\frac{|(\varphi(x_{mn+i})-\varphi(x_0))\cdot
    v|}{d(x_{mn+i},x_0)}\geq \lambda.\end{equation}
\item\label{item:uniqueness2} There exists a $\lambda>0$ such that, for any $v\in\mathbb
  S^{n-1}$,
\[\limsup_{x\to x_0} \frac{|(\varphi(x)-\varphi(x_0))\cdot
  v|}{d(x,x_0)}\geq \lambda.\]
\item\label{item:uniqueness3} For any $f\colon X\to\mathbb R$, if there exists a $Df(x_0)\in\mathbb R^n$ such that
\[\limsup_{x\to x_0}\frac{|f(x)-f(x_0)-Df(x_0)\cdot(\varphi(x)-\varphi(x_0))|}{d(x,x_0)}=0,\]
then it is unique.
\end{enumerate}
In particular, the fact that such a $Df(x_0)$ is unique depends only
upon the chart and is independent of $f$.
\end{lemma}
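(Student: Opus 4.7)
The plan is to establish $(\ref{item:uniqueness1}) \Rightarrow (\ref{item:uniqueness2}) \Rightarrow (\ref{item:uniqueness3})$ by short linear-algebraic arguments, close the loop via $(\ref{item:uniqueness3}) \Rightarrow (\ref{item:uniqueness2})$ in contrapositive form, and then prove the genuinely new content, $(\ref{item:uniqueness2}) \Rightarrow (\ref{item:uniqueness1})$, by an inductive construction. Throughout I will write $L := \operatorname{Lip}(\varphi)$ and
\[
D(x) := \frac{\varphi(x) - \varphi(x_0)}{d(x,x_0)} \in \mathbb{R}^n, \qquad \Lambda(v) := \limsup_{x \to x_0} |D(x) \cdot v|,
\]
so that condition (2) becomes $\Lambda(v) \geq \lambda$ for every $v \in \mathbb{S}^{n-1}$.

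The three routine implications are as follows. For $(\ref{item:uniqueness1}) \Rightarrow (\ref{item:uniqueness2})$: fix $v$, let $i(m) \in \{0, \ldots, n-1\}$ attain the maximum in \eqref{eq:uniqueness}, and observe that $y_m := x_{mn+i(m)}$ tends to $x_0$ with $\liminf_m |D(y_m) \cdot v| \geq \lambda$, so $\Lambda(v) \geq \lambda$. For $(\ref{item:uniqueness2}) \Rightarrow (\ref{item:uniqueness3})$: two candidate derivatives $Df(x_0)$ and $D'f(x_0)$ force $w := Df(x_0) - D'f(x_0)$ to satisfy $\limsup_{x \to x_0} |w \cdot D(x)| = 0$ by the triangle inequality, and plugging $v = w/\|w\|$ into (2) then yields $\|w\|\lambda \leq 0$, so $w = 0$. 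For $(\ref{item:uniqueness3}) \Rightarrow (\ref{item:uniqueness2})$: $\Lambda$ is subadditive, even, $1$-homogeneous, and $L$-Lipschitz in $v$ (all immediate from the definition and $\|D(x)\| \leq L$), hence continuous on the compact sphere; if (2) fails then $\inf_{\mathbb{S}^{n-1}} \Lambda = 0$ is attained at some $v_\infty$, and the Lipschitz function $f := v_\infty \cdot \varphi$ admits both $0$ and $v_\infty$ as derivatives at $x_0$, contradicting (3).

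The main work is $(\ref{item:uniqueness2}) \Rightarrow (\ref{item:uniqueness1})$. I would construct, for each $m \in \mathbb{N}$, an ordered tuple $x_1^m, \ldots, x_n^m \in B(x_0, 1/m) \setminus \{x_0\}$ whose directions $w_i^m := D(x_i^m)$ are quantitatively linearly independent, and then concatenate via $x_{mn+i-1} := x_i^m$. The tuple is produced by a Gram--Schmidt-style induction: fix any unit $v_1$ and, using (2) in the form $\sup_{0 < d(x,x_0) < 1/m} |D(x) \cdot v_1| \geq \lambda$, choose $x_1^m$ with $|w_1^m \cdot v_1| \geq \lambda/2$; given $x_1^m, \ldots, x_k^m$, select a unit vector $v_{k+1}$ orthogonal to $\operatorname{span}(w_1^m, \ldots, w_k^m)$ and apply (2) again to obtain $x_{k+1}^m$ with $|w_{k+1}^m \cdot v_{k+1}| \geq \lambda/2$. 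This forces the component of $w_{k+1}^m$ orthogonal to $\operatorname{span}(w_1^m, \ldots, w_k^m)$ to have norm at least $\lambda/2$, and the product of these Gram--Schmidt heights gives $|\det(w_1^m, \ldots, w_n^m)| \geq (\lambda/2)^n$.

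The hard part will be converting this determinant bound into the uniform-in-$v$ spanning inequality of (1). For any unit $v$, expanding in the dual basis $\{(w_j^m)^*\}$ as $v = \sum_j \alpha_j (w_j^m)^*$ gives $w_j^m \cdot v = \alpha_j$; the ``height equals volume over base area'' identity yields $\|(w_j^m)^*\| \leq L^{n-1}/|\det(w_1^m, \ldots, w_n^m)|$, and combining with $1 = \|v\| \leq n \max_j |\alpha_j| \cdot \max_j \|(w_j^m)^*\|$ produces
\[
\max_{1 \leq j \leq n} |w_j^m \cdot v| \;\geq\; \frac{|\det(w_1^m, \ldots, w_n^m)|}{n L^{n-1}} \;\geq\; \frac{(\lambda/2)^n}{n L^{n-1}} =: \lambda',
\]
uniformly in $v$ and $m$, so the concatenated sequence verifies (1) with constant $\lambda'$. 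The principal obstacle is precisely this passage from the pointwise-in-$v$ information in (2) to a single block of exactly $n$ points whose maxima simultaneously control every direction; the Gram--Schmidt selection and dual-basis bookkeeping depend crucially on the fact that the ambient dimension of $\varphi$ equals the block size $n$ appearing in (1).
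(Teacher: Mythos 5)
Your proposal is correct. The core idea of the hard direction -- inductively choosing each new test direction orthogonal to the span of the directions already produced -- is exactly the paper's, but you route the equivalence cycle differently and extract the constant differently, so a comparison is worthwhile. The paper proves $\eqref{item:uniqueness1}\Rightarrow\eqref{item:uniqueness2}\Rightarrow\eqref{item:uniqueness3}\Rightarrow\eqref{item:uniqueness1}$: starting only from the qualitative information in \eqref{item:uniqueness3} (namely $\Lambda(v)>0$ for each $v$, with no uniform lower bound), it passes to subsequential limits $w_i$ of the difference quotients, shows the $w_i$ form a basis, and then obtains the uniform $\lambda$ by a soft compactness argument on $\mathbb S^{n-1}$; the resulting constant is not explicit. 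You instead insert the extra implication $\eqref{item:uniqueness3}\Rightarrow\eqref{item:uniqueness2}$ (via the continuity of $\Lambda$ on the sphere, which guarantees the infimum is attained -- a step that is genuinely needed, since $\inf\Lambda=0$ without attainment would not produce a non-unique derivative), and then run the Gram--Schmidt selection from the \emph{uniform} hypothesis \eqref{item:uniqueness2}. This lets you work at each scale $1/m$ with actual points rather than limits, and the determinant/dual-basis computation yields the explicit constant $\lambda'=(\lambda/2)^n/(n\Lip(\varphi)^{n-1})$. What your version buys is a quantitative dependence of the $\lambda$ in \eqref{item:uniqueness1} on the $\lambda$ in \eqref{item:uniqueness2}, which the paper's compactness step obscures; what it costs is one extra implication and the (routine but necessary) verification that $\Lambda$ is Lipschitz on $\mathbb S^{n-1}$. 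Both arguments are complete and correct.
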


\begin{proof}
Equation \eqref{item:uniqueness1} implies \eqref{item:uniqueness2}.
Now suppose that \eqref{item:uniqueness2} holds and for a function
$f\colon X\to \mathbb R$ there exist $Df(x_0)$ and $Df'(x_0) \in
\mathbb R^n$ that satisfy the hypotheses of \eqref{item:uniqueness3}.
Then by the triangle inequality
\[\limsup_{x\to x_0}\frac{|(\varphi(x)-\varphi(x_0))\cdot (D-D')|}{d(x,x_0)}=0\]
and so $\|D-D'\|=0$.

Finally suppose that \eqref{item:uniqueness3} holds for some $f\colon
X\to\mathbb R$.  Then for any $v_0\in\mathbb S^{n-1}$,
\[\limsup_{x\to x_0}\frac{|(\varphi(x)-\varphi(x_0))\cdot v_0|}{d(x,x_0)}>0.\]
Therefore, there exists $x_m^0\to x_0$ and $w_0\in \overline
B(0,\Lip\varphi)$ such that $w_0\cdot v_0>0$ and
\[\lim_{m\to\infty}\frac{\varphi(x_m^0)-\varphi(x_0)}{d(x_m^0,x_0)}=w_0.\]
For each $1\leq i <n$ inductively choose $v_i\in\mathbb S^{n-1}$ such that $v_i\cdot
w_j=0$ for each $0\leq j <i$ and let $x_m^i\to x_0$ and $w_i\in \overline
B(0,\Lip\varphi)$ such that $w_i\cdot v_i>0$ and
\[\lim_{m\to\infty}\frac{\varphi(x_m^i)-\varphi(x_0)}{d(x_m^i,x_0)}=w_i.\]
Then for any $\alpha_1,\ldots,\alpha_n\in\mathbb R$, suppose that
$\alpha_0 w_0+\ldots + \alpha_{n-1} w_{n-1}=0$.  By taking the
inner product with $w_{n-1}$ we see that $\alpha_{n-1}=0$.  Repeating
we see that $\alpha_i=0$ for each $0\leq i <n$ and so the $w_i$ form a
basis of $\mathbb R^n$.  In particular, there exists a $\lambda>0$ such
that, for each $v\in\mathbb S^{n-1}$, there exists a $0\leq j<n$ with
$|w_j\cdot v|\geq \lambda$.

Therefore, if we set $x_{mn+i}=x_m^i$ for each $m\in\mathbb N$ and
$0\leq i<n$, for any $v\in\mathbb S^{n-1}$ there exists a $0\leq j<n$
such that
\[\liminf_{m\to\infty}\max_{0\leq
    i<n}\frac{|(\varphi(x_{mn+i})-\varphi(x_0))\cdot
    v|}{d(x_{mn+i},x_0)}\geq
\lim_{m\to\infty}\frac{|(\varphi(x_m^j)-\varphi(x_0))\cdot
  v|}{d(x_m^j,x_0)}\geq \lambda\]
as required.
\end{proof}

Using this we may give a necessary condition for a function to be
differentiable at a point in a chart.

\begin{lemma}\label{lem:nondiffcond}
Let $(U,\varphi)$ be an $n$-dimensional chart in a metric space
$(X,d)$, $x_0\in U$ and $x_m\to x_0$ satisfying \eqref{eq:uniqueness}
for some $\lambda>0$.  Then for any $f\colon
X\to\mathbb R$ that is differentiable at $x_0$ we have
\[\|Df(x_0)\|\leq \Lip(f,x_0)/\lambda\]
and
\[\frac{\lambda}{\Lip\varphi} \Lip(f,x_0) \leq \liminf_{m\to\infty}\max_{0\leq i<n}\frac{|f(x_{mn+i})-f(x_0)|}{d(x_{mn+i},x_0)}.\]
\end{lemma}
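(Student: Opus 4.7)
The proof splits naturally into two inequalities that share a common mechanism: feed a specific direction $v\in\mathbb S^{n-1}$ into the uniqueness condition \eqref{eq:uniqueness}, and then convert the guaranteed lower bound on $\varphi$-displacement into a lower bound on $f$-displacement using the differentiability formula.

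First I would dispense with the trivial case $Df(x_0)=0$. Differentiability then forces $|f(x)-f(x_0)|=o(d(x,x_0))$, whence $\Lip(f,x_0)=0$ and both inequalities hold automatically. For the non-trivial case, I would begin by proving the upper bound on $\Lip(f,x_0)$ coming from differentiability: since
\[f(x)-f(x_0)=Df(x_0)\cdot(\varphi(x)-\varphi(x_0))+o(d(x,x_0)),\]
the Lipschitz property of $\varphi$ yields $\Lip(f,x_0)\le \|Df(x_0)\|\,\Lip\varphi$, equivalently
\[\|Df(x_0)\|\ge \Lip(f,x_0)/\Lip\varphi.\]
This is the one ingredient I will need beyond the uniqueness condition.

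Next I would apply \eqref{eq:uniqueness} with the unit vector $v=Df(x_0)/\|Df(x_0)\|$. This produces, for every sufficiently large $m$, some index $0\le i_m<n$ with
\[\bigl|(\varphi(x_{mn+i_m})-\varphi(x_0))\cdot v\bigr|\ge (\lambda-o(1))\,d(x_{mn+i_m},x_0).\]
Plugging $x=x_{mn+i_m}$ into the differentiability formula and using that $v\cdot Df(x_0)=\|Df(x_0)\|$ gives
\[|f(x_{mn+i_m})-f(x_0)|\ge \|Df(x_0)\|\,\lambda\,d(x_{mn+i_m},x_0)-o(d(x_{mn+i_m},x_0)).\]
Dividing through and taking $\liminf_m\max_i$ yields
\[\liminf_{m\to\infty}\max_{0\le i<n}\frac{|f(x_{mn+i})-f(x_0)|}{d(x_{mn+i},x_0)}\ge \lambda\,\|Df(x_0)\|.\]

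From here both claims drop out. Combining the displayed $\liminf$ bound with $\|Df(x_0)\|\ge \Lip(f,x_0)/\Lip\varphi$ from the first paragraph gives the second stated inequality. For the first, observe that the left-hand side of that displayed inequality is bounded above by $\Lip(f,x_0)$ (since it is a $\liminf$ of difference quotients of $f$ at $x_0$), so $\lambda\,\|Df(x_0)\|\le \Lip(f,x_0)$, which is exactly $\|Df(x_0)\|\le \Lip(f,x_0)/\lambda$. There is no genuine obstacle here; the only mild subtlety is choosing the right test direction $v$ in \eqref{eq:uniqueness} so that $v\cdot Df(x_0)$ recovers the full norm $\|Df(x_0)\|$ rather than merely a fraction of it.
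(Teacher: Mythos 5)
Your proof is correct and follows essentially the same route as the paper: test \eqref{eq:uniqueness} with $v=Df(x_0)/\|Df(x_0)\|$, use the differentiability formula and the triangle inequality to transfer the $\varphi$-displacement lower bound to $f$, bound the resulting $\liminf$ above by $\Lip(f,x_0)$ to get the first inequality, and combine with $\Lip(f,x_0)\le\|Df(x_0)\|\Lip\varphi$ for the second. The only cosmetic differences are the order of the two steps and your explicit treatment of the case $Df(x_0)=0$, which the paper leaves implicit.
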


\begin{proof}
If $f\colon X\to\mathbb R$ is differentiable at $x_0\in X$ then by the
triangle inequality
\begin{align*}\liminf_{m\to\infty}\max_{0\leq i<n}\frac{|f(x_{mn+i})-f(x_0)|}{d(x_{mn+i},x_0)}&\geq \liminf_{m\to\infty}\max_{0\leq i<n} \frac{|(\varphi(x_{mn+i})-\varphi(x_0))\cdot D|}{d(x_{mn+i},x_0)}\\
&\geq \lambda \|D\|.\end{align*}
This proves the first inequality.  Secondly, by another application of the triangle inequality,
\begin{align*}\Lip(f,x_0)&=\limsup_{x\to x_0}\frac{|f(x)-f(x_0)|}{d(x,x_0)}\\
&\leq \|Df(x_0)\|\limsup_{x\to x_0}\frac{\|\varphi(x)-\varphi(x_0)\|}{d(x,x_0)}\\
&\leq \|Df(x_0)\|\Lip\varphi.\end{align*}
Combining this with the first inequality gives the required result.
\end{proof}

A very easy application of this Lemma gives the following Corollary on
singletons in a Lipschitz differentiability space.  This will be used
without any specific reference.

\begin{corollary}\label{cor:singletons}
Any singleton in a Lipschitz differentiability space has measure zero.
\end{corollary}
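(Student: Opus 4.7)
The plan is to argue by contradiction. Suppose $\mu(\{x_0\})>0$ for some $x_0\in X$. Since $X$ is decomposed into countably many charts, $x_0$ lies in some chart $(U,\varphi)$ of dimension $n$; since isolated points are assumed to have $\mu$-measure zero, $x_0$ is not isolated; and since no set of positive $\mu$-measure lies in the $\mu$-null non-differentiability set of a single Lipschitz function, every Lipschitz $f\colon X\to\mathbb{R}$ must be differentiable at $x_0$ with a unique derivative $Df(x_0)\in\mathbb{R}^n$. I will construct a Lipschitz function failing to be differentiable at $x_0$ and so reach a contradiction.

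Pick $y_k\in X\setminus\{x_0\}$ with $y_k\to x_0$; after passing to a subsequence I may assume $d(y_{k+1},x_0)\leq d(y_k,x_0)/3$ and that $v_k := (\varphi(y_k)-\varphi(x_0))/d(y_k,x_0)$ converges to some $v^{\ast}\in\overline{B}(0,\Lip\varphi)$. Define $g$ on $A:=\{x_0\}\cup\{y_k:k\in\mathbb{N}\}$ by $g(x_0)=0$ and $g(y_k)=(-1)^k d(y_k,x_0)$. For $j<k$, the spacing gives $d(y_j,y_k)\geq d(y_j,x_0)-d(y_k,x_0)\geq (2/3)d(y_j,x_0)$ while $|g(y_j)-g(y_k)|\leq d(y_j,x_0)+d(y_k,x_0)\leq (4/3)d(y_j,x_0)$, and comparison with $x_0$ is immediate, so $g$ is Lipschitz on $A$. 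Extend $g$ by McShane's theorem to a Lipschitz function $\bar{g}\colon X\to\mathbb{R}$.

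By the above, $\bar{g}$ is differentiable at $x_0$ with some derivative $D\in\mathbb{R}^n$; evaluating the differentiability condition at $y_k$ and dividing by $d(y_k,x_0)$ yields
\[
(-1)^k = D\cdot v_k + o(1).
\]
The right-hand side converges to $D\cdot v^{\ast}$, a single real number, whereas $(-1)^k$ has no limit. This contradiction forces $\mu(\{x_0\})=0$. The only step requiring care is the Lipschitz verification for $g$ on $A$, which follows directly from the geometric spacing of the sequence $\{y_k\}$; once that is in place the oscillating McShane extension produces a concrete obstruction to differentiability at $x_0$.
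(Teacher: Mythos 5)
Your proof is correct, but it follows a genuinely different route from the paper's. The paper proves the stronger-looking fact that at almost every point of a chart there is a Lipschitz function failing to be differentiable there: it invokes Lemma \ref{lem:discuniq} to extract a sequence $x_m\to x_0$ along which $\varphi(x_m)-\varphi(x_0)$ ``spans'' all directions with a quantitative constant $\lambda$, builds the bad function as a sum of truncated distance bumps whose difference quotients oscillate between $1/4$ and $0$, and then uses Lemma \ref{lem:nondiffcond} to convert that oscillation into non-differentiability. You instead work directly at the putative atom $x_0$ and sidestep the spanning condition entirely: by compactness of $\overline B(0,\Lip\varphi)$ you pass to a subsequence along which the normalised chart increments $v_k$ converge, so that \emph{any} candidate derivative $D$ yields a convergent sequence $D\cdot v_k$, which the sign-alternating values $(-1)^k d(y_k,x_0)$ of your McShane extension immediately contradict. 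This is why you need neither Lemma \ref{lem:discuniq} nor Lemma \ref{lem:nondiffcond}, nor even uniqueness of the derivative --- a simplification that is available precisely because you are arguing at a single fixed point of positive measure rather than almost everywhere. The trade-off is that the paper's argument, while heavier, establishes the machinery (the spanning sequences and the quantitative non-differentiability criterion) that is reused throughout Section \ref{sec:construction}, whereas your argument is shorter and essentially self-contained. Your auxiliary steps (the $2$-Lipschitz bound on $A$ from the geometric spacing $d(y_{k+1},x_0)\le d(y_k,x_0)/3$, the use of the standing assumption that isolated points form a null set, and the observation that a positive-measure singleton cannot lie in the null non-differentiability set) are all sound.
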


\begin{proof}
Let $(U,\varphi)$ be any $n$-dimensional chart in a Lipschitz
differentiability space $(X,d,\mu)$.  For almost any $x_0\in U$ let
$x_m\to x_0$ and $\lambda>0$ be obtained from Lemma
\ref{lem:discuniq} and by taking a suitable subsequence if necessary, we
may suppose that
\[\max_{0\leq i < n}d(x_{mn+i},x_0)\leq \min_{0\leq i <n}d(x_{(m+1)n+i},x_0)/2\]
for each $m\in\mathbb N$.  Then the 1-Lipschitz function
\[f(x)=\sum_{m\in 2\mathbb N} \max_{0\leq i <n}(d(x_{mn+i},x_0)/4 - d(x_{mn+i},x))^+\]
satisfies
\[\frac{f(x_{mn+i})-f(x_0)}{d(x_{mn+i},x_0)}=\begin{cases}1/4 & \text{if } m \text{ even}\\
0 & \text{if } m \text{ odd}\end{cases}\]
for each $0\leq i <n$.  In particular, the condition given in Lemma
\ref{lem:nondiffcond} does not hold at $x_0$ and so $f$ is not
differentiable at $x_0$, as required.
\end{proof}

When constructing a non-differentiable Lipschitz function, we will ensure that
\[\liminf_{m\to\infty}\max_{0\leq i<n}\frac{|f(x_{mn+i})-f(x_0)|}{d(x_{mn+i},x_0)}\]
is sufficiently small by first bounding $d(x_{mn+i},x_0)$ from below,
for a fixed $m$, all $0\leq i<n$ and all $x_0$ in a certain set of
large measure.  We now do this uniformly across the chart to simplify
the construction.

\begin{definition}\label{defn:structuredchart}
Let $(X,d)$ be a metric space and $\lambda>0$.  We say that $U\subset
X$ and a Lipschitz function $\varphi\colon X\to\mathbb R^n$ form a
\emph{$\lambda$-structured chart} of dimension $n$ if $U$ is compact
and there exists a $U'\subset U$ of full measure such that, for every
$R>0$ there exists an $r>0$ and for every $x_0\in U'$ points
$x_1,\ldots,,x_n\in U$ with each $r<d(x_i,x_0)<R$ and
\[\max_{1\leq i\leq n}\frac{|(\varphi(x_i)-\varphi(x_0))\cdot v|}{d(x_i,x_0)}\geq \lambda\]
for every $v\in\mathbb S^{n-1}$.

We say that $(U,\varphi)$ is a structured chart if it is a
$\lambda$-structured chart for some $\lambda>0$.
\end{definition}

Note that a structured chart is also a chart and so we may consider
the derivative of a real valued function with respect to a structured
chart.  The following Lemma shows that we may just consider structured
charts when working with a Lipschitz differentiability space.

\begin{lemma}\label{lem:decompstruct}
Let $(U,\varphi)$ be an $n$-dimensional chart in a metric measure space
$(X,d,\mu)$ such that
\[\Lip(v\cdot\varphi,x_0)>0\]
for every $v\in\mathbb S^{n-1}$ and almost every $x_0\in U$.  Then for
any $\epsilon>0$ there exists a $U'\subset U$ with
$\mu(U\setminus U')<\epsilon$ and a countable set $N\subset X$ such
that $(U'\cup N,\varphi)$ is a structured chart.

In particular, we may decompose any Lipschitz differentiability space
into a $\mu$-null set and a countable collection of structured charts
with respect to which any real valued Lipschitz function is
differentiable almost everywhere.
\end{lemma}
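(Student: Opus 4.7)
The plan is to reduce the pointwise positivity hypothesis to a uniform lower bound using compactness of $\mathbb S^{n-1}$, produce the $n$ witnessing points with a uniform geometric constant via a Gram--Schmidt construction, enforce uniformity in the scale $r=r(R)$ by an Egorov argument, and finally replace the $X$-valued witnesses by points from a countable set via separability.

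First, by inner regularity of $\mu$, choose a compact $K\subset U$ with $\mu(U\setminus K)<\epsilon/4$. The map $v\mapsto\Lip(v\cdot\varphi,x_0)$ is $(\Lip\varphi)$-Lipschitz in $v$, hence continuous on $\mathbb S^{n-1}$; since the hypothesis states this function is positive at every $v$ for almost every $x_0\in K$, compactness of $\mathbb S^{n-1}$ supplies a uniform lower bound $\lambda(x_0)>0$ on the sphere for such $x_0$. The Borel sets $A_\lambda:=\{x_0\in K:\Lip(v\cdot\varphi,x_0)\geq\lambda\ \forall v\in\mathbb S^{n-1}\}$ therefore exhaust $K$ up to a null set as $\lambda\downarrow 0$, so I fix $\lambda_0>0$ with $\mu(K\setminus A_{\lambda_0})<\epsilon/4$.

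The central geometric claim is: there is a constant $\lambda_1=\lambda_1(n,\lambda_0,\Lip\varphi)>0$ such that for every $x_0\in A_{\lambda_0}$ and every $R>0$ one can find $x_1,\ldots,x_n\in X$ with $0<d(x_i,x_0)<R$ and
\[
\max_{1\leq i\leq n}\frac{|(\varphi(x_i)-\varphi(x_0))\cdot v|}{d(x_i,x_0)}\geq\lambda_1\quad\text{for every }v\in\mathbb S^{n-1}.
\]
Iterating the construction in the proof of Lemma~\ref{lem:discuniq}(2)$\Rightarrow$(1): choose $v_1\in\mathbb S^{n-1}$ arbitrarily and pick $x_1$ with $d(x_1,x_0)<R$ witnessing $|(\varphi(x_1)-\varphi(x_0))\cdot v_1|/d(x_1,x_0)\geq\lambda_0/2$, set $w_i:=(\varphi(x_i)-\varphi(x_0))/d(x_i,x_0)$, and inductively take $v_i\in\mathbb S^{n-1}$ orthogonal to $w_1,\ldots,w_{i-1}$ with $x_i$ chosen analogously. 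Each $w_i$ then has component orthogonal to $\operatorname{span}(w_1,\ldots,w_{i-1})$ of norm at least $\lambda_0/2$, so the matrix $W$ with rows $w_i$ satisfies $|\det W|\geq(\lambda_0/2)^n$ (Gram--Schmidt) while $\|W\|_{\mathrm{op}}\leq\Lip\varphi\cdot\sqrt n$; hence the least singular value obeys $\sigma_n(W)\geq(\lambda_0/2)^n/(\Lip\varphi\cdot\sqrt n)^{n-1}$, and $\max_i|w_i\cdot v|\geq\sigma_n(W)/\sqrt n=:\lambda_1$ follows.

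The main technical obstacle is uniformity in the lower scale. For each $k\in\mathbb N$ and $r>0$, the Borel set $E_{k,r}$ consisting of those $x_0\in A_{\lambda_0}$ admitting $x_1,\ldots,x_n\in X$ with $r<d(x_i,x_0)<1/k$ and the max condition with constant $\lambda_1/2$ is increasing in $1/r$ and, by the geometric claim, exhausts $A_{\lambda_0}$ as $r\downarrow 0$. I pick $r_k>0$ with $\mu(A_{\lambda_0}\setminus E_{k,r_k})<\epsilon\cdot 2^{-k-2}$ and take $U'$ to be a compact subset of $A_{\lambda_0}\cap\bigcap_k E_{k,r_k}$ with $\mu(U\setminus U')<\epsilon$. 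For $N$ I choose a countable dense subset of a compact set containing $U'$ of sufficiently large $\mu$-mass (available since $\mu$ is Radon); approximating each $x_i$ above by $\tilde x_i\in N$ with $d(\tilde x_i,x_i)\ll r_k\lambda_1/\Lip\varphi$ preserves the max condition with constant $\lambda_1/4$ at scales essentially $(r_k/2,2/k)$. Then $U'\cup N$ is compact and $(U'\cup N,\varphi)$ is a $(\lambda_1/4)$-structured chart (for $R\geq 2$ one reuses the witnesses from $k=1$).

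For the second statement, the uniqueness of derivatives built into Definition~\ref{def:chart} is item~(3) of Lemma~\ref{lem:discuniq}, hence equivalent to (2); in particular $\Lip(v\cdot\varphi_i,x_0)>0$ for every $v\in\mathbb S^{n-1}$ at almost every $x_0$ of each chart $(U_i,\varphi_i)$ of a Lipschitz differentiability space, so the first part applies in every chart. Applying it with $\epsilon=2^{-k}$ in each $(U_i,\varphi_i)$ and taking the union of the resulting structured charts over $i$ and $k$ yields the required countable decomposition up to a $\mu$-null set.
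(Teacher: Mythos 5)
Most of your argument is sound and, in places, takes a cleaner quantitative route than the paper: the uniform lower bound $\lambda(x_0)$ via compactness of $\mathbb S^{n-1}$ and Lipschitz dependence on $v$, the explicit Gram--Schmidt constant $\lambda_1(n,\lambda_0,\Lip\varphi)$ (the paper's proof of Lemma~\ref{lem:discuniq} produces the same basis $w_1,\dots,w_n$ but keeps $\lambda$ non-quantitative, which is why it must then run a separate Borel exhaustion over $\lambda$), and the Egorov-type choice of $r_k$ in place of the paper's lower-semicontinuity argument on the compact set $U'$. All of that is correct.

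The genuine gap is the construction of the countable set $N$. Your witnesses $x_1,\dots,x_n$ are arbitrary points of $X$ with $r_k<d(x_i,x_0)<1/k$; they come from the $\limsup$ defining $\Lip(v\cdot\varphi,x_0)$ and need not lie in, or anywhere near, a compact set of large $\mu$-mass (they may sit entirely outside $\operatorname{supp}\mu$). So a countable dense subset $N$ of such a compact set need not contain any point within $r_k\lambda_1/\Lip\varphi$ of a witness, and the approximation step fails. Worse, any $N$ dense enough to approximate all possible witnesses (e.g.\ dense in a neighbourhood of $U'$) destroys the compactness of $U'\cup N$ that Definition~\ref{defn:structuredchart} requires: if $N$ is a countable dense subset of a compact $L$ properly containing $U'\cup N$, then $\overline{U'\cup N}=L\neq U'\cup N$, so $U'\cup N$ is not closed, hence not compact. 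The paper resolves exactly this tension by taking, at each scale $1/i$, only a \emph{finite} $\lambda r/2\Lip\varphi$-net $y^1,\dots,y^M$ of $U'$ and the finitely many witnesses $x^j_i$ attached to the net points, proving that the witnesses for $y^j$ still serve (with a degraded constant $\lambda'$) for every $x_0\in U'$ within $\lambda r/2\Lip\varphi$ of $y^j$; since each $N_{1/i}$ is finite and contained in a shrinking neighbourhood of $U'$, every sequence in $U'\cup N$ has a constant subsequence or converges to $U'$, giving compactness. Your proof needs this net-and-transfer step (or an equivalent device) in place of the dense-$N$ approximation; with it, the rest of your argument goes through. A last minor omission: in the second part you should note that the countable set $N$ is $\mu$-null in a Lipschitz differentiability space (Corollary~\ref{cor:singletons}), so that adjoining it does not affect almost-everywhere differentiability on the new chart.
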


\begin{proof}
For $x_0,x_1,\ldots,x_n\in X$ let us denote by $P(x_0;x_1,\ldots,x_n)$
the property
\[\max_{1\leq i\leq n}\frac{|(\varphi(x_i)-\varphi(x_0))\cdot
  v|}{d(x_i,x_0)}\geq \lambda \text{ for every } v\in\mathbb S^{n-1}.
\]
Then for any $R,\lambda,r>0$, the set of $x_0\in U$ for which
there exist $x_1,\ldots,x_n\in X$ with $r< d(x_i,x_0)< R$ for
each $1\leq i \leq n$ and such that $P(x_0;x_1,\ldots,x_n)$ holds is an open
set.  Therefore, the set of those $x_0\in U$ that satisfy
\eqref{eq:uniqueness} for a given $\lambda>0$ is a Borel set.  In
particular, for any $\epsilon>0$ there exists a compact $U'\subset U$
with $\mu(U\setminus U')<\epsilon$, so that $(U',\varphi)$ is still a
chart, and a $\lambda>0$ such that every point of $U'$ satisfies
\eqref{eq:uniqueness} for $\lambda$.

Now let $R>0$ and $\lambda$ be as found above.  The function
\[x_0\mapsto \sup\left\{\min_{1\leq i\leq n} d(x_i,x_0):d(x_i,x_0)\leq
R \text{ and } P(x_0;x_1,\ldots,x_n)\right\}
\]
is positive and lower semicontinuous on $U'$.  Therefore, there exists
an $r>0$ such that this function is bounded below by $r$ on $U'$.  Let
$y^1,\ldots, y^M$ be a finite $\lambda r/2\Lip\varphi$-net of $U'$
and for each $1\leq j\leq M$ let $x_1^j,\ldots,x_n^j\in X$ with
$r\leq d(x_i^j,y^j)\leq R$ for each $1\leq i \leq n$ and such that
$P(y^j;x_1^j,\ldots,x_n^j)$ for each $1\leq j\leq M$.  We set
\[N_R=\{x_i^j: 1\leq i\leq n,\ 1\leq j\leq M\}.\]
Then for any $x_0\in U'$ and $v\in\mathbb S^{n-1}$, there exists a
$y^j$ with $d(x_0,y^j)<\lambda r/2\Lip\varphi$ and a $1\leq i
\leq n$ such that
\begin{align*}\max_{1\leq i\leq n}|(\varphi(x_i^j)-\varphi(x_0))\cdot v|
&\geq  |(\varphi(x_i^j)-\varphi(y^j))\cdot v|-\|\varphi(y^j)-\varphi(x_0)\|\\
&\geq \lambda d(x_i^j,y^j)-\lambda d(x_i^j,y^j)/2\\
&\geq \lambda d(x_i^j,x_0)/(2+\lambda/\Lip\varphi)\\
&:= \lambda' d(x_i^j,x_0).
\end{align*}
Therefore, if we define $N=\cup_i N_{1/i}$
and $V=U'\cup N$, for any $x_0\in U'$ there exists a sequence
$V\ni x_m\to x_0$ that satisfies \eqref{eq:uniqueness} for $\lambda'$.
Moreover, since each $N_{1/k}$ is a finite subset of
$B(U',\lambda\Lip\varphi/2k)$, for any sequence $x_m\in V$ either
there exists a constant subsequence or $d(x_m,U')\to 0$.  In
particular, $V$ is compact and so $(V,\varphi)$ is a structured chart.

Finally, in a Lipschitz differentiability space $N$ is a $\mu$-null
set and so $(V,\varphi)$ is a chart with respect to which any
Lipschitz function is differentiable almost everywhere in $V$.
\end{proof}

To conclude this section we highlight a key fact that will be used
when investigating Alberti representations in Lipschitz
differentiability spaces.  This result should be compared to the
concept of a gradient (see Definition \ref{def:gradient}).  We will
use this without any specific reference.

\begin{lemma}
Let $(U,\varphi)$ be a chart in a metric measure space $(X,d,\mu)$ and
$\gamma\in\Gamma(X)$.  Suppose that for some non-isolated
$t_0\in\dom\gamma$ and $f\colon X\to\mathbb R$, $f$ is differentiable
with respect to $(U,\varphi)$ at $\gamma(t_0)$ and that
$(\varphi\circ\gamma)'(t_0)$ exists.  Then
\[(f\circ\gamma)'(t_0)=Df(x_0)\cdot (\varphi\circ\gamma)'(t_0).\]
\end{lemma}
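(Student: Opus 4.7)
The plan is to treat this as essentially a chain rule computation, exploiting the bi-Lipschitz nature of $\gamma$ to pull the differentiability estimate of $f$ back to the real line.

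Write $x_0 = \gamma(t_0)$ and let $L = \operatorname{biLip}\gamma$. First, I would fix $\epsilon > 0$ and use the differentiability of $f$ at $x_0$ to choose $\delta > 0$ such that
\[|f(x) - f(x_0) - Df(x_0)\cdot(\varphi(x) - \varphi(x_0))| \leq \epsilon\, d(x, x_0)\]
whenever $d(x, x_0) < \delta$. Since $t_0$ is non-isolated in $\dom\gamma$ and $\gamma$ is bi-Lipschitz, there is a one-sided or two-sided sequence $t \in \dom\gamma$ with $t \to t_0$ and $t \neq t_0$, and by the Lipschitz upper bound $d(\gamma(t), x_0) \leq L|t - t_0|$, so for $|t - t_0|$ small enough we may substitute $x = \gamma(t)$ into the inequality above.

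Next I would divide through by $|t - t_0|$. The error term is bounded by
\[\epsilon\, \frac{d(\gamma(t), x_0)}{|t - t_0|} \leq \epsilon L,\]
so the estimate reads
\[\left|\frac{f(\gamma(t)) - f(\gamma(t_0))}{t - t_0} - Df(x_0)\cdot \frac{\varphi(\gamma(t)) - \varphi(\gamma(t_0))}{t - t_0}\right| \leq \epsilon L.\]
Taking $t \to t_0$ along $\dom\gamma$ and using the assumed existence of $(\varphi\circ\gamma)'(t_0)$, the right-hand quotient converges to $Df(x_0)\cdot (\varphi\circ\gamma)'(t_0)$. Hence
\[\limsup_{t\to t_0}\left|\frac{f(\gamma(t)) - f(\gamma(t_0))}{t - t_0} - Df(x_0)\cdot (\varphi\circ\gamma)'(t_0)\right| \leq \epsilon L.\]
Since $\epsilon$ was arbitrary, the left-hand side is zero, which both establishes the existence of $(f\circ\gamma)'(t_0)$ and identifies its value as $Df(x_0)\cdot (\varphi\circ\gamma)'(t_0)$.

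There is no real obstacle here; the only point of care is ensuring that the limit is taken along $\dom\gamma$ (so that $f\circ\gamma$ is actually defined) and that $\gamma(t) \neq x_0$ for $t$ near but not equal to $t_0$, which is automatic from $\gamma$ being bi-Lipschitz. The hypothesis that $t_0$ be non-isolated is exactly what is needed to make the derivative $(f\circ\gamma)'(t_0)$ a meaningful limit, as already noted in the paper's discussion preceding Definition \ref{defn:albrep}.
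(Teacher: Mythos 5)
Your proof is correct and is essentially the paper's own argument: the paper likewise applies the differentiability estimate to $x=\gamma(t)$, uses the Lipschitz bound $d(\gamma(t),\gamma(t_0))\le L|t-t_0|$ to convert the $o(d(x,x_0))$ error into an $o(|t-t_0|)$ error, and concludes by the triangle inequality together with the assumed existence of $(\varphi\circ\gamma)'(t_0)$. No gaps.
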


\begin{proof}
If $\gamma$ is $L$ Lipschitz and $\gamma(t_0)=x_0$, use the triangle
inequality and the fact that
\[\limsup_{\dom\gamma\ni t\to t_0}\frac{|P(\gamma(t))-P(\gamma(t_0))|}{|t-t_0|}\leq L\limsup_{\im\gamma\ni x\to x_0}\frac{|P(x)-P(x_0)|}{d(x,x_0)}\]
for any $P\colon X\to\mathbb R$.
\end{proof}

\section{Construction of a non-differentiable Lipschitz function}\label{sec:construction}
In the following section we will see that a metric measure space
possesses many Alberti representations whenever a certain class of
subsets have measure zero.  Following this we will show that such
subsets do have measure zero in a Lipschitz differentiability space by
constructing a Lipschitz function that is differentiable almost
nowhere on such a set.

In this section we present a general method for constructing a
Lipschitz function that is not differentiable on a given subset of a
structured chart with certain properties.  Note that the construction
does not use the fact that we work inside a Lipschitz
differentiability space, just that we have a chart structure.

We will construct such a Lipschitz function from a given sequence of
Lipschitz functions.  The first step will be to modify each function
in such a sequence in the following way.

\begin{lemma}\label{lem:modifications}
Let $(X,d,\mu)$ be a metric measure space, $h>4\epsilon>0$ and $L>0$.
Then for any $L$-Lipschitz $f\colon X\to\mathbb R$ and Borel $S\subset
X$ there exists an $L$-Lipschitz $\tilde f\colon X\to\mathbb R$ and
Borel $\tilde S\subset S$ with $\mu(\tilde S)\geq
(1-4\epsilon/h)\mu(S)$ such that:
\begin{enumerate}\item\label{item:mod1}The support of $\tilde f$ is contained within $B(S,2h/L)$.
\item\label{item:mod2}$0\leq \tilde f \leq h$.
\item\label{item:mod3}For every $x, y\in B(S,h/L)$ with $x\neq y$,
\[\frac{|\tilde f(x)-\tilde f(y)|}{d(x,y)}\leq\frac{|f(x)-f(y)|}{d(x,y)}.\]
\item\label{item:mod4}For every $x_0\in \tilde S$ and $x\in X$ with $0<d(x,x_0)\leq\epsilon/L$,
\[\frac{|\tilde f(x)-\tilde f(x_0)|}{d(x,x_0)}=\frac{|f(x)-f(x_0)|}{d(x,x_0)}.\]
\end{enumerate}
\end{lemma}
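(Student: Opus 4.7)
The plan is to construct $\tilde f$ as $\min(\phi\circ (f-c),\,g)$, where $\phi:\mathbb R\to [0,h]$ is a sawtooth cutoff applied to the values of $f$, $c$ is a translation constant chosen by averaging, and $g$ is a distance-based cutoff that forces the support to lie in $B(S,2h/L)$.

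First I would define $\phi$ to be the $1$-Lipschitz sawtooth of period $2h$ that equals the identity on $[0,h]$ and equals $t\mapsto 2h-t$ on $[h,2h]$; the crucial feature is that $\phi$ has constant slope $\pm 1$ on every interval of $\mathbb R$ disjoint from $h\mathbb Z$, so that $|\phi(a)-\phi(b)|=|a-b|$ whenever the closed interval with endpoints $a,b$ meets $h\mathbb Z$ only possibly at its endpoints. Next, for $c\in[0,2h]$, set
\[
N_c:=\{x_0\in S:\operatorname{dist}(f(x_0)-c,\,h\mathbb Z)<\epsilon\},
\]
which is Borel because $f$ is continuous. Fubini gives
\[
\frac{1}{2h}\int_0^{2h}\mu(N_c)\,dc = \frac{2\epsilon}{2h}\,\mu(S)=\frac{\epsilon}{h}\mu(S),
\]
so some $c\in[0,2h]$ satisfies $\mu(N_c)\leq (\epsilon/h)\mu(S)\leq (4\epsilon/h)\mu(S)$, and I set $\tilde S:=S\setminus N_c$.

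Next I would define the cutoff $g(x):=(2h-L\,d(x,S))^+$, which is $L$-Lipschitz, equals $2h$ on $S$, is at least $h$ throughout $B(S,h/L)$, and vanishes outside $B(S,2h/L)$. I then set
\[
\tilde f(x):=\min\bigl(\phi(f(x)-c),\,g(x)\bigr).
\]
Since $\min$ of two $L$-Lipschitz functions is $L$-Lipschitz, $\tilde f$ is $L$-Lipschitz; it takes values in $[0,h]$ because $\phi\in[0,h]$ and $g\geq 0$; and its support is in $B(S,2h/L)$ because $g$ vanishes outside, establishing \eqref{item:mod1} and \eqref{item:mod2}. On $B(S,h/L)$, where $g\geq h\geq \phi\circ(f-c)$, we have $\tilde f=\phi\circ(f-c)$, so the $1$-Lipschitz property of $\phi$ immediately yields \eqref{item:mod3}.

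For \eqref{item:mod4}, take $x_0\in\tilde S$ and any $x\in X$ with $0<d(x,x_0)\leq \epsilon/L$; then both $x_0,x\in B(S,h/L)$ (using $\epsilon<h$), so $\tilde f$ equals $\phi\circ(f-c)$ at both points. Since $|f(x)-f(x_0)|\leq\epsilon$ and $f(x_0)-c$ lies at distance strictly greater than $\epsilon$ from $h\mathbb Z$ (by the definition of $\tilde S$), the segment joining $f(x_0)-c$ and $f(x)-c$ is disjoint from $h\mathbb Z$; hence $\phi$ acts there as a translation with slope $\pm 1$ and $|\tilde f(x)-\tilde f(x_0)|=|f(x)-f(x_0)|$. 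The only delicate point is making the Lipschitz constant stay exactly $L$ (rather than $2L$) when combining the sawtooth with the support cutoff, which is precisely why I use $\min$ instead of multiplying by a bump or subtracting a cutoff; after that, every claim is routine.
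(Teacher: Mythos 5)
Your construction is correct and follows the same basic strategy as the paper: compose $f$ with a $2h$-periodic, $1$-Lipschitz sawtooth, discard the small subset of $S$ whose $f$-values land within $\epsilon$ of the sawtooth's breakpoints, and cut the result off near $S$. Three points of comparison. First, your averaging is off by a factor of two: for fixed $x_0$ the set $\{c\in[0,2h]:d(f(x_0)-c,h\mathbb Z)<\epsilon\}$ is two periods of an $h$-periodic set of density $2\epsilon/h$, hence has measure $4\epsilon$, so the average of $\mu(N_c)$ is $(2\epsilon/h)\mu(S)$ rather than $(\epsilon/h)\mu(S)$; this is still at most $(4\epsilon/h)\mu(S)$, so nothing breaks. (The paper instead pigeonholes over finitely many discrete translates of the sawtooth; the two devices are interchangeable.) Second, and in your favour: you exclude $\epsilon$-neighbourhoods of all of $h\mathbb Z$, i.e.\ of both the zeros and the peaks of $\phi$, which is exactly what item \eqref{item:mod4} requires, since $\phi$ fails to be a local isometry across a peak just as across a zero; the paper's bad set $F_m$ only guards the zeros $2\epsilon m+2h\mathbb Z$, so your version is the more careful one. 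Third, taking $\tilde f=\min(\phi\circ(f-c),g)$ with the distance cutoff $g=(2h-Ld(\cdot,S))^+$ is a clean substitute for the paper's piecewise definition followed by a Lipschitz extension: the $\min$ of two $L$-Lipschitz functions keeps the constant exactly $L$, $g>h\geq\phi$ on $B(S,h/L)$ so $\tilde f$ agrees with $\phi\circ(f-c)$ there, and $g$ vanishes off $B(S,2h/L)$. All four listed properties then follow as you say.
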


\begin{proof}
Let $M$ be the greatest integer less than $h/4\epsilon$ and for each
$0\leq k < M$ define $Z_k(t)=d(t,2\epsilon k+ 2h\mathbb Z)$ and
$\hat f_k=Z_k\circ f\colon X\to\mathbb R$.  Then each $\hat f_k$
satisfies \eqref{item:mod2} and \eqref{item:mod3}.  Now define
\[F_k=\{x_0\in S: d(f(x_0),2\epsilon k + 2h \mathbb Z)<\epsilon\}.\]
Then for integer $0\leq k<M$, the $F_k$ are disjoint Borel subsets of
$S$.  Indeed, if $x_0\in F_k\cap F_{k'}$ then there exists an
$n\in\mathbb N$ with $|k-k'+hn/\epsilon|<1$.  Since
$|k-k'|<h/4\epsilon$ and $h/\epsilon>4$, this can only happen if $n=0$
and $k=k'$ and so such $F_k$ and $F_{k'}$ are either disjoint or
equal.  Therefore, there exists an $m$ with
\[\mu(F_m)\leq \mu(S)/M\leq 4\epsilon\mu(S)/h.\]
We set $\tilde S = S\setminus F_m$ so that, for $x_0\in \tilde S$ and
$x\in X$ with $d(x,x_0)\leq\epsilon/L$,
\[|\hat f_m(x)-\hat f_m(x_0)|<\epsilon \text{ and } d(f(x_0),2\epsilon m+2h\mathbb Z)\geq \epsilon.\]
In particular
\[|\hat f_m(x)-\hat f_m(x_0)|=|f(x)-f(x_0)|\]
and so \eqref{item:mod4} holds.

Finally we define
\begin{align*}\tilde f &\colon B(S,h/L)\cup X\setminus B(S,2h/L)\to\mathbb R\\
\tilde f(x) &= \begin{cases}\hat f_m & x\in B(S,h/L)\\
0 & x\in X\setminus B(X,2h/L).\end{cases}\end{align*}
Then, since $0\leq \tilde f\leq h$, $\tilde f$ is $L$-Lipschitz and so we may extend it to a function on the whole of $X$ that satisfies the required properties.
\end{proof}

We now give the main Lemma required for the construction of a
non-differentiable Lipschitz function.  It takes a set $S$ and a
sequence of Lipschitz functions that, in some neighbourhood of $S$,
witness both large and small difference quotients at all points of $S$
and combines them to construct a Lipschitz function that witnesses
such difference quotients in all neighbourhoods of $S$.  Later in this
section we will see that such a function cannot be differentiable at any
point of $S$.

\begin{lemma}\label{lem:prenondiff}
Let $(X,d,\mu)$ be a metric measure space, $S\subset X$ Borel and
$L,\beta>0$.  Suppose that there exists a sequence of $L$-Lipschitz
functions $f_m\colon X\to \mathbb R$ such that:
\begin{enumerate}
\item \label{item:steep}For every $x_0\in S$ there exists an $M\in\mathbb N$ such that for each $m\geq M$ there exists an $x\in X$ with $0<d(x,x_0)<1/m$ and
\[|f_m(x)-f_m(x_0)|\geq L\beta d(x,x_0).\]
\item\label{item:flat}For every $m\in\mathbb N$ there exists a $\rho_m>0$ such that, for every $x_0\in S$ and any $y,z\in B(x_0,\rho_m)$,
\[|f_m(y)-f_m(z)|\leq d(y,z)/m.\]
\end{enumerate}
Then for any $\alpha>0$ and $R_i>r_i>0$ such that $R_i,r_i\to 0$, there
exists $i(k)\to\infty$ and a Lipschitz function $F\colon X\to\mathbb
R$ such that:
\begin{itemize}
\item For almost every $x_0\in S$, $\Lip(F,x_0)\geq L\beta-\alpha$.
\item For every $k\in \mathbb N$, $x_0\in S$ and $x\in X$ with
  $r_{i(k)}<d(x,x_0)<R_{i(k)}$,
\[\frac{|f(x)-f(x_0)|}{d(x,x_0)}\leq \alpha.\]
\end{itemize}
\end{lemma}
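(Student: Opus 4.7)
The plan is to construct $F$ as an infinite sum $F = \sum_{k=1}^\infty g_k$, where each $g_k := \tilde f_{m_k}$ is obtained from $f_{m_k}$ by applying Lemma \ref{lem:modifications} with carefully chosen parameters $h_k, \epsilon_k > 0$. I will select the subsequence $i(k) \to \infty$, the indices $m_k \to \infty$, and the parameters $h_k, \epsilon_k$ recursively, so that each $g_k$ is bounded by $h_k$, supported in $B(S, 2h_k/L)$, and carries a subset $\tilde S_k \subset S$ on which $g_k$ inherits the slope behaviour of $f_{m_k}$ at scales at most $\epsilon_k/L$ via property (\ref{item:mod4}).

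At step $k$, with all quantities fixed for $j < k$, the choices will be: (a) $i(k)$ so that $R_{i(k)}$ is smaller than $\rho_{m_j}$, $h_j/L$, and $r_{i(k-1)}$ for every $j < k$; (b) $m_k$ so large that $m_k > 1/(L\beta)$ and $1/m_k < \min_{j<k}\{\rho_{m_j}, h_j/L\}$; (c) $h_k$ with $h_k \le L\rho_{m_k}/2$, $h_k \le \alpha r_{i(k)}/8$, $h_k \le \alpha\rho_{m_{k-1}}/8$, and $h_k \le h_{k-1}/2$; and (d) $\epsilon_k$ with $L/m_k \le \epsilon_k \le h_k\cdot 2^{-k}$. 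The summability of $\epsilon_k/h_k$ combined with Borel--Cantelli will ensure that $\mu$-almost every $x_0 \in S$ lies in $\tilde S_k$ for all sufficiently large $k$, while the bound $m_k > 1/(L\beta)$ combined with hypothesis (\ref{item:flat}) forces any witness $x_k$ from (\ref{item:steep}) to satisfy $d_k := d(x_k, x_0) \in [\rho_{m_k}, 1/m_k]$.

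The annulus flatness and steep-slope conclusions then follow by splitting each sum at $j = k$. For $x_0 \in S$ and $x$ with $r_{i(k)} < d(x,x_0) < R_{i(k)}$: for $j < k$, the chain $d(x,x_0) < R_{i(k)} < \min(\rho_{m_j}, h_j/L)$ lets property (\ref{item:mod3}) and hypothesis (\ref{item:flat}) yield $|g_j(x) - g_j(x_0)| \le d(x,x_0)/m_j$; for $j \ge k$, the geometric decay gives $\sum_{j \ge k} 2h_j \le 4h_k \le (\alpha/2) r_{i(k)} \le (\alpha/2) d(x,x_0)$; picking the $m_j$ so that $\sum 1/m_j \le \alpha/2$ bounds the total slope by $\alpha$. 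For the steep-slope condition at $x_0$ eventually in $\tilde S_k$, property (\ref{item:mod4}) (applicable since $d_k \le 1/m_k \le \epsilon_k/L$) yields $|g_k(x_k) - g_k(x_0)| \ge L\beta d_k$, while the same term-by-term analysis for $j \ne k$, now using $d_k \ge \rho_{m_k}$ to bound $\sum_{j > k} 2h_j \le 4h_{k+1} \le (\alpha/2)\rho_{m_k} \le (\alpha/2)d_k$, keeps the total correction under $\alpha d_k$ and produces witnesses $x_k \to x_0$ of slope at least $L\beta - \alpha$.

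The hardest step will be proving that $F$ is globally Lipschitz. For arbitrary $x, y \in X$ with $d = d(x,y)$, the indices with $h_k \le Ld$ contribute $O(Ld)$ via the trivial bound $|g_k(x) - g_k(y)| \le 2h_k$ and the geometric decay. For the remaining indices the key is the constraint $h_k \le L\rho_{m_k}/2$: whenever both $x, y$ lie in $B(S, h_k/L)$, this forces a common $x_0 \in S$ with $x, y \in B(x_0, \rho_{m_k})$, so property (\ref{item:mod3}) combined with hypothesis (\ref{item:flat}) yields $|g_k(x) - g_k(y)| \le d/m_k$, summing to $O(d)$. The ``transitional'' indices, for which one of $x, y$ lies in $B(S, 2h_k/L) \setminus B(S, h_k/L)$, are bounded in number by a constant depending only on the decay ratio of the $h_k$ (since for such $k$ the value $h_k/L$ must lie in a dyadic range determined by $d(x,S)$ or $d(y,S)$), and each contributes at most $Ld$ via the plain $L$-Lipschitz estimate; handling this case analysis uniformly in the mutual positions of $x, y$ relative to $S$ is the technical heart of the proof.
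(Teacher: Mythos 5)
Your overall strategy---modify each $f_{m_k}$ via Lemma \ref{lem:modifications}, sum the results, use Borel--Cantelli on the exceptional sets $S\setminus\tilde S_k$, and split every estimate at $j=k$---is the same as the paper's, and the annulus-flatness and steep-slope parts are sound in outline. However, your parameter constraints (c) and (d) are mutually inconsistent, and the inconsistency is forced by the hypotheses themselves. As you observe, once $m_k>1/(L\beta)$ the witness $x_k$ of \eqref{item:steep} must satisfy $d_k=d(x_k,x_0)\geq \rho_{m_k}$ (otherwise \eqref{item:flat} gives $L\beta\leq 1/m_k$); in particular $\rho_{m_k}<1/m_k$ whenever $S\neq\emptyset$. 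To transfer the steepness to $g_k$ you invoke property \eqref{item:mod4} of Lemma \ref{lem:modifications} at the distance $d_k$, which requires $\epsilon_k/L\geq d_k$; since $d_k$ can lie anywhere in $[\rho_{m_k},1/m_k)$ you correctly demand $\epsilon_k\geq L/m_k>L\rho_{m_k}$. But you also impose $h_k\leq L\rho_{m_k}/2$, so $\epsilon_k>2h_k$, whereas Lemma \ref{lem:modifications} needs $h_k>4\epsilon_k$ (and your own condition $\epsilon_k\leq 2^{-k}h_k$ is stronger still). Hence no admissible pair $(h_k,\epsilon_k)$ exists for large $k$ and the construction cannot start.

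The constraint $h_k\leq L\rho_{m_k}/2$ is not incidental: it is the only mechanism you offer for bounding $|g_k(x)-g_k(y)|$ by $d(x,y)/m_k$ on the indices with $h_k>Ld(x,y)$, i.e.\ precisely where the trivial bound $2h_k$ is useless; without it the global Lipschitz estimate collapses, since the number of such indices grows without bound as $d(x,y)\to 0$ while each would only be controlled by $Ld(x,y)$. The paper resolves this tension differently: it takes $h=r_{i(k)}/2$, which is much larger than $\epsilon=1/m(k)$, so the witness is always covered by \eqref{item:mod4}, and it replaces ``every $g_k$ is uniformly flat on its whole support-neighbourhood'' by a combinatorial observation---for a fixed pair $x,y$ with $r_{i(k+1)}\leq d(x,y)<r_{i(k)}$, at most \emph{one} index $k'\leq k$ can violate $|g_{k'}(x)-g_{k'}(y)|\leq d(x,y)/m(k')$, because such a violation forces $x$ and $y$ into a common ball $B(x_0,R_{i(k'-1)})$ on which all earlier $g_j$ are flat, and that single exceptional index is absorbed using the raw Lipschitz constant of $g_{k'}$. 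You need to import that idea (or an equivalent substitute) and decouple $h_k$ from $\rho_{m_k}$; as it stands the proof has a genuine gap.
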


\begin{proof}
By dividing by $L$ if necessary, we may suppose that $L=1$ and by
taking a suitable subsequence we may suppose that $R_i\leq r_{i-1}$
for each $i\in\mathbb N$.  We define sequences $m(k),\ i(k)\to \infty$
inductively as follows, choosing $m(0)=i(0)=1$.  Given $m(k)$ and $i(k)$
choose $i(k+1)>i(k)$ such that
\[R_{i(k+1)}<\rho_{m(k)} \text{ and } r_{i(k+1)} \leq 2^{-(k+1)}\alpha
r_{i(j)}\]
for every $j\leq k$.  Then choose $m(k+1)$ such that
\[\frac{1}{m(k+1)}\leq 2^{-(k+1)}r_{i(k+1)} \text{ and } m(k+1)\geq
2^{k+1}/\alpha.\]
Note that these conditions imply, for every $j\in\mathbb N$,
\[\sum_{k> j}r_{i(k)}\leq \alpha r_{i(j)} \text{ and } \sum_{k\in\mathbb N} 1/m(k)\leq \alpha.\]
We define 1-Lipschitz functions $g_k\colon
X\to \mathbb R$ and a Borel set $S_k\subset S$ by applying Lemma
\ref{lem:modifications} to $f_{m(k)}$ with $h=r_{i(k)}/2$ and
$\epsilon=1/m(k)$.  Then $\mu(S_k)\geq (1-2^{3-k})\mu(S)$ and the
$g_k$ have the following properties:
\begin{enumerate}\item The support of $g_k$ is contained within $B(S,r_{i(k)})$.
\item $0\leq g_k\leq r_{i(k)}/2$.
\item \label{item:prop3}For every $x_0\in S_k$ there exists an $x$ with $0<d(x,x_0)<1/m(k)$ such that
\[\frac{|g_k(x)-g_k(x_0)|}{d(x,x_0)}\geq \beta.\]
\item \label{item:prop4}For every $x_0\in S$ and any $x, y\in
  B(x_0,R_{i(k)})$ with $x\neq y$,
\[\frac{|g_k(x)-g_k(y)|}{d(x,y)}\leq \frac{1}{m(k)}.\]
\end{enumerate}
Finally we define $F=\sum_k g_k$ and
\[S'=\bigcap_{m\in\mathbb N}\bigcup_{k\geq m} S_k,\]
a set of full measure in $S$.

We first show that $F$ is Lipschitz.  Let $x\neq y\in X$ with
\[r_{i(k+1)}\leq d(x,y)<r_{i(k)}\]
and suppose that, for some $k'\leq k$,
\[|g_{k'}(x)-g_{k'}(y)|>d(x,y)/m_{k'}.\]
Then one of $x$ or $y$ must necessarily belong to $B(S,r_{i(k)})$.
However,
\[d(x,y)<r_{i(k)}\leq r_{i(k')}\]
and so there exists an $x_0\in S$ such that
\[x,y\in B(x_0, 2r_{i(k')})\subset B(x_0,R_{i(k'-1)}).\]
Since the $R_{i(k)}$ strictly decrease we have, for all $j<k'$,
\[|g_j(x)-g_j(y)|\leq d(x,y)/m(j).\]
In particular this can only happen for at most one value of $k'\leq k$.  If such a $k'$ does not exist set $k'=k$. Then we have
\begin{align*}|F(x)-F(y)|&\leq \sum_{k'\neq j\leq k}|g_j(x)-g_j(y)|+|g_{k'}(x)-g_{k'}(y)|+\sum_{j\geq k+1}|g_j(x)-g_j(y)|\\
&\leq \sum_{k'\neq j\leq k}d(x,y)/m(j) + 2d(x,y) + \sum_{j > k+1} r_{i(j)}/2\\
&\leq (\alpha+2)d(x,y) + \alpha r_{i(k+1)}\\
&\leq 2(\alpha+1)d(x,y)\end{align*}
and so $F\colon X\to\mathbb R$ is Lipschitz.

Now let $x_0\in S'$ and $K\in\mathbb N$ such that $x_0\in S_k$ for each
$k\geq K$.  Then for each $k\geq K$ there exists an $x\in X$ with
$0<d(x,x_0)<1/m(k)$ such that
\[\frac{|g_k(x)-g_k(x_0)|}{d(x,x_0)}\geq \beta.\]
Note that we also have $x\in B(S,R_{i(j)})$ for any $j<k$ and so
\begin{align*}|F(x)-F(x_0)|&\geq |g_k(x)-g_k(x_0)|-\sum_{j<k}|g_j(x)-g_j(x_0)|-\sum_{j>k}|g_j(x)-g_j(x_0)|\\
&\geq \beta d(x,x_0) - \sum_{j<k}d(x,x_0)/m(j) -\sum_{j>k}
  r_{i(j)}/2\\ &\geq \beta d(x,x_0) - \alpha d(x,x_0) - \alpha
  r_{i(k)}\\ &\geq (\beta-2\alpha)d(x,x_0).\end{align*}
Such an $x$ exists for each $k\geq K$ and so $\Lip(F,x_0)\geq
\beta-2\alpha$.

Now let $x_0\in S$ and for any $k\in\mathbb N$ let $x\in X$ with
$r_{i(k)}<d(x,x_0)<R_{i(k)}$.  Then
\begin{align*}|F(x_i)-F(x_0)|&\leq \sum_{j\leq k}|g_j(x)-g_j(x_0)|+\sum_{j>k}|g_j(x)-g_j(x_0)|\\
&\leq \alpha d(x,x_0) + \sum_{j>k}r_{i(j)}/2\\
&\leq 2\alpha d(x,x_0).\end{align*}
Therefore, $F$ satisfies the conclusion of the Lemma for $2\alpha$.
\end{proof}

By combining this construction with the definition of a structured
chart, we show that the constructed function is differentiable
almost nowhere on such a set.

\begin{proposition}\label{prop:cons}
Let $(U,\varphi)$ be a structured chart in a metric measure space
$(X,d,\mu)$, $S\subset U$ be Borel and $L,\beta>0$.  Suppose that there
exists a sequence of $L$-Lipschitz functions $f_m\colon X\to \mathbb
R$ such that:
\begin{enumerate}
\item For every $x_0\in S$ there exists an $M\in\mathbb N$ such that, for
  each $m\geq M$, there exists an $x\in X$ with $0<d(x,x_0)<1/m$ and
\[|f_m(x)-f_m(x_0)|\geq \beta d(x,x_0).\]
\item There exists a $\rho_m>0$ such that, for every $x_0\in S$ and any $y,z\in B(x_0,\rho_m)$,
\[|f_m(y)-f_m(z)|\leq d(y,z)/m.\]
\end{enumerate}
Then there exists a Lipschitz function $F\colon X\to \mathbb R$ that is
differentiable $\mu$-almost nowhere on $S$.  In particular, if $X$ is a
Lipschitz differentiability space, then $S$ is $\mu$-null.
\end{proposition}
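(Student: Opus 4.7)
The plan is to feed the given sequence $(f_m)$ into Lemma~\ref{lem:prenondiff} and then combine the resulting function $F$ with the structured chart hypothesis to contradict differentiability. Let $\lambda>0$ and $U'\subset U$ be supplied by Definition~\ref{defn:structuredchart}, and fix $\alpha>0$ with $\alpha(1+\Lip\varphi/\lambda)<L\beta$. Set $R_i=1/i$, and for each $i$ let $r_i\in(0,R_i)$ be the corresponding radius from the structured chart, so that for every $x_0\in U'$ there exist $x_1^i(x_0),\ldots,x_n^i(x_0)\in U$ with $r_i<d(x_j^i(x_0),x_0)<R_i$ and
\[\max_{1\leq j\leq n}\frac{|(\varphi(x_j^i(x_0))-\varphi(x_0))\cdot v|}{d(x_j^i(x_0),x_0)}\geq\lambda\]
for every $v\in\mathbb{S}^{n-1}$. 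In particular $R_i,r_i\to 0$ since $r_i<R_i=1/i$.

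Applying Lemma~\ref{lem:prenondiff} to $(f_m)$ with this $\alpha$ and these radii produces a Lipschitz $F\colon X\to\mathbb{R}$, a subsequence $i(k)\to\infty$, and a full-measure set $S'\subset S$ on which $\Lip(F,x_0)\geq L\beta-\alpha$. Fix $x_0\in S'\cap U'$. Concatenating the structured-chart witnesses into a single sequence $y_{kn+j}:=x_{j+1}^{i(k)}(x_0)$ for $k\in\mathbb{N}$, $0\leq j<n$, we obtain $y_m\to x_0$ satisfying~\eqref{eq:uniqueness} for $\lambda$ block-by-block in $k$, while simultaneously, by the second bullet of Lemma~\ref{lem:prenondiff},
\[\max_{0\leq j<n}\frac{|F(y_{kn+j})-F(x_0)|}{d(y_{kn+j},x_0)}\leq \alpha\qquad\text{for every } k\in\mathbb{N}.\]

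If $F$ were differentiable at $x_0$ with respect to $(U,\varphi)$, Lemma~\ref{lem:nondiffcond} applied to this witness sequence would give
\[\tfrac{\lambda}{\Lip\varphi}(L\beta-\alpha)\leq \tfrac{\lambda}{\Lip\varphi}\Lip(F,x_0)\leq \alpha,\]
contradicting the choice of $\alpha$. Hence $F$ is differentiable $\mu$-almost nowhere on $S$. For the final clause, in a Lipschitz differentiability space the convention following Definition~\ref{def:chart} ensures that the chart $(U,\varphi)$ supports differentiability almost everywhere of every real-valued Lipschitz function; combined with what we have just shown, this forces $\mu(S)=0$.

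The only real delicacy is the coordination of parameters: the radii $R_i,r_i$ must simultaneously drive both the structured-chart witnesses and the non-steepness estimate from Lemma~\ref{lem:prenondiff}, and $\alpha$ must be taken small enough that the resulting upper bound on $\Lip(F,x_0)$ collides with the lower bound $L\beta-\alpha$. Once these are aligned the argument reduces to a direct application of the two preceding lemmas and no further work is needed.
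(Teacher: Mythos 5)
Your argument is essentially identical to the paper's own proof: choose $\alpha$ small relative to $\lambda\beta/\Lip\varphi$, extract the radii $R_i>r_i\to 0$ from the structured-chart definition, feed them into Lemma~\ref{lem:prenondiff}, interleave the chart witnesses with the resulting flatness estimate, and contradict Lemma~\ref{lem:nondiffcond}. The only blemish is a normalisation slip: hypothesis (1) of the Proposition gives steepness $\beta d(x,x_0)$ while Lemma~\ref{lem:prenondiff} is stated with $L\beta d(x,x_0)$, so the lemma must be invoked with steepness parameter $\beta/L$ and yields $\Lip(F,x_0)\geq \beta-\alpha$ rather than $L\beta-\alpha$; replacing $L\beta$ by $\beta$ in your choice of $\alpha$ fixes this without affecting the structure of the argument.
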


\begin{proof}
For $\lambda>0$ let $(U,\varphi)$ be a $\lambda$-structured chart and
choose $\alpha <\lambda\beta/(\Lip\varphi+1)$.  Then by the definition
of a structured chart, there exist
positive $R_i>r_i\to 0$ such that, for almost every $x_0\in U$ and every
$i\in\mathbb N$, there exist $x_1,\ldots,x_n\in U$ with
$r_i<d(x_j,x_0)<R_i$ and
\[\max_{1\leq j\leq n}\frac{|(\varphi(x_j)-\varphi(x_0))\cdot v|}{d(x_j,x_0)}\geq \lambda\]
for every $v\in\mathbb S^{n-1}$.  We apply Lemma \ref{lem:prenondiff}
to obtain a Lipschitz function $F\colon X\to\mathbb R$ and
$i(k)\to\infty$ such that, for almost every $x_0\in S$, $\Lip(F,x_0)\geq
\beta-\alpha$ and for every $k\in\mathbb N$ and $x\in X$ with
$r_{i(k)}<d(x,x_0)<R_{i(k)}$,
\[\frac{|F(x)-F(x_0)|}{d(x,x_0)}\leq \alpha.\]
In particular, there exists a sequence $x_m\to x_0$ satisfying
\eqref{eq:uniqueness} for $\lambda$ such that
\[\liminf_{m\to\infty}\max_{0\leq i< n}\frac{|(f(x_{mn+i})-f(x_0))\cdot v|}{d(x_{mn+i},x_0)}\leq \alpha.\]
Therefore, by Lemma \ref{lem:nondiffcond} and our choice of $\alpha$,
$F$ is not differentiable at almost every point of $S$.
\end{proof}

The following result first appeared in \cite{porousdiff}, however we
now give a short proof as this result will be required when
establishing our characterisations of Lipschitz differentiability
spaces.  We first require some notation.

\begin{definition}\label{def:porous}
Let $(X,d)$ be a metric space, $S\subset X$ Borel and $x_0\in S$.  We
say that $S$ is \emph{porous at $x_0$} if there exists an $\eta>0$ and
$X\ni x_m\to x_0$ such that
\[B(x_m,\eta d(x_m,x_0))\cap S =\emptyset.\]

Further, we say that $S$ is \emph{porous} if it is porous at $x_0$ for
each $x_0\in S$.
\end{definition}

\begin{corollary}[\cite{porousdiff}, Theorem 2.4]\label{cor:porousnull}
Porous sets in a Lipschitz differentiability space $(X,d,\mu)$ have
measure zero.
\end{corollary}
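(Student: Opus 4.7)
The plan is to apply Proposition~\ref{prop:cons}. By Lemma~\ref{lem:decompstruct} we may decompose $X$ into a $\mu$-null set and countably many structured charts, so by countable subadditivity it suffices to treat $S$ inside a single structured chart. A further countable decomposition, indexed by rational values of the porosity constant, reduces to the case in which there is a fixed $\eta>0$ such that every $x_0\in S$ admits a sequence $X\ni x_k\to x_0$ with $B(x_k,\eta\,d(x_k,x_0))\cap S=\emptyset$.

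For a positive sequence $\rho_m\to 0$ (to be specified) consider the truncated distance function
\[f_m(x)=\max\{0,\,d(x,S)-\rho_m\},\]
which is $1$-Lipschitz and vanishes identically on $B(S,\rho_m)\supseteq B(x_0,\rho_m)$ for every $x_0\in S$; hence hypothesis (2) of Proposition~\ref{prop:cons} holds automatically with this $\rho_m$. For hypothesis (1), the porosity bound $d(x_k,S)\geq\eta\,d(x_k,x_0)$ gives
\[\frac{f_m(x_k)-f_m(x_0)}{d(x_k,x_0)}\geq \eta-\frac{\rho_m}{d(x_k,x_0)}\geq \frac{\eta}{2}=:\beta\]
whenever $d(x_k,x_0)\geq 2\rho_m/\eta$. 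Combined with $d(x_k,x_0)<1/m$, this yields the required steep difference quotient provided the porosity sequence at $x_0$ has some term in the window $[2\rho_m/\eta,\,1/m)$.

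The principal obstacle is that the porosity scales $d(x_k,x_0)$ may decay to zero at vastly different rates across $x_0\in S$, so no single sequence $(\rho_m)$ can guarantee a porosity scale in the window $[2\rho_m/\eta,1/m)$ for every $x_0$ at once. I would overcome this by a further countable measurable decomposition $S=\bigcup_j S_j$, where each $S_j$ consists of points whose porosity scales are uniformly bounded below by a common sequence $\rho_m^{(j)}\to 0$; the measurability of such a decomposition follows from the fact that, for each fixed $m$ and threshold, the existence of a porosity scale in a given window is a Borel property of $x_0$ (cf.\ the proof of Lemma~\ref{lem:decompstruct}). Applying Proposition~\ref{prop:cons} on each $S_j$ with $\rho_m=\rho_m^{(j)}$ yields $\mu(S_j)=0$, and hence $\mu(S)=\sum_j\mu(S_j)=0$.
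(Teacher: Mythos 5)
Your proof is correct and takes essentially the same route as the paper: reduce to structured charts, take $f_m$ to be a truncated distance function to the set so that the flatness hypothesis of Proposition \ref{prop:cons} holds automatically on a $\rho_m$-neighbourhood of $S$, and then the only remaining work is to make the available porosity scales uniform in $x_0$. For that last step you use a further countable measurable decomposition of $S$ according to the decay rate of the porosity scales, whereas the paper restricts to compact pieces $S_i$ and extracts a uniform lower bound $\epsilon_{r,i}$ from the lower semicontinuity of $x_0\mapsto\sup\{d(x,x_0)<r : d(x,S_i)>\eta_i d(x,x_0)/2\}$ (note the relaxation of the porosity constant to make the condition open, which is also what makes your Borel-measurability claim go through); both devices work.
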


\begin{proof}
For any porous set $S\subset X$ there exists a countable Borel decomposition
$S=\cup_i S_i\cup N$ and a sequence $\eta_i>0$ such that $\mu(N)=0$,
each $S_i$ is compact and contained within a structured chart and for
each $i\in\mathbb N$ and $x_0\in S_i$, there exists $x_m\to x_0$ such
that
\[B(x_m,\eta_i d(x_m,x_0))\cap S_i = \emptyset.\]
Then for any $r>0$ and $i\in\mathbb N$, the function
\[x_0\mapsto \sup\{d(x,x_0)<r: d(x,S_i) > \eta_i d(x,x_0)/2\}\]
is well defined, strictly positive and lower semicontinuous on $S_i$ and
so there exists an $\epsilon_{r,i}>0$ such that it is bounded below by
$\epsilon_{r,i}$.  Then the functions
\[f_m(x)= \min\{d(x,S_i)-\epsilon_{1/m,i},0\}\]
satisfy the hypotheses of Proposition \ref{prop:cons} for $S_i$ and so
$S_i$ is $\mu$-null.  In particular, $S$ is also $\mu$-null.
\end{proof}

\subsection{Additional remarks on Lipschitz differentiability spaces}
We give several simple consequences of Proposition \ref{prop:cons}.

\begin{corollary}[\cite{porousdiff}, Corollary 2.7]\label{cor:subsetdiff}
For any measurable subset $Y$ of a Lipschitz differentiability
space $(X,d,\mu)$, $(Y,d,\mu)$ is a Lipschitz differentiability space
with respect to the same chart structure.

Moreover, the derivative of a real
valued Lipschitz function in $Y$ agrees with the derivative of any
Lipschitz extension of $f$ to $X$, almost everywhere.
\end{corollary}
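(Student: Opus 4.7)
The plan is to reduce differentiability on $Y$ to differentiability on $X$ via the McShane extension, isolating the only non-trivial issue: uniqueness of the derivative with respect to the restricted chart $(U \cap Y, \varphi|_Y)$. Given a Lipschitz $f \colon Y \to \mathbb{R}$, I would extend it to a Lipschitz $F \colon X \to \mathbb{R}$. Since $X$ is a Lipschitz differentiability space, $F$ is differentiable at almost every $y_0 \in Y$, with some derivative $DF(y_0)$, and the differentiability estimate taken over $X \ni x \to y_0$ automatically restricts to the smaller limsup over $Y \ni y \to y_0$. Hence $DF(y_0)$ witnesses differentiability of $f$ at $y_0$ with respect to the restricted chart, and the real content is that this witness is almost everywhere unique on $Y$.

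To establish uniqueness, I would first invoke Lemma~\ref{lem:decompstruct} to reduce to the case where $(U, \varphi)$ is a $\lambda$-structured chart of $X$: the $X$-uniqueness condition of Lemma~\ref{lem:discuniq} then holds with a uniform $\lambda$ at every point of some full measure $U' \subset U$. Let $S \subset Y \cap U'$ be the set where $Y$-uniqueness fails; by a short compactness argument on $\mathbb{S}^{n-1}$, each $y_0 \in S$ admits a bad direction $v^* = v^*(y_0) \in \mathbb{S}^{n-1}$ with
\[\limsup_{Y \ni y \to y_0} \frac{|(\varphi(y) - \varphi(y_0)) \cdot v^*|}{d(y, y_0)} = 0.\]
I claim every $y_0 \in S$ is a porosity point of $Y$ with the uniform constant $\eta = \lambda/(8\Lip \varphi)$. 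Indeed, the $X$-uniqueness applied to $v^*$ produces $x_m \to y_0$ in $X$ with $|(\varphi(x_m) - \varphi(y_0)) \cdot v^*| \geq (\lambda/2) d(x_m, y_0)$ for large $m$, and a triangle inequality using $\Lip \varphi$ shows that this quantity remains at least $(\lambda/4) d(y, y_0)$ throughout $B(x_m, \eta d(x_m, y_0))$; the vanishing $Y$-limsup then forces this ball to be disjoint from $Y$ for all sufficiently large $m$. Since the porosity holes of $Y$ remain holes of $S \subset Y$, $S$ itself is porous and Corollary~\ref{cor:porousnull} gives $\mu(S) = 0$. Measurability of $S$, needed to invoke the corollary, is handled by replacing $\mathbb{S}^{n-1}$ with a finite $\epsilon$-net at each scale, which exhibits $S$ as a countable union of Borel porous subsets.

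For the moreover statement, any Lipschitz extension $\tilde F$ of $f$ is differentiable at almost every $y_0 \in Y$, and subtracting the differentiability expansions of $\tilde F$ and $F$, both of which equal $f$ on $Y$, yields
\[\limsup_{Y \ni y \to y_0} \frac{|(D\tilde F(y_0) - DF(y_0)) \cdot (\varphi(y) - \varphi(y_0))|}{d(y, y_0)} = 0.\]
The $Y$-uniqueness established above then forces $D\tilde F(y_0) = DF(y_0)$ almost everywhere on $Y$. I expect the main technical step to be the porosity extraction in the second paragraph: arranging the constants so the porosity constant is uniform across $S$, and handling the measurability of the bad-direction set via the net argument sketched above.
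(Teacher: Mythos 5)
Your proposal is correct and follows essentially the same route as the paper: extend $f$ to $X$, reduce to structured charts via Lemma~\ref{lem:decompstruct}, observe that the only issue is uniqueness of the derivative along $Y$, and show via Lemma~\ref{lem:discuniq} that the non-uniqueness set is porous (with constant comparable to $\lambda/\Lip\varphi$), hence $\mu$-null by Corollary~\ref{cor:porousnull}. Your finite-net treatment of measurability and the explicit subtraction argument for the ``moreover'' clause are minor elaborations of what the paper does by taking a countable dense subset of $\mathbb S^{n-1}$.
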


\begin{proof}
Let $f\colon Y\to\mathbb R$ be Lipschitz and $(U,\varphi)$ a
$\lambda$-structured chart of dimension $n$ in $X$.  We may extend $f$
to a Lipschitz function defined on $X$ and so, for almost every
$x_0\in Y\cap U$, there exists a unique $Df(x_0)$ such that
\[\limsup_{X\ni x\to x_0}\frac{|f(x)-f(x_0)-Df(x_0) \cdot(\varphi(x)-\varphi(x_0))|} {d(x,x_0)}=0.\]
This $Df(x_0)$ will also be a derivative for the metric measure
space $(Y,d,\mu)$, provided it is unique.  As seen in Lemma
\ref{lem:discuniq} such a $Df(x_0)$ is not unique if and only if there
exists a $v\in\mathbb S^{n-1}$ such that
\[\limsup_{Y\ni x\to x_0}\frac{|(\varphi(x)-\varphi(x_0))\cdot
  v|}{d(x,x_0)}=0.\]
However, since $(U,\varphi)$ is a $\lambda$-structured chart in $X$,
there exist $X\ni x_m\to x_0$ with
\[\lim_{m\to\infty}\frac{|(\varphi(x_m)-\varphi(x_0))\cdot
  v|}{d(x_m,x_0)}\geq \lambda.
\]
By combining these two relations and using the fact that $\varphi$ is
Lipschitz, we see that
\[B(x_m, \lambda d(x_m,x_0)/2\Lip\varphi)\cap Y = \emptyset
\]
for sufficiently large $m$.  In particular, for each $v\in\mathbb
S^{n-1}$, the set of such $x_0$ is a porous set in $X$.  By taking the
union of such $x_0$ over a countable dense subset of $\mathbb
S^{n-1}$, we see that the set of those $x_0$ where $Df(x_0)$ is not
unique is $\mu$-null.  Therefore the derivative of $f$ in $X$ is also
the derivative of $f$ in $Y$, almost everywhere.
\end{proof}

We also show how we may apply the construction of a non-differentiable
Lipschitz function when we only have control over the infinitesimal
behaviour of such a sequence of Lipschitz functions.

\begin{corollary}\label{cor:cons}
Let $(X,d,\mu)$ be a Lipschitz differentiability space, $S\subset X$
Borel and $L,\beta>0$.  Suppose that there exists a sequence of
$L$-Lipschitz functions $f_m\colon X\to\mathbb R$ such that, for
almost every $x_0\in S$:
\begin{itemize}
\item There exists an $M\in\mathbb N$ such that, for each $m\geq M$, there
  exists an $x\in X$ with $0<d(x,x_0)<1/m$ and
\[|f_m(x)-f_m(x_0)|\geq \beta d(x,x_0).\]
\item For each $m\in\mathbb N$
\[\Lip(f_m,x_0)\leq 1/m.\]
\end{itemize}
Then $S$ is $\mu$-null.
\end{corollary}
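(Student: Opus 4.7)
The plan is to reduce to Proposition~\ref{prop:cons} by upgrading the pointwise flatness $\Lip(f_m,x_0)\leq 1/m$ to the bi-point flatness required by that proposition. First, by Lemma~\ref{lem:decompstruct}, I may assume $X$ consists of a single structured chart containing $S$. The hypothesis gives, for each $m$ and $\mu$-almost every $x_0\in S$, a positive scale $c_m(x_0)$ with $|f_m(x)-f_m(x_0)|\leq (2/m)d(x,x_0)$ for $d(x,x_0)<c_m(x_0)$. Decomposing $S$ according to the level sets $\{x_0:c_m(x_0)\geq 1/k\}$ and choosing, for each $m$, a cut-off $k_m$ with $\mu(S\setminus\{c_m\geq 1/k_m\})<\epsilon/2^m$, I obtain a diagonal Egorov reduction: for each $\epsilon>0$ there is a Borel $S_\epsilon\subset S$ with $\mu(S\setminus S_\epsilon)<\epsilon$ and a sequence $\rho_m>0$ such that $|f_m(x)-f_m(x_0)|\leq (2/m)d(x,x_0)$ whenever $x_0\in S_\epsilon$, $m\in\mathbb N$ and $d(x,x_0)<\rho_m$.

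To upgrade this to a bi-point bound I invoke the non-porosity of $S_\epsilon$. Corollary~\ref{cor:porousnull} implies that the points of $S_\epsilon$ at which $S_\epsilon$ is porous themselves form a porous set, hence are $\mu$-null. A further Egorov reduction produces a Borel $S_\epsilon'\subset S_\epsilon$ of almost full measure on which non-porosity is uniform: for every $\eta>0$ there exists $r(\eta)>0$ with $B(x_0,r)\subset B(S_\epsilon',\eta r)$ whenever $x_0\in S_\epsilon'$ and $r<r(\eta)$.

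For $x_0\in S_\epsilon'$ and $y,z\in B(x_0,\rho'_m)$ with $\rho'_m$ chosen suitably, uniform non-porosity lets me select $y',z'\in S_\epsilon'$ with $d(y,y')$ and $d(z,z')$ small relative to $\max(d(y,x_0),d(z,x_0))$. Applying the uniform pointwise flatness of $f_m$ at $y'$ and at $z'$, together with the triangle inequality, yields
\[
|f_m(y)-f_m(z)|\leq (C/m)\,d(y,z)
\]
for an absolute constant $C$, provided $d(y,z)$ is not too small compared with $\max(d(y,x_0),d(z,x_0))$. The scales $r_{i(k)}, R_{i(k)}$ in the construction of Lemma~\ref{lem:prenondiff} can be chosen so that the pairs $(x,y)$ arising in the Lipschitz estimate of the constructed function always satisfy this scale-separation condition, so the bi-point flatness hypothesis of Proposition~\ref{prop:cons} is effectively verified for the sequence $f_m$ on $S_\epsilon'$. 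Applying Proposition~\ref{prop:cons} gives $\mu(S_\epsilon')=0$, and since $\epsilon$ was arbitrary, $\mu(S)=0$.

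The main obstacle is the third paragraph: balancing the non-porosity scale $r(\eta)$, which shrinks as $\eta\to 0$, against the flatness scale $\rho_m$, and arranging the scales in the Lemma~\ref{lem:prenondiff} construction so that every pair of points relevant to the Lipschitz estimate automatically satisfies the scale-separation condition required by the bi-point bound.
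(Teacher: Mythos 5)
Your first two paragraphs match the paper's reduction (restrict to a structured chart, Egorov the pointwise Lipschitz bound to get uniform scales $\rho_m$ on a large subset $S'$), but the third paragraph contains a genuine gap, and it is exactly the one you flag. The scale--separation condition you need for the bi-point bound cannot be arranged in the construction of Lemma \ref{lem:prenondiff}. In the Lipschitz estimate for $F=\sum_k g_k$ one must bound $|g_j(x)-g_j(y)|$ by $d(x,y)/m(j)$ for \emph{all but one} index $j$, for an \emph{arbitrary} pair $x\neq y\in X$ that merely happens to lie in some ball $B(x_0,R_{i(j)})$ with $x_0\in S$; here $d(x,y)$ can be as small as $r_{i(k+1)}$ while $d(x,x_0)$ is of order $r_{i(k')}$ with $k'\ll k$, so the ratio $d(x,y)/\max(d(x,x_0),d(y,x_0))$ is unboundedly small no matter how the scales $r_{i(k)},R_{i(k)}$ are chosen. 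Your porosity argument produces an error term of size $L\eta\max(d(y,x_0),d(z,x_0))$, which for such pairs swamps $(C/m)d(y,z)$; falling back on the raw $L$-Lipschitz bound for the bad indices destroys the Lipschitz property of $F$, since there may be unboundedly many of them. So the bi-point flatness hypothesis of Proposition \ref{prop:cons} is genuinely not verified on all of $X$, and cannot be.

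The paper's fix is different and avoids the problem entirely: rather than upgrading the flatness estimate to hold for all pairs in $X$, it applies Proposition \ref{prop:cons} \emph{inside the subspace} $(S',d,\mu)$, which is itself a Lipschitz differentiability space by Corollary \ref{cor:subsetdiff}. In that subspace the hypothesis only concerns pairs $y,z\in B(x_0,\rho_m)\cap S'$, and for those a single application of the one-point estimate at $y\in S'$ already gives $|f_m(y)-f_m(z)|\leq (2/m)d(y,z)$ --- no porosity is needed for the flatness. Porosity enters instead on the \emph{steepness} side: the witnesses $x$ with $|f_m(x)-f_m(x_0)|\geq\beta d(x,x_0)$ may not lie in $S'$, so one decomposes $S'=S''\cup P$ where $P$ is porous in $X$ (hence null by Corollary \ref{cor:porousnull}) and on $S''$ the witnesses can be taken in $S'$ with constant $\beta/2$. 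If you replace your third paragraph with this passage to the subspace, the rest of your argument goes through.
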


\begin{proof}
By Lemma \ref{lem:decompstruct}, it suffices to prove the result
whenever $S$ is contained within a structured chart $(U,\varphi)$.
For this, we will apply Proposition \ref{prop:cons} to a suitable
subset of $S$.

For any $\epsilon>0$ there exist a Borel $S' \subset S$ with
$\mu(S')\geq \mu(S)-\epsilon$, an $M\in\mathbb N$ and a sequence
$\rho_m>0$ such that:
\begin{itemize}
\item For every $m\geq M$, $x_0\in S'$ and $y\in X$ with
  $0<d(y,x_0)<\rho_m$,
\[\frac{|f_m(y)-f_m(x_0)|}{d(y,x_0)}\leq 2/m.\]
\item For every $m\geq M$ and $x_0\in S'$ there exists an
$x\in X$ with $0<d(x,x_0)<1/m$ and
\[\frac{|f_m(x)-f_m(x_0)|}{d(x,x_0)}\geq \beta.\]
\end{itemize}
In particular, for any $x_0\in S'$ and $y,z\in B(x_0,\rho_m)\cap S'$,
\[\frac{|f_m(y)-f_m(z)|}{d(y,z)}\leq 2/m.\]

Also observe that, for any $x_0\in S'$, there either exists an $M'\geq
M$ and for each $m\geq M'$ an $x\in S'$ with
\[\frac{|f_m(x)-f_m(x_0)|}{d(x,x_0)}\geq \beta/2\]
or there exists $X\ni x_j\to x_0$ such that $B(x_j,\beta
d(x_j,x_0)/2L)\cap S'=\emptyset$.  Therefore, there exists a Borel
decomposition $S'=S''\cup P$ where $P$ is a porous subset of $X$ and
$S''$ satisfies the hypotheses of Proposition \ref{prop:cons} for the
Lipschitz differentiability space $(S',d,\mu)$.  Therefore $S''$ and so $S'$
are $\mu$-null.  Finally, since $\epsilon>0$ was arbitrary, $S$ is
also $\mu$-null.
\end{proof}

Using the previous Corollary, we give an example of metric spaces that
can only be Lipschitz differentiability spaces when they have zero
measure.

\begin{corollary}
Let $(X,d,\mu)$ be a Lipschitz differentiability space, $Y\subset X$
Borel and for each $m\in\mathbb N$ let $N_m$ be a $1/m$-net of
$Y$.  Suppose that, for each $m\in\mathbb N$ and $q\in N_m$,
\[\limsup_{X\ni x\to x_0}\frac{|d(x,q)-d(x_0,q)|}{d(x,x_0)}=0\]
for almost every $x_0\in Y\setminus \cup_m N_m$.  Then $\mu(Y)=0$.

In particular, for any metric measure space $(X,d,\mu)$ and
$0<\delta<1$, $(X,d^\delta,\mu)$ is a Lipschitz differentiability
space if and only if $\mu(X)=0$.
\end{corollary}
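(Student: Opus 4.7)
The plan is to apply Corollary~\ref{cor:cons} to $f_m$ defined as the distance to a carefully chosen \emph{finite} subset of $N_{2m}$. First I would replace each $N_m$ by a countable sub-net (possible by separability of $(X,d)$, at the harmless cost of replacing $1/m$ by a larger fixed multiple), so that $\bigcup_m N_m$ is countable. Corollary~\ref{cor:singletons} then gives $\mu(Y\cap\bigcup_m N_m)=0$, and it suffices to prove $\mu(S)=0$ for $S=Y\setminus\bigcup_m N_m$. A countable intersection of the full-measure sets furnished by the hypothesis yields a full-measure subset of $S$ on which $\Lip(d(\cdot,q),x_0)=0$ holds for every $q\in\bigcup_m N_m$ simultaneously. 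By inner regularity of $\mu$, it further suffices to show $\mu(K)=0$ for every compact $K\subset S$.

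For such a compact $K$ and each $m$, I would build a \emph{finite} $1/m$-net $\hat N_m\subset N_{2m}$ of $K$: take a finite $1/(2m)$-net of $K$ (available by compactness of $K$) and replace each of its elements by a nearby point of $N_{2m}$ (available because $N_{2m}$ is a $1/(2m)$-net of $Y\supset K$). I would then apply Corollary~\ref{cor:cons} to the $1$-Lipschitz functions $f_m(x)=d(x,\hat N_m)$ with $\beta=1$. The steep condition is immediate: since $\hat N_m$ is finite, a nearest point $q^*\in\hat N_m$ to $x_0$ is attained at distance $<1/m$, and $|f_m(q^*)-f_m(x_0)|=d(q^*,x_0)$. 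The main obstacle is verifying the pointwise bound $\Lip(f_m,x_0)\leq 1/m$, and this is where the finiteness of $\hat N_m$ is crucial: at $x_0\in K\setminus\hat N_m$, the finite set of positive numbers $\{d(x_0,q):q\in\hat N_m\}$ has a strictly positive gap $\delta$ between the minimum and the next-smallest value, so that for $y$ with $d(y,x_0)<\delta/3$ the nearest element of $\hat N_m$ to $y$ must lie in the finite set $A$ of actual nearest points of $\hat N_m$ to $x_0$. Since $d(x_0,q)=f_m(x_0)$ for each $q\in A$, one obtains
\[|f_m(y)-f_m(x_0)|\leq \max_{q\in A}|d(y,q)-d(x_0,q)|,\]
so $\Lip(f_m,x_0)\leq \max_{q\in A}\Lip(d(\cdot,q),x_0)=0$ at almost every $x_0\in K$, by the hypothesis and the finiteness of $A$. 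Corollary~\ref{cor:cons} then yields $\mu(K)=0$, and hence $\mu(Y)=0$.

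The reason for passing to the explicitly finite $\hat N_m$ rather than working with $N_m$ itself is precisely the preceding gap argument: in non-doubling settings the near-nearest set inside $N_m$ could in principle be infinite, and the pointwise Lipschitz bound would no longer follow from the hypothesis alone. The ``in particular'' claim then follows by applying the main statement to $(X,d^\delta,\mu)$ with $0<\delta<1$: take $N_m$ to be a $d^\delta$-$1/m$-net of $X$ and verify the hypothesis for every $x_0\neq q$ via the mean value theorem, which yields
\[\frac{|d(x,q)^\delta-d(x_0,q)^\delta|}{d(x,x_0)^\delta}\leq \frac{\delta}{(d(x_0,q)/2)^{1-\delta}}\,d(x,x_0)^{1-\delta}\xrightarrow[x\to x_0]{}0\]
for $x$ close enough to $x_0$ that $d(x,q)\geq d(x_0,q)/2$, using $1-\delta>0$. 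The first part then forces $\mu(X)=0$, while the converse is immediate since a space of total $\mu$-measure zero vacuously satisfies the definition of a Lipschitz differentiability space.
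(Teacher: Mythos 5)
Your proposal is correct and follows essentially the same route as the paper: both take $f_m$ to be the distance to a finite subset of a net, get the steep condition from the nearest net point and the flat condition $\Lip(f_m,x_0)=0$ from the hypothesis, and conclude via Corollary \ref{cor:cons} (with the same treatment of the snowflake case, up to your use of the mean value theorem in place of a direct limit computation). The only difference is that your ``gap'' argument for the pointwise Lipschitz bound is more elaborate than needed, since for a finite set one already has $|\min_q d(y,q)-\min_q d(x_0,q)|\leq \max_q |d(y,q)-d(x_0,q)|$ and hence $\Lip(f_m,x_0)\leq \max_q \Lip(d(\cdot,q),x_0)=0$ directly.
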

\begin{proof}
Suppose that $(U,\varphi)$ is a structured chart in $X$ and $S\subset
Y\cap U\setminus \cup_m N_m$ is compact.  Then for each $m\in\mathbb
N$ there exists a finite $N'_m\subset N_m$ such that $B(N'_m,1/m)\supset
S$.  We define the 1-Lipschitz function
\[f_m(x)=d(x,N'_m)=\min\{d(x,q):q\in N'_m\}.\]
Then by the hypothesis on the $N_m$, $\Lip(f_m,x_0)=0$ for almost every $x_0\in S$.

Further, for any $x_0\in S$ let $q\in N'_m$ with $d(x_0,q)$ minimal.
Then $d(x_0,q)<1/m$ and
\[\frac{|f_m(x_0)-f_m(q)|}{d(x,q)}=1.\]
Therefore the hypotheses of Corollary \ref{cor:cons} are satisfied for
$S'$, so that $S'$ and hence $S$ are $\mu$-null.  Since $S\subset
U\setminus \cup_m N_m$ was an arbitrary compact set and each $N_m$
is separated, $Y\cap U$ must also be $\mu$-null.

Now let $(X,d,\mu)$ be a metric measure space and $0<\delta<1$.  For
any $a>0$, $|(a+r)^\delta-a^\delta|/r^\delta\to 0$ as $r\to 0$.
In particular, for any $x_0, z\in (X,d^\delta)$ with $x_0\neq z$, we may
use the triangle inequality in $(X,d)$ to deduce
\[\limsup_{x\to x_0}
\frac{|d^\delta(x,z)-d^\delta(x_0,z)|}{d^\delta(x,x_0)}
\leq \frac{|(d(x_0,z)+d(x,x_0))^\delta -d^\delta(x_0,z)|}{d^\delta(x,x_0)}
=0.
\]
Therefore, by choosing $N_m$ to be any $1/m$-net of $X$, if
$(X,d^\delta,\mu)$ is a Lipschitz differentiability space we must have
$\mu(X)=0$.
\end{proof}

Finally, we address the notion of a zero dimensional chart in a
Lipschitz differentiability space.

\begin{corollary}\label{cor:zerodim}
Let $(X,d,\mu)$ be a metric measure space and $U\subset X$ Borel.
Suppose that, for any Lipschitz $f\colon X\to\mathbb R$,
\[\limsup_{X\ni x\to x_0}\frac{|f(x)-f(x_0)|}{d(x,x_0)}=0\]
for almost every $x_0\in U$.  Then $\mu(U)=0$.
\end{corollary}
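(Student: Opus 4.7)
The plan is to assume $\mu(U) > 0$ and derive a contradiction by producing a single Lipschitz function $\tilde F \colon X \to \mathbb R$ with $\Lip(\tilde F, x_0) > 0$ on a subset of $U$ of positive measure. By inner regularity we may assume $U$ is compact.

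The first observation is that every singleton in $U$ is $\mu$-null: for any $p \in U$ the 1-Lipschitz function $x \mapsto d(x, p) \wedge \operatorname{diam}(U)$ has pointwise Lipschitz constant $1$ at $p$ unless $p$ is isolated, so by the hypothesis applied to this particular function, $\{p\}$ lies in the corresponding $\mu$-null exceptional set (or in the $\mu$-null set of isolated points). In particular, a finite $(1/m)$-net $N_m \subset U$ (which exists by compactness) is $\mu$-null, and so is $\bigcup_m N_m$. Set $f_m(x) = d(x, N_m)$, a 1-Lipschitz function on $X$ whose pointwise Lipschitz constant is $0$ at $\mu$-a.e.\ $x_0 \in U$ by hypothesis. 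Applying Egorov's theorem to the decreasing family $r \mapsto \sup_{0 < d(y, x_0) < r} |f_m(y) - f_m(x_0)|/d(y, x_0) \downarrow 0$ produces a Borel set $S \subset U \setminus \bigcup_m N_m$ with $\mu(S) \geq \mu(U)/2$ and radii $\rho_m > 0$ such that $|f_m(y) - f_m(x_0)| \leq d(y, x_0)/(10m)$ whenever $x_0 \in S$ and $0 < d(y, x_0) < \rho_m$. A triangle inequality then gives $|f_m(y) - f_m(z)| \leq d(y,z)/m$ for all $y, z \in S \cap B(x_0, \rho_m/2)$, which is condition (2) of Lemma~\ref{lem:prenondiff} inside the metric subspace $(S, d, \mu|_S)$.

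For condition (1), I use a porosity reduction: if $S$ were porous at some $x_0 \in S$ with parameter $1/k$, the 1-Lipschitz function $h(x) = d(x, S) \wedge \operatorname{diam}(U)$ would satisfy $\Lip(h, x_0) \geq 1/k$, contradicting the hypothesis applied to $h$. Hence, after discarding a countable union of $\mu$-null sets, I may assume $S$ is non-porous with parameter $1/4$ at every $x_0 \in S$: for each $x_0 \in S$ there is $R(x_0) > 0$ with $B(x, d(x, x_0)/4) \cap S \neq \emptyset$ whenever $0 < d(x, x_0) < R(x_0)$. Taking $x = q_m \in N_m$ the nearest net point to $x_0$ (so $0 < d(q_m, x_0) \leq 1/m$ since $x_0 \notin \bigcup_m N_m$, with $|f_m(q_m) - f_m(x_0)| = d(q_m, x_0)$), I pick $y_m \in S \cap B(q_m, d(q_m, x_0)/4)$ for all $m$ large enough; a short computation gives $d(y_m, x_0) \leq 5/(4m)$ and $|f_m(y_m) - f_m(x_0)|/d(y_m, x_0) \geq 3/5$, which after an inconsequential reindexing to shrink the scale below $1/m$ is condition (1) of Lemma~\ref{lem:prenondiff} with $\beta = 3/5$.

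Applying Lemma~\ref{lem:prenondiff} in $(S, d, \mu|_S)$ to the family $\{f_m|_S\}$ then produces a Lipschitz $F \colon S \to \mathbb R$ with $\Lip(F, x_0) \geq 3/5 - \alpha > 0$ at $\mu$-a.e.\ $x_0 \in S$ (computed inside $S$), for any fixed small $\alpha$. Extending $F$ to a Lipschitz $\tilde F \colon X \to \mathbb R$ by McShane and observing that $\tilde F|_S = F$ gives $\Lip(\tilde F, x_0) \geq \Lip(F, x_0) > 0$ for $\mu$-a.e.\ $x_0 \in S$, contradicting the hypothesis applied to the single Lipschitz function $\tilde F$; hence $\mu(U) = 0$. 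The main obstacle is that the natural choice $f_m = d(\cdot, N_m)$ has global Lipschitz constant $1$, so condition (2) of Lemma~\ref{lem:prenondiff} cannot hold with ambient space $X$, which is precisely why the work must be done inside the subspace $(S, d, \mu|_S)$ after the Egorov and porosity reductions.
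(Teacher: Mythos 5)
Your proof is correct and follows essentially the same route as the paper's: approximate $U$ by finite nets $N_m$, take $f_m=d(\cdot,N_m)$, use Egorov to extract a large subset $S$ on which the difference quotients of $f_m$ are uniformly small at a fixed scale, and feed the resulting sequence into Lemma \ref{lem:prenondiff} to build a single Lipschitz function with positive pointwise Lipschitz constant almost everywhere on $S$. The one genuine difference is your porosity reduction. The paper verifies hypothesis (2) of Lemma \ref{lem:prenondiff} only for pairs $y,z$ in $S'$ but then uses the net point $q\notin S'$ as the witness for hypothesis (1), so the lemma is being applied with the ``flat'' condition holding in the subspace and the ``steep'' condition witnessed in the ambient space; you instead work honestly inside $(S,d,\mu\llcorner S)$ and use the single Lipschitz function $d(\cdot,S)$ (together with the hypothesis) to discard the porous part of $S$, which lets you replace the net point $q_m$ by a nearby point $y_m\in S$ while keeping the difference quotient bounded below by $3/5$. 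This extra step is a genuine improvement in rigour: it resolves the mismatch between where condition (2) holds and where the witness for condition (1) lives, which the paper's own proof leaves implicit. The computations ($d(y_m,x_0)\leq 5t/4$, quotient $\geq 3/5$, and the McShane extension preserving $\Lip(\tilde F,x_0)\geq\Lip_S(F,x_0)$) all check out.
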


\begin{proof}
First observe that, for any $x_0\in X$, the function $x\mapsto d(x,x_0)$
satisfies $\Lip(f,x_0)>0$ and so singletons must have measure
zero in $U$.  In particular, almost every point of $U$ is a limit
point of $U$.

Now, for any $m\in\mathbb N$ let $N_m$ be a $1/m$-net of
$U$, $S\subset U\setminus N_m$ be compact and $N'_m\subset N_m$ be
finite with $B(N'_m,1/m)\supset S$.  We define
\[f_m(x)=d(x,N'_m)=\min\{d(x,q):q\in N'_m\}\]
and let $S'\subset S$ be a compact and $\rho>0$ such that
\[\frac{|f_m(x)-f_m(x_0)|}{d(x,x_0)}\leq 1/m\]
for each $x_0\in S'$ and $0<d(x,x_0)<\rho$.  In particular, for any
$x_0\in S'$ and $y,z\in B(x_0,\rho)\cap S'$,
\[\frac{|f_m(y)-f_m(z)|}{d(y,z)}\leq 1/m.\]
Moreover, for any $x_0\in S'$ there exists a $x\in U$ with
$0<d(x,x_0)<1/m$ and
\[\frac{|f_m(x)-f_m(x_0)|}{d(x,x_0)}=1.\]
We apply Lemma \ref{lem:prenondiff} to obtain a
Lipschitz function $F\colon X\to\mathbb R$ with $\Lip(F,x_0)>0$ for
almost every $x_0\in S'$.  Therefore $S'$ and hence $S$ and $X$ are
$\mu$-null.
\end{proof}

\begin{remark}\label{rmk:keithrmk}
Suppose we choose to define the $\limsup$ of a function at an
isolated point to equal zero (as is the case in \cite{keith}).  Then
this Corollary shows that a set in a Lipschitz differentiability
space is a chart of dimension zero if and only if almost every point
is isolated.  This answers the question raised in Remark 2.1.3 of
~\cite{keith} on the nature of charts of dimension zero.
\end{remark}

%GKS
%A general theorem for the existence of n-independent representations
\section{The existence of Alberti representations}\label{sec:reps}
\subsection{A single representation}
We now give a characterisation of the metric measure spaces with
many independent Alberti representations (see Definition
\ref{defn:phidir}).  Our method relies upon the Gliksberg-K\"onig-Seever
Decomposition Theorem, which we state first.

\begin{theorem}[\cite{rudin-unitball}, Theorem 9.4.4]\label{thm:gks}
Let $(X,d,\mu)$ be a compact metric measure space and $K$ a
weak$^*$-compact, convex and non-empty set of regular Borel probability
measures on $X$.  Then there exists a Borel decomposition $X=A\cup S$
such that $\mu\llcorner A \ll\nu$ for some $\nu\in K$ and $k(S)=0$ for
every $k \in K$.
\end{theorem}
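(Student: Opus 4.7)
The plan is to prove Theorem \ref{thm:gks} via a maximization argument that exploits both the convexity and weak-* compactness of $K$, in three steps: first find a maximal absolutely continuous piece, then use maximality to force singularity on the complement, and finally uniformize the singular supports over all of $K$.

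First I would set up the maximization. Let $\mathcal{F}$ denote the family of Borel sets $A \subset X$ such that $\mu\llcorner A \ll \nu$ for some $\nu \in K$, and set $\alpha = \sup\{\mu(A) : A \in \mathcal{F}\}$. I claim this supremum is attained. Take a maximizing sequence $(A_n, \nu_n)$ with $A_n \in \mathcal{F}$, $\nu_n \in K$ witnessing $\mu \llcorner A_n \ll \nu_n$, and $\mu(A_n) \to \alpha$. Let $A_0 = \bigcup_n A_n$ and $\nu_0 = \sum_n 2^{-n}\nu_n$; the latter belongs to $K$ because $K$ is weak-* closed and the partial sums are convex combinations of elements of $K$. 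Since $\nu_n \leq 2^n \nu_0$, every $\nu_0$-null set is $\nu_n$-null, so $\mu \llcorner A_n \ll \nu_0$ for each $n$, whence $\mu \llcorner A_0 \ll \nu_0$ and $\mu(A_0) = \alpha$. Replacing $A_0$ by its union with the absolutely continuous support of $\mu$ relative to $\nu_0$ (a $\mu$-null enlargement) I may further arrange that $\nu_0(X \setminus A_0) = 0$.

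Next set $S_0 = X \setminus A_0$. I claim $k \llcorner S_0 \perp \mu$ for every $k \in K$. Indeed, if the Lebesgue decomposition of $k \llcorner S_0$ with respect to $\mu$ had a nontrivial absolutely continuous part $h\,\mu$, then the Borel set $B = \{x \in S_0 : h(x) > 0\}$ would satisfy $\mu(B) > 0$ and $\mu \llcorner B \ll k$; but then $A_0 \cup B \in \mathcal{F}$ with witness $\tfrac{1}{2}(\nu_0 + k) \in K$, contradicting the maximality $\mu(A_0) = \alpha$. Hence every $k \in K$ is concentrated on some $\mu$-null Borel set $T_k \subset S_0$.

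The main obstacle is the final uniformization step: I need a single $\mu$-null Borel $T \subset S_0$ with $k(T_k \setminus T) = 0$ for every $k \in K$, for then $A := A_0 \cup T$ and $S := S_0 \setminus T$ yield the theorem (one has $\mu \llcorner A = \mu \llcorner A_0 \ll \nu_0$, and $k(S) = k(T_k \cap S) + k(S \setminus T_k) = 0$). Choosing a countable weak-* dense $\{k_n\} \subset K$ (which exists because $K$ is compact metric in weak-*) and setting $T = \bigcup_n T_{k_n}$ immediately handles the dense subset, but extending to arbitrary $k \in K$ is delicate: weak-* convergence $k_{n_i} \to k$ does not in general preserve Borel-nullity, since $k \mapsto k(N)$ is only upper/lower semicontinuous for closed/open $N$. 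The classical Gliksberg--K\"onig--Seever argument (see \cite{rudin-unitball}) overcomes this by combining the convex compactness of $K$ with a transfinite Hahn-decomposition exhaustion against an auxiliary measure in $K$, producing a $T$ valid uniformly over all of $K$; this is where the full weak-* compactness, not just closedness, of $K$ is essential.
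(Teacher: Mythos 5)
The paper does not prove Theorem \ref{thm:gks}; it is quoted from Rudin and used as a black box, so there is no internal proof to compare yours against. Judged on its own merits, your attempt handles the routine parts correctly: the exhaustion producing $A_0$ is fine (small caveat: the partial sums $\sum_{n\leq N}2^{-n}\nu_n$ are not convex combinations of elements of $K$ — their coefficients sum to $1-2^{-N}$ — so you must normalise them before invoking weak$^*$-closedness), and the maximality argument giving $k\llcorner S_0\perp\mu$ for every $k\in K$ is correct. Note only that this shows $k\llcorner S_0$, not $k$ itself, is concentrated on a $\mu$-null $T_k\subset S_0$; this suffices for your final computation since $S\subset S_0$, but it should be stated accurately. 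The parenthetical "$\mu$-null enlargement" arranging $\nu_0(X\setminus A_0)=0$ is asserted before you have the maximality fact that justifies it, and is not used afterwards.

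The genuine gap is exactly the step you identify and then skip: producing a single $\mu$-null Borel $T\subset S_0$ with $k(S_0\setminus T)=0$ for \emph{every} $k\in K$ simultaneously. This is not a loose end — it is the entire content of the theorem beyond standard Lebesgue decomposition, and it is precisely where weak$^*$-compactness and convexity must interact with the regularity of the measures (one is forced to work with compact sets, where $k\mapsto k(C)$ is upper semicontinuous on $K$, because $k\mapsto k(B)$ has no semicontinuity for general Borel $B$; this is why your countable-dense-subset idea fails, as you note). Deferring to "the classical Gliksberg-K\"onig-Seever argument" is an appeal to the theorem being proved, not a proof, and the phrase "transfinite Hahn-decomposition exhaustion against an auxiliary measure in $K$" does not describe a verifiable argument. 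Observe also that the naive exhaustion does not obviously terminate: enlarging $T$ by a $\mu$-null carrier of $k\llcorner(S_0\setminus T)$ annihilates that one $k$ but need not decrease $\sup_{k'\in K}k'(S_0\setminus T)$. As it stands, the proposal reduces the theorem to its hardest component and cites it away, so it is not a complete proof.
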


We would like to apply this Theorem with $K$ a set of normalised
$\mathcal H^1$ measures supported on elements of a compact subset of
$\Gamma(X)$ (see Definition \ref{defn:curvespace}).  However, this set
need not be compact.  Instead, in the following Lemma, we apply it
with $K$ a set of normalised $\mathcal H^1$ measures supported on
elements of a compact subset of $\Pi(X)$ (see Definition
\ref{defn:curvespace}).  Following this, we will use this Lemma
to obtain a conclusion in terms of $\Gamma(X)$.

\begin{lemma}\label{lem:generalrep}
Let $(X,d,\mu)$ be a compact metric measure space and $J\subset\Pi(X)$
closed.  Then there exists a Borel decomposition $X=A\cup S$ and an
Alberti representation $\mathcal A$ of $\mu\llcorner A$ such that:
\begin{enumerate}\item \label{item:rep} Almost every $\gamma\in\mathcal A$ belongs to $J$.
\item \label{item:null} For every $\gamma\in J$, $\mathcal
  H^1(\gamma\cap S)=0$.
\end{enumerate}
\end{lemma}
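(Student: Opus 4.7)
The plan is to apply the Gliksberg--K\"onig--Seever theorem (Theorem \ref{thm:gks}) with $K$ taken to be the closed convex hull of a family of normalised pushforwards of the form
\[ \mu_\gamma := \gamma_\#(\mathcal L^1\llcorner\dom\gamma) / \mathcal L^1(\dom\gamma), \]
which are Borel probability measures on $X$ absolutely continuous with respect to $\mathcal H^1\llcorner\im\gamma$ and so suitable for Lemma \ref{lem:albertinikodym}. As the paragraph preceding the lemma warns, the family $\{\mu_\gamma : \gamma \in J\}$ need not be weak$^*$-compact, so I first decompose
\[ J \setminus \{\text{constant curves}\} = \bigcup_{m \in \mathbb N} J_m, \]
where $J_m$ consists of those $m$-bi-Lipschitz curves in $J$ whose domain is a closed subinterval of $[-m, m]$ of length at least $1/m$. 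By Lemma \ref{lem:gammaborel} each $J_m$ is a closed subset of $H(X)$ sitting inside a compact subspace of $H(X)$, hence is itself compact.

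Next I verify that $\gamma \mapsto \mu_\gamma$ is continuous from $J_m$ into the space of Borel probability measures on $X$ equipped with the weak$^*$ topology. For $\gamma_n \to \gamma$ in $J_m$, the domains $[a_n, b_n]$ converge in Hausdorff metric to $[a,b]$, and the linear reparametrisations $\hat\gamma_n \colon [a,b] \to X$ converge uniformly to $\gamma$ by combining the Hausdorff convergence of the graphs with the uniform bi-Lipschitz bound provided by Lemma \ref{lem:gammaborel}. A change of variables then gives $\int f\, d\mu_{\gamma_n} \to \int f\, d\mu_\gamma$ for every continuous $f \colon X \to \mathbb R$, so that $K_m := \{\mu_\gamma : \gamma \in J_m\}$ is weak$^*$-compact, and so is its closed convex hull $\overline{K_m}$.

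Applying Theorem \ref{thm:gks} separately to each $\overline{K_m}$ produces a Borel decomposition $X = A_m \cup S_m$ with $\mu\llcorner A_m \ll \nu_m$ for some $\nu_m \in \overline{K_m}$ and $k(S_m) = 0$ for every $k \in \overline{K_m}$. Classical barycentre theory (every point of the closed convex hull of a compact set in a Hausdorff locally convex space is the barycentre of a Borel probability measure on that set) writes $\nu_m = \int_{K_m} \kappa \, d\tilde{\mathbb M}_m(\kappa)$, and a Borel section of the continuous surjection $J_m \to K_m$ (for example via the Jankov--von Neumann theorem) lifts $\tilde{\mathbb M}_m$ to a Borel probability measure $\mathbb M_m$ concentrated on $J_m$ satisfying $\nu_m = \int_{J_m} \mu_\gamma \, d\mathbb M_m(\gamma)$. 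Lemma \ref{lem:albertinikodym} then yields an Alberti representation of $\mu\llcorner A_m$ supported on curves in $J_m$.

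Finally, set $S := \bigcap_m S_m$ and $A := X \setminus S = \bigcup_m A_m$. Lemma \ref{lem:sumreps} combines the Alberti representations of the $\mu\llcorner A_m$ into one for $\mu\llcorner A$ whose almost every curve lies in $J$, giving (1). For (2), any non-constant $\gamma \in J$ belongs to some $J_m$, so $\mu_\gamma(S_m) = 0$ by the Gliksberg--K\"onig--Seever conclusion, which via the bi-Lipschitz property of $\gamma$ forces $\mathcal H^1(\im\gamma \cap S_m) = 0$ and hence $\mathcal H^1(\im\gamma \cap S) = 0$; for the constant case $\im\gamma$ is a single point and the conclusion is automatic. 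The main technical obstacle is the barycentre step, converting abstract membership in $\overline{K_m}$ into an honest integral representation over $J_m$; everything else is routine once the compactness and continuity of $\gamma \mapsto \mu_\gamma$ on each $J_m$ are established.
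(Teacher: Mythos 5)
Your proposal is correct and follows essentially the same route as the paper: the same exhaustion of $J$ by the compact pieces $J_m$ of uniformly bi-Lipschitz curves with domains in $[-m,m]$ of length at least $1/m$, the same application of the Gliksberg--K\"onig--Seever theorem to the weak$^*$-closed convex hull of the normalised pushforward measures, the same use of Lemmas \ref{lem:albertinikodym} and \ref{lem:sumreps} to assemble the representation, and the same treatment of degenerate curves. Your explicit continuity check for $\gamma\mapsto\mu_\gamma$ and the barycentre/selection step are just fuller versions of what the paper compresses into the remark that any $\nu$ in the hull is of the form $\int_{\Pi(X)}\mathcal H_\gamma\,\mathrm{d}\mathbb P'(\gamma)$ with $\mathbb P'(J_k)=1$.
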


\begin{remark}
If $\mu$ gives measure zero to any set $S$ that satisfies
\eqref{item:null} then $\mathcal A$ is an Alberti representation of
$\mu$.  Observe that, from the definition of an Alberti
representation, this condition is also necessary for a
representation of $\mu$ satisfying \eqref{item:rep} to exist.
Therefore, this Lemma gives a characterisation of the existence of such
an Alberti representation.
\end{remark}

\begin{proof}
If $J$ is empty then setting $A=\emptyset$ and $\mathcal
A=(0,\{\mathcal H^1\llcorner\gamma\})$ completes the proof.
Otherwise, for each $k\in\mathbb N$ let $J_k$ be the set of
$k$-bi-Lipschitz $\gamma\in J$ with $\dom\gamma\subset [-k,k]$ and
$\mathcal L^1(\dom\gamma) \geq 1/k$.  Then by Lemma \ref{lem:gammaborel},
since $J\subset \Pi(X)$ is closed, each $J_k$ is isometrically
equivalent to a closed subset of the compact subsets of $[-k,k]\times
X$ with the Hausdorff metric, and so is compact.

Now fix $k\in\mathbb N$ and for each $\gamma\in J_k$ let $\mathcal
H_\gamma$ be the pushforward of the Lebesgue measure on $\dom\gamma$
under $\gamma$, multiplied by a suitable scalar such that $\mathcal
H_\gamma$ is a probability measure.  Then since $J_k$ is compact, the
set of such measures is weak$^*$-compact and so $K$, its
weak$^*$-closed convex hull, satisfies the hypothesis of Theorem
\ref{thm:gks}.  Moreover, probability measures on $J_k$ are
weak$^*$-compact and so any measure $\nu\in K$ is of the form
\[\nu(A)=\int_{\Pi(X)} \mathcal H_\gamma (A)\text{d}\mathbb P'(\gamma),\]
for some Borel probability measure $\mathbb P'$ on $\Pi(X)$
with $\mathbb P'(J_k)=1$.

Therefore, Theorem \ref{thm:gks} gives a decomposition $X=A_k\cup S_k$
where $\mu\llcorner A_k$ is absolutely continuous with respect to some
$\nu\in K$ and $\mathcal H_\gamma(S_k)=0$ for every $\gamma\in J_k$.
In particular, an application of Lemma \ref{lem:albertinikodym}
constructs an Alberti representation $\mathcal A_k$ of $\mu\llcorner
A_k$ from $\nu$ such that almost every $\gamma\in\mathcal A_k$ belongs
to $J_k$ and $\mathcal H^1(\gamma\cap S_k)=0$ for every $\gamma\in
J_k$.  Finally, if we define $A=\cup_k A_k$ and $S=\cap_k S_k$, then
$X=A\cup S$ is a Borel decomposition of $X$ such that, by Lemma
\ref{lem:sumreps}, $\mu\llcorner A$ has an Alberti representation
$\mathcal A$ such that almost every $\gamma\in\mathcal A$ belongs to
$J$, and such that $S$ satisfies $\mathcal H^1(\gamma\cap S)=0$ for
every $\gamma\in J$ with $\mathcal L^1(\dom\gamma)>0$.  Since $\mathcal
H^1(\gamma\cap S)=0$ for any $\gamma\in J$ with $\mathcal
L(\dom\gamma)=0$, this decomposition is of the required form.
\end{proof}

We summarise the exact setting in which we will use the above Lemma.
Not only does the resulting Alberti representation have a direction
but we may also control the magnitude of the partial derivatives of a
finite number of Lipschitz functions.  Later (in Definition
\ref{def:speed}), this will be developed into the notion of the
\emph{speed} of an Alberti representation.

\begin{corollary}\label{cor:basicrep}
Let $(X,d,\mu)$ be a compact metric measure space and, for some
$n\in\mathbb N$, let $\varphi,\psi\colon X\to\mathbb R^n$ be
Lipschitz.  Then for any cone $C\subset\mathbb
R^n$ and $\delta_1,\ldots,\delta_n>0$, there exists a Borel
decomposition $X=A\cup S$ such that:
\begin{itemize}
\item There exists an Alberti representation $\mathcal A$ of
  $\mu\llcorner A$ in the $\varphi$-direction of $C$ such that, for
  almost every $\gamma\in\mathcal A$, $\gamma$ belongs to $\Pi(X)$ and
\[\|(\psi_i\circ\gamma)'(t_0)\|\geq\delta_i\Lip(\gamma,t_0)\]
for almost every $t_0\in\dom\gamma$ and every $1\leq i\leq n$.
\item $\mathcal H^1(\gamma\cap S)=0$ for every $\gamma\in\Pi(X)$ in
the $\varphi$-direction of $C$ with
\begin{equation}\label{eq:prespeed}
  \|\psi_i(\gamma(t))-\psi_i(\gamma(t'))\| \geq \delta_i
  \|\gamma(t)-\gamma(t')\|
\end{equation}
for every $t,t'\in\dom\gamma$ and every $1\leq i\leq n$.
\end{itemize}
\end{corollary}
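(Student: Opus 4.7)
The plan is to apply Lemma~\ref{lem:generalrep} to a carefully chosen closed family $J\subset\Pi(X)$. I would take $J$ to consist of those $\gamma\in\Pi(X)$ satisfying both the \emph{integrated} cone condition
\[
\varphi(\gamma(t))-\varphi(\gamma(t'))\in (t-t')\,C \qquad \text{for all } t,t'\in\dom\gamma,
\]
and the bi-Lipschitz lower bound \eqref{eq:prespeed} for each $1\le i\le n$. Closedness of $J$ in $\Pi(X)$ should follow from Lemma~\ref{lem:gammaborel}: if $\gamma_m\to\gamma$ in $H(X)$ with $\gamma\in\Pi(X)$, then each pair of points on $\gamma$ is approximated by pairs on $\gamma_m$ with convergent $t$-coordinates, and continuity of $\varphi,\psi$ together with closedness of $C$ (plus the trivial case $t=t'$) transfer both defining inequalities to the limit.

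Feeding $J$ into Lemma~\ref{lem:generalrep} would yield a Borel decomposition $X=A\cup S$ and an Alberti representation $\mathcal A$ of $\mu\llcorner A$ with $\mathbb P$-almost every $\gamma\in J$, and with $\mathcal H^1(\gamma\cap S)=0$ for every $\gamma\in J$. The derivative statements are then routine: differentiating the integrated cone condition at Lebesgue points of $(\varphi\circ\gamma)'$ gives $(\varphi\circ\gamma)'(t)\in C$ a.e., while taking $\limsup$ as $t'\to t$ in \eqref{eq:prespeed} produces $\|(\psi_i\circ\gamma)'(t_0)\|\ge\delta_i\Lip(\gamma,t_0)$ a.e. For the second bullet, any $\gamma\in\Pi(X)$ in the $\varphi$-direction of $C$ that satisfies \eqref{eq:prespeed} automatically lies in $J$, since the pointwise directional condition integrates to the cone condition; hence $\mathcal H^1(\gamma\cap S)=0$ follows from (ii) of the Lemma.

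The principal technical point I would need to attend to is the gap between ``$(\varphi\circ\gamma)'(t)\in C$'', which the integrated condition yields, and ``$(\varphi\circ\gamma)'(t)\in C\setminus\{0\}$'' a.e., which Definition~\ref{defn:phidir} demands of the representation $\mathcal A$. Since a pointwise non-vanishing bound cannot be imposed on $J$ without destroying its closedness, I would instead apply the scheme above separately to each of the closed subfamilies
\[
J_k:=J\cap\bigl\{\gamma\in\Pi(X):\|\varphi(\gamma(t))-\varphi(\gamma(t'))\|\ge|t-t'|/k \text{ for all } t,t'\in\dom\gamma\bigr\},
\]
obtain decompositions $X=A_k\cup S_k$ and representations $\mathcal A_k$, and glue these via Lemma~\ref{lem:sumreps} into an Alberti representation of $\mu\llcorner\bigcup_k A_k$; on each $\mathcal A_k$-typical curve one has $\|(\varphi\circ\gamma)'\|\ge 1/k$ at Lebesgue points, placing the derivative in $C\setminus\{0\}$. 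A Lebesgue-density decomposition of $\dom\gamma$ along the level sets $\{\|(\varphi\circ\gamma)'\|>1/k\}$ for any $\gamma$ in the target class shows that the remaining set $S=\bigcap_k S_k$ still satisfies $\mathcal H^1(\gamma\cap S)=0$, as required.
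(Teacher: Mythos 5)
Your overall strategy --- a closed family $J\subset\Pi(X)$ cut out by \emph{integrated} versions of the cone and speed conditions, fed into Lemma~\ref{lem:generalrep}, with the pointwise derivative statements recovered by differentiation --- is exactly the paper's proof, and that part of your argument is sound. You are also right that there is a mismatch between the integrated cone condition (which survives Hausdorff limits and yields $(\varphi\circ\gamma)'\in C$ a.e.) and the requirement $(\varphi\circ\gamma)'\in C\setminus\{0\}$ in Definition~\ref{defn:phidir}; the paper passes over this silently and in practice recovers nonvanishing later from the speed conditions (taking $\psi=\varphi$ in Corollary~\ref{cor:onerep2}).

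The problem is your repair. After replacing $J$ by the subfamilies $J_k$ with the uniform lower bound $\|\varphi(\gamma(t))-\varphi(\gamma(t'))\|\ge|t-t'|/k$, the only information Lemma~\ref{lem:generalrep} gives about the singular sets is that $\mathcal H^1(\gamma'\cap S_k)=0$ for $\gamma'\in J_k$, i.e.\ for curves with \emph{interval} domain satisfying that global integrated bound. A curve $\gamma$ in the target class of the second bullet has $(\varphi\circ\gamma)'\in C\setminus\{0\}$ only almost everywhere, with no essential lower bound on $\|(\varphi\circ\gamma)'\|$, so it need not lie in any $J_k$; and your proposed ``Lebesgue-density decomposition along the level sets $\{\|(\varphi\circ\gamma)'\|>1/k\}$'' does not produce curves to which the $S_k$-property applies. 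Restrictions of $\gamma$ to (compact subsets of) those level sets have non-interval domains, hence lie in $\Gamma(X)\setminus\Pi(X)$ and are outside the scope of Lemma~\ref{lem:generalrep}(ii); and the integrated bound does not localise to subintervals, since between two parameters at which the derivative is large the increment of $\varphi\circ\gamma$ can still be arbitrarily small relative to $|t-t'|$. Converting an a.e.-derivative condition on a non-interval domain into a global integrated condition on an interval is precisely the content of Lemma~\ref{lem:nulloncone2}, and it requires the Banach-space embedding and convexity (so that the curve can be extended linearly off a compact set of parameters) --- machinery not available at the level of a bare compact metric space where this Corollary is applied. So as written your argument proves the first bullet but not the second. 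The paper avoids the issue by taking $J$ to be (the closure of) the target class itself, so that the second bullet is immediate from Lemma~\ref{lem:generalrep}(ii), and by postponing the nonvanishing of $(\varphi\circ\gamma)'$ to the later stage where $\psi$-speed bounds with $\psi=\varphi$ enforce it.
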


\begin{proof}
Let $\gamma_m\in\Pi(X)$ be a sequence of Lipschitz curves in the
$\varphi$-direction of $C$ and, for some $\gamma\in\Pi(X)$, suppose
that $\gamma_m\to \gamma$.
Then for any $t,t'\in\dom\gamma$ and for each $m\in\mathbb N$ there
exists $t_m,t'_m\in\dom\gamma_m$ such that
\[\gamma_m(t_m)\to\gamma(t) \text{ and } \gamma_m(t'_m)\to\gamma(t').\]
Then
\begin{align*}(\varphi(\gamma(t))-\varphi(\gamma(t')))\cdot w &=\lim_{m\to\infty} (\varphi(\gamma_m(t_m))-\varphi(\gamma_m(t_m)))\cdot w\\
&\geq \lim_{m\to\infty} (1-\theta)\|\varphi(\gamma_m(t_m))-\varphi(\gamma_m(t_m))\|\\
&= (1-\theta)\|\varphi(\gamma(t))-\varphi(\gamma(t'))\|
\end{align*}
so that $\gamma$ is in the $\varphi$-direction of $C$.  Similarly, if
each $\gamma_m$ satisfies \eqref{eq:prespeed} for every
$t,t'\in\dom\gamma$ and every $1\leq i\leq n$, then so does $\gamma$.
Therefore, the set $J$ of all $\gamma\in\Pi(X)$ in the
$\varphi$-direction of $C$ that satisfy \eqref{eq:prespeed} for every
$1\leq i \leq n$ and every $t,t'\in\dom\gamma$ is closed.  Moreover,
for any $\gamma\in J$,
\[\|(\psi_i\circ\gamma)'(t_0)\|\geq \delta_i\Lip(\gamma,t_0)\]
for almost every $t_0\in\dom\gamma$ and each $1\leq i \leq n$.  By
applying Lemma \ref{lem:generalrep} to $J$ we obtain a decomposition
$X=A\cup S$ of the required form.
\end{proof}

Observe that, if $X$ is a metric space such that $\Pi(X)$ only
contains Lipschitz curves $\gamma$ with $\dom\gamma$ a single point,
then the decomposition $A=\emptyset$, $S=X$ satisfies the conclusion
of the previous Lemma.  To obtain a meaningful result, we will instead
apply the Lemma to the closed, convex hull of an isometric copy of
$X$ within a Banach space $\mathcal B$.  In this case
$\Pi(\mathcal B)$ is very rich and so we obtain a much stronger conclusion.

So that we may describe such an Alberti representation purely
in the language of the metric space, we now investigate Lipschitz curves
in the singular set $S$.

\begin{lemma}\label{lem:nulloncone2}
Let $\mathcal B$ be a Banach space, $X\subset\mathcal B$ closed and
convex and let $\varphi,\psi\colon\mathcal B\to\mathbb R^n$ be
Lipschitz.  For a cone $C\subset \mathbb R^n$ and
$\delta_1,\ldots,\delta_n>0$ suppose that a Borel set
$S\subset\mathcal B$ satisfies $\mathcal H^1(\gamma\cap S)=0$ for all
$\gamma\in\Pi(X)$ in the $\varphi$-direction of $C$ with
\[\|\psi_i(\gamma(t))-\psi_i(\gamma(t'))\|\geq\delta_i\|\gamma(t)-\gamma(t')\|\]
for every $t,t'\in\dom\gamma$ and each $1\leq i \leq n$.  Then for
any measurable $D\subset \mathbb R$ and Lipschitz $\gamma\colon D\to
X$ the set
\[\{t_0\in D: (\varphi\circ\gamma)'(t_0)\in
C^\circ \text{ and } |(\psi_i\circ\gamma)'(t_0)|>\delta_i
\Lip(\gamma,t_0)\ \forall\ 1\leq i \leq n\}\]
is Lebesgue null.
\end{lemma}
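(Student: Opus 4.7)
The plan is by contradiction. The conclusion is nontrivial only when the stated set is intersected with $\gamma^{-1}(S)$ (otherwise the hypothesis on $S$ is not engaged), so I suppose for contradiction that
\[T:=\{t_0\in D:\gamma(t_0)\in S,\ (\varphi\circ\gamma)'(t_0)\in C^\circ,\ |(\psi_i\circ\gamma)'(t_0)|>\delta_i\Lip(\gamma,t_0)\text{ for all }i\}\]
has positive Lebesgue measure. The strategy is to build a test curve $\tilde\gamma\in\Pi(X)$ that fulfils the hypothesis of the Lemma and whose image has positive $\mathcal H^1$-measure intersection with $S$, yielding a contradiction.

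First, by Egoroff's theorem I would decompose $T$ into countably many compact pieces $T_k$ on each of which: $(\varphi\circ\gamma)'(t_0)$ lies in a compact subcone $C_0$ of $C^\circ$; $|(\psi_i\circ\gamma)'(t_0)|\geq\delta_i'\Lip(\gamma,t_0)$ for a fixed $\delta_i'>\delta_i$; and the $o(|t-t'|)$ error terms in the definitions of all these derivatives (and of $\Lip(\gamma,\cdot)$) are uniform in $t_0\in T_k$. The lower bound on $|(\psi_i\circ\gamma)'|$, together with the Lipschitz continuity of $\psi_i$, forces $\gamma|_{T_k}$ to be bi-Lipschitz into $\mathcal B$. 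Fix a $T_k$ of positive measure, pick a Lebesgue density point $t_0\in T_k$, and choose $r>0$ small enough that on $K:=T_k\cap[t_0-r,t_0+r]$ the uniform estimates yield the two chord bounds $\varphi(\gamma(t))-\varphi(\gamma(t'))\in(t-t')C$ and $|\psi_i(\gamma(t))-\psi_i(\gamma(t'))|\geq\delta_i\|\gamma(t)-\gamma(t')\|$ for every $t,t'\in K$.

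Next, I extend $\gamma|_K$ to $\tilde\gamma\colon[\min K,\max K]\to X$ by linear interpolation in $\mathcal B$ on each complementary open interval of $[\min K,\max K]\setminus K$. Convexity of $X$ keeps $\tilde\gamma$ inside $X$; the chord bounds promote $\tilde\gamma$ to a bi-Lipschitz map, so $\tilde\gamma\in\Pi(X)$. The crucial task is to verify that $\tilde\gamma$ still satisfies the hypothesis of the Lemma, namely $(\varphi\circ\tilde\gamma)'(t)\in C$ for a.e.\ $t$ and $|\psi_i(\tilde\gamma(t))-\psi_i(\tilde\gamma(t'))|\geq\delta_i\|\tilde\gamma(t)-\tilde\gamma(t')\|$ for every pair. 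At a Lebesgue density point $t\in K$ the complementary intervals adjacent to $t$ shrink faster than the distance from $t$, so $(\varphi\circ\tilde\gamma)'(t)$ exists and equals $(\varphi\circ\gamma)'(t)\in C^\circ$; mixed chord pairs $(t,t')$ with one endpoint in $K$ reduce to the bounds on $K$ by the closeness of the interpolated point to its $K$-endpoints.

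The main obstacle lies in the behaviour of $\tilde\gamma$ on a single complementary interval, where $\tilde\gamma$ is affine in $\mathcal B$: the directional derivative of $\varphi$ along such a segment need not lie in $C$, and the $\psi_i$-chord bound need not hold for pairs strictly interior to one segment. To overcome this I would shrink $r$ so that, by the density of $T_k$ at $t_0$, the total length of complementary intervals within $[t_0-r,t_0+r]$ is $o(r)$, and then pass to a further Egoroff refinement to discard a null set on which the directional derivatives fall outside $C$; the $\psi_i$-chord bound on a single short segment is then controlled using the uniform approximations and the fact that the segment endpoints lie in $K$. Once the hypothesis is verified for $\tilde\gamma$, it delivers $\mathcal H^1(\tilde\gamma\cap S)=0$; but $\gamma(K)\subset\tilde\gamma([\min K,\max K])\cap S$ has positive $\mathcal H^1$-measure by the bi-Lipschitz property of $\gamma|_K$, which is the desired contradiction.
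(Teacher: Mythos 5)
Your overall architecture is the same as the paper's: argue by contradiction, uniformise the derivatives on a compact piece of positive measure, restrict to a neighbourhood of a density point, extend affinely across the complementary intervals to obtain a curve in $\Pi(X)$, and contradict $\mathcal H^1(\widetilde\gamma\cap S)=0$ using the positive measure of $\gamma(K)\subset S$. (Your insertion of the condition $\gamma(t_0)\in S$ into the exceptional set is the intended reading of the statement; the paper's own proof uses it in exactly this way.) The difficulty is the step you yourself single out as the main obstacle, and your proposed resolution of it does not work. The complementary intervals in general form a subset of $\dom\widetilde\gamma$ of \emph{positive} measure, and the hypothesis of the Lemma must hold at almost every point of them; making their total length $o(r)$ therefore buys nothing, and ``passing to a further Egoroff refinement to discard a null set'' is not available, both because the set where $(\varphi\circ\widetilde\gamma)'$ leaves $C$ need not be null and because removing any subset of the domain destroys membership in $\Pi(X)$, whose elements are defined on closed intervals. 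As written, the verification of the Lemma's hypotheses for $\widetilde\gamma$ on the gaps fails.

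The device you are missing is the paper's uniformisation to \emph{fixed vectors with a margin}, rather than to a compact subcone. One first extracts a single $\Phi$ with $\overline B(\Phi,\xi)\subset C$, constants $\Psi_i$ and an upper bound $b$ for $\Lip(\gamma,\cdot)$ with $|\Psi_i|\ge\delta_i'b$ for some $\delta_i'>\delta_i$, together with a set $D''$ of positive measure on which the two-point estimates $\|\varphi(\gamma(t))-\varphi(\gamma(t_0))-\Phi(t-t_0)\|\le\epsilon|t-t_0|$, $|\psi_i(\gamma(t))-\psi_i(\gamma(t_0))|\ge|\Psi_i||t-t_0|$ and $\|\gamma(t)-\gamma(t_0)\|\le b|t-t_0|$ hold, with $\epsilon<\min\{\|\Phi\|,\xi\}$. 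Every condition is thus measured against the parameter increment $|t-t_0|$ rather than against $\|\gamma(t)-\gamma(t_0)\|$, so each passes to the closure of $D_0$ and in particular to the two endpoints of every gap; since the interpolation is affine and $\varphi,\psi$ act (in the setting where the Lemma is applied) as the linear coordinate projections of $\mathcal B$, the derivative of $\varphi\circ\widetilde\gamma$ on a gap is constant and equal to the chord slope, hence within $\epsilon<\xi$ of $\Phi$ and therefore in $C$, while $|(\psi_i\circ\widetilde\gamma)'|\ge|\Psi_i|\ge\delta_i'b\ge\delta_i\Lip(\widetilde\gamma,\cdot)$ and the monotonicity forced by the two-point bound yields the chord inequality for all pairs. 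By contrast, knowing only that the endpoint derivatives lie in a subcone of $C^\circ$ does not place the gap chord in $C$ (you also need a lower bound on $\|(\varphi\circ\gamma)'\|$, which the choice $\epsilon<\|\Phi\|$ encodes), and your mixed-pair and interior-pair estimates for $\psi_i$ do not reduce to the bounds on $K$ by ``closeness'', since a gap adjacent to a non-density point of $K$ need not be short relative to the distances involved.
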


\begin{proof}
Suppose that the conclusion does not hold for some measurable $D\subset
\mathbb R$ of positive measure and $\gamma\colon D\to X$.  We will
construct a new function $\widetilde\gamma\in \Pi(X)$ that agrees with
$\gamma$ on a set of positive measure and satisfies the conditions
given in the hypotheses of the Lemma, producing a contradiction.

By standard measure theoretic techniques, there exist
$\Phi,\Psi\in\mathbb R^n$, $b,\xi>0$ and $\delta'_i>\delta_i$ with
\[\overline B(\Phi,\xi)\subset C \text{ and each } |\Psi_i|\geq \delta_i' b\]
such that, for every $\epsilon>0$, there exists a bounded $D'\subset D$ of
positive measure with
\[\|(\varphi\circ\gamma)'(t_0)-\Phi\|<\epsilon,\ |(\psi_i\circ\gamma)'(t_0)|>|\Psi_i| \text{ and } \Lip(\gamma,t_0)< b\]
for every $1\leq i\leq n$ and $t_0\in D'$.  Let $D'$ be such a set for some choice of
\[0<\epsilon<\min\{\|\Phi\|,\xi\}.\]
Further, let $R>0$ and $D''\subset D'$ have positive measure such that,
for every $t_0\in D''$ and every $t\in\dom\gamma$ with $|t-t_0|\leq R$,
\[\|\varphi(\gamma(t))-\varphi(\gamma(t_0))-\Phi(t-t_0)\|\leq \epsilon
|t-t_0|,\]
for each $1\leq i \leq n$
\[|\psi_i(\gamma(t))-\psi_i(\gamma(t_0))|\geq |\Psi_i(t-t_0)|,\]
and
\[\|\gamma(t)-\gamma(t_0)\|\leq b|t-t_0|.\]

We set $D_0$ to be the intersection of $D''$ with an interval of
diameter $R$ chosen so that $D_0$ has positive measure, and $I$ to be
the smallest closed interval containing $D_0$.  Finally, we define
$\widetilde \gamma$ to equal $\gamma$ on $D_0$ and extend $\widetilde\gamma$
to $I$ whilst maintaining the Lipschitz constant by first extending
to the closure of $D_0$ and then linearly on the connected components
of the complement.

First observe that, since $X$ is closed and convex,
$\im\widetilde\gamma\subset X$.  Further, since $\gamma|_{D_0}$ is in the
$\varphi$-direction of $C$, $\mathcal H^1(\gamma(D_0))>0$.  Now, for
 any connected component $(c,d)$ of $I\setminus\overline{D}_0$, let $c_m,d_m\in D_0$ with $c_m\to c$ and $d_m\to d$.  Then
\begin{align*}\|\varphi(\widetilde\gamma(c))-\varphi(\widetilde\gamma(d))-\Phi(c-d)\| &= \lim_{m\to\infty} \|\varphi(\widetilde\gamma(c_m))-\varphi(\widetilde\gamma(d_m))-\Phi(c_m-d_m)\|\\
&\leq \epsilon |c-d|.\end{align*}
Similarly
\[|\psi_i(\widetilde\gamma(c))-\psi_i(\widetilde\gamma(d))|\geq |\Psi_i(c-d)|\]
for each $1\leq i \leq n$ and
\[\|\widetilde\gamma(c)-\widetilde\gamma(d)\| \leq b|c-d|.\]
In particular, since $\widetilde\gamma$ is extended linearly to $(c,d)$,
for almost every $t_0\in I$,
\[\|(\varphi\circ\widetilde\gamma)'(t_0)-\Phi\|\leq \epsilon\]
so that $\tilde\gamma$ is in the $\varphi$-direction of $C$.  Similarly
\[|(\psi_i\circ\widetilde\gamma)'(t_0)|\geq |\Psi_i|\]
for each $1\leq i \leq n$ and, for every $t,t'\in I$,
\[\|\widetilde\gamma(t)-\widetilde\gamma(t')\| \leq b|t-t'|.\]
Therefore, for any $t\leq t'\in I$,
\begin{align*}\|\widetilde\gamma(t)-\widetilde\gamma(t')\|&\geq \|\varphi(\widetilde\gamma(t))-\varphi(\widetilde\gamma(t'))\|\\
&= \left\|\int_{[t,t']} (\varphi\circ\widetilde\gamma)'(t_0)\right\|\\
&\geq (\|\Phi\|-\epsilon)|t-t'|\\
\end{align*}
so that $\widetilde\gamma$ is bi-Lipschitz and so belongs to $\Pi(X)$.
Similarly, for any $1\leq i \leq n$,
\begin{align*}
|\psi_i(\widetilde\gamma(t))-\psi_j(\widetilde\gamma(t'))| &=
\left|\int_{[t,t']} (\psi_i\circ\widetilde\gamma)'(t_0)\right|\\
&\geq \delta'_ib|t-t_0|\\
&\geq \delta_i\|\widetilde\gamma(t)-\widetilde\gamma(t')\|.\end{align*}
Therefore $\widetilde\gamma$ satisfies the hypotheses of the Lemma and so
\[0=\mathcal H^1(\tilde\gamma\cap S)\geq \mathcal H^1(\tilde\gamma(D_0))>0,\]
a contradiction.
\end{proof}

To apply these results to a metric measure space, we now define an
embedding into a Banach space that exposes additional structure
given by some Lipschitz functions defined on the metric space.

\begin{definition}\label{def:embedding}
For $n\in\mathbb N$ we define $\mathcal B$ to be the Banach space
\[\mathcal B = \mathbb R^n \times \mathbb R^n \times \ell_\infty\]
with norm
\[\|(u,v,s)\|=\max\{ \|u\|_{\mathbb R^n}, \|v\|_{\mathbb R^n},
\|s\|_{\ell_\infty}\}.\]

Further, suppose that $(X,d)$ is a metric space and
$\varphi,\psi\colon X\to\mathbb R^n$ Lipschitz.  We fix a bi-Lipschitz
embedding $\iota\colon X\to \ell_\infty$ and define the bi-Lipschitz
embedding $\iota^*\colon X\to \mathcal B$ by
\[x \mapsto (\varphi(x),\psi(x),\iota(x)).\]
We identify $X$ with its image in $\mathcal B$ under $\iota^*$ and
note that the projection $P_1$ onto the first factor
(respectively $P_2$ onto the second factor) in
$\mathcal B$ agrees with $\varphi$ (respectively $\psi$) on $X$.  In
particular, for any $\gamma\in\Gamma(X)$, the derivative
$(\varphi\circ\gamma)'$ (respectively $(\psi\circ\gamma)'$) agrees
with $(P_1\circ\gamma)'$ (respectively $(P_2\circ\gamma)'$) almost
everywhere.
\end{definition}

Finally, we define an additional property of an Alberti
representation.  The \emph{speed} of an Alberti representation
quantitatively describes how the partial derivative of a Lipschitz
function with respect to the Alberti representation compares to the
infinitesimal behaviour of such a function.  This notion will have a
particular importance when characterising Lipschitz differentiability
spaces when we will be interested in Alberti representations with
speed independent of the Lipschitz function.

\begin{definition}\label{def:speed}
Let $(X,d)$ be a metric space, $\varphi\colon X\to\mathbb R^n$
Lipschitz and $\delta>0$.  We say that $\gamma\in\Gamma(X)$ has
\emph{speed $\delta$} (or \emph{$\varphi$-speed $\delta$} whenever the
implied Lipschitz function is not clear) if for almost every
$t_0\in\dom\gamma$,
\[\|(\varphi\circ\gamma)'(t_0)\|\geq\delta\Lip(\varphi,\gamma(t_0))\Lip(\gamma,t_0).\]
Further, we say that an Alberti representation $\mathcal A$ has
\emph{speed $\delta$} (or $\varphi$-speed $\delta$) if almost every
$\gamma\in\mathcal A$ has speed $\delta$.
\end{definition}

\begin{corollary}\label{cor:onerep2}
Let $(X,d,\mu)$ be a metric measure space and $\varphi,\psi\colon
X\to\mathbb R^n$ Lipschitz such that $\Lip(\psi_i,x_0)>0$ for each
$1\leq i \leq n$.  Then, for any cone $C\subset\mathbb R^n$ and
$\delta_1,\ldots,\delta_n>0$, there exists a Borel decomposition
$X=A\cup S$ such that:
\begin{itemize}
\item There exists an Alberti representation of $\mu\llcorner A$ in
the $\varphi$-direction of $C$ with $\psi_i$-speed $\delta_i$ for each
$1\leq i\leq n$.
\item $\mathcal H^1(\gamma\cap S)=0$ for
any $\gamma\in\Gamma(X)$ in the $\varphi$-direction of $C^\circ$ with
$\psi_i$-speed strictly greater than $\delta_i$, for each $1\leq i
\leq n$.
\end{itemize}

In particular, for any cone $C\subset \mathbb R^n$ there exists a
Borel decomposition $X= A\cup S$ where $\mu\llcorner A$ has an Alberti
representation in the $\varphi$-direction of $C$ and $\mathcal
H^1(\gamma\cap S)=0$ for any $\gamma\in\Gamma(X)$ in the
$\varphi$-direction of $C$.
\end{corollary}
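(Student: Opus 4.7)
The plan is to lift the problem from $X$ into the Banach space $\mathcal B$ of Definition \ref{def:embedding}, so that $X$ sits inside the closed convex hull $K$ of its image under $\iota^*$; by Mazur's theorem this hull is compact, and Corollary \ref{cor:basicrep} then produces an Alberti representation by curves in $\Pi(K)$ with uniform chord bounds in the $P_1$-direction of $C$. Lemma \ref{lem:nulloncone2} upgrades the null-set conclusion to all Lipschitz curves $\tilde\gamma\colon D\to K$ satisfying strict infinitesimal bounds, which is the form needed to match Definition \ref{def:speed}. A partition of $X$ by dyadic levels of $\Lip(\psi_i,\cdot)$ will reconcile the absolute bound $\delta_i\Lip(\gamma,t_0)$ appearing in Corollary \ref{cor:basicrep} with the $\psi_i$-speed $\delta_i\Lip(\psi_i,\gamma(t_0))\Lip(\gamma,t_0)$ required here.

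First I would reduce to compact $X$ using the Radon property of $\mu$ and Lemma \ref{lem:sumreps}. For a small parameter $\eta>0$, I decompose $X$ into countably many compact Borel pieces $Y$ on which each $\Lip(\psi_i,\cdot)$ lies in a band $[c_i,(1+\eta)c_i)$; this uses Borel measurability of $\Lip(\psi_i,\cdot)$ and its positivity. On each $Y$ I fix a Kuratowski isometric embedding $\iota\colon Y\to\ell_\infty$, form $\iota^*\colon Y\to\mathcal B$, and let $K:=\overline{\operatorname{conv}}\,\iota^*(Y)$ (compact by Mazur's theorem). Pushing $\mu\llcorner Y$ forward to $\tilde\mu$ on $K$ and applying Corollary \ref{cor:basicrep} to $(K,\|\cdot\|,\tilde\mu)$ with projections $P_1,P_2$ in the roles of $\varphi,\psi$, cone $C$, and constants $\delta'_i:=\delta_i(1+\eta)c_i$ yields a decomposition $K=A_K\cup S_K$ together with an Alberti representation of $\tilde\mu\llcorner A_K$ by curves in $\Pi(K)$ in the $P_1$-direction of $C$ satisfying $\|(P_{2,i}\circ\gamma)'(t_0)\|\geq\delta'_i\Lip(\gamma,t_0)$.

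To pull back, each curve $\gamma\in\Pi(K)$ of the representation has its $\mu_\gamma$-mass supported on $\gamma\cap\iota^*(Y)$; restricting $\gamma$ to $\gamma^{-1}(\iota^*(Y))$ and composing with $(\iota^*)^{-1}$ produces an element of $\Gamma(Y)\subset\Gamma(X)$. Using $\|\iota^*(x)-\iota^*(y)\|\geq d(x,y)$, so that the pullback does not increase pointwise Lipschitz constants, together with $\Lip(\psi_i,\cdot)\leq(1+\eta)c_i$ on $Y$, one checks that the resulting Alberti representation of $\mu\llcorner A$ (with $A:=(\iota^*)^{-1}(A_K\cap\iota^*(Y))$) has $\psi_i$-speed at least $\delta_i$ and lies in the $\varphi$-direction of $C$. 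Applying Lemma \ref{lem:nulloncone2} in $\mathcal B$ to the compact convex $K$ guarantees that $S_K$ has $\mathcal H^1$-null intersection with every Lipschitz $\tilde\gamma\colon D\to K$ for which $(P_1\circ\tilde\gamma)'(t_0)\in C^\circ$ and $|(P_{2,i}\circ\tilde\gamma)'(t_0)|>\delta'_i\Lip(\tilde\gamma,t_0)$ almost everywhere.

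The main technical obstacle is the factor-$(1+\eta)$ discrepancy: on the band $Y$, the hypothesis of $\psi_i$-speed strictly greater than $\delta_i$ yields only $|(\psi_i\circ\gamma)'|>\delta_ic_i\Lip(\gamma,t_0)$, whereas Lemma \ref{lem:nulloncone2} demands $>\delta'_i\Lip(\gamma,t_0)=\delta_i(1+\eta)c_i\Lip(\gamma,t_0)$. To close this, I would run the construction along a sequence $\eta_m\downarrow 0$, set $A:=\bigcup_m A^{(m)}$ (Lemma \ref{lem:sumreps} assembles a single Alberti representation with $\psi_i$-speed at least $\delta_i$) and $S:=X\setminus A$. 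For any Lipschitz $\gamma\in\Gamma(X)$ with $\psi_i$-speed strictly exceeding $\delta_i$, the pointwise ratio $h(t_0):=|(\psi_i\circ\gamma)'(t_0)|/\bigl(\delta_i\Lip(\psi_i,\gamma(t_0))\Lip(\gamma,t_0)\bigr)$ satisfies $h>1$ almost everywhere, so $\dom\gamma$ is covered up to a null set by the superlevels $\{h>1+1/n\}$; on each superlevel, every $m$ with $\eta_m\leq 1/n$ supplies the strict bound demanded by Lemma \ref{lem:nulloncone2} on each band of the $m$-th partition, forcing $\mathcal H^1(\gamma\cap S)=0$. Finally, the ``in particular'' clause follows from the main result by a standard thickening of $C$ to a $\tilde C$ with $C\subset\tilde C^\circ$.
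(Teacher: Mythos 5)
Your proposal is correct and follows essentially the same route as the paper: decompose $X$ into bands on which each $\Lip(\psi_i,\cdot)$ is pinched within a factor $1+\eta$ (the paper's $\lambda$-adic levels), apply Corollary \ref{cor:basicrep} in the compact convex hull inside $\mathcal B$ and then Lemma \ref{lem:nulloncone2} to upgrade the singular-set condition, and remove the factor-$(1+\eta)$ loss by combining representations over $\eta\downarrow 0$ via Lemma \ref{lem:sumreps} (the paper intersects the singular sets over rational $\lambda>1$; your superlevel-set bookkeeping is equivalent). The only point where you are lighter than you should be is the final ``in particular'' clause, where the main issue is dropping the speed hypothesis and the positivity assumption on $\Lip(\psi_i,\cdot)$ rather than merely thickening $C$ — but the paper treats that step as routine as well.
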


\begin{proof}
Fix $\lambda>1$, $k_1,\ldots,k_n\in\mathbb Z$ and let $Y\subset
X$ be compact with
\[\lambda^{-k_i}\geq\Lip(\psi_i,x_0)\geq \lambda^{-k_i-1}\]
for every $x_0\in Y$ and every $1\leq i \leq n$.  Since $Y$ is
compact, it's closed convex hull $\widetilde Y$ is also compact.  By
applying Corollary \ref{cor:basicrep} to $(\widetilde Y,d,\mu)$ we
obtain a Borel decomposition $Y=Y_a\cup Y_s$ such that:
\begin{itemize}
\item There exists an Alberti representation $\mathcal A$ of
  $\mu\llcorner Y_a$ in the $\varphi$-direction of $C$ such that
\[|(\psi_i\circ\gamma)'(t_0)|\geq \delta_i\lambda^{k_i}\Lip(\gamma,t_0)\]
for almost every $\gamma\in\mathcal A$, almost every
$t_0\in\dom\gamma$ and every $1\leq i \leq n$.
\item $\mathcal H^1(\gamma\cap Y_s)=0$ for every $\gamma\in
  \Pi(\widetilde Y)$ in the $\varphi$-direction of $C$ with
\[|\psi_i(\gamma(t))-\psi_i(\gamma(t'))|\geq \delta_i\lambda^{k_i}\|\gamma(t)-\gamma(t')\|\]
for each $t,t'\in\dom\gamma$ and every $1\leq i\leq n$.
\end{itemize}
By our choice of $\lambda$, $\mathcal A$ has $\psi_i$-speed $\delta_i$
for each $1\leq i \leq n$.  Further, by Lemma \ref{lem:nulloncone2},
for any $\gamma\in\Gamma(Y)$ in the $\varphi$-direction of $C^\circ$
with each $\psi_i$-speed strictly greater than $\delta_i\lambda$,
$\mathcal H^1(\gamma\cap S)=0$.

By varying the $k_i\in\mathbb Z$, there exists a countable cover of
$X$ by such $Y$ except for a $\mu$-null set and so, after combining
the respective representations using Lemma \ref{lem:sumreps}, for each
$\lambda>1$ there exists a Borel decomposition $X=A_\lambda\cup
S_\lambda$ where $A_\lambda$ has an Alberti representation in the
$\varphi$-direction of $C$ with $\psi_i$-speed $\delta_i$ for each
$1\leq i \leq n$ and such that $S$ satisfies $\mathcal H^1(\gamma\cap
S)=0$ for every $\gamma$ in the $\varphi$-direction of $C^\circ$ with
each $\psi_i$-speed greater than $\delta_i\lambda$.  Writing
$A=\cup_\lambda A_\lambda$ and $S=\cap_\lambda S_\lambda$ (where the union
and intersection are taken over $\lambda\in\mathbb Q,\ \lambda>1$),
Lemma \ref{lem:sumreps} completes the proof.
\end{proof}

\subsection{Multiple representations}
We now improve upon Corollary \ref{cor:onerep2} by producing multiple
independent Alberti representations of a metric measure space
$(X,d,\mu)$.

Suppose that $\varphi\colon X\to\mathbb R^n$ is Lipschitz and that
$C_1,\ldots,C_n$ is a collection of independent cones in $\mathbb
R^n$.  Then, by multiple applications of Corollary \ref{cor:onerep2},
there exists a Borel decomposition $X= A\cup S$ such that
$\mu\llcorner A$ has $n$ $\varphi$-independent Alberti representations
and $S$ has a Borel decomposition $S=S_1\cup\ldots\cup S_n$ such that,
for each $1\leq i \leq n$, $\mathcal H^1(\gamma\cap S_i)=0$ for each
$\gamma\in\Gamma(X)$ in the $\varphi$-direction of $C_i$.

However, as we will see in the next section, the method of
constructing a non-differentiable Lipschitz function on $S$ requires,
for any $\epsilon>0$, a decomposition as above such that each $C_i$
has width $1-\epsilon$.  Of course, for sufficiently small $\epsilon$,
these cones will not be independent.  So that we may produce
independent Alberti representations and also satisfy this condition
for $S$, we introduce a method that reduces the width of a cone that
defines the direction of an Alberti representation, at the expense of
a countable decomposition.

\begin{corollary}\label{cor:refine}
Let $(X,d,\mu)$ be a metric measure space, $A\subset X$ Borel and
$\varphi,\psi \colon X\to\mathbb R^n$ Lipschitz.  Suppose that, for some cone $C=C(w,\theta)\subset \mathbb
R^n$, $\mu\llcorner A$ has an Alberti representation $\mathcal A$ in
the $\varphi$-direction of $C$ such that $|(\psi_i\circ\gamma)'(t_0)|>0$
for almost every $\gamma\in\mathcal A$, almost every
$t_0\in\dom\gamma$ and every $1\leq i \leq n$.  Then, for any
countable collection of cones $C_m$ with
\[\bigcup_{m\in\mathbb N}C_m^\circ \supset C\setminus \{0\},\]
there exists a countable Borel decomposition $A=\cup_k A_k$ such that each
$\mu\llcorner A_k$ has an Alberti representation $\mathcal A_k$ in the
$\varphi$-direction of some $C_m$ with $\psi_i$-speed strictly greater
than $1/k$, for each $1\leq i \leq n$.

Moreover, if for some $\delta_1,\ldots,\delta_n>0$, $\mathcal A$ has
$\psi_i$-speed strictly greater than $\delta_i$ for each $1\leq i\leq
n$, then so does each $\mathcal A_k$.
\end{corollary}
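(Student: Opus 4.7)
My plan is to decompose the given Alberti representation $\mathcal{A} = (\mathbb{P}, \{\mu_\gamma\})$ curve-by-curve according to which open cone $C_m^\circ$ contains the $\varphi$-derivative and which quantitative speed bound is attained. For each pair $(m,\ell)\in\mathbb{N}\times\mathbb{N}$ and each $\gamma\in\Gamma(X)$ I will consider the set
\[T_{m,\ell}(\gamma):=\left\{t_0\in\dom\gamma\,:\,(\varphi\circ\gamma)'(t_0)\in C_m^\circ \text{ and } |(\psi_i\circ\gamma)'(t_0)|>\Lip(\psi_i,\gamma(t_0))\,\Lip(\gamma,t_0)/\ell\ \forall\,i\right\}.\]
Joint Borel measurability of $(x,\gamma)\mapsto\mathbf{1}_{T_{m,\ell}(\gamma)}(\gamma^{-1}(x))$ on $A(X)$ will follow from Lemma \ref{lem:measurability}\eqref{eq:compactdp}(1) together with the fact that pointwise Lipschitz constants are Borel. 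By hypothesis, for $\mathbb{P}$-a.e.\ $\gamma$ and $\gamma^{-1}_\#\mu_\gamma$-a.e.\ $t_0$ we have $(\varphi\circ\gamma)'(t_0)\in C\setminus\{0\}\subset\bigcup_m C_m^\circ$ and $|(\psi_i\circ\gamma)'(t_0)|>0$, so $\bigcup_{m,\ell}T_{m,\ell}(\gamma)$ will be of full $\gamma^{-1}_\#\mu_\gamma$-measure in $\dom\gamma$.

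Next, for each $(m,\ell)$ I will set $\mu_{\gamma,m,\ell}:=\gamma_\#(\gamma^{-1}_\#\mu_\gamma\llcorner T_{m,\ell}(\gamma))$ and
\[\nu_{m,\ell}(B):=\int_{\Gamma(X)}\mu_{\gamma,m,\ell}(B)\,\mathrm{d}\mathbb{P}(\gamma).\]
Then $(\mathbb{P},\{\mu_{\gamma,m,\ell}\})$ will be an Alberti representation of $\nu_{m,\ell}$ which, by construction, is in the $\varphi$-direction of $C_m$ and has $\psi_i$-speed strictly greater than $1/\ell$ for every $i$; moreover $\sum_{m,\ell}\nu_{m,\ell}=\mu\llcorner A$.

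To obtain a decomposition of $A$ itself I will use that each $\nu_{m,\ell}\leq\mu$, so the Radon--Nikodym densities $f_{m,\ell}:=\mathrm{d}\nu_{m,\ell}/\mathrm{d}\mu$ satisfy $\sum_{m,\ell}f_{m,\ell}=1$ $\mu$-a.e.\ on $A$. Fixing any bijection $\phi\colon\mathbb{N}^2\to\mathbb{N}$ with $\phi(m,\ell)\geq\ell$, I define the disjoint Borel sets
\[A_k:=\{x\in A:f_{\phi^{-1}(k)}(x)>0\ \text{and}\ f_{\phi^{-1}(k')}(x)=0\ \forall\,k'<k\},\]
which cover $A$ up to a $\mu$-null set. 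Writing $\phi^{-1}(k)=(m(k),\ell(k))$, the restriction $\mu\llcorner A_k$ is absolutely continuous with respect to $\nu_{m(k),\ell(k)}$ on $A_k$ (with density $\mathbf{1}_{A_k}/f_{\phi^{-1}(k)}$), so Lemma \ref{lem:albertinikodym}, applied with $\mathbb{M}=\mathbb{P}$ and $\nu_\gamma=\mu_{\gamma,m(k),\ell(k)}$, produces an Alberti representation $\mathcal{A}_k$ of $\mu\llcorner A_k$. Its curve measures are absolutely continuous with respect to those of $(\mathbb{P},\{\mu_{\gamma,m(k),\ell(k)}\})$, so $\mathcal{A}_k$ inherits the $\varphi$-direction of $C_{m(k)}$ and the $\psi_i$-speed strictly greater than $1/\ell(k)\geq 1/k$, as required.

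For the \emph{moreover} statement, if $\mathcal{A}$ already has $\psi_i$-speed $>\delta_i$, then the pointwise inequality $|(\psi_i\circ\gamma)'(t_0)|>\delta_i\Lip(\psi_i,\gamma(t_0))\Lip(\gamma,t_0)$ holds on a set of full $\gamma^{-1}_\#\mu_\gamma$-measure, and since every step above replaces curve measures by ones that are absolutely continuous with respect to them, this bound persists in each $\mathcal{A}_k$. The proof is essentially measure-theoretic bookkeeping with no deep obstacle; the main point requiring care is the joint Borel measurability underlying $T_{m,\ell}(\gamma)$ and hence the Borel character of the measures $\nu_{m,\ell}$, which is supplied directly by Lemma \ref{lem:measurability}.
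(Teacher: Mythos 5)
Your argument is correct in substance, but it takes a genuinely different route from the paper. The paper's proof simply re-runs Corollary \ref{cor:onerep2} for each cone $C_m$ and each speed threshold $1/k$: this produces the good pieces $A_k$ together with a residual singular set $S$ on which $\mathcal H^1(\gamma\cap S)=0$ for every $\gamma\in\Gamma(X)$ in the $\varphi$-direction of some $C_m^\circ$ with positive $\psi_i$-speeds; since the $C_m^\circ$ cover $C\setminus\{0\}$ and the hypothesised representation $\mathcal A$ consists of exactly such curves, $\mu(S)=0$. You instead bypass the Gliksberg--K\"onig--Seever/Banach-space machinery entirely and refine $\mathcal A$ ``fiberwise'', restricting each curve measure $\mu_\gamma$ to the portion of $\dom\gamma$ where the derivative lies in $C_m^\circ$ and the speed exceeds $1/\ell$, and then sorting points of $A$ by Radon--Nikodym densities. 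Your route is more elementary and makes transparent why the refined representations inherit direction and speed (the new curve measures are absolutely continuous with respect to restrictions of the old ones); its cost is that the measurability burden, which the paper outsources to Corollary \ref{cor:onerep2}, lands on you. Two points there deserve attention. First, $\sum_{m,\ell}\nu_{m,\ell}=\mu\llcorner A$ is false as written, since the $T_{m,\ell}(\gamma)$ are nested in $\ell$ and the $C_m^\circ$ may overlap; you should either disjointify the $T_{m,\ell}$ or argue only that $\mu\llcorner A$ is absolutely continuous with respect to the family $\{\nu_{m,\ell}\}$ (which is all the definition of $A_k$ needs, and which does follow from $\bigcup_{m,\ell}T_{m,\ell}(\gamma)$ having full measure). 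Second, Lemma \ref{lem:measurability} covers $(x,\gamma)\mapsto(\varphi\circ\gamma)'(\gamma^{-1}(x))$ and $(\psi_i\circ\gamma)'(\gamma^{-1}(x))$, but not the metric pointwise Lipschitz constant $\Lip(\gamma,\gamma^{-1}(x))$ appearing in your speed condition; its joint Borel measurability needs a separate (routine, entirely analogous) argument, and the passage from joint measurability of $\mathbf 1_{T_{m,\ell}}$ to Borel measurability of $\gamma\mapsto\mu_{\gamma,m,\ell}(Y)$ requires a monotone class argument on top of the measurability of $\gamma\mapsto\mu_\gamma(Y)$. Neither issue is a genuine obstruction, but both must be written out for the proof to stand on its own.
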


\begin{proof}
By applying Corollary \ref{cor:onerep2} using each cone $C_m$ in the
hypotheses and each $\delta_i=1/k$, we obtain a countable Borel
decomposition of $A$ into a sets $A_k$ such that each $\mu\llcorner A_k$
has an Alberti representation of the required form and a set $S$ that
satisfies $\mathcal H^1(\gamma\cap S)=0$ for each $\gamma\in\Gamma(X)$
in the $\varphi$-direction of some $C_m^\circ$ and with positive
$\psi_i$-speed for each $1\leq i \leq n$.  Therefore, since the
$C_m^\circ$ cover $C\setminus\{0\}$, $S$ satisfies this for all
$\gamma$ in the $\varphi$-direction of $C$ with positive
$\psi_i$-speed for each $1\leq i \leq n$.  Since $\mu\llcorner A$ is
represented by $\mathcal A$, $\mu(S)=0$.

Now suppose that $\delta'_i>\delta_i$ and that $\mathcal A$ has
$\psi_i$-speed $\delta'_i$ for each $1\leq i \leq n$.  Then if we
repeat the same process but for each $1\leq i \leq n$ take each
$\mathcal A_k$ to have $\psi_i$-speed between $\delta'_i$ and
$\delta_i$, we obtain the same decomposition but with $S$ satisfying
$\mathcal H^1(\gamma\cap S)=0$ for each $\gamma$ in the
$\varphi$-direction of $C$ with $\psi_i$-speed $\delta'_i$ for each
$1\leq i \leq n$.  Again, since $\mu\llcorner A$ is represented
by $\mathcal A$, $\mu(S)=0$.
\end{proof}

\begin{definition}
We will refer to the above process of obtaining new Alberti
representations with the same properties as an existing representation
but in the direction of thinner cones as \emph{refining} an Alberti
representation.
\end{definition}

The requirement in the previous Corollary that both $\varphi$ and
$\psi$ take values in the same Euclidean space is not necessary.
Indeed, suppose that $(X,d,\mu)$ is a metric measure space satisfying
the hypotheses of Corollary \ref{cor:refine} for $\varphi\colon
X\to\mathbb R^n$ and $\psi\colon X\to\mathbb R^m$ Lipschitz.  If $m<n$
then by repeating any $n-m+1$ coordinate functions of $\psi$, we
obtain a Lipschitz function $\widetilde\psi$ into $\mathbb R^n$.
Similarly, if $n<m$, we may define a Lipschitz function
$\widetilde\varphi$ into $\mathbb R^m$ and $\widetilde w\in\mathbb
R^m$ by $\widetilde\varphi_i = \varphi_i$ and $\widetilde w_i=w_i$ for
$1\leq i \leq n$ and $\widetilde \varphi_i=\widetilde w_i=0$ for $n< i
\leq m$.  Then a Lipschitz curve is in the $\varphi$-direction of
$C(w,\theta)$ if and only if it is in the
$\widetilde\varphi$-direction of $C(\widetilde w,\theta)$.  In both
cases, the original hypotheses of Corollary \ref{cor:refine} are
satisfied and it's conclusion gives us Alberti representations of the
required form.

We will require refinements of Alberti representations that maintain
the speed of a vector valued function.  Suppose an Alberti
representation $\mathcal A$ of $(X,d,\mu)$ is in the
$\varphi$-direction of a cone $C$, so that
$\|(\varphi\circ\gamma)'(t_0)\|>0$ for almost every $\gamma\in\mathcal
A$ and almost every $t_0\in\gamma$.  Then we may refine $\mathcal A$
into Alberti representations, each with positive $\varphi$-speed.
Further, suppose that $\mathcal A$ has $\varphi$-speed strictly
greater than $\delta$, for some $\delta>0$.  Then there exists a
countable Borel decomposition $X=\cup_i X_i$ and for each $i$,
rational $\delta_1\ldots,\delta_n>0$ with $\delta_1^2+\ldots
+\delta_n^2>\delta^2$ such that $\mathcal A\llcorner X_i$ has
$\varphi_i$-speed strictly greater than $\delta_i$, for each $1\leq i
\leq n$.  Therefore, we may take arbitrary refinements of $\mathcal A$,
each with $\varphi$-speed strictly greater than $\delta$.

As in previous constructions, we will find multiple independent
Alberti representations of a metric measure space $(X,d,\mu)$ by
decomposing $X$ into two sets; one set on which $\mu$ has many
independent Alberti representations and another, singular, set.  We
now define the general form that such a singular set takes.  In fact,
we will later see that such sets play a fundamental role when
determining whether a metric measure space is a Lipschitz
differentiability space.  These sets, and the methods involving them
in the following section, are a natural generalisation of those
considered in \cite{acp-structurenullsets} in which the authors investigate measures on
Euclidean space with respect to which Rademacher's Theorem holds.

\begin{definition}\label{def:atilde}
Let $(X,d)$ be a metric space, $\varphi\colon X\to\mathbb R^n$
Lipschitz and $\delta,\theta,\lambda>0$.  We define $\widetilde
A(\varphi;\delta,\theta,\lambda)$ to be the set of $S\subset X$ such
that:
\begin{itemize}
\item For every $x_0\in S$,
\[\Lip(v\cdot\varphi,x_0) > \lambda\Lip(\varphi,x_0)\ \forall v\in\mathbb S^{n-1}.
\]
\item There exists a countable Borel decomposition $S=\cup_k
S_k$ and a countable collection of closed cones $C_k\subset \mathbb
R^n$ of width $1-\theta$ such that each $S_k$ satisfies $\mathcal
H^1(\gamma\cap S_k)=0$ for each $\gamma\in\Gamma(X)$ in the
$\varphi$-direction of $C_k$ with $\varphi$-speed $\delta$.
\end{itemize}
Further, we define $\widetilde A(\delta,\theta,\lambda)$ to be the set
of $S\subset X$ for which there exists a countable Borel decomposition
$S=\cup_k S_k$ and a sequence of Lipschitz functions $\varphi_k\colon
X\to\mathbb R^{n_k}$ such that $S_k\in\widetilde
A(\varphi_k;\delta,\theta,\lambda)$ for each $k\in\mathbb N$.
Finally, we define $\widetilde A(\varphi)$, respectively $\widetilde
A$, to be the set of $S\subset X$ for which there exists a $\lambda>0$
such that $S\in\widetilde A(\varphi;\delta,\theta,\lambda)$,
respectively $S\in\widetilde A(\delta,\theta,\lambda)$, for every
$\delta,\theta>0$.
\end{definition}

\begin{remark}
Observe that $\widetilde A(\varphi;\delta,\theta,\lambda)\subset
\widetilde A(\varphi;\delta',\theta',\lambda')$ whenever $\delta\leq
\delta'$, $\theta\geq\theta'$ and $\lambda\leq \lambda'$, and hence
$\widetilde A(\delta,\theta,\lambda)\subset \widetilde
A(\delta',\theta',\lambda')$.
\end{remark}

By combining the previous results of this section we may construct
many independent Alberti representations of a measure, one-by-one.

\begin{proposition}\label{prop:manyrep2}
Let $(U,\varphi)$ be a $\lambda$-structured chart of dimension $n$ in
a metric measure space $(X,d,\mu)$.  Then for any $0<\delta,\theta<1$
there exists a countable Borel decomposition
\[U=S\cup\bigcup_{i\in\mathbb N}U_i\]
such that each $\mu\llcorner U_i$ has $n$ $\varphi$-independent Alberti
representations with $\varphi$-speed strictly greater than $\delta$
and $S$ belongs to $\widetilde
A(\varphi;\delta,\theta,\lambda/\Lip\varphi)$.
\end{proposition}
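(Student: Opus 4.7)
The plan is to construct the $n$ representations iteratively, applying Corollary \ref{cor:onerep2} to produce each new representation and Corollary \ref{cor:refine} to refine them into cones narrow enough to guarantee $\varphi$-independence. I first fix a finite cover of $\mathbb{S}^{n-1}$ by the open interiors of closed cones $C^{(1)}, \ldots, C^{(M)}$ of width $1-\theta$; compactness of the sphere makes this possible. Since $(U,\varphi)$ is $\lambda$-structured, one has $\Lip(v\cdot\varphi,x_0) \geq \lambda$ for every $v\in\mathbb{S}^{n-1}$ and a.e.\ $x_0\in U$; combined with $\Lip(\varphi,x_0)\le \Lip\varphi$ this gives the strict inequality $\Lip(v\cdot\varphi,x_0)>(\lambda/\Lip\varphi)\Lip(\varphi,x_0)$ required in Definition \ref{def:atilde} after discarding a null set, and in particular $\Lip(\varphi_i,x_0)>0$, so Corollary \ref{cor:onerep2} applies with $\psi=\varphi$. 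A preliminary countable decomposition of $U$ according to the value of $\Lip(\varphi,x_0)$ lets me translate between the speed condition of Definition \ref{def:speed} and the form $\|(\psi_i\circ\gamma)'\|\geq \delta_i\Lip(\gamma,t_0)$ appearing in Corollary \ref{cor:onerep2}.

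I then proceed by induction on the number of representations obtained. Given a Borel piece $V\subset U$ on which $\mu\llcorner V$ has $k<n$ independent Alberti representations in narrow cones $D_1,\ldots,D_k$, I apply Corollary \ref{cor:onerep2} in turn with each $C^{(j)}$ to obtain a Borel splitting $V = V_A^{(j)} \cup V_S^{(j)}$. On each $V_A^{(j)}$ for which $C^{(j)}$ is not contained in the linear span of the central directions of $D_1,\ldots,D_k$, I use Corollary \ref{cor:refine} to refine the newly produced representation into an open sub-cone thin enough to be $\varphi$-independent of $D_1,\ldots,D_k$, raising the count of independent representations to $k+1$ on that piece. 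The remainder $\bigcap_j V_S^{(j)}$ has $\mathcal{H}^1(\gamma\cap\cdot)=0$ for every $\gamma$ in the $\varphi$-direction of some $(C^{(j)})^\circ$ with $\varphi$-speed strictly greater than $\delta$, and is deposited into $S$, labelled by the corresponding cone $C^{(j)}$ of width $1-\theta$. After at most $n$ such iterations, every surviving piece contributes a $U_i$ carrying $n$ $\varphi$-independent representations of $\varphi$-speed strictly greater than $\delta$, and the countable family of deposits assembles into a Borel decomposition of $S$ witnessing membership in $\widetilde{A}(\varphi;\delta,\theta,\lambda/\Lip\varphi)$.

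The main technical obstacle is managing two distinct cone-width scales simultaneously: the representations must live in increasingly narrow sub-cones to remain mutually $\varphi$-independent across $n$ directions, while the cones associated to the singular set must have width exactly $1-\theta$ as demanded by Definition \ref{def:atilde}. This is resolved by applying Corollary \ref{cor:refine} immediately after each new representation is produced, placing the representation in a thin interior sub-cone of $C^{(j)}$ while the original $C^{(j)}$ is reserved for the singular deposits. Secondary bookkeeping, handled by invoking Corollary \ref{cor:onerep2} with a speed parameter $\delta'<\delta$ slightly smaller than $\delta$ and simultaneously applying it again with a parameter $\delta''>\delta$ slightly larger, converts between the strict inequality "$>\delta''$" given by Corollary \ref{cor:onerep2} and the forms "$>\delta$" (for the representations) and "$\geq\delta$" (for the singular set) required by the statement and by Definition \ref{def:atilde}.
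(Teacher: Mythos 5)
Your overall architecture (add representations one at a time via Corollary \ref{cor:onerep2}, use Corollary \ref{cor:refine} to reconcile the wide cones needed for the singular set with the narrow cones needed for independence, collect the singular pieces into $S$) matches the paper's, and you correctly identify the central tension between the two cone-width scales. The gap is in the step that is supposed to deliver independence. Corollary \ref{cor:refine} does not let you ``refine the newly produced representation into an open sub-cone thin enough to be $\varphi$-independent of $D_1,\ldots,D_k$'': it decomposes $V_A^{(j)}$ into countably many pieces according to where the curves of the new representation \emph{actually} point, and on some of those pieces the resulting thin sub-cone of $C^{(j)}$ may consist entirely of directions expressible as linear combinations of vectors from $D_1,\ldots,D_k$. (Your selection criterion does not filter this out: a closed cone of width $1-\theta$ has nonempty interior and so is never contained in a proper linear subspace, so every $C^{(j)}$ passes the test.) On such a piece you have $k$ independent representations plus one dependent one; the piece is not known to be singular for any cone of width $1-\theta$, so it cannot be deposited into $S$, and it does not carry $k+1$ independent representations, so it cannot be promoted. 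Re-running the loop with another $C^{(j')}$ can land you in the same situation, and you give no termination argument, so the induction does not close.

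The paper resolves exactly this point by reversing the order of operations: it first refines the $m$ existing representations into cones $C(w_1,\alpha),\ldots,C(w_m,\alpha)$ with $\alpha<\sqrt{1-\theta^2}$, and only then chooses the centre $w_{m+1}$ of the new cone orthogonal to $w_1,\ldots,w_m$, so that $C(w_1,\alpha),\ldots,C(w_m,\alpha),C(w_{m+1},\theta)$ are independent \emph{as cones} before Corollary \ref{cor:onerep2} is applied. Every curve of the new representation, wherever it points inside the new cone, is then automatically independent of the old ones, and the dichotomy of Corollary \ref{cor:onerep2} cleanly sends each point either to a set with $m+1$ independent representations or to a singular set that is absorbed into $S$. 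To repair your argument you would need an analogous device: for each configuration of already-refined narrow cones, choose the next cone so that independence is guaranteed in advance, rather than hoping the refinement lands in a good sub-cone. Your $\delta'<\delta<\delta''$ bookkeeping for strict versus non-strict speed, and open versus closed cones, is a reasonable way to handle those secondary issues.
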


\begin{proof}
Observe that it suffices to prove the result for $\Lip\varphi=1$.  In
this case, by applying Corollary \ref{cor:onerep2} using a cone of
width $1-\theta$, there exists a Borel decomposition $U=U'\cup S$ such
that $\mu\llcorner U'$ has an Alberti representation with
$\varphi$-speed strictly greater than $\delta$ and $S\in \widetilde
A(\varphi;\delta,\theta,\lambda)$.

Now suppose that, for some $1\leq m<n$, there exists a countable Borel
decomposition $U=\cup_i U_i\cup S$ such that each $\mu\llcorner U_i$ has
$m$ independent Alberti representations with $\varphi$-speed strictly
greater than $\delta$ and $S\in \widetilde
A(\varphi;\delta,\theta,\lambda)$.  Then, by refining if necessary, we
may suppose that the representations of each $\mu\llcorner U_i$ are in
the $\varphi$-direction of cones of width $\alpha$, for some
$0<\alpha<\sqrt{1-\theta^2}$.

Now fix an $i\in\mathbb N$ and let $w_1,\ldots,w_m\in\mathbb S^{n-1}$
such that $C(w_1,\alpha),\ldots,C(w_m,\alpha)$ are independent and
$\mu\llcorner U_i$ has an Alberti representation in the
$\varphi$-direction of $C(w_i,\alpha)$, for each $1\leq i \leq m$.  We
choose $w_{m+1}\in\mathbb S^{n-1}$ orthogonal to $w_1,\ldots,w_m$ so
that, by our choice of $\alpha$,
$C(w_1,\alpha),\ldots,C(w_m,\alpha),C(w_{m+1},\theta)$ are independent
cones in $\mathbb R^n$.  By applying Corollary \ref{cor:onerep2} using
$C(w_{m+1},\theta)$, there exists a Borel decomposition $U_i=U'_i\cup
S_i$ such that $\mu\llcorner U'_i$ has $m+1$ independent Alberti
representations with $\varphi$-speed strictly greater than $\delta$
and $S_i\in \widetilde A(\varphi;\delta,\theta,\lambda)$.

Since $S\cup S_1\cup S_2\cup\ldots \in \widetilde
A(\varphi;\delta,\theta,\lambda)$, we see that there exists a
countable Borel decomposition $X=\cup_i U'_i\cup S'$ such that each
$\mu\llcorner U'_i$ has $m+1$ independent Alberti representations and
$S'\in \widetilde A(\varphi;\delta,\theta,\lambda)$.  By repeating
this process $n-1$ times we obtain the required decomposition.
\end{proof}

\begin{theorem}\label{thm:manyrep}
Let $(U,\varphi)$ be a $\lambda$-structured chart of dimension $n$ in
a metric measure space $(X,d,\mu)$.  Then there exists a
countable Borel decomposition $U=\cup_i U_i$ such that each $\mu\llcorner
U_i$ has $n$ $\varphi$-independent Alberti representations if and only
if, for every $S\in\widetilde A(\varphi)$ with $S\subset U$, $\mu(S)=0$.
\end{theorem}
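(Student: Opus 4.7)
My plan is to prove both implications; the key ingredients are Proposition \ref{prop:manyrep2}, which produces a decomposition modulo a set in $\widetilde A(\varphi;\delta,\theta,\lambda/\Lip\varphi)$, and Corollary \ref{cor:refine}, which refines Alberti representations into sub-representations with prescribed thin direction cones and positive speed.

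\emph{Easy direction ($\Leftarrow$).} For each $(\delta,\theta) \in \mathbb Q^+ \times \mathbb Q^+$, apply Proposition \ref{prop:manyrep2} to obtain a decomposition $U = S^{(\delta,\theta)} \cup \bigcup_i U_i^{(\delta,\theta)}$ with $S^{(\delta,\theta)} \in \widetilde A(\varphi;\delta,\theta,\lambda/\Lip\varphi)$ and each $\mu\llcorner U_i^{(\delta,\theta)}$ carrying $n$ $\varphi$-independent Alberti representations. Put $S^\infty := \bigcap_{(\delta,\theta)} S^{(\delta,\theta)}$; intersecting the given decomposition of each $S^{(\delta,\theta)}$ with $S^\infty$ shows $S^\infty \in \widetilde A(\varphi;\delta,\theta,\lambda/\Lip\varphi)$ for every $(\delta,\theta) \in \mathbb Q^+ \times \mathbb Q^+$. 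By the monotonicity noted in the Remark below Definition \ref{def:atilde} and density of $\mathbb Q^+$, this extends to all $\delta,\theta > 0$, so $S^\infty \in \widetilde A(\varphi)$ and $\mu(S^\infty) = 0$ by hypothesis. The countable family $\{U_i^{(\delta,\theta)}\}$ covers $U \setminus S^\infty$; disjointifying, restricting each Alberti representation to the corresponding disjoint piece, and absorbing $S^\infty$ into the first piece yields the required decomposition.

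\emph{Hard direction ($\Rightarrow$).} Let $S \in \widetilde A(\varphi)$ with $S \subset U$, with witness $\lambda_0 > 0$. I show $\mu(S \cap U_i) = 0$ for each $i$. Fix $i$ and let $\mathcal A_1,\ldots,\mathcal A_n$ be $n$ $\varphi$-independent Alberti representations of $\mu\llcorner U_i$, whose directions lie in independent cones $C_1,\ldots,C_n$. In particular $\mathcal A_1$ has positive $\varphi$-speed almost everywhere, so a countable refinement decomposes $\mu\llcorner U_i$ (modulo a $\mu$-null set) into pieces $\mu\llcorner V_m$, each carrying an Alberti representation $\mathcal A_1^{(m)}$ in the $\varphi$-direction of $C_1$ with $\varphi$-speed strictly above some $\delta_m > 0$. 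Fixing such an $m$ and applying $S \in \widetilde A(\varphi;\delta_m,1/2,\lambda_0)$, write $S = \bigcup_k S_k$ with closed cones $C_k$ of width $1/2$ such that $\mathcal H^1(\gamma \cap S_k) = 0$ for every $\gamma \in \Gamma(X)$ in the $\varphi$-direction of $C_k$ with $\varphi$-speed $\delta_m$. Appending dummy pairs $(\emptyset, C)$ for $C$ in a countable cover of $\mathbb S^{n-1}$ by cones of width $1/2$, one may further assume $\bigcup_k C_k^\circ \supset \mathbb R^n \setminus \{0\}$. Corollary \ref{cor:refine} then refines $\mathcal A_1^{(m)}$ into countably many sub-representations, each in the $\varphi$-direction of some $C_k$ and with $\varphi$-speed $>\delta_m$. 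For each such sub-representation, almost every curve $\gamma$ satisfies $\mathcal H^1(\gamma \cap S_k) = 0$, hence $\mu_\gamma(S_k) = 0$ since $\mu_\gamma \ll \mathcal H^1 \llcorner \im\gamma$; integrating gives zero $\mu$-measure of $S_k$ on the support of the sub-representation. Summing over these sub-representations (which cover $V_m$ modulo a null set), then over $k$, $m$, and $i$, yields $\mu(S) = 0$.

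The main technical obstacle is in the hard direction: the cones $C_k$ arising from the $\widetilde A(\varphi)$-property of $S$ are existential and need not cover $\mathbb R^n \setminus \{0\}$, so refining $\mathcal A_1^{(m)}$ may produce sub-representations in directions outside $\bigcup_k C_k^\circ$, where the $\mathcal H^1$-null condition gives no information. This is bypassed by augmenting the $\widetilde A$-decomposition with dummy pieces $(\emptyset, C)$ paired with any cones $C$ of width $1/2$—trivially permitted because $\mathcal H^1(\gamma \cap \emptyset) = 0$—so that the $C_k^\circ$ cover $\mathbb R^n \setminus \{0\}$; Corollary \ref{cor:refine} then matches the refinements with the $C_k$'s.
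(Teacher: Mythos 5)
Your first implication is sound and is essentially the paper's argument (the paper intersects the singular sets from Proposition \ref{prop:manyrep2} applied with $\delta=\theta=1/m$; your version over all rational pairs works the same way). The converse, however, has a genuine gap, and it is exactly at the point you flag as the main obstacle: the dummy-cone device does not resolve it. After augmenting the $\widetilde A$-decomposition with pairs $(\emptyset,C)$ and applying Corollary \ref{cor:refine} to $\mathcal A_1^{(m)}$, you obtain a decomposition of $V_m$ into pieces, each carrying a sub-representation in the direction of exactly one cone of the augmented family. On a piece whose sub-representation points into a dummy cone $C$, the only conclusion available is $\mu(\emptyset\cap\cdot)=0$: the hypothesis $\mathcal H^1(\gamma\cap S_k)=0$ is assumed only for $\gamma$ in the direction of the genuine cone $C_k$, and a curve in the direction of a dummy cone need not be in the direction of any genuine $C_k$. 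So for a fixed genuine $k$ you only get $\mu(S_k\cap W_k)=0$, where $W_k$ is the union of those pieces whose sub-representation happens to point into $C_k$, and nothing forces the sets $W_k$ (over genuine $k$) to cover $V_m$ up to a null set. Indeed nothing prevents $C_1\cap\bigcup_k C_k^\circ=\emptyset$, in which case every piece of the refinement is assigned a dummy cone and the argument says nothing about $S$ at all. ``Summing over $k$'' therefore does not give $\mu(S\cap V_m)=0$.

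The way out --- and the reason the definition of $\widetilde A(\varphi)$ demands membership in $\widetilde A(\varphi;\delta,\theta,\lambda)$ for \emph{every} $\theta>0$ rather than just $\theta=1/2$ --- is to use all $n$ independent representations and to take $\theta=1/m$ with $m$ large. The singular cones then have width $1-1/m$, i.e.\ are nearly half-spaces, and since $C_1,\ldots,C_n$ are independent, for $m$ large (depending only on $C_1,\ldots,C_n$) every cone of width $1-1/m$ contains one of the $C_i$ (up to replacing $C_i$ by $-C_i$, which is harmless because reversing the parametrisation of a curve preserves its image and hence $\mathcal H^1(\gamma\cap S_k)$). Hence for each $k$ an entire representation $\mathcal A_{i(k)}$ of $\mu\llcorner U_j$ is in the direction of $C_k$ with speed exceeding $1/m$, and integrating $\mu_\gamma(S_k)=0$ over all of $\mathcal A_{i(k)}$ gives $\mu(S_k\cap U_j)=0$ on the whole of $U_j$, with no refinement needed. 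Your argument discards both ingredients that make this work: the other $n-1$ representations, and the freedom to take the singular cones arbitrarily wide.
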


\begin{proof}
First suppose that, for every $S\in\widetilde A(\varphi)$ with
$S\subset U$, $\mu(S)=0$.  By the previous Proposition, for each $m\in\mathbb N$
there exists a Borel decomposition $U=U_m\cup S_m$ such that $U_m$ has a
countable decomposition of the required form and $S_m\in\widetilde
A(\varphi;1/m,1/m,\lambda)$.  This gives a Borel decomposition
$U=S\cup U_1\cup U_2\cup\ldots$ where each $U_i$ is of the required
form and $S=\cap_m S_m\in\widetilde A(\varphi)$, as required.

Conversely, suppose that such a decomposition exists.  Then by
refining each Alberti representation if necessary, for each $U_j$ we
may suppose that there exists a $\delta>0$ and independent cones
$C_1,\ldots,C_n$ such that for each $1\leq i \leq n$, $\mu\llcorner U_j$
has an Alberti representation in the $\varphi$-direction of $C_i$ with
$\varphi$-speed $\delta$.  For such a $U_j$ there exists an
$m\in\mathbb N$ such that $1/m<\delta$ and such that any cone of width
$1-1/m$ must completely contain one of the $C_i$.  Since $\mu\llcorner
U_j$ has the above Alberti representations, for any
$S\in\widetilde A(\varphi;1/m,1/m,\lambda)$, $\mu(S\cap U_j)=0$.  In
particular, for any $S\in\widetilde A(\varphi)$, $\mu(S\cap U_j)=0$
and hence $\mu(S\cap U)=0$.
\end{proof}

%%% Local Variables: 
%%% mode: latex
%%% TeX-master: "structure"
%%% End: 

 %representations in diff spaces
% basic building block
% apply the general theorem using the construction
% arbitrary representations
\section{First Alberti representations in differentiability spaces}\label{sec:spanreps}
By combining our previous results, we now prove the existence of
independent Alberti representations in Lipschitz differentiability
spaces.  We will apply Theorem \ref{thm:manyrep} to construct
independent representations and will show, for any chart $(U,\varphi)$
and $S\in\widetilde A(\varphi)$, that $\mu(S\cap U)=0$ by constructing
a Lipschitz function that is differentiable almost nowhere in such a
set, via Proposition \ref{prop:cons}.

\subsection{Constructions in Banach spaces}
We will again embed our metric space into a Banach space.  This
provides many Lipschitz curves that we may use to define the Lipschitz
functions required for Proposition \ref{prop:cons}.

In this subsection we fix the following notation for simplicity.

\begin{notation}\label{not:banachcurves}
For $n\in\mathbb N$ we let $\mathcal B$ to be the Banach space
defined in Definition \ref{def:embedding}, let $\varphi\colon
\mathcal B\to \mathbb R^n$ be the projection onto the first factor and fix
$w\in\mathbb S^{n-1}$.  We also fix $X$ a compact subset of $\mathbb
R^{n} \times \{0\} \times \ell_{\infty} \subset \mathcal B$,
$0<\delta,\theta<1$ and $S\subset X$ closed with $\mathcal
H^1(\gamma\cap S)=0$ for every $\gamma\in\Gamma(X)$ in the
$\varphi$-direction of $C(w,\theta)$ with $\varphi$-speed $\delta$.

For each $t\in\mathbb R$ we define $\mathcal P_t\colon \mathbb
R^n\to\mathbb R^n$ to be the orthogonal projection onto
\[\{v\in\mathbb R^n:(v-tw)\cdot w =0\}\]
and define $P_t\colon \mathcal B\to\mathcal
B$ by
\[P_t(u,v,s)=(\mathcal P_t(u),v,s).\]
We also define $\mathcal P=\mathcal P_0$,
$\varphi_0=\inf\{\varphi(x)\cdot w:x\in X\}$, $P=P_{\varphi_0}$ and
$\Omega$ to be the closed, convex hull of $P(X)\cup X$, a compact,
convex set.

Further, for any measurable $I\subset \mathbb R$ and Lipschitz
$\gamma\colon I\to\Omega$, we express
\begin{equation*}
\gamma = (\varphi\circ\gamma, 0, \gamma^{*})
\end{equation*}
and write $\mathcal V_{\gamma}(t)= \Lip(\gamma^{*},t)$, so that
\begin{equation*}
  \Lip(\gamma, t) = \operatorname{max}\{\|(\varphi\circ\gamma)'(t)\|,
  \mathcal V_{\gamma}(t)\} 
\end{equation*}
for almost every $t \in I$.

In addition, for any Borel $V\subset\mathcal B$ and $\epsilon>0$, we
define
\begin{equation*}
  Q(V,\gamma, \epsilon) = \int_{\gamma^{-1}(\Omega\setminus V)}
  (\varphi\circ\gamma)' \cdot w + \int_{\gamma^{-1}(\Omega)}
  \left(  K(\theta) \|\mathcal
  P((\varphi\circ\gamma)') \| + \delta \mathcal V_{\gamma}
\right)
 + \epsilon\mathcal H^1(\gamma),
\end{equation*}
where $K(\theta)=(1-\theta)/\sqrt{\theta(2-\theta)}$.
\end{notation}

\begin{lemma}\label{lem:fundobs}
For any $\epsilon>0$ there exists a $\mathcal B$-open set $V\supset S$
such that, for any compact interval $I$ and Lipschitz $\gamma\colon
I\to\Omega$ with $(\varphi\circ\gamma)'\cdot w\geq 0$ almost
everywhere,
\[Q(V,\gamma, \epsilon) \geq (\varphi(\gamma_e)-\varphi(\gamma_s))\cdot w -\epsilon,\]
where $\gamma_s$ and $\gamma_e$ are the endpoints of $\gamma$.
\end{lemma}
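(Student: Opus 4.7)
The plan is to prove the inequality by a pointwise decomposition of the integrand, reducing to a uniform length bound on a ``good part'' of $\gamma$ inside $V$, which I then establish by a compactness argument exploiting the hypothesis on $S$. Set
\begin{equation*}
A_\gamma := \{t\in I : (\varphi\circ\gamma)'(t)\in C(w,\theta) \text{ and } \|(\varphi\circ\gamma)'(t)\|\geq \delta\mathcal V_\gamma(t)\}.
\end{equation*}
On $I\setminus A_\gamma$, there is a pointwise bound: if $(\varphi\circ\gamma)'(t)\notin C(w,\theta)$, then using $\|(\varphi\circ\gamma)'(t)\|^2 = ((\varphi\circ\gamma)'(t)\cdot w)^2 + \|\mathcal P((\varphi\circ\gamma)'(t))\|^2$ together with $(\varphi\circ\gamma)'(t)\cdot w < (1-\theta)\|(\varphi\circ\gamma)'(t)\|$ gives $(\varphi\circ\gamma)'(t)\cdot w \leq K(\theta)\|\mathcal P((\varphi\circ\gamma)'(t))\|$ (this is the raison d'\^etre of the constant $K(\theta)$); otherwise $\|(\varphi\circ\gamma)'(t)\| < \delta\mathcal V_\gamma(t)$, giving $(\varphi\circ\gamma)'(t)\cdot w < \delta\mathcal V_\gamma(t)$. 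Splitting $I = \gamma^{-1}(V)\cup\gamma^{-1}(\Omega\setminus V)$ and using the bound $(\varphi\circ\gamma)'(t)\cdot w \leq \Lip(\gamma,t)$ on $A_\gamma$, the lemma reduces to exhibiting an open $V\supset S$ for which
\begin{equation*}
\int_{A_\gamma\cap \gamma^{-1}(V)} \Lip(\gamma,t)\, dt \leq \epsilon \mathcal H^1(\gamma) + \epsilon
\end{equation*}
holds for every Lipschitz $\gamma\colon I\to\Omega$ satisfying the hypotheses.

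To obtain such a $V$, I argue by contradiction. Choose a decreasing sequence of open sets $V_k\downarrow S$ with $\bigcap_k\overline{V_k} = S$ (for example $V_k = \{d(\cdot,S)<1/k\}$), and, for each $k$, a Lipschitz $\gamma_k\colon I_k\to\Omega$ violating the reduced bound for $V_k$. After arc-length reparametrization I may assume $\Lip(\gamma_k,\cdot)\equiv 1$, and by subdividing $I_k$ and applying pigeonhole---exploiting the $\epsilon\mathcal H^1(\gamma)$ term to concentrate the failure onto a subinterval of uniformly bounded length $L_0$---reduce to $I_k = [0,L_0]$ with $\operatorname{Leb}(E_k)\geq c > 0$ uniformly in $k$, where $E_k := A_{\gamma_k}\cap\gamma_k^{-1}(V_k)$. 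By Arzel\`a--Ascoli in the compact metric space $\Omega$, pass to a subsequence converging uniformly to some 1-Lipschitz $\gamma_\infty\colon[0,L_0]\to\Omega$.

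The goal is to exhibit a positive-measure set $A_\infty\subset[0,L_0]$ on which $\gamma_\infty(t)\in S$, $(\varphi\circ\gamma_\infty)'(t)\in C(w,\theta)$ and $\|(\varphi\circ\gamma_\infty)'(t)\|\geq \delta\mathcal V_{\gamma_\infty}(t)$; then, thanks to these two conditions integrating to give $\|\gamma_\infty(t)-\gamma_\infty(s)\|\geq (1-\theta)\delta|t-s|$, the restriction of $\gamma_\infty$ to a compact subset of $A_\infty$ lies in $\Gamma(X)$, is in the $\varphi$-direction of $C(w,\theta)$ with $\varphi$-speed $\delta$, and meets $S$ in positive $\mathcal H^1$-measure, contradicting the hypothesis on $S$. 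The inclusion $\gamma_\infty(t)\in S$ for $t\in\limsup_k E_k$ is immediate from $\gamma_k(t)\in V_k$ and $\gamma_k(t)\to\gamma_\infty(t)$, and Fatou for sets gives $\operatorname{Leb}(\limsup_k E_k)\geq c$.

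The main obstacle is inheriting the cone and speed conditions in the limit, since pointwise derivatives of $\gamma_k$ need not converge. Rather than work with $E_k$ directly, I would approximate it from within by closed sets---via Lusin applied to $(\varphi\circ\gamma_k)'$ and the metric derivative $\mathcal V_{\gamma_k}$---on which the defining inequalities hold uniformly with a strict margin $\delta+\eta$ and a cone $C(w,\theta-\eta)$. Hausdorff compactness of these closed subsets within $[0,L_0]$, combined with weak-$*$ convergence of $(\varphi\circ\gamma_k)'$ in $L^\infty$ and lower semicontinuity of $\mathcal V_\gamma$ under uniform convergence of the $\ell_\infty$-component, then produces a limiting closed set $A_\infty$ on which $\gamma_\infty$ inherits the (slightly weakened but still sufficient) cone and speed conditions. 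This is the technically most delicate part of the argument.
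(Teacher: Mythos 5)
Your skeleton is right up to and including the compactness step: the pointwise dichotomy giving $(\varphi\circ\gamma)'\cdot w\leq K(\theta)\|\mathcal P((\varphi\circ\gamma)')\|+\delta\mathcal V_\gamma$ off the good set $A_\gamma$, the reduction to a length bound on $A_\gamma\cap\gamma^{-1}(V)$, the contradiction setup with $V_k\downarrow S$, arc-length normalisation, pigeonhole, Arzel\`a--Ascoli, and the fact that $\gamma_\infty(t)\in S$ on $\limsup_k E_k$, a set of measure at least $c$. The gap is exactly where you flag it, and the fix you sketch does not close it. First, you cannot approximate $E_k$ from within by sets on which the defining inequalities hold ``with a strict margin $\delta+\eta$ and a cone $C(w,\theta-\eta)$'': those inequalities are sharp, and the subset of $E_k$ where they hold with any positive margin may be empty (e.g.\ $(\varphi\circ\gamma_k)'$ lying on the boundary of $C(w,\theta)$ throughout $E_k$); Lusin gives continuity, not slack. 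Relaxing the definition of $A_\gamma$ instead goes the wrong way: the limit curve would lie in a larger cone with smaller speed, to which the hypothesis on $S$ does not apply. Second, even with closed Lusin subsets, the sets $E_k$ move with $k$: a sequence of subsets of $[0,L_0]$ of measure $\geq c$ need not admit a common positive-measure subset along any subsequence; Hausdorff limits of closed sets do not control Lebesgue measure from below; and on $\limsup_k E_k$ the weak-$*$ limit of $(\varphi\circ\gamma_k)'$ is contaminated by the completely uncontrolled values of $(\varphi\circ\gamma_k)'$ off $E_k$. So the set $A_\infty$ you need may simply not exist, and this is a genuine obstruction, not a technicality.

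The paper sidesteps this by never transferring the cone and speed conditions from the $\gamma_k$ to the limit. It passes the \emph{functional} to the limit instead: lower semicontinuity of total variation and of length under uniform convergence (applied with $\overline{V_m}$, exhausting $\gamma^{-1}(\Omega\setminus\overline{V_m})$ by finitely many compact intervals) yields $Q(S,\gamma_\infty,\epsilon)\leq(\varphi(\gamma_e)-\varphi(\gamma_s))\cdot w-\epsilon$ directly for the limit curve. The pointwise decomposition is then performed on $\gamma_\infty$ alone: on the set $D$ where $(\varphi\circ\gamma_\infty)'\in C^\circ(w,\theta)$ and $\|(\varphi\circ\gamma_\infty)'\|\geq\delta\Lip(\gamma_\infty,\cdot)$ one has $\Lip(\gamma_\infty,\cdot)>0$, so $D$ decomposes into compacta on which $\gamma_\infty$ is bi-Lipschitz, the hypothesis on $S$ applies to these restrictions, and $D\cap\gamma_\infty^{-1}(S)$ is Lebesgue null; your dichotomy then holds almost everywhere on $\gamma_\infty^{-1}(S)$ and contradicts the displayed inequality. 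If you want to keep your framework, the repair is to abandon the construction of $A_\infty$ and instead prove semicontinuity of the integral inequality itself, doing the good/bad decomposition only on the limit curve.
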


\begin{proof}
Suppose that the conclusion is false.  Then there exists an
$\epsilon>0$ and for each $m\in\mathbb N$ a Lipschitz $\gamma_m$ that
violates the inequality for the open set $V_m=B(S,1/m)$.  Since
$\Omega$ is compact, by replacing $\epsilon$ by $\epsilon/2$ if
necessary, we may suppose that each $\gamma_m$ has the same endpoints,
$\gamma_s$ and $\gamma_e$.  In particular, for each $m\in\mathbb N$,
\[\mathcal H^1(\gamma_m)\leq Q(V_m,\gamma_m, \epsilon)/\epsilon \leq
(\varphi(\gamma_e))-\varphi(\gamma_s))\cdot w/\epsilon.
\]
Therefore, there exists an $L>0$ and a reparametrisation of each
$\gamma_m$ such that each is a 1-Lipschitz function defined on
$[0,L]$.  Further, by the Arzel\`a-Ascoli theorem and after possibly
choosing a suitable subsequence, there exists a 1-Lipschitz
$\gamma\colon [0,L]\to\Omega$ such that $\gamma_m\to\gamma$ uniformly.
Moreover, since $(\varphi\circ\gamma_m)'\geq 0$ almost everywhere for
each $m\in\mathbb N$, $(\varphi\circ\gamma)'\geq 0$ almost everywhere.
We will show that the image of $\gamma$ intersects $S$ in a set of
positive measure from which we deduce a contradiction to our
hypothesis.

Fix an $m\in\mathbb N$ and $\eta>0$.  Since
$\beta=\gamma^{-1}(\Omega\setminus \overline{V_m})$ is an open subset
of $\mathbb R$ of finite measure, there exist disjoint compact intervals
$\beta_1,\ldots,\beta_N\subset\beta$ such that $\mathcal
L^1(\beta\setminus \cup_i \beta_i)\leq \eta$.  Therefore,
\begin{equation*}
   \int_{\gamma^{-1}(\Omega\setminus \overline{V_{m}})}
   (\varphi\circ\gamma)' \cdot w \leq \int_{\cup_{i}\beta_{i}}(\varphi
   \circ \gamma)' \cdot w + \eta \Lip\varphi.
\end{equation*}
Moreover, by the lower semicontinuity of the total variation of
Lipschitz functions under uniform convergence,
\begin{equation*}
   \int_{\cup_{i\beta_{i}}} (\varphi\circ\gamma)' \cdot w \leq
   \liminf_{k \to \infty}\int_{\cup_{i}\beta_{i}}(\varphi
   \circ \gamma_{k})' \cdot w.
\end{equation*}
Now, since $\gamma_k\to\gamma$ uniformly and each $\gamma(\beta_i)$
is compact, there exists a $K\in\mathbb N$ such that, for all $k\geq
K$ and $1\leq i \leq N$, $\gamma_k(\beta_i)\subset \Omega \setminus
\overline{V_m}$.  Therefore, since $(\varphi \circ \gamma_{k})' \cdot
w \geq 0$ almost everywhere for each $k$,
\begin{equation*}
   \int_{\gamma^{-1}(\Omega\setminus \overline{V_{m}})}
   (\varphi\circ\gamma)' \cdot w \leq \liminf_{k\to\infty}\int_{\gamma_{k}^{-1}(\Omega\setminus \overline{V_{m}})}(\varphi
   \circ \gamma_{k})' \cdot w + \eta \Lip\varphi.
\end{equation*}
Further, by again using the lower semicontinuity of total variation
and of $\alpha \mapsto \mathcal H^{1}(\alpha)$,
\[Q(\overline{V_m},\gamma,\epsilon) \leq \liminf_{k\to\infty}
Q(\overline{V_m},\gamma_k,\epsilon) +\eta \Lip\varphi.\]
Finally, since $(\varphi\circ\gamma)'\geq 0$ almost everywhere and
$V_k\subset V_m$ for every $k\geq m$,
\[Q(\overline{V_m},\gamma, \epsilon) \leq (\varphi(\gamma_e)-\varphi(\gamma_s))
\cdot w -\epsilon + \eta \Lip\varphi.
\]

This is true for all $m\in\mathbb N$ and $\eta>0$.  Moreover, since
$S$ is closed, $\cap_m \overline{V_m}=S$ and so
\[Q(S,\gamma,\epsilon)\leq (\varphi(\gamma_e)-\varphi(\gamma_s))\cdot w -\epsilon.\]
In particular, by the fundamental theorem of calculus,
\begin{equation}\label{eq:fundobspt1} \epsilon\leq \int_{\gamma^{-1}(S)} (\varphi\circ\gamma)'\cdot w -
\int_{\gamma^{-1}(\Omega)} \left( K(\theta)\|\mathcal
  P((\varphi\circ\gamma)')\| +\delta\mathcal V_{\gamma}\right) -\epsilon\mathcal H^1(\gamma).
\end{equation}
On the other hand, when $\gamma$ is restricted to
\[D:=\{t_0: (\varphi\circ\gamma)'(t_0) \cdot w >
(1-\theta)\|(\varphi\circ\gamma)'(t_0)\| \geq (1-\theta)\delta
\Lip(\gamma,t_0)\},\] it is a Lipschitz function in the
$\varphi$-direction of $C$ with $\varphi$-speed $\delta$.  Moreover,
$\Lip(\gamma,t_0)>0$ for every $t_0\in D$ and so we may decompose $D$
into a Lebesgue null set and a countable collection of compact sets
$K_i$ on which $\gamma$ is bi-Lipschitz.  In particular, each
$\restr{\gamma}{K_i}\in\Gamma(X)$ and so $D\cap\gamma^{-1}(S)$ is
Lebesgue null.  Therefore, for almost every $t_{0} \in \gamma^{-1}(S)$,
either
\begin{equation*}
  (\varphi \circ \gamma)'(t_{0}) \leq (1-\theta) \|(\varphi \circ
  \gamma)'(t_{0}) \| \leq K(\theta) \|\mathcal P(\varphi\circ\gamma)'(t_{0})\|,
\end{equation*}
or
\begin{equation*} \|(\varphi\circ\gamma)'(t_0)\| \leq \delta
\Lip(\gamma,t_0) \text{ and so }
\Lip(\gamma,t_0) = \mathcal V_{\gamma}(t_{0}).
\end{equation*}
Thus
\begin{equation}\label{eq:fundobspt2}\int_{\gamma^{-1}(S)}(\varphi\circ\gamma)'\cdot w\leq
  \int_{\gamma^{-1}(S)}\left( K(\theta) \|\mathcal
    P((\varphi\circ\gamma)')\|+ \delta\mathcal V_{\gamma}\right).
\end{equation}
By combining equations \eqref{eq:fundobspt1} and \eqref{eq:fundobspt2}
we obtain $\epsilon\leq 0$, giving the required contradiction.
\end{proof}

We use this result to construct a Lipschitz function on $\Omega$
defined via Lipschitz curves connecting points in $\Omega$.  We will
see that functions of this form have the properties required to apply
Proposition \ref{prop:cons}.

\begin{lemma}\label{lem:firstfunct}
For any $\epsilon>0$ there exists a $\mathcal B$-open set $V\supset S$
and a $(1+K(\theta)+\delta+\epsilon)$-Lipschitz function $f\colon
\Omega\to\mathbb R$ such that:
\begin{enumerate}
\item \label{item:funcprop2} For every $x,x_0\in \Omega$ with $(\varphi(x)-\varphi(x_0))\cdot w\geq 0$,
\[f(x)-f(x_0)\geq(\varphi(x)-\varphi(x_0))\cdot w-\epsilon.\]
\item \label{item:funcprop1}For every $y,z$ contained in a ball $B\subset V$,
\[|f(y)-f(z)|\leq K(\theta)\|\mathcal
P(\varphi(y)-\varphi(z))\|+(\delta+\epsilon)\|y-z\|.\]
\end{enumerate}
\end{lemma}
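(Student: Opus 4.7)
My plan is to construct $f$ as an infimum of a cost-like functional over admissible Lipschitz curves ending at $x$. First I would apply Lemma~\ref{lem:fundobs} to fix an open $V\supset S$ realizing the fundamental inequality $Q(V,\gamma,\epsilon)\ge (\varphi(\gamma_e)-\varphi(\gamma_s))\cdot w-\epsilon$ for every admissible Lipschitz $\gamma\colon I\to\Omega$ (shrinking $V$ if necessary, which only makes the inequality stronger). Then define
\[
f(x):=\inf\bigl\{\varphi(\gamma_s)\cdot w+Q(V,\gamma,\epsilon):\gamma\in\Pi(\Omega),\ \gamma_e=x,\ (\varphi\circ\gamma)'\cdot w\ge 0\text{ a.e.}\bigr\}.
\]
The constant curve $\gamma\equiv x$ is admissible (viewed as a degenerate element of $\Pi(\Omega)$ with one-point domain) and has $Q=0$, giving $f(x)\le\varphi(x)\cdot w$. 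Conversely, Lemma~\ref{lem:fundobs} applied to any admissible $\gamma$ ending at $x$ yields $\varphi(\gamma_s)\cdot w+Q(V,\gamma,\epsilon)\ge\varphi(x)\cdot w-\epsilon$, so $f(x)\in[\varphi(x)\cdot w-\epsilon,\varphi(x)\cdot w]$. Property~(1) then follows at once: if $(\varphi(x)-\varphi(x_0))\cdot w\ge 0$, then $f(x)-f(x_0)\ge(\varphi(x)\cdot w-\epsilon)-\varphi(x_0)\cdot w=(\varphi(x)-\varphi(x_0))\cdot w-\epsilon$.

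For the Lipschitz bound and~(2), the key observation is that whenever $(\varphi(y)-\varphi(z))\cdot w\ge 0$ the straight segment $\sigma$ from $z$ to $y$ lies in $\Omega$ (convexity) and is admissible, so concatenating with a near-optimal path to $z$ gives $f(y)\le f(z)+Q(V,\sigma,\epsilon)$. Computing $Q(V,\sigma,\epsilon)$ on this segment in arclength parametrization and using $\psi\equiv 0$ on $\Omega$ together with $\mathcal V_\sigma\le 1$ and $\mathcal H^1(\sigma)=\|y-z\|$ yields
\[
Q(V,\sigma,\epsilon)\le\bigl(\mathbf 1_{V^c}\text{-integral}\bigr)+K(\theta)\|\mathcal P(\varphi(y)-\varphi(z))\|+\delta\|y-z\|+\epsilon\|y-z\|,
\]
and the first integral is bounded by $(\varphi(y)-\varphi(z))\cdot w\le\|y-z\|$ in general, but vanishes entirely when $\sigma\subset V$. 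Since any ball $B\subset V$ is convex, the hypothesis $y,z\in B\subset V$ of~(2) forces $\sigma\subset V$, so one obtains exactly the right-hand side of~(2) in the ``upward'' direction, and the global Lipschitz bound $(1+K(\theta)+\delta+\epsilon)\|y-z\|$ otherwise.

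The main obstacle is the reverse direction of the Lipschitz estimate and of~(2): for $y,z$ with $(\varphi(y)-\varphi(z))\cdot w\ge 0$ the segment from $y$ to $z$ is not admissible, so one cannot symmetrize directly. To handle it I would take a near-optimal admissible $\gamma^*_y$ for $f(y)$, use the intermediate value theorem to find $t^*$ with $\varphi(\gamma^*_y(t^*))\cdot w=\varphi(z)\cdot w$, truncate $\gamma^*_y$ at $t^*$, and close up to $z$ by a horizontal (constant-$w$-level) segment which is automatically admissible. Estimating the cost of $\gamma^*_y|_{[t^*,t_e]}$ from below by Lemma~\ref{lem:fundobs} applied to that sub-curve, and the cost of the closing segment using that its first $Q$-integrand vanishes (constant $w$-level) and that $\|\gamma^*_y(t^*)-z\|$ is controlled because the $\epsilon\mathcal H^1(\gamma)$ penalty in $Q$ makes near-optimal paths have uniformly bounded length (Arzel\`a--Ascoli, plus lower semicontinuity of $Q$ as used in the proof of Lemma~\ref{lem:fundobs}), gives the matching bound. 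This compactness step is where I expect the real work to lie, and it is what promotes the one-sided segment estimates above into a genuine $(1+K(\theta)+\delta+\epsilon)$-Lipschitz function satisfying both halves of~(2).
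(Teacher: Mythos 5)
Your definition of $f$ (allowing competitors to start anywhere and charging the starting height $\varphi(\gamma_s)\cdot w$) is a harmless variant of the paper's (which forces competitors to start on $P(\Omega)$), and your treatment of property (1) and of the ``upward'' half of (2) --- concatenate a near-optimal competitor for the lower point with the straight segment, whose first $Q$-integral vanishes because a ball contained in $V$ is convex --- is exactly the paper's argument. The genuine gap is in the reverse estimate, and it is not the ``compactness step'' you point to. Two things go wrong with truncating a near-optimal competitor $\gamma^*_y$ for the higher point at the level $\varphi(z)\cdot w$ and closing up with a horizontal segment. First, in your formulation the constant curve at $y$ already achieves $\varphi(y)\cdot w$, which is within $\epsilon$ of the infimum by Lemma \ref{lem:fundobs}; so a near-optimal competitor need not pass through the level $\varphi(z)\cdot w$ at all and the intermediate value theorem is unavailable. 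Second, and fatally, even when $t^*$ exists the point $\gamma^*_y(t^*)$ has nothing to do with the ball containing $y$ and $z$: its distance to $z$ is controlled only by the total length of the competitor, so the horizontal closing segment costs $O(\mathcal H^1(\gamma^*_y))$ rather than $O(\|y-z\|)$. This yields at best $f(z)\le f(y)+C$ with $C$ independent of $\|y-z\|$, and an additive bound is useless for (2), which carries no additive error term. Your appeal to the $\epsilon\mathcal H^1$ penalty and Arzel\`a--Ascoli gives a uniform length bound, not a bound proportional to $\|y-z\|$.

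The missing device is the family of level projections $P_t$ set up in Notation \ref{not:banachcurves}. The paper applies $P_{\varphi(y)\cdot w}$ to the \emph{entire tail} of the competitor beyond the crossing time, not merely to its endpoint: this does not increase any term of $Q$ (the $w$-component of $(\varphi\circ\gamma)'$ becomes zero, $\mathcal P((\varphi\circ\gamma)')$ and $\mathcal V_\gamma$ are unchanged, and lengths do not increase since $P_t$ is $1$-Lipschitz), and it relocates the terminal point to $P_{\varphi(y)\cdot w}(z)$. Because $P_{\varphi(y)\cdot w}$ fixes the lower point and is $1$-Lipschitz, the closing segment then has length at most $\|y-z\|$ and horizontal $\varphi$-displacement exactly $\mathcal P(\varphi(y)-\varphi(z))$, which produces the reverse inequality with the correct constants (and, as in the paper, valid for all $y,z$, not only inside $V$). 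Without this projection your argument does not close.
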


\begin{proof}
For $\epsilon>0$ let $V\supset S$ be the $\Omega$-open set obtained
from an application of Lemma \ref{lem:fundobs}.  By our definition of
$\Omega$, for every $x\in \Omega$ the straight line segment joining
$P(x)$ to $x$ lies in $\Omega$.  Therefore we may define a function
$f\colon \Omega \to\mathbb R$ by
\[f(x)=\inf Q(V,\gamma, \epsilon)\]
where the infimum is taken over all $l\geq 0$ and all Lipschitz
$\gamma\colon [0,l]\to\Omega$ with $(\varphi\circ\gamma)'\cdot w \geq
0$ almost everywhere such that $\gamma(0)\in P(\Omega)$ and
$\gamma(l)=x$.  We will call such a curve \emph{admissible for $x$}.

We now use the conclusion of Lemma \ref{lem:fundobs} to deduce the
required properties of $f$.  We first show that $f$ is Lipschitz and
satisfies \eqref{item:funcprop1}.  Indeed, let $y,z\in\Omega$ with
$(\varphi(z)-\varphi(y))\cdot w\geq 0$ and let $\gamma\colon
[0,l]\to\Omega$ be any admissible curve for $y$.  Define
$\widetilde\gamma\colon [0,l+1]\to \mathcal B$ by
\[\widetilde\gamma(t)=\begin{cases}\gamma(t) & t\in[0,l]\\
y+(t-l)(z-y) & t\in (l,l+1].\end{cases}
\]
Then since $(\varphi(z)-\varphi(y))\cdot w\geq 0$,
$(\varphi\circ\widetilde\gamma)' \geq 0$ almost everywhere and so
$\widetilde\gamma$ is admissible for $z$.  Therefore
\begin{align}\nonumber f(z)&\leq f(y) + Q(V,\restr{\widetilde\gamma}{[l,l+1]})\\
\nonumber&\leq f(y) + \mathcal H^1(\widetilde\gamma([l,l+1])\setminus V)+
  K(\theta) \|\mathcal P(\varphi(y)-\varphi(z))\| + (\delta+\epsilon)\mathcal H^1(\widetilde\gamma([l,l+1]))\\
\label{eq:firstfunct0}&\leq f(y) + K(\theta)\|\mathcal P(\varphi(y)-\varphi(z))\|+(\delta+\epsilon)\|y-z\| \text{ if }y,z\in\text{Ball}\subset V\\
\label{eq:firstfunct1}&\leq f(y) + K(\theta)\|\mathcal
  P(\varphi(y)-\varphi(z))\|+(1 +\delta+\epsilon)\|y-z\| \text{ otherwise.}
\end{align}

To bound $f(y)$, let $\gamma\colon [0,l]\to \Omega$ be admissible
for $z$ and set
\[t_0=\inf\{t\in I:\varphi(\gamma(t))\cdot w\geq \varphi(y)\cdot w\}.\]
Then since $(\varphi\circ\gamma)'\geq 0$ almost everywhere,
$(\varphi(\gamma(t))- \varphi(y))\cdot w \geq 0$ for all $t\geq t_0$.
We define $\widetilde\gamma$ by
\[\widetilde\gamma(t)=\begin{cases} \gamma(t) & \text{if }0\leq t\leq t_0\\
P_{\varphi(y)\cdot w}(\gamma(t)) & t_0<t\leq
l\\ \widetilde\gamma(l)+(t-l)(y-\widetilde\gamma(l)) & t\in
(l,l+1]. \end{cases}
\]
Then $\widetilde\gamma(l+1)=y$ and $(\varphi\circ \widetilde
\gamma)'(t)\cdot w = 0$ for almost every $t\in [t_0,l+1]$, so that
$\widetilde\gamma$ is admissible for $y$.  Further, for almost every
$t\in [t_0,l]$,
\[\|\mathcal P((\varphi\circ\widetilde\gamma)'(t))\| \leq \|\mathcal
P((\varphi\circ\gamma)'(t))\| \text{ and }
\mathcal V_{\widetilde\gamma}(t) = \mathcal V_{\gamma}(t).
\]
Therefore
\[Q(V,\restr{\widetilde\gamma}{[t_0,l]}) \leq Q(V,\restr{\gamma}{[t_0,l]})
\]
and so
\begin{equation}\label{eq:firstfunct2}f(y)\leq f(z) + K(\theta)\|\mathcal P(\varphi(y)-\varphi(z))\|+(\delta+\epsilon)\|y-z\|.\end{equation}
By combining equations \eqref{eq:firstfunct0}, \eqref{eq:firstfunct1} and
\eqref{eq:firstfunct2} we see that $f$ is a
$(1+K(\theta)+\delta+\epsilon)$-Lipschitz function such that, for every $y,z$
belonging to a ball contained in $V$,
\[|f(y)-f(z)|\leq K(\theta)\|\mathcal P(\varphi(y)-\varphi(z))\|+(\delta+\epsilon) \|y-z\|.\]

To prove \eqref{item:funcprop2} let $x,x_0\in\Omega$ with
$(\varphi(x)-\varphi(x_0))\cdot w \geq 0$.  By the definition of
$\Omega$ and $P$, the straight line segment $\gamma$ joining $P(x_0)$
to $x_0$ lies entirely within $\Omega$ and satisfies
$(\varphi\circ\gamma)'\geq 0$ almost everywhere and so is
admissible for $x_0$.  Further, $\mathcal
P((\varphi\circ\gamma)'(t_{0}))=0= \mathcal V_{\gamma}(t_{0})$ for almost
every $t_{0}$.  Therefore,
\[f(x_0)\leq (\varphi(x_0)-\varphi(P(x_0)))\cdot w + \epsilon \|x_0- P(x_0)\|.\]
Further, by the conclusion of Lemma \ref{lem:fundobs},
\[f(x)\geq (\varphi(x)-\varphi(P(x)))\cdot w -\epsilon.\]
Therefore, since $(\varphi(P(x))-\varphi(P(x_0)))\cdot w=0$,
\[f(x)-f(x_0)\geq (\varphi(x)-\varphi(x_0))\cdot w - \epsilon
(1+\operatorname{diam}X).\]
This proves the result for $(1 + \operatorname{diam} X)\epsilon$,
which suffices.
\end{proof}

\subsection{Application to Lipschitz differentiability spaces}
We first apply our previous construction to subsets of metric measure spaces.

\begin{lemma}\label{lem:firstfunlds}
Let $(X,d,\mu)$ be a metric measure space, $U\subset X$ compact and
$\varphi\colon X\to\mathbb R^n$ Lipschitz.  Suppose that, for some
$w\in\mathbb S^{n-1}$, $0<\theta<1$ and $\delta>0$, $S\subset U$ is
closed and satisfies $\mathcal H^1(\gamma\cap S)=0$ for any
$\gamma\in\Gamma(X)$ in the $\varphi$-direction of $C(w,\theta)$ with
$\varphi$-speed $\delta$.  Then for any $\epsilon>0$ there exists a
$(1+K(\theta)+\delta+\epsilon)\Lip\varphi$-Lipschitz function $f\colon U\to\mathbb R$
and a $\rho>0$ such that:
\begin{itemize}
\item For every $x_0\in S$ and $x\in U$ with $(\varphi(x)-\varphi(x_0))\cdot w\geq 0$,
\[f(x)-f(x_0)\geq (\varphi(x)-\varphi(x_0))\cdot w -\epsilon\]
\item For every $x_0\in S$ and $y,z\in B(x_0,\rho)$,
\[|f(y)-f(z)|\leq K(\theta)\|\mathcal P(\varphi(y)-\varphi(z))\|+(\delta+\epsilon)\Lip\varphi d(y,z).\]
\end{itemize}
Here $\mathcal P\colon\mathbb R^n\to\mathbb R^n$ is the orthogonal
projection onto the hyperplane orthogonal to $w$ passing through the
origin.
\end{lemma}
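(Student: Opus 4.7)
The plan is to reduce to Lemma \ref{lem:firstfunct} by embedding $U$ into the Banach space $\mathcal{B}$ of Definition \ref{def:embedding} with a scaling chosen so that pointwise Lipschitz constants, $\varphi$-directions, and $\varphi$-speeds all translate correctly between $(U,d)$ and its image. Write $L = \Lip\varphi$, which we may assume positive (otherwise the statement is trivial with $f \equiv 0$). Fix a Kuratowski embedding $\iota \colon U \to \ell_\infty$ with $\|\iota(x) - \iota(y)\|_{\ell_\infty} = d(x,y)$, and define
\[
\iota^* \colon U \to \mathcal{B}, \qquad \iota^*(x) = (\varphi(x),\, 0,\, L\iota(x)).
\]
Because $\|\varphi(x)-\varphi(y)\| \leq L\,d(x,y) = \|L\iota(x)-L\iota(y)\|_{\ell_\infty}$, the $\ell_\infty$ component dominates, and $\|\iota^*(x)-\iota^*(y)\|_{\mathcal{B}} = L\,d(x,y)$; moreover the projection $P_1$ onto the first factor recovers $\varphi$, so the requirements of Definition \ref{def:embedding} are met.

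Next, I would adopt Notation \ref{not:banachcurves} with the compact set $X_{\mathcal{B}} := \iota^*(U)$ and the closed subset $S_{\mathcal{B}} := \iota^*(S)$, keeping the same $w$, $\theta$, $\delta$. The remaining hypothesis of Notation \ref{not:banachcurves}, that $\mathcal{H}^1(\gamma \cap S_{\mathcal{B}}) = 0$ for every $\gamma \in \Gamma(X_{\mathcal{B}})$ in the $\varphi$-direction of $C(w,\theta)$ with Banach $\varphi$-speed $\delta$, is verified as follows. Given such $\gamma$, let $\tilde\gamma := (\iota^*)^{-1}\circ\gamma \in \Gamma(X)$. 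Then $(\varphi\circ\tilde\gamma)' = (\varphi\circ\gamma)'$ and $\Lip(\tilde\gamma, t_0) = L^{-1}\Lip_{\mathcal{B}}(\gamma, t_0)$, so using $\Lip(\varphi, \tilde\gamma(t_0)) \leq L$,
\[
\|(\varphi\circ\tilde\gamma)'(t_0)\| \geq \delta\,\Lip_{\mathcal{B}}(\gamma, t_0) = \delta L\,\Lip(\tilde\gamma, t_0) \geq \delta\,\Lip(\varphi, \tilde\gamma(t_0))\,\Lip(\tilde\gamma, t_0),
\]
so $\tilde\gamma$ has $d$-$\varphi$-speed $\delta$ and is plainly still in the $\varphi$-direction of $C(w,\theta)$. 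The hypothesis of the present lemma therefore gives $\mathcal{H}^1_d(\tilde\gamma \cap S) = 0$, and since $\iota^*$ scales $\mathcal{H}^1$ by $L$, also $\mathcal{H}^1_{\mathcal{B}}(\gamma \cap S_{\mathcal{B}}) = 0$.

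Then I would apply Lemma \ref{lem:firstfunct} with the given $\epsilon$ to produce an open set $V \supset S_{\mathcal{B}}$ and a $(1 + K(\theta) + \delta + \epsilon)$-Lipschitz function $f \colon \Omega \to \mathbb{R}$ satisfying both listed properties, and define $\tilde f := f \circ \iota^* \colon U \to \mathbb{R}$. By the scaling of $\iota^*$, $\tilde f$ is $(1 + K(\theta) + \delta + \epsilon)L$-Lipschitz, which is the bound demanded. The first conclusion of the lemma follows directly from Lemma \ref{lem:firstfunct}(\ref{item:funcprop2}) applied at $\iota^*(x), \iota^*(x_0) \in \Omega$, since $P_1 \circ \iota^* = \varphi$. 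For the second, compactness of $S_{\mathcal{B}}$ inside the open $V$ yields $\rho' > 0$ with $B_{\mathcal{B}}(\iota^*(x_0), \rho') \subset V$ uniformly in $x_0 \in S$; setting $\rho := \rho'/(2L)$, any $y, z \in B(x_0, \rho)$ have $\iota^*(y), \iota^*(z)$ inside the ball $B_{\mathcal{B}}(\iota^*(x_0), \rho'/2) \subset V$, and Lemma \ref{lem:firstfunct}(\ref{item:funcprop1}) produces
\[
|\tilde f(y) - \tilde f(z)| \leq K(\theta)\,\|\mathcal{P}(\varphi(y) - \varphi(z))\| + (\delta + \epsilon)\,L\,d(y,z),
\]
as required.

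The main delicacy is the book-keeping around the factor $L = \Lip\varphi$: the scaling $(\varphi, 0, L\iota)$ is precisely the one that simultaneously arranges (i) that Banach $\varphi$-speed $\delta$ implies metric $\varphi$-speed $\delta$ (so the hypothesis transfers), (ii) that the Lipschitz constant of $\tilde f$ picks up exactly one factor of $L$, and (iii) that the local term $(\delta + \epsilon)\|\iota^*(y)-\iota^*(z)\|_{\mathcal{B}}$ becomes $(\delta + \epsilon)\Lip\varphi\,d(y,z)$. Once this scaling is fixed, the remainder of the argument is mechanical.
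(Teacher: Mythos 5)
Your proof is correct and follows the same route as the paper's: identify $U$ with its image in $\mathcal B$ under the embedding $\iota^*$ of Definition \ref{def:embedding} (with $\psi=0$), transfer the null-set hypothesis to the Banach-space setting, apply Lemma \ref{lem:firstfunct}, and pull the resulting function back, using compactness of $S$ inside the open set $V$ to obtain a uniform $\rho$. The only difference is your rescaling of the $\ell_\infty$ factor by $\Lip\varphi$, which makes $\iota^*$ an exact similarity and tidies the speed/constant book-keeping that the paper handles more tersely via the two-sided bound $d(x,y)\leq\|x-y\|\leq\Lip\varphi\,d(x,y)$.
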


\begin{proof}
Let us identify $U$ with its image in $\mathcal B$ via the bi-Lipschitz
isomorphism $\iota^*$ defined in Definition \ref{def:embedding} (for
$\psi=0$).  Note that $\varphi$ agrees with the projection onto the
first factor on $U$ and so we may extend $\varphi$ to all of $\mathcal
B$ by defining it to be this projection.  Then $S$ is also a subset of
$\mathcal B$ that satisfies $\mathcal H^1(\gamma\cap S)=0$ for every
$\gamma\in\Gamma(U)$ in the $\varphi$-direction of $C(w,\theta)$ with
$\varphi$-speed $\delta$.  We denote by $\|x-y\|$ the distance between
$x$ and $y$ in $\mathcal B$ and by $d(x,y)$ the original distance
between $x$ and $y$ in $U$, so that
\[d(x,y)\leq \|x-y\|\leq \Lip\varphi d(x,y).\]

Define $f$ to be the Lipschitz function and $V$ the open set
obtained from an application of Lemma \ref{lem:firstfunct}.  Since
$U\subset V$ is compact there exists a $\rho>0$ such that, for every
$x_0\in S$, $B(x_0,\rho)\subset V$.  This function has the
required properties.
\end{proof}

We now combine the functions constructed in the previous Lemma to
satisfy the hypotheses of Proposition \ref{prop:cons} for arbitrarily
large subsets of any $S\in\widetilde A(\varphi)$, for any structured
chart $(U,\varphi)$.

\begin{lemma}\label{lem:aptolds}
Let $(U,\varphi)$ be a structured chart in a metric measure space
$(X,d,\mu)$ and suppose that $S \in \widetilde A(\varphi)$ with
$S\subset U$.  Then
there exists a $\beta>0$ and, for any $\delta>0$, a
1-Lipschitz function $f\colon X \to\mathbb R$, a compact
$S'\subset S$ with $\mu(S')\geq\mu(S)-\delta$
and a $\rho>0$ such that:
\begin{itemize}
\item For every $x_0\in S'$ there exists an $x\in U$ with $0<d(x,x_0)<\delta$ and
\[|f(x)-f(x_0)|\geq \beta d(x,x_0).\]
\item For every $x_0\in S$ and $y,z\in B(x_0,\rho)$,
\[|f(y)-f(z)|\leq \delta d(y,z).\]
\end{itemize}
\end{lemma}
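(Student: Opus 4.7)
The plan is to satisfy the hypotheses of Proposition \ref{prop:cons} by constructing $f$ as a normalised sum of finitely many functions of the type supplied by Lemma \ref{lem:firstfunlds}, one for each direction in a truncated $\widetilde A(\varphi)$-decomposition of $S$ (taken in both orientations), each localised near its corresponding compact piece via Lemma \ref{lem:modifications}. Fix $\lambda>0$ from $S\in\widetilde A(\varphi)$ and let $\lambda_s>0$ be the constant of the $\lambda_s$-structured chart $(U,\varphi)$; combining the structured-chart inequality with Cauchy--Schwarz gives $\Lip(\varphi,x_0)\geq\lambda_s$ for $\mu$-almost every $x_0\in U$. Set $\beta:=\lambda\lambda_s/(8\Lip\varphi)$; this will be the universal constant.

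Given $\delta>0$, choose small $\theta_0,\delta',\varepsilon>0$ (all $O(\delta)$) with $K(1-\theta_0)+\delta'+\varepsilon<\delta$ and $\varepsilon<\delta\lambda\lambda_s/16$. Using $S\in\widetilde A(\varphi;\delta',\theta_0,\lambda)$ take the Borel decomposition $S=\bigcup_k S_k$ with cones $C_k=C(w_k,1-\theta_0)$, pick $N$ with $\mu(S\setminus\bigcup_{k\leq N}S_k)<\delta/2$, and pass to pairwise disjoint compact subsets $\tilde S_k\subset S_k$ lying in the full-measure set where $\Lip(\varphi,\cdot)\geq\lambda_s$; these have positive mutual separation $\eta^*>0$. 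For each $k\leq N$ and sign $\sigma\in\{+,-\}$, apply Lemma \ref{lem:firstfunlds} to $\tilde S_k$ with direction $\sigma w_k$ (the hypothesis is symmetric under reversal of curves) to obtain an $L$-Lipschitz $f_k^\sigma\colon X\to\mathbb R$, where $L:=(1+K(1-\theta_0)+\delta'+\varepsilon)\Lip\varphi$, together with the neighbourhood $V_k^\sigma\supset\tilde S_k$ furnished by Lemma \ref{lem:firstfunct}. Using the explicit form $V=B(\tilde S_k,1/m)$ from Lemma \ref{lem:fundobs}, shrink each $V_k^\sigma$ to lie in $B(\tilde S_k,\eta^*/4)$, so the $V_k^\sigma$ are pairwise disjoint across distinct $k$. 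Apply Lemma \ref{lem:modifications} to each $f_k^\sigma$ with input set $\tilde S_k$ and parameters $h,\varepsilon_m$ chosen so that $2h/L<\eta^*/8$ and $\varepsilon_m/L\geq\delta$, producing $\tilde f_k^\sigma$ supported in $B(\tilde S_k,2h/L)\subset V_k^\sigma$ together with nearly-full subsets $\hat S_k^\sigma\subset\tilde S_k$. Define $F:=\sum_{k\leq N}(\tilde f_k^++\tilde f_k^-)$ and $f:=F/(2L)$, which is $1$-Lipschitz because at any point at most the two terms corresponding to a single $k$ contribute.

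For the verification, set $S':=\bigcup_k(\hat S_k^+\cap\hat S_k^-)$, which is compact with $\mu(S')\geq\mu(S)-\delta$. Select $\rho>0$ smaller than the radii from Lemmas \ref{lem:firstfunlds} and \ref{lem:modifications}, and smaller than $\eta^*/8$. For $x_0\in S'\subset\tilde S_{k_0}$, the relation $\Lip(w_{k_0}\cdot\varphi,x_0)>\lambda\Lip(\varphi,x_0)\geq\lambda\lambda_s$ produces $x\in U$ with $|(\varphi(x)-\varphi(x_0))\cdot w_{k_0}|\geq\lambda\lambda_s d(x,x_0)/2$ and $d(x,x_0)\in(4\varepsilon/(\lambda\lambda_s),\delta)$; picking the sign $\sigma$ for which the projection is positive, the growth estimate of Lemma \ref{lem:firstfunlds} combined with Lemma \ref{lem:modifications}(4) transfers to $|\tilde f_{k_0}^\sigma(x)-\tilde f_{k_0}^\sigma(x_0)|\geq\lambda\lambda_s d(x,x_0)/4$, whence $|f(x)-f(x_0)|\geq\beta d(x,x_0)$. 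For the small-difference-quotient: any $x_0\in S$ either lies in a unique $V_{k_0}^{\sigma_0}$, in which case $B(x_0,\rho)\subset V_{k_0}^{\sigma_0}$ and all other $\tilde f_k^\sigma$ vanish on $B(x_0,\rho)$, so Lemma \ref{lem:firstfunct}(1) applied to the ball $B(x_0,\rho)\subset V_{k_0}^{\sigma_0}$ and Lemma \ref{lem:modifications}(3) yield $|f(y)-f(z)|\leq(K(1-\theta_0)+\delta'+\varepsilon)d(y,z)\leq\delta d(y,z)$; or $x_0$ escapes every $V_k^\sigma$, in which case $F$ vanishes on $B(x_0,\rho)$ and the bound is trivial.

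The main obstacle is ensuring the small-difference-quotient at \emph{every} $x_0\in S$, not only on the compact core $\bigcup_k\tilde S_k$ used to construct the $f_k^\sigma$. The resolution exploits that the bound in Lemma \ref{lem:firstfunct}(1) holds for any ball contained in the open set $V$, depending only on the input compact set. By shrinking each $V_k^\sigma$ so that it is a tight, pairwise disjoint neighbourhood of $\tilde S_k$ containing the support of $\tilde f_k^\sigma$, the small-difference-quotient remains valid at arbitrary $x_0\in V_{k_0}^{\sigma_0}$ — including $x_0\in S$ lying in $V_{k_0}^{\sigma_0}$ but not in $\tilde S_{k_0}$ — while the remaining $x_0\in S$ lie outside every support and pick up the trivial bound. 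This is also what prevents $\beta$ from picking up a factor $1/N$: since at most one $V_k$ contributes near any given point, the normalisation by $2L$ suffices and $\beta$ is governed only by $\lambda,\lambda_s,\Lip\varphi$.
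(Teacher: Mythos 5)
Your overall architecture matches the paper's: decompose $S$ via the $\widetilde A(\varphi)$ structure, build one function per direction with Lemma \ref{lem:firstfunlds}, localise with Lemma \ref{lem:modifications}, and sum over pieces with disjoint supports. (The two-orientation trick with $\pm w_k$ is unnecessary — the growth estimate of Lemma \ref{lem:firstfunlds} already yields $|g(x)-g(x_0)|\geq |(\varphi(x)-\varphi(x_0))\cdot w|-\epsilon$ by swapping the roles of $x$ and $x_0$ — but that is harmless.)

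There is, however, a genuine gap in the lower-bound verification: you claim that $\Lip(w_{k_0}\cdot\varphi,x_0)>\lambda\Lip(\varphi,x_0)\geq\lambda\lambda_s$ ``produces $x\in U$ with $|(\varphi(x)-\varphi(x_0))\cdot w_{k_0}|\geq\lambda\lambda_s\,d(x,x_0)/2$ and $d(x,x_0)\in(4\varepsilon/(\lambda\lambda_s),\delta)$.'' A pointwise Lipschitz constant is a $\limsup$ as $r\to 0$: it guarantees witnesses at arbitrarily \emph{small} scales, but gives no witness at a scale bounded \emph{below} by the prescribed quantity $4\varepsilon/(\lambda\lambda_s)$, and $\varepsilon$ must be fixed before Lemma \ref{lem:firstfunlds} is applied. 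If for some $x_0$ the large difference quotients of $w_{k_0}\cdot\varphi$ occur only below that threshold, the additive error $\varepsilon$ in the growth estimate swamps the linear gain and the lower bound fails. This uniform annulus of scales is exactly what the \emph{structured chart} hypothesis supplies (for every $R>0$ there is a single $r>0$ such that every $x_0\in U'$ has a witness with $r<d(x,x_0)<R$ and quotient $\geq\lambda_s$), and it is what the paper's proof uses; you invoke the structured chart only to bound $\Lip(\varphi,x_0)$ from below, which is not where its force lies. A related inconsistency appears in your parameters for Lemma \ref{lem:modifications}: you require the threshold $\varepsilon_m/L\geq\delta$ so that witnesses up to scale $\delta$ survive the modification, but the measure loss there is $4\varepsilon_m/h$ with $h$ constrained by the (possibly tiny) separation $\eta^*$ of the compact pieces, so $4\varepsilon_m/h\geq 4L\delta/h$ need not be small — or even less than $1$. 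Both problems dissolve once the witnesses are taken from the structured chart at scales in $(r,R)$ with $R\ll\min\{h,\delta\}$ chosen so that $4R/h$ is small, and $\varepsilon$ taken of order $\lambda_s r$; the resulting $\beta$ is then governed by the structured-chart constant $\lambda_s$ rather than by the product $\lambda\lambda_s$.
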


\begin{proof}
  Note that it suffices to prove the result for any 1/2-Lipschitz
  $\varphi$ and $0<\delta<1/2$.  If so, let $\lambda>0$ and
  $0<\theta<1$ such that $(U,\varphi)$ is a $\lambda$-structured chart
  and $K(\theta)\leq \delta/2$.  Then there exists a countable Borel
  decomposition $S=\cup_i S_i$ and for each $i\in\mathbb N$ a
  $w_i\in\mathbb S^{n-1}$ such that $\mathcal H^1(\gamma\cap S_i)=0$
  for any $\gamma\in\Gamma(X)$ in the $\varphi$-direction of
  $C(w_i,\theta)$ with $\varphi$-speed $\delta$.

For every $i\in\mathbb N$, let $Q_i\subset S_i$ be disjoint and compact
and let $N\in\mathbb N$ such that
\[\mu(Q_1\cup\ldots\cup Q_N)> \mu(S)-\delta.\]
Then there exists a $h>0$ such that the $B(Q_i,2h)$ are disjoint.
Further, let $0<4R<\min\{h,\delta\}$ such that $(1-4R/h)\mu(Q_1\cup\ldots\cup
Q_N)>\mu(S)-\delta$.  Then, since $(U,\varphi)$ is a structured chart,
there exists an $r>0$ such that, for every $x_0\in U$, there exists an $x\in X$ with
$r<d(x,x_0)<R$ and
\[\frac{|(\varphi(x)-\varphi(x_0))\cdot w_i|}{d(x,x_0)}\geq
\lambda.
\]
We also set $\epsilon = \lambda r/2$ and, for
each $1\leq i \leq N$, define
\[g_i\colon B(Q_i,h)\to\mathbb R\]
to be the 1-Lipschitz function obtained from Lemma
\ref{lem:firstfunlds} restricted to $B(Q_i,h)$ for this choice of
$\epsilon$, $\theta$ and $w_i$.  Further, we let $f_i\colon
B(Q_i,h)\to\mathbb R$ and $P_i\subset Q_i$ be obtained from applying
Lemma \ref{lem:modifications} to $g_i$ and $Q_i$ with the choice of
$\epsilon=R$.  We extend each $f_i$ to a 1-Lipschitz
function defined on $X$ with value zero outsize
$B(Q_i,2h)$.

Then for any $x_0\in P_i$ there exists an $x\in X$ with $r<d(x,x_0)<R<\delta$ and
\begin{align*}|f_i(x)-f_i(x_0)| &= |g_i(x)-g_i(x_0)|\\
&\geq |(\varphi(x)-\varphi(x_0))\cdot w_i| -\epsilon\\
&\geq \lambda d(x,x_0)-\lambda r/2\\
&\geq \lambda d(x,x_0)/2.\end{align*}
Further, we let $\rho>0$ such that, for any $y,z\in \text{Ball}\subset B(Q_i,\rho)$,
\begin{align*} |f_i(y)-f_i(z)| &\leq |g_i(y)-g_i(z)|\\
&\leq K(\theta)\|\mathcal P(\varphi(y)-\varphi(z))\|+(\delta+\epsilon) d(y,z)\\
&\leq 2\delta d(y,z).\end{align*}
Finally we set $S'=P_1\cup\ldots\cup P_N$ so that
\[\mu(S')\geq (1-4R/h)\mu(Q_1\cup\ldots\cup Q_N)\geq \mu(S)-\delta\]
and
\[f=\sum_{1\leq i\leq N} g_i.\]
Then since the $g_i$ have disjoint support, $f$ and $S'$ satisfy the
conclusion of the Lemma for $2\delta$ and $\beta=\lambda/2$.
\end{proof}

We now combine all of our previous results
to obtain our first statement on the structure of measures in
Lipschitz differentiability spaces.

\begin{theorem}\label{thm:spanrep}
Let $(U,\varphi)$ be an $n$-dimensional chart in a Lipschitz
differentiability space $(X,d,\mu)$.  Then any $\widetilde A(\varphi)$
subset of $U$ is $\mu$-null.  Therefore, there exists a countable
Borel decomposition
\[U=\bigcup_{k\in\mathbb N} U_k\]
such that, for each $k\in\mathbb N$, $\mu\llcorner U_k$ has $n$
$\varphi$-independent Alberti representations.

Moreover, for any Lipschitz $\psi\colon X\to\mathbb R^m$ and
$U'\subset U$ of positive measure, if $U'$ has $m$ $\psi$-independent
Alberti representations then $m\leq n$.
\end{theorem}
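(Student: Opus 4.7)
The core content is Part 1 (that every $\widetilde A(\varphi)$ subset of $U$ is $\mu$-null); Parts 2 and 3 will follow quickly from it. Let $S \in \widetilde A(\varphi)$ with $S \subset U$. By Lemma \ref{lem:decompstruct}, in a Lipschitz differentiability space $U$ decomposes (modulo a $\mu$-null set) into countably many structured charts of the form $(U_i, \varphi)$. Since $\widetilde A(\varphi)$ is closed under taking Borel subsets (simply restrict the decomposition appearing in its definition), it suffices to prove $\mu(S \cap U_i) = 0$ for each $i$; thus I may assume from the outset that $(U, \varphi)$ is $\lambda$-structured.

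The plan is to feed Proposition \ref{prop:cons} a sequence of $1$-Lipschitz functions built one per scale. For each $m \in \mathbb N$, apply Lemma \ref{lem:aptolds} with $\delta = 2^{-m}$ to produce a $1$-Lipschitz $f_m \colon X \to \mathbb R$, a compact $S_m \subset S$ with $\mu(S \setminus S_m) \leq 2^{-m}$, and a radius $\rho_m > 0$; crucially, the same $\beta > 0$ (depending only on $S$) is available at every scale. Set
\[
S^* = \bigcup_{k \in \mathbb N} \bigcap_{m \geq k} S_m,
\]
so that $\mu(S \setminus S^*) = 0$ by Borel--Cantelli. For each $x_0 \in S^*$ there exists $M$ with $x_0 \in S_m$ for all $m \geq M$, which gives the ``steep'' hypothesis of Proposition \ref{prop:cons} on $S^*$, while the uniform ``flat'' bound of Lemma \ref{lem:aptolds} supplies the second hypothesis on all of $S \supset S^*$. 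Proposition \ref{prop:cons}, applied in the structured chart $(U, \varphi)$, then produces a Lipschitz function on $X$ differentiable $\mu$-almost nowhere on $S^*$; the defining property of a Lipschitz differentiability space forces $\mu(S^*) = 0$, so $\mu(S) = 0$.

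Part 2 follows at once: decompose $U$ into structured charts via Lemma \ref{lem:decompstruct} and apply Theorem \ref{thm:manyrep} on each piece, which now applies because every $\widetilde A(\varphi)$ subset is $\mu$-null. For the final ``moreover,'' suppose $U' \subset U$ has positive measure and admits $m$ $\psi$-independent Alberti representations for some Lipschitz $\psi \colon X \to \mathbb R^m$. By Part 2 we may pass to a positive-measure subset of $U'$ lying in a single $U_k$, so that both the $n$ $\varphi$-independent and $m$ $\psi$-independent representations are simultaneously available there. Applying Proposition \ref{prop:curvefullmeas} to each $\psi$-representation with $B$ the corresponding cone minus the origin yields, at $\mu$-a.e.\ $x_0$, curves $\gamma_1, \ldots, \gamma_m$ through $x_0 = \gamma_j(0)$ (with $0$ a density point of $\dom \gamma_j$) such that $(\psi \circ \gamma_j)'(0)$ are linearly independent in $\mathbb R^m$. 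At such $x_0$ every $\psi_i$ is differentiable with respect to $\varphi$, and the chain rule from the end of Section \ref{sec:firstanalysis} gives
\[
(\psi_i \circ \gamma_j)'(0) = D\psi_i(x_0) \cdot (\varphi \circ \gamma_j)'(0);
\]
the rank-$m$ matrix on the left factors through $\mathbb R^n$ as an $m \times n$ times $n \times m$ product, forcing $m \leq n$. The main obstacle is the gap between what Lemma \ref{lem:aptolds} produces (steepness on a subset of large measure) and what Proposition \ref{prop:cons} requires (steepness eventually at every point); the summable-$\delta$ Borel--Cantelli device bridges it at the cost of a $\mu$-null loss, which is harmless because the property at stake is itself almost-everywhere.
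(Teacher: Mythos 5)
Your proof is correct and follows essentially the same route as the paper: reduce to structured charts via Lemma \ref{lem:decompstruct}, feed the $1$-Lipschitz functions of Lemma \ref{lem:aptolds} taken at a summable sequence of scales into Proposition \ref{prop:cons}, then obtain the representations from Theorem \ref{thm:manyrep}, and prove the dimension bound using Proposition \ref{prop:curvefullmeas} together with the chain rule along curves. The only differences are cosmetic: the paper fixes $\epsilon>0$, chooses $\sum_m\delta_m<\epsilon$ and intersects all the sets $S_m$ instead of using your Borel--Cantelli set $S^*$, and it phrases the final step as linear dependence of the $D\psi_i(x_0)$ forcing $\Lip(v\cdot\psi,x_0)=0$ for some $v\in\mathbb S^{m-1}$, which is the same rank obstruction you express via the matrix factorisation.
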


\begin{proof}
By Lemma \ref{lem:decompstruct} it suffices to prove that any
$\widetilde A(\varphi)$ subset $S$ of a structured chart $(U,\varphi)$
is $\mu$-null.  If $S$ is such a set, for any $\epsilon>0$ and each
$m\in\mathbb N$ let $0<\delta_m<1/m$ such that $\sum_m\delta_m<\epsilon$
and let $f_m\colon X\to\mathbb R$ and $S_m\subset S$ be obtained by
applying Lemma \ref{lem:aptolds} with $\delta_m$.  Then $S':=\cap_m
S_m$ satisfies $\mu(S')\geq \mu(S)-\epsilon$ and the $f_m$ satisfy the
hypotheses of Proposition \ref{prop:cons} for $S'$.  Therefore there
exists a Lipschitz function differentiable $\mu$-almost nowhere on
$S'$.  Thus $S'$ and hence $S$ are $\mu$ null.

In particular, any $\widetilde A(\varphi)$ subset of $U$ is $\mu$-null and
so, by Theorem \ref{thm:manyrep}, there exists the required collection
of Alberti representations.

Now suppose that $(U,\varphi)$ is any chart, $\psi\colon X\to\mathbb
R^m$ is Lipschitz and $U'\subset U$ has $m$ $\psi$-independent Alberti
representations.  Then for almost every $x_0\in U'$ we have
$\Lip(v\cdot\psi,x_0)>0$ for each $v\in\mathbb S^{m-1}$.  However, for
almost every $x_0\in U$, each $D\psi_i(x_0)$ exists.  Therefore, if
$m>n$, there exists a $v\in\mathbb S^{m-1}$ such that $\sum_i v_i
D\psi_i(x_0)=0$.  In particular we have $\Lip(v\cdot\psi,x_0)=0$ and
so $U'$ must be $\mu$-null.
\end{proof}

We now apply our theory of Alberti representations to charts, relating
the behaviour of Lipschitz differentiability spaces to the existing
differentiability theory of Euclidean spaces.  Recall the notion of a
gradient given in Definition \ref{def:gradient}.

\begin{corollary}\label{cor:tangentcurves}
Let $(U,\varphi)$ be an $n$-dimensional chart in a Lipschitz
differentiability space $(X,d,\mu)$.  Then for almost every $x\in U$
there exists $\gamma_1^x,\ldots,\gamma_n^x\in\Gamma(X)$ such that each
$(\gamma_i^x)^{-1}(x)=0$ is a density point of $(\gamma_i^x)^{-1}(U)$
and the $(\varphi\circ\gamma_i^x)'(0)$ are linearly
independent.

Moreover, for any such $\gamma_i^x$, for any Lipschitz $f\colon
X\to\mathbb R$ and almost every $x\in U$, the gradient of $f$ at $x$
with respect to $\varphi$ and $\gamma_1^x,\ldots,\gamma_n^x$ equals $Df(x)$.
\end{corollary}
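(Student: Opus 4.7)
The plan is to combine Theorem~\ref{thm:spanrep}, Proposition~\ref{prop:curvefullmeas}, and the final lemma of Section~\ref{sec:firstanalysis}. By Theorem~\ref{thm:spanrep}, decompose $U$ Borel-measurably into countably many pieces $U_k$, each carrying $n$ $\varphi$-independent Alberti representations $\mathcal{A}_1^k,\ldots,\mathcal{A}_n^k$ in the $\varphi$-directions of independent closed cones $C_1^k,\ldots,C_n^k\subset\mathbb{R}^n$. It then suffices to prove both assertions on each $U_k$ separately.

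For the first assertion, fix $k$ and $1\le i\le n$ and apply Proposition~\ref{prop:curvefullmeas} to the Alberti representation $\mathcal{A}_i^k$ of $\mu\llcorner U_k$ with the Borel function
\[
(x,\gamma)\mapsto (\varphi\circ\gamma)'(\gamma^{-1}(x)),
\]
which is Borel by Lemma~\ref{lem:measurability}, and Borel target $B=C_i^k\setminus\{0\}$. The hypotheses are met because $\mathcal{A}_i^k$ is in the $\varphi$-direction of $C_i^k$, so the proposition delivers, for almost every $x\in U_k$, a curve $\gamma_i^x\in\Gamma(X)$ with $0=(\gamma_i^x)^{-1}(x)$ a density point of $(\gamma_i^x)^{-1}(U_k)$ and $(\varphi\circ\gamma_i^x)'(0)\in C_i^k\setminus\{0\}$, chosen measurably in $x$. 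Since $(\gamma_i^x)^{-1}(U_k)\subset(\gamma_i^x)^{-1}(U)$, zero is a density point of the latter as well, and independence of the cones forces $(\varphi\circ\gamma_1^x)'(0),\ldots,(\varphi\circ\gamma_n^x)'(0)$ to be linearly independent.

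For the second assertion, fix any collection $\gamma_1^x,\ldots,\gamma_n^x$ satisfying the first part, and let $f\colon X\to\mathbb{R}$ be Lipschitz. At almost every $x\in U$ the function $f$ is differentiable with respect to $(U,\varphi)$ (because $U$ is a chart in a Lipschitz differentiability space), while $(\varphi\circ\gamma_i^x)'(0)$ exists by construction. At any such $x$ for which $(f\circ\gamma_i^x)'(0)$ also exists, the lemma at the end of Section~\ref{sec:firstanalysis} yields
\[
(f\circ\gamma_i^x)'(0)=Df(x)\cdot(\varphi\circ\gamma_i^x)'(0)
\]
for each $i$, and these $n$ equations, coupled with the linear independence from the first part, determine $\nabla f(x)$ uniquely; hence $\nabla f(x)=Df(x)$.

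The remaining subtlety is to ensure existence of $(f\circ\gamma_i^x)'(0)$ at almost every $x$ for the chosen selection. I would handle this by enlarging the Borel target in the application of Proposition~\ref{prop:curvefullmeas} above from $C_i^k\setminus\{0\}$ to $(C_i^k\setminus\{0\})\times\mathbb{R}$ for the Borel function $(x,\gamma)\mapsto\bigl((\varphi\circ\gamma)'(\gamma^{-1}(x)),(f\circ\gamma)'(\gamma^{-1}(x))\bigr)$; the finiteness of $(f\circ\gamma)'$ at $\mathcal{A}_i^k$-almost every $(\gamma,t)$ follows from Lebesgue's theorem applied to the Lipschitz function $f\circ\gamma$, together with the absolute continuity $\gamma^{-1}_{\#}\mu_\gamma\ll\mathcal{L}^1$. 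This is the main---but essentially bookkeeping---obstacle; the rest is a direct consequence of the machinery already in place.
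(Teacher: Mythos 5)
Your proposal is correct and matches the paper's proof: decompose $U$ via Theorem \ref{thm:spanrep}, apply Proposition \ref{prop:curvefullmeas} to each of the $n$ independent Alberti representations, and use the final (unnumbered) lemma of Section \ref{sec:firstanalysis} to identify the gradient with $Df(x)$. Your closing ``subtlety'' is not actually an issue: that lemma already asserts the \emph{existence} of $(f\circ\gamma_i^x)'(0)$ whenever $f$ is differentiable at $x$ and $(\varphi\circ\gamma_i^x)'(0)$ exists, which is exactly what the ``for any such $\gamma_i^x$'' quantifier requires (your selection-based fix would only cover the one measurable choice, so it is both unnecessary and insufficient for that clause).
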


\begin{proof}
By the previous Theorem there exists a countable Borel decomposition $U=\cup_i
U_i$ of $U$ into sets with $n$ $\varphi$-independent Alberti
representations.  Therefore, by applying Proposition
\ref{prop:curvefullmeas} to each representation, for almost every $x$
in any $U_i$ there exists such $\gamma_1^x,\ldots,\gamma_n^x$.

Moreover, if $Df(x)$ exists, then since
\[(f\circ\gamma_i^x)'(0)= Df(x)\cdot (\varphi\circ\gamma_i^x)'(0),\]
$Df(x)$ equals the gradient of $f$ at $x$ with respect to $\varphi$
and $\gamma_1^x,\ldots,\gamma_n^x$.
\end{proof}

The previous Corollary should be compared to \cite{cheegerkleiner-rnp},
Theorem 3.3.  This theorem asserts that, for any $n$ dimensional chart
in a doubling Lipschitz differentiability space that satisfies the
Poincar\'e inequality, and for any collection $f_1,f_2 \ldots$ of
Lipschitz functions, for almost every $x \in U$ there exist
$\gamma_1^x,\ldots, \gamma^x_n \in\Gamma(X)$ (whose domains are in
fact intervals) such that, for each $i\in\mathbb N$, the gradient of
$f_i$ at $x$ with respect to $\varphi$ and $\gamma^x_1,\ldots,
\gamma^x_n$ equals $Df_i(x)$.

We now use the derivative of a Lipschitz function to show
that any $\widetilde A$ subset of a Lipschitz differentiability space
has measure zero.  For this, we first investigate how the direction of
a Lipschitz curve varies with respect to different Lipschitz
functions in a Lipschitz differentiability space.

\begin{lemma}\label{lem:preatildenull}
Let $(U,\varphi)$ be an $n$-dimensional $\lambda$-structured chart in
a Lipschitz differentiability space $(X,d,\mu)$, $S\subset U$ Borel
and $\eta>0$.  Suppose that $\psi\colon X\to\mathbb R^m$ is Lipschitz
and
\[\Lip(v\cdot\psi,x_0)> \eta\Lip(\psi,x_0)\]
for every $x_0\in S$ and $v\in\mathbb S^{m-1}$.  Then for any
$\widetilde w\in\mathbb S^{m-1}$ and $0<\epsilon<\theta<1$, there exists a
countable Borel decomposition $S=\cup_i S_i\cup N$ where $\mu(N)=0$
and, for each $i\in\mathbb N$, there exists a $w_i\in\mathbb
S^{n-1}$ such that, for any $\delta>0$, any $\gamma\in\Gamma(S_i)$ in
the $\varphi$-direction of $C(w_i,\theta-\epsilon)$ with $\varphi$-speed
$\delta$ is in the $\psi$-direction of $C(\widetilde w,
1-(1-\theta)\lambda\eta/\Lip\varphi)$ with $\psi$-speed $\delta\eta$.
\end{lemma}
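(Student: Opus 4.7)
My plan is to exploit the differentiability of $\psi$ with respect to the chart $(U,\varphi)$, together with the chain rule. Applying Lemma~\ref{lem:nondiffcond} to each $v\cdot\psi$, $v\in\mathbb S^{m-1}$, yields the two-sided pointwise estimate $\Lip(f,x_0)/\Lip\varphi \leq \|Df(x_0)\| \leq \Lip(f,x_0)/\lambda$, where $D$ is the derivative with respect to $\varphi$. Combined with the hypothesis $\Lip(v\cdot\psi,x_0) > \eta\Lip(\psi,x_0)$, this yields at every differentiability point $x_0\in S$ the crucial ratio bound
\[
\frac{\|D(\widetilde w\cdot\psi)(x_0)\|}{\|D\psi(x_0)\|_{\mathrm{op}}} \;>\; \frac{\lambda\eta}{\Lip\varphi},
\]
and the tight inequality $\|D(\widetilde w\cdot\psi)(x_0)\|\,\Lip(\varphi,x_0) \geq \Lip(\widetilde w\cdot\psi,x_0) > \eta\Lip(\psi,x_0)$. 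These will drive the direction and speed estimates respectively.

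To produce the decomposition, I observe that the Borel unit vector field $\mathbf w(x):=D(\widetilde w\cdot\psi)(x)/\|D(\widetilde w\cdot\psi)(x)\|$ is defined on $S$ off a $\mu$-null set $N$. Covering $\mathbb S^{n-1}$ by finitely many spherical caps of prescribed small angular diameter $\epsilon'>0$ and pulling back through $\mathbf w$ gives a finite Borel partition of $S\setminus N$ on each piece of which $\mathbf w$ lies within angle $\epsilon'$ of some fixed $w_i\in\mathbb S^{n-1}$. A routine measurable refinement (stratifying by level sets of $\Lip(\varphi,\cdot),\Lip(\psi,\cdot),\|D(\widetilde w\cdot\psi)(\cdot)\|$ and $\|D\psi(\cdot)\|_{\mathrm{op}}$) then makes each of these functions constant up to a prescribed multiplicative error on each piece, while preserving the quantitative gap in $\Lip(\widetilde w\cdot\psi,\cdot) > \eta\Lip(\psi,\cdot)$ uniformly on each piece.

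Fix such an $S_i$ and any $\gamma\in\Gamma(S_i)$ in the $\varphi$-direction of $C(w_i,\theta-\epsilon)$ with $\varphi$-speed $\delta$. The chain rule applied to $\widetilde w\cdot\psi$ gives $(\psi\circ\gamma)'(t)\cdot\widetilde w = D(\widetilde w\cdot\psi)(\gamma(t))\cdot(\varphi\circ\gamma)'(t)$; combining with $(\varphi\circ\gamma)'(t)\cdot w_i\geq(1-\theta+\epsilon)\|(\varphi\circ\gamma)'(t)\|$ and the angular bound on $\mathbf w-w_i$ produces, once $\epsilon'$ is chosen sufficiently small,
\[
(\psi\circ\gamma)'(t)\cdot\widetilde w \;\geq\; \bigl(1-\theta+\tfrac{\epsilon}{2}\bigr)\,\|D(\widetilde w\cdot\psi)(\gamma(t))\|\,\|(\varphi\circ\gamma)'(t)\|.
\]
The direction claim follows by dividing this by the upper bound $\|(\psi\circ\gamma)'(t)\|\leq \|D\psi(\gamma(t))\|_{\mathrm{op}}\|(\varphi\circ\gamma)'(t)\|$ and invoking the ratio bound above. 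The $\psi$-speed claim follows by instead using $\|(\psi\circ\gamma)'(t)\|\geq(\psi\circ\gamma)'(t)\cdot\widetilde w$ together with the $\varphi$-speed hypothesis $\|(\varphi\circ\gamma)'(t)\|\geq\delta\Lip(\varphi,\gamma(t))\Lip(\gamma,t)$ and the tight inequality $\|D(\widetilde w\cdot\psi)(\cdot)\|\Lip(\varphi,\cdot)>\eta\Lip(\psi,\cdot)$.

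The main obstacle is bookkeeping the three layers of error — the angular spread $\epsilon'$ of $\mathbf w$ within each $S_i$, the multiplicative oscillation of the pointwise Lipschitz constants over $S_i$, and the quantitative slack in $\Lip(\widetilde w\cdot\psi,\cdot)>\eta\Lip(\psi,\cdot)$ — so that their combined effect is absorbed by $\epsilon$ and every strict inequality in the conclusion is preserved with a definite margin; this is routine provided the refinement in the decomposition is carried out sufficiently finely relative to the width loss allowed by $\epsilon$.
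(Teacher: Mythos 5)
Your decomposition and your direction estimate follow the paper's proof essentially verbatim: the paper also forms the linear map $D\psi(x_0)\colon\mathbb R^m\to\mathbb R^n$ (whose value at $\widetilde w$ is your $D(\widetilde w\cdot\psi)(x_0)$), derives the two-sided bounds $\|D\psi(v)\|\Lip(\varphi,x_0)\geq\eta\|v\|\Lip(\psi,x_0)$ and $\lambda\|D\psi(v)\|\leq\|v\|\Lip(\psi,x_0)$ from the hypothesis together with the structured-chart condition, decomposes $S$ (up to a null set) by covering $\mathbb S^{n-1}$ with caps around the normalised vectors $D\psi(x_0)(\widetilde w)/\|D\psi(x_0)(\widetilde w)\|$, and runs everything through the identity $v\cdot(\psi\circ\gamma)'(t_0)=D\psi(v)\cdot(\varphi\circ\gamma)'(t_0)$. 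That part of your argument is sound (and the extra stratification by level sets of the pointwise Lipschitz constants is not actually needed, since all the estimates are pointwise).

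The one genuine problem is the $\psi$-speed estimate. Routing it through $\|(\psi\circ\gamma)'(t)\|\geq(\psi\circ\gamma)'(t)\cdot\widetilde w$ forces you to pay the cosine of the cone aperture: your own display gives $(\psi\circ\gamma)'(t)\cdot\widetilde w\geq(1-\theta+\tfrac{\epsilon}{2})\|D(\widetilde w\cdot\psi)(\gamma(t))\|\,\|(\varphi\circ\gamma)'(t)\|$, and feeding this into the $\varphi$-speed hypothesis and the bound $\|D(\widetilde w\cdot\psi)\|\Lip(\varphi,\cdot)>\eta\Lip(\psi,\cdot)$ yields $\psi$-speed $(1-\theta+\tfrac{\epsilon}{2})\delta\eta$, not the claimed $\delta\eta$; the factor cannot be removed along this route, because $(\varphi\circ\gamma)'$ is only constrained to lie in a cone about your $\mathbf w(\gamma(t))$, not to be parallel to it. The paper obtains the sharp constant by a dual estimate that never sees the cone width: since $D\psi(x_0)$ is injective with $\|D\psi^{-1}\|\leq\Lip(\varphi,x_0)/\eta\Lip(\psi,x_0)$ (this uses the hypothesis for \emph{all} $v\in\mathbb S^{m-1}$, not just $\widetilde w$), one writes $\|(\varphi\circ\gamma)'(t_0)\|=v\cdot(\varphi\circ\gamma)'(t_0)=D\psi^{-1}(v)\cdot(\psi\circ\gamma)'(t_0)$ for the maximising unit vector $v$ and deduces $\|(\psi\circ\gamma)'(t_0)\|\geq\eta\Lip(\psi,x_0)\|(\varphi\circ\gamma)'(t_0)\|/\Lip(\varphi,x_0)\geq\delta\eta\Lip(\psi,x_0)\Lip(\gamma,t_0)$. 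The precise constant matters downstream (Theorem \ref{thm:atildenull} matches $\delta\eta$ against the definition of the $\widetilde A$ classes), so you should replace your speed step with this one; the remainder of your argument then goes through.
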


\begin{proof}
We use the derivative of each $\psi_i$ to transform the direction of
such a Lipschitz curve.  Indeed, for $x_0\in S$ suppose that the
derivative, $D\psi_i(x_0)$, of each component of $\psi$ exists at
$x_0$.  If we write $D\psi\colon \mathbb R^m\to\mathbb R^n$ for the
linear map whose columns are the $D\psi_i(x_0)$, then for each
$v\in\mathbb R^m$,
\[\Lip(D\psi(v)\cdot\varphi,x_0)=\Lip(v\cdot\psi,x_0)\geq
\eta \|v\| \Lip(\psi,x_0)\]
and so
\begin{equation}\label{eq:psione}\|D\psi(v)\|\Lip(\varphi,x_0)\geq
  \eta \|v\| \Lip(\psi,x_0).\end{equation}
We also obtain
\begin{equation}\label{eq:psitwo}\lambda\|D\psi(v)\|\leq\|v\|\Lip(\psi,x_0).\end{equation}
In particular, $D\psi$ is injective and it's inverse
$D\psi^{-1}\colon \im D\psi\to\mathbb R^n$ satisfies the inequalities
corresponding to \eqref{eq:psione} and \eqref{eq:psitwo}.

Now let $w\in\mathbb S^{n-1}\cap\im D\psi$, $0<\theta<1$ and suppose that
$\gamma\in\Gamma(X)$ satisfies $\gamma(t_0)=x_0$ and that both
$(\varphi\circ\gamma)'(t_0)$ and $(\psi\circ\gamma)'(t_0)$ exist.
Then, for every $v\in\mathbb R^m$,
\[v\cdot(\psi\circ\gamma)'(t_0)=D\psi(v)\cdot(\varphi\circ\gamma)'(t_0).\]
Therefore, if $(\varphi\circ\gamma)'(t_0)\in C(w,\theta)$, by equation
\eqref{eq:psitwo},
\begin{align*}D\psi(D\psi^{-1}(w))\cdot(\varphi\circ\gamma)'(t_0) &=
  w\cdot(\varphi\circ\gamma)'(t_0)\\
&\geq (1-\theta)\|(\varphi\circ\gamma)'(t_0)\|\\
&= (1-\theta) \|D\psi^{-1}\cdot (\psi\circ\gamma)'(t_0)\|\\
&\geq (1-\theta) \lambda \|(\psi\circ\gamma)'(t_0)\|/\Lip(\psi,x_0)
\end{align*}
If we let $\widetilde w\in\mathbb R^m$ be a scalar multiple of
$D\psi^{-1}(w)$ with norm 1, then since
\[\|D\psi^{-1}\|\leq \Lip(\varphi,x_0) /\eta\Lip(\psi,x_0),\]
the previous inequality gives
\begin{align*} \widetilde w\cdot (\psi\circ\gamma)'(t_0) &\geq
  \frac{(1-\theta)\lambda\|(\psi\circ\gamma)'(t_0)\|}{\|D\psi^{-1}(w)\|\Lip(\psi,x_0)}\\
&\geq \frac{(1-\theta)\lambda\eta
    \|(\psi\circ\gamma)'(t_0)\|}{\Lip(\varphi,x_0)}.
\end{align*}
Therefore $(\psi\circ\gamma)'(t_0)\in C(\widetilde
w,1-(1-\theta)\lambda\eta/\Lip\varphi)$.

Similarly, if $\|(\varphi\circ\gamma)'(t_0)\|\geq
\delta\Lip(\varphi,x_0)\Lip(\gamma,t_0)$, then by equation
\eqref{eq:psione} there exists a
$v\in\mathbb S^{n-1}$ such that
\begin{align*}\|(\varphi\circ\gamma)'(t_0)\| &=
  |v\cdot(\varphi\circ\gamma)'(t_0)\|\\
&= |D\psi^{-1}(v)\cdot(\psi\circ\gamma)'(t_0)|\\
&\leq \|(\psi\circ\gamma)'(t_0)\|\Lip(\varphi,x_0)/\eta\Lip(\psi,x_0).
\end{align*}
Therefore
\[\|(\psi\circ\gamma)'(t_0)\|\geq
\delta\eta\Lip(\psi,x_0)\Lip(\gamma,t_0).\]

Let $S=\cup_i S_i\cup
N$ be a countable Borel decomposition where $\mu(N)=0$ and such that,
for each $i\in\mathbb N$, there exists a $w_i\in\mathbb S^{n-1}$ with
\[\left\|\frac{D\psi(x_0)(\widetilde w)}{\|D\psi(x_0)(\widetilde
  w)\|}-w_i\right\|<\epsilon.\]
Then if $\gamma\in\Gamma(S_i)$ is in the
$\varphi$-direction of $C(w_i,\theta-\epsilon)$ with $\varphi$-speed $\delta$,
for almost every $t_0\in\dom\gamma$,
\[(\varphi\circ\gamma)'(t_0)\in C(w_i,\theta-\epsilon)\subset
C\left(\frac{D\psi(x_0)(\widetilde w)}{\|D\psi(x_0)(\widetilde w)\|},
\theta\right)\]
and
\[\|(\psi\circ\gamma)'(t_0)\|\geq \eta\delta
\Lip(\psi,x_0)\Lip(\gamma,t_0).\] Therefore, by the above estimates,
$\gamma$ is in the $\psi$-direction of $C(\widetilde
w,1-(1-\theta)\lambda\eta/\Lip\varphi)$ with $\psi$-speed
$\delta\eta$.
\end{proof}

As a consequence, we obtain the following Theorem.

\begin{theorem}\label{thm:atildenull}
Any $\widetilde A$ subset of a Lipschitz differentiability space has
measure zero.
\end{theorem}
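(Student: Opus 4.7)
The plan is to reduce to Theorem~\ref{thm:spanrep}. Let $S\in\widetilde A$ be witnessed by a constant $\lambda>0$, so that $S\in\widetilde A(\delta,\theta,\lambda)$ for every $\delta,\theta>0$. Applying Lemma~\ref{lem:decompstruct}, I decompose $X$ modulo a $\mu$-null set into countably many structured charts $(U_j,\varphi_j)$, each $\lambda_j$-structured of dimension $n_j$. It suffices to show $S\cap U_j\in\widetilde A(\varphi_j)$ for every $j$, since Theorem~\ref{thm:spanrep} will then yield $\mu(S\cap U_j)=0$ and summing will complete the proof.

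Fix $j$ and take $\lambda'_j=\lambda_j/(2\Lip\varphi_j)$. The first condition of Definition~\ref{def:atilde} at points of $S\cap U_j$ follows from the $\lambda_j$-structured property, since $\Lip(v\cdot\varphi_j,x_0)\geq\lambda_j\geq\lambda'_j\Lip(\varphi_j,x_0)$ for every unit $v$. For the second condition, given $\delta_0,\theta_0\in(0,1)$, I set $\theta=1-\theta_0/2$, $\epsilon=\theta_0/2$, $\delta_1=\delta_0\lambda/2$ and $\theta_1=\theta_0\lambda_j\lambda/(4\Lip\varphi_j)$. From $S\in\widetilde A(\delta_1,\theta_1,\lambda)$ I obtain a countable decomposition $S=\cup_kS_k$ with Lipschitz maps $\varphi_k\colon X\to\mathbb R^{n_k}$ so that $S_k\in\widetilde A(\varphi_k;\delta_1,\theta_1,\lambda)$, and a further Borel decomposition $S_k=\cup_lS_{k,l}$ with unit vectors $w_{k,l}\in\mathbb R^{n_k}$ such that $\mathcal H^1(\gamma'\cap S_{k,l})=0$ for every $\gamma'\in\Gamma(X)$ in the $\varphi_k$-direction of $C(w_{k,l},1-\theta_1)$ with $\varphi_k$-speed $\delta_1$. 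Applying Lemma~\ref{lem:preatildenull} to each $S_{k,l}\cap U_j$ with $\varphi=\varphi_j$, $\psi=\varphi_k$, $\eta=\lambda$, $\widetilde w=w_{k,l}$ and the chosen $\theta,\epsilon$ produces a countable Borel decomposition $S_{k,l}\cap U_j=(\cup_iT_{k,l,i})\cup N_{k,l}$ with $\mu(N_{k,l})=0$ and unit vectors $w'_{k,l,i}\in\mathbb R^{n_j}$ such that every $\gamma\in\Gamma(T_{k,l,i})$ in the $\varphi_j$-direction of $C(w'_{k,l,i},1-\theta_0)$ with $\varphi_j$-speed $\delta_0$ is forced into the $\varphi_k$-direction of $C(w_{k,l},1-\theta_0\lambda_j\lambda/(2\Lip\varphi_j))\subset C(w_{k,l},1-\theta_1)$ with $\varphi_k$-speed $\delta_0\lambda\geq\delta_1$.

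To upgrade this conclusion from $\gamma\in\Gamma(T_{k,l,i})$ to arbitrary $\gamma\in\Gamma(X)$, I decompose $\gamma^{-1}(T_{k,l,i})$ using inner regularity as $\cup_nK_n\cup N$ with $N$ Lebesgue-null and each $K_n$ compact; since $\gamma$ is bi-Lipschitz, each $\restr{\gamma}{K_n}$ belongs to $\Gamma(T_{k,l,i})$, and at density points $t_0\in K_n$ one has $(\varphi_j\circ\restr{\gamma}{K_n})'(t_0)=(\varphi_j\circ\gamma)'(t_0)$ (preserving the direction) while $\Lip(\restr{\gamma}{K_n},t_0)\leq\Lip(\gamma,t_0)$ (preserving the speed). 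The previous paragraph applied to each $\restr{\gamma}{K_n}$, combined with the $\widetilde A(\varphi_k;\delta_1,\theta_1,\lambda)$ property of $S_{k,l}$, yields $\mathcal H^1(\gamma(K_n))=0$, and summing gives $\mathcal H^1(\gamma\cap T_{k,l,i})=0$. Thus the family $\{T_{k,l,i}\}$ paired with cones $\{C(w'_{k,l,i},1-\theta_0)\}$ witnesses $S\cap U_j\in\widetilde A(\varphi_j;\delta_0,\theta_0,\lambda'_j)$; since $\lambda'_j$ depends on neither $\delta_0$ nor $\theta_0$, one has $S\cap U_j\in\widetilde A(\varphi_j)$.

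The main obstacle is the quantitative bookkeeping in Lemma~\ref{lem:preatildenull}: its change-of-frame step inflates the opening of cones by the factor $\Lip\varphi_j/(\lambda_j\lambda)$, so very thin source cones for $\varphi_k$ (i.e.\ $\theta_1$ small compared with $\theta_0$) are required to land inside the cones that $S_{k,l}$ already avoids, and the target direction $w'_{k,l,i}$ depends on the local behaviour of $\varphi_k$ at $x_0$, necessitating the countable sub-decomposition built into that lemma. Choosing the parameters in the order $\theta_0\rightsquigarrow(\theta,\epsilon)\rightsquigarrow\theta_1\rightsquigarrow\delta_1$ preserves uniformity of $\lambda'_j$ in $\delta_0,\theta_0$, which is essential since Definition~\ref{def:atilde} demands a single $\lambda'_j$ witnessing membership in $\widetilde A(\varphi_j)$.
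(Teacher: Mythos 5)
Your proof is correct and follows essentially the same route as the paper: both arguments hinge on Lemma \ref{lem:preatildenull} to convert the $\psi$-avoidance condition defining $\widetilde A$ into a $\varphi_j$-avoidance condition on each structured chart, and both then fall back on the output of Section \ref{sec:spanreps} --- you via the null-set form of Theorem \ref{thm:spanrep} (checking membership in $\widetilde A(\varphi_j)$, with the cone-width and speed bookkeeping done correctly), the paper via the Alberti representations that theorem produces. The only loose end is that the exceptional null sets $N_{k,l}$ from Lemma \ref{lem:preatildenull} depend on $(\delta_0,\theta_0)$ and are not covered by the $T_{k,l,i}$, so strictly you obtain $(S\cap U_j)\setminus N(\delta_0,\theta_0)\in\widetilde A(\varphi_j;\delta_0,\theta_0,\lambda'_j)$ rather than membership of $S\cap U_j$ itself; running the argument along $\delta_0=\theta_0=1/m$, discarding the countable union of the resulting null sets, and using that the classes $\widetilde A(\varphi_j;\delta,\theta,\lambda)$ are monotone and closed under passing to subsets repairs this immediately.
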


\begin{proof}
For $\lambda>0$ let $(U,\varphi)$ be a $\lambda$-structured chart and
$U=\cup_i U_i$ be a countable Borel decomposition such that, for each
$i\in\mathbb N$, there exists independent cones $C_1,\ldots,C_n$ and a
$\delta>0$ such that $\mu\llcorner U_i$ has Alberti representations in
the $\varphi$-direction of each $C_i$ with $\varphi$-speed $\delta$.
Further, let $0<\epsilon<\theta<1$ such that any cone in $\mathbb R^n$
of width $\theta-\epsilon$ completely contains one of the $C_i$.

We work with a fixed $U_i$.  Suppose that for some Lipschitz
$\psi\colon X\to\mathbb R^m$ and $\widetilde w\in\mathbb S^{m-1}$,
$S\subset U_i$ satisfies $\mathcal H^1(\gamma\cap S)=0$ for every
$\gamma\in\Gamma(X)$ in the $\psi$-direction of $C(\widetilde
w,1-(1-\theta)\lambda\eta/ \Lip\varphi)$ with $\psi$-speed
$\delta\eta$.  Then by the previous Lemma there exists a countable
Borel decomposition $S=\cup_i S_i\cup N$ and for each $i\in\mathbb N$
cones $C_i$ of width $\theta-\epsilon$ such that $\mu(N)=0$ and $\mathcal
H^1(\gamma\cap S_i)=0$ for any $\gamma\in\Gamma(X)$ in the
$\varphi$-direction of $C_i$ with speed $\delta$.  However, one of the
Alberti representations of $\mu\llcorner U_i$ is in the
$\varphi$-direction of this cone with $\varphi$-speed $\delta$ and so
$\mu(S_i)=0$ and hence $\mu(S)=0$.

Finally, let $S'\in \widetilde A$ and $\eta>0$ such that
$S'\in\widetilde A(\delta',\theta',\eta)$ for any
$0<\delta',\theta'<1$.  In particular $S\in\widetilde
A(\eta\delta,1-(1-\theta)\lambda\eta/\Lip\varphi,\eta)$.  Therefore,
there exists a countable decomposition $S'=\cup_j S_j$ and for each
$j\in\mathbb N$ a Lipschitz $\psi_j\colon X\to\mathbb R^{n_j}$ such that
each $S_j$ has the form of $S$ above.  Therefore, each $S_j$ has
measure zero.  In particular, any $\widetilde A$ subset of $U_i$ and
hence of $(X,d,\mu)$ must be $\mu$-null.
\end{proof}

Finally, as a consequence of the existence of the above Alberti
representations, we give a partial, positive answer to
\cite{cheeger-diff}, Conjecture 4.63 regarding the image of a chart under
the chart map.

\begin{corollary}\label{cor:chartleb}
For $n=1$ or 2 let $(U,\varphi)$ be an $n$-dimensional chart in a
Lipschitz differentiability space $(X,d,\mu)$.  Then the pushforward
$\varphi_* (\mu\llcorner U)$ is absolutely continuous with respect to
$n$-dimensional Lebesgue measure.

In particular, if $\mu(U)>0$ then $\mathcal L^n(\varphi(U))>0$.
\end{corollary}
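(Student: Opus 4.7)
The plan is to push each of the Alberti representations of $\mu\llcorner U$ forward under $\varphi$ to obtain Alberti representations of $\nu := \varphi_*(\mu\llcorner U)$ as a measure on $\mathbb{R}^n$, and then use these representations to kill any $\mathcal{L}^n$-null set.

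First I would apply Theorem~\ref{thm:spanrep} to obtain a countable Borel decomposition $U = \bigcup_k U_k$ so that each $\mu\llcorner U_k$ carries $n$ $\varphi$-independent Alberti representations. By refining via Corollary~\ref{cor:refine}, together with a further countable decomposition of each $U_k$ onto sets where $\Lip(\varphi,\cdot)$ is bounded below by some $c_k > 0$, I may assume each representation $\mathcal{A}_i^k$ is in the $\varphi$-direction of a closed cone $C_i \subset \mathbb{R}^n$ with strictly positive $\varphi$-speed. Then for $\mathbb{P}$-almost every $\gamma$ in $\mathcal{A}_i^k$ the derivative $(\varphi\circ\gamma)'$ lies in $C_i\setminus\{0\}$ with norm bounded above by $\Lip(\varphi)\Lip(\gamma,t)$ and below by a uniform positive constant (using the cone angle, the speed bound, and the lower bound on $\Lip(\varphi,\cdot)$). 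Consequently $\varphi\circ\gamma$ is bilipschitz on $\dom\gamma$ and belongs to $\Gamma(\mathbb{R}^n)$, and $\varphi_*\mu_\gamma$ is absolutely continuous with respect to $\mathcal{H}^1\llcorner \im(\varphi\circ\gamma)$. Packaging these pushforwards produces $n$ $\operatorname{id}$-independent Alberti representations $\widetilde{\mathcal{A}}_i^k$ of $\nu_k := \varphi_*(\mu\llcorner U_k)$ on $\mathbb{R}^n$, in the $\operatorname{id}$-direction of the same cones $C_1,\ldots,C_n$. It therefore suffices to prove that each $\nu_k \ll \mathcal{L}^n$.

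When $n=1$, a single such representation already closes the argument: for Lebesgue-null $E\subset\mathbb{R}$ and $\widetilde{\mathbb{P}}$-almost every bilipschitz $\tilde\gamma \in \Gamma(\mathbb{R})$, one has $\mathcal{L}^1(\tilde\gamma^{-1}(E)) = 0$, hence $\tilde\nu_{\tilde\gamma}(E) = 0$, and integrating against $\widetilde{\mathbb{P}}$ yields $\nu_k(E)=0$. When $n=2$ the argument is finished by invoking the Alberti--Cs\"ornyei--Preiss structure theorem for planar Lebesgue-null sets: for any pair of independent closed cones $C_1, C_2 \subset \mathbb{R}^2$ and any Borel $E \subset \mathbb{R}^2$ with $\mathcal{L}^2(E) = 0$, there is a Borel decomposition $E = E_1 \cup E_2$ such that $\mathcal{H}^1(\tilde\gamma \cap E_i) = 0$ for every Lipschitz $\tilde\gamma \in \Gamma(\mathbb{R}^2)$ whose derivative lies in $C_i\setminus\{0\}$ almost everywhere. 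Applying this to our cones $C_1, C_2$ and then to the Alberti representation $\widetilde{\mathcal{A}}_i^k$ gives $\nu_k(E_i) = \int \tilde\nu_{\tilde\gamma}(E_i)\, \mathrm{d}\widetilde{\mathbb{P}}_i^k = 0$ for each $i$, so $\nu_k(E) = 0$.

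The main obstacle, and the reason for the restriction to $n\leq 2$, is precisely the availability of the ACP structural theorem for planar Lebesgue-null sets; the analogous decomposition in $\mathbb{R}^n$ for $n\geq 3$ is open, which is also why Cheeger's conjecture~4.63 can be confirmed only in these low dimensions by the present method. The ``in particular'' assertion is then immediate, since $\mathcal{L}^n(\varphi(U)) = 0$ would force $\nu(\varphi(U)) = 0$ by absolute continuity, whereas $\nu(\varphi(U)) \geq \mu(U) > 0$.
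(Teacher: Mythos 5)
Your proposal is correct and follows essentially the same route as the paper: decompose $U$ via Theorem~\ref{thm:spanrep} into pieces carrying $n$ $\varphi$-independent Alberti representations, push these forward to representations of $\varphi_*(\mu\llcorner U)$ on $\mathbb R^n$, and conclude via absolute continuity of measures on $\mathbb R$ with one representation (for $n=1$) and the Alberti--Cs\"ornyei--Preiss result for measures on $\mathbb R^2$ with two independent representations (for $n=2$). The only cosmetic difference is that you verify by hand that the pushed-forward curves are bi-Lipschitz, where the paper instead cites Corollary~\ref{cor:onerep2} for the existence of the pushforward representation.
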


\begin{proof}
Suppose that a measure $\nu$ has an Alberti representation in the
$\varphi$-direction of a cone $C\subset \mathbb R^n$.  Then an easy
application of Corollary \ref{cor:onerep2} shows that
$\varphi_*\nu$ also has an Alberti representation in the direction of
$C$.  By Theorem \ref{thm:spanrep} there exists a countable Borel
decomposition $U = \cup_m U_m$ such that each $\mu\llcorner U_m$ has
$n$-independent Alberti representations, so that each
$\varphi_*(\mu\llcorner U_m)$ has $n$ independent Alberti
representations.

Any measure on $\mathbb R$ with an Alberti representation is
absolutely continuous with respect to Lebesgue measure.  Further,
results from \cite{acp-structurenullsets} show that any measure on $\mathbb R^2$ with
two independent Alberti representations must also be absolutely
continuous with respect to Lebesgue measure.  Therefore, in either
case, $\varphi_*(\mu\llcorner U)$ is absolutely continuous with
respect to Lebesgue measure.
\end{proof}

This improves the known cases proved by Keith ($n=1$, see \cite{keith}, page
282) and Gong ($n=2$, see \cite{gong-posmeas}, Theorem 1.4) who prove
Corollary \ref{cor:chartleb} for Lipschitz differentiability spaces
that satisfy the Poincar\'e inequality and possess a doubling measure
(see Definition \ref{defn:poincare}).

\begin{remark}
Using a recent announcement of Cs\"ornyei and Jones (see
\url{www.math.sunysb.edu/Videos/dfest/PDFs/38-Jones.pdf}, pages 15-23)
one may show that, for any $n\in\mathbb N$, any measure on $\mathbb
R^n$ with $n$ independent Alberti representations is absolutely
continuous with respect to Lebesgue measure.  Therefore, the previous
argument also proves the statements of Corollary \ref{cor:chartleb} for
any $n\in\mathbb N$.

One may also use this announcement to generalise the techniques of
Keith and Gong to prove Corollary \ref{cor:chartleb} for all $n\in
\mathbb N$ for Lipschitz differentiability spaces that satisfy the
Poincar\'e inequality and possess a doubling measure.
\end{remark}

%%% Local Variables: 
%%% mode: latex
%%% TeX-master: "structure"
%%% End: 

\section{A characterisation via Alberti representations}\label{sec:albertichar}
We now show how the Alberti representations found in the previous
section characterise the differentiability properties of a metric
measure space. 

We first introduce the notion of a \emph{universal} set of Alberti
representations for when a set of representations describe all of the
Lipschitz functions on a metric measure space.

\begin{definition}\label{def:universal}
Let $(U,\varphi)$ be a chart in a metric measure space $(X,d,\mu)$ and
$\mathcal A_1,\ldots,\mathcal A_n$ be a collection of
$\varphi$-independent Alberti representations of $\mu\llcorner U$,
each with strictly positive $\varphi$-speed.  For $\rho>0$ we say that
this collection of Alberti representations is \emph{$\rho$-universal}
(or just \emph{universal} if such a $\rho$ exists) if, for any
Lipschitz $f\colon X\to\mathbb R$, there exists a finite Borel
decomposition $X=X_1\cup\ldots\cup X_n$ such that the Alberti
representation of $\mu\llcorner X_i$ induced by $\mathcal A_i$ has
$f$-speed $\rho$.
\end{definition}

\begin{remark}
A universal collection of Alberti representations is a
much stronger concept than a \emph{maximal} collection of
representations (i.e. a collection for which there are no other
independent representations).  For example, any purely unrectifiable
metric measure space has a maximal (empty) collection of Alberti
representations.  However, by Corollary \ref{cor:zerodim}, this collection is
not universal as there exists a Lipschitz function with positive
pointwise Lipschitz constant on a set of positive measure.  (For a
slightly less trivial example, one may consider the Cartesian product
of $\mathbb R$ and a purely unrectifiable space.)
\end{remark}

We will see that a universal collection of Alberti
representations is precisely the required concept so that the gradient
(see Definition \ref{def:gradient}) $\nabla f$ of a Lipschitz
function $f$ forms a derivative.  We prove this in a very natural way:
observe that the derivative of $f-\nabla f\cdot\varphi$ along the Lipschitz
curves defining the gradient is zero.  Further, if the Alberti
representations are sufficiently refined, then the derivative along
almost every curve in the representation is also sufficiently small.
Therefore, after a suitable limit of such gradients
obtained from a sequence of Alberti representations, each refining the
previous, we obtain a derivative.

We begin with a simple result that bounds the magnitude of the
gradient using properties of Alberti representations.  For this we must
quantitatively describe an independent collection of Alberti
representations.

\begin{definition}\label{def:separated}
For $\xi>0$ we say $v_1,\ldots,v_m\in\mathbb R^n$ are
\emph{$\xi$-separated} if, for any $\lambda\in \mathbb
R^m\setminus\{0\}$,
\[\left\|\sum_{i=1}^m \lambda_i v_i\right\| > \xi \max_{1\leq i\leq
  m}\|\lambda_i v_i\|\]
and that closed cones $C_1,\ldots, C_m$ are
$\xi$-separated if any choice of $v_i\in C_i\setminus\{0\}$ are
$\xi$-separated.  Further, we say that Alberti representations
$\mathcal A_1,\ldots,\mathcal A_m$ are \emph{$\xi$-separated} if there
exists $\xi$ separated cones $C_1,\ldots,C_m$ such that each $\mathcal
A_i$ in the $\varphi$-direction of $C_i$.
\end{definition}

Observe that cones $C_1,\ldots,C_m$ are $\xi$-separated if and only
if, for every $1\leq i\leq m$, the distance of $C_i\cap \mathbb
S^{n-1}$ from those $v$ in the symmetric convex hull of the $C_j$ (for
$j\neq i$) with $\|v\|\leq 1$ is strictly greater than $\xi$.  In
particular, any independent cones are $\xi$-separated for some
$\xi>0$.

Suppose that a metric measure space $(X,d,\mu)$ has Alberti
representations in the $\varphi$-direction of $\xi$-separated cones
$C(w_1,\theta),\ldots,C(w_m,\theta)$.  Then there exists an
$\epsilon>0$ such that
$C(w_1,\theta+\epsilon),\ldots,C(w_m,\theta+\epsilon)$ are also
$\xi$-separated and a finite cover of each $C(w_i,\theta)$ by cones
$C_1^i,\ldots, C_{N_i}^i$ of width $\epsilon$ that are contained
within $C(w_i,\theta+\epsilon)$.  By applying Corollary
\ref{cor:refine} using the $C_j^i$ we obtain \emph{arbitrary}
refinements of these Alberti representations that are also
$\xi$-separated.  Moreover, if for some Lipschitz $\psi\colon
X\to\mathbb R^n$ and $\delta_1,\ldots,\delta_n>0$, the original
Alberti representations have $\psi_i$-speed strictly greater than
$\delta_i$ for each $1\leq i \leq N$, then so do the refinements.

For this section we fix the following notation.

\begin{notation}\label{not:albchar}
We fix a metric measure space $(X,d,\mu)$ and for
$\rho,\delta,\lambda,\xi>0$ let $(U,\varphi)$ be an $n$-dimensional
$\lambda$-structured chart in $(X,d,\mu)$ such that $\mu\llcorner U$
has a $\rho$-universal collection of $n$, $\xi$-separated Alberti
representations with $\varphi$-speed strictly greater than $\delta$.
\end{notation}

Using the notion of separated Alberti representations, we may bound
the magnitude of a gradient.

\begin{lemma}\label{lem:gradientbound}
Let $f\colon X\to\mathbb R$ be Lipschitz, $x_0\in U$ and
$\gamma_1,\ldots,\gamma_n \in\Gamma(X)$ such that the
$(\varphi\circ\gamma_i)'(0)$ are $\xi$-separated and, for each $1\leq
i \leq n$, $\gamma_i^{-1}(x_0)=0$ is a density point of $\dom\gamma_i$
and
\[(\varphi\circ\gamma_i)'(0)\geq
\delta\Lip(\varphi,x_0)\Lip(\gamma_i,0).\]
Then the gradient $\nabla f(x_0)$ of $f$ at $x_0$ with
respect to $\varphi$ and $\gamma_1,\ldots,\gamma_n$ satisfies
\[\|\nabla f(x_0)\|\leq n\Lip(f,x_0)/\xi\delta\lambda.\]
\end{lemma}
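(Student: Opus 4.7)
The plan is to expand $\nabla f(x_0)$ in a suitable basis built from the curves and then combine three inequalities: the defining relation for the gradient, the fact that the pointwise Lipschitz constant of $f$ controls the derivative of $f\circ\gamma_i$ at the density point $0$, and the $\xi$-separated hypothesis on the tangent vectors $v_i := (\varphi\circ\gamma_i)'(0)$.

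First I would set $v_i = (\varphi\circ\gamma_i)'(0)$ and $\hat v_i = v_i/\|v_i\|$. Note that $\xi$-separation of the $v_i$ passes to the $\hat v_i$: if $\|\sum_i\mu_i v_i\|>\xi\max_i\|\mu_i v_i\|$ for all nonzero $\mu$, then substituting $\mu_i=\lambda_i/\|v_i\|$ yields $\|\sum_i\lambda_i\hat v_i\|>\xi\max_i|\lambda_i|$. Using the lemma from the end of Section~\ref{sec:firstanalysis} (the fact that $(f\circ\gamma_i)'(0)$ is controlled by $\Lip(f,x_0)\Lip(\gamma_i,0)$ at a density point), and the defining relation $\nabla f(x_0)\cdot v_i=(f\circ\gamma_i)'(0)$, I get
\[
  |\nabla f(x_0)\cdot v_i|\leq \Lip(f,x_0)\Lip(\gamma_i,0).
\]
Dividing by $\|v_i\|$ and applying the hypothesis $\|v_i\|\geq \delta\Lip(\varphi,x_0)\Lip(\gamma_i,0)$ gives the key pointwise bound
\[
  |\nabla f(x_0)\cdot\hat v_i|\leq \frac{\Lip(f,x_0)}{\delta\Lip(\varphi,x_0)}
  \quad\text{for each }1\leq i\leq n.
\]

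Next, since the $\hat v_i$ are $\xi$-separated they are in particular a basis of $\mathbb R^n$, so I may write $\nabla f(x_0)=\sum_i\alpha_i\hat v_i$. The $\xi$-separated inequality applied to $\lambda_i=\alpha_i$ gives $\max_i|\alpha_i|\leq \|\nabla f(x_0)\|/\xi$. Taking the inner product of $\nabla f(x_0)$ with itself through this expansion,
\[
  \|\nabla f(x_0)\|^2 = \sum_{i=1}^n \alpha_i\bigl(\nabla f(x_0)\cdot\hat v_i\bigr)
  \leq n\max_i|\alpha_i|\cdot\frac{\Lip(f,x_0)}{\delta\Lip(\varphi,x_0)}
  \leq \frac{n\,\Lip(f,x_0)\,\|\nabla f(x_0)\|}{\xi\delta\Lip(\varphi,x_0)},
\]
so $\|\nabla f(x_0)\|\leq n\Lip(f,x_0)/(\xi\delta\Lip(\varphi,x_0))$.

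Finally, I invoke that $(U,\varphi)$ is a $\lambda$-structured chart: taking $R\to 0$ in Definition~\ref{defn:structuredchart} shows $\Lip(v\cdot\varphi,x_0)\geq\lambda$ for every $v\in\mathbb S^{n-1}$, and since $\Lip(v\cdot\varphi,x_0)\leq \Lip(\varphi,x_0)$, we obtain $\Lip(\varphi,x_0)\geq\lambda$. Substituting this into the previous bound yields $\|\nabla f(x_0)\|\leq n\Lip(f,x_0)/(\xi\delta\lambda)$, as required. The only slightly delicate step is passing from the coordinatewise bound $|\nabla f(x_0)\cdot\hat v_i|$ to a bound on the full norm — this is precisely where the $\xi$-separated hypothesis (rather than mere linear independence) is essential, since it provides a quantitative bound on the expansion coefficients $\alpha_i$ in terms of $\|\nabla f(x_0)\|$ itself.
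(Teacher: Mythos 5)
Your proof is correct and follows essentially the same route as the paper: combine the defining relation $\nabla f(x_0)\cdot(\varphi\circ\gamma_i)'(0)=(f\circ\gamma_i)'(0)$, the density-point bound $|(f\circ\gamma_i)'(0)|\leq\Lip(f,x_0)\Lip(\gamma_i,0)$, the speed hypothesis, and the $\xi$-separation to conclude. The only difference is that the paper simply asserts the linear-algebra fact that some $i$ satisfies $|G\cdot\hat v_i|\geq\xi\|G\|/n$ for $\xi$-separated directions, whereas you supply its (correct) proof via the basis expansion $\nabla f(x_0)=\sum_i\alpha_i\hat v_i$.
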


\begin{proof}
For any $\xi$-separated $v_1,\ldots,v_n$ and
$G\in\mathbb R^n$, there exists a $1\leq i \leq n$ such that
\[\xi\|G\|/n\leq |G\cdot v_i|.\]
Therefore,
\begin{align*}
\Lip(f,x_0)\Lip(\gamma_i,0) &\geq |(f\circ\gamma_i)'(0)|\\
&= |\nabla f(x_0)\cdot(\varphi\circ\gamma_i)'(0)|\\
&\geq \xi \|\nabla f(x_0)\|\|(\varphi\circ\gamma_i)'(0)\|/n\\
&\geq \xi\delta \|\nabla f(x_0)\|\Lip(\varphi,x_0)\Lip(\gamma_i,0)/n.
\end{align*}
That is,
\[\|\nabla f(x_0)\|\leq \Lip f n/\xi\delta\Lip(\varphi,x_0)\]
as required.
\end{proof}

Next we refine a universal collection of Alberti representations
whilst maintaining their speed with respect to a finite collection of
Lipschitz functions.

\begin{lemma}\label{lem:univrefine}
Let $\mathcal F$ be a finite collection of real valued Lipschitz
functions defined on $X$, $f\colon X\to\mathbb R$ Lipschitz and
$\epsilon>0$.  Then there exists a finite Borel decomposition
$X=X_1\cup\ldots\cup X_N$ and for each $1\leq i \leq N$, Alberti
representations $\mathcal A'_1,\ldots,\mathcal A'_n$ of $\mu\llcorner
X_i$ such that:
\begin{enumerate}
\item \label{item:vecrefinesep} The $\mathcal A'_k$ are $\xi$-separated and have $\varphi$-speed
  strictly greater than $\delta$.
\item \label{item:vecrefinespeed} For every $g\in\mathcal F$ there exists an $\mathcal A'_k$ with
  $g$-speed $\rho$.
\item \label{item:vecrefinewidth} If we write $\Phi\colon X\to\mathbb R^{n+1}$ to be the function
  obtained by appending $f$ to $\varphi$, then each $\mathcal A'_k$
  is in the $\Phi$-direction of a cone of width $\epsilon$.
\end{enumerate}
\end{lemma}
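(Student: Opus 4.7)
The plan is to use the $\rho$-universality of $\mathcal{A}_1,\ldots,\mathcal{A}_n$ first, in order to secure the speed condition (2), and then apply the refinement machinery of Corollary \ref{cor:refine} in the $\Phi$-direction to achieve (3), taking care at each step that (1) is not destroyed. First I would, for each $g \in \mathcal F$, invoke $\rho$-universality to obtain a finite Borel decomposition $X = \bigcup_{k=1}^{n} Y_{k}^{g}$ with $\mathcal{A}_k \llcorner Y_k^g$ having $g$-speed $\rho$. Taking the common refinement of these $|\mathcal F|$ partitions produces a finite Borel partition $X = Z_1 \cup \ldots \cup Z_M$ such that, on each $Z_j$ and for each $g \in \mathcal F$, there is some $i(g,j) \in \{1,\ldots,n\}$ with $\mathcal{A}_{i(g,j)} \llcorner Z_j$ of $g$-speed $\rho$. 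Each restriction $\mathcal{A}_k \llcorner Z_j$ inherits $\xi$-separation and $\varphi$-speed strictly greater than $\delta$, so conditions (1) and (2) hold after this step.

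Fix a $Z_j$. Since $(U,\varphi)$ is a $\lambda$-structured chart, $\Lip(\varphi,x_0) \geq \lambda$ for almost every $x_0 \in U$, and each $\mathcal{A}_k \llcorner Z_j$ has $\varphi$-speed strictly greater than $\delta$. This forces $|(f \circ \gamma)'(t_0)|$ to be uniformly controlled by $\|(\varphi \circ \gamma)'(t_0)\|$ along almost every curve, so that $\mathcal{A}_k \llcorner Z_j$ is in the $\Phi$-direction of some proper cone in $\mathbb{R}^{n+1}$. Now cover $\mathbb{S}^{n} \subset \mathbb R^{n+1}$ by finitely many spherical caps of diameter less than $\epsilon$, giving cones $\widetilde C_1, \ldots, \widetilde C_L \subset \mathbb R^{n+1}$ of width $\epsilon$ whose open interiors cover $\mathbb R^{n+1} \setminus \{0\}$. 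Apply Corollary \ref{cor:refine} (using the remark allowing $\psi$ to take values in a codomain of different dimension than $\varphi$) to each $\mathcal{A}_k \llcorner Z_j$ with $\Phi$ playing the role of direction function and $\psi$ the concatenation of the coordinates of $\varphi$ and of those $g \in \mathcal F$ with $i(g,j) = k$, and with fixed positive speed thresholds matching the given $\delta$ and $\rho$. The moreover clause of Corollary \ref{cor:refine}, together with the discussion on refining while maintaining vector-valued $\varphi$-speed given before Definition \ref{def:atilde}, shows that $\varphi$-speed strictly greater than $\delta$ and $g$-speed $\rho$ are preserved, while $\xi$-separation is preserved automatically because each refined representation remains a sub-representation of the original $\mathcal{A}_k$ and in particular still lies in the $\varphi$-direction of the original cone $C_k$ in $\mathbb R^n$. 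Taking the common refinement over $k = 1, \ldots, n$, and then over $j = 1, \ldots, M$, produces the final finite Borel partition $X = X_1 \cup \ldots \cup X_N$ and the representations $\mathcal{A}'_1, \ldots, \mathcal{A}'_n$ on each $X_i$ satisfying (1), (2), (3).

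The main obstacle is to ensure the decomposition is genuinely finite rather than countable. Corollary \ref{cor:refine} as stated yields a countable decomposition because it ranges over a countable family of cones and allows the speed thresholds to vary as $1/k$, but in our setting both the covering $\{\widetilde C_l\}$ and the speed thresholds are fixed. Tracing the proof of Corollary \ref{cor:refine}, which reduces to finitely many applications of Corollary \ref{cor:onerep2} — one per pair (cone, threshold tuple) — the decomposition collapses to a finite one. A secondary technical point is justifying that $\mathcal{A}_k \llcorner Z_j$ sits in the $\Phi$-direction of a proper cone of width strictly less than $1$ so Corollary \ref{cor:refine} can be invoked; this is where the $\lambda$-structured hypothesis and the $\varphi$-speed lower bound $\delta$ are crucially used, through the resulting bound on $|(f\circ\gamma)'|$ by $\|(\varphi\circ\gamma)'\|$.
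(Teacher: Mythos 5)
Your proposal follows essentially the same route as the paper: invoke $\rho$-universality for each $g\in\mathcal F$, take the common (finite) refinement of the resulting partitions, then refine each $\mathcal A_k$ on each piece in the $\Phi$-direction via Corollary \ref{cor:refine}, and intersect. Your observations that the decomposition stays finite (fixed cones, fixed thresholds) and that one must first check $\mathcal A_k$ lies in the $\Phi$-direction of a proper cone of $\mathbb R^{n+1}$ (which follows from $\varphi$-speed $>\delta$ and $\Lip(\varphi,\cdot)\geq\lambda$, giving $|(f\circ\gamma)'|\leq (\Lip f/\delta\lambda)\|(\varphi\circ\gamma)'\|$) are both correct and worth making explicit.

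The one flaw is your justification for why $\xi$-separation survives the refinement. A representation produced by Corollary \ref{cor:refine} is \emph{not} a sub-representation (restriction) of $\mathcal A_k$: it is built afresh via Corollary \ref{cor:onerep2} and the Gliksberg--K\"onig--Seever decomposition, with $\mathcal A_k$ used only to kill the singular set. Its direction cones need only have interiors covering $C_k\setminus\{0\}$, so they necessarily protrude beyond $C_k$, and "still lies in the $\varphi$-direction of the original cone $C_k$" is false as stated. The correct fix is the one the paper sketches after Definition \ref{def:separated}: enlarge each $C(w_k,\theta)$ to $C(w_k,\theta+\epsilon')$ with the enlarged cones still $\xi$-separated, and choose the covering cones (here, their projections from $\mathbb R^{n+1}$ onto the first $n$ coordinates) to lie inside the enlarged cones. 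This is a local repair rather than a change of strategy, but as written the step does not go through.
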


\begin{proof}
The collection $\mathcal A_1,\ldots,\mathcal A_n$ is
$\rho$-universal and so there exists a finite Borel
decomposition $X=Z_1\cup\ldots\cup Z_L$, such that, for any
$g\in\mathcal F$ and $1\leq i\leq L$, there exists a $1\leq k\leq n$ such
that $\mathcal A_k\llcorner Z_i$ has $g$-speed $\rho$.

We fix $1\leq i \leq L$ and for each $1\leq k\leq n$ let $\mathcal
F(k)$ be the set of $g\in\mathcal F$ such that $\mathcal A_k\llcorner
Z_i$ has $g$-speed $\rho$.  Then by Corollary \ref{cor:refine} we may
refine each $\mathcal A_k\llcorner Z_i$ to obtain a finite
decomposition $Z_i=Y_k^1\cup\ldots\cup Y_k^{M_k}$ and Alberti
representations $\mathcal A_k^1,\ldots,\mathcal A_k^{M_k}$ of
$\mu\llcorner Y_k^1,\ldots,\mu\llcorner Y_k^{M_k}$ respectively, in
the $\Phi$-direction of cones of width $\epsilon$ and with
$\varphi$-speed strictly greater than $\delta$, and such that each
$\mathcal A_k^i$ has $g$-speed $\rho$, for each $g\in \mathcal F(k)$.
Moreover, we may make these refinements such that the representations
are $\xi$-separated, with respect to $\varphi$.  Therefore, for these
representations, both \eqref{item:vecrefinesep} and
\eqref{item:vecrefinewidth} are satisfied.

By taking the intersection of $Y_k^m$ for $1\leq k\leq n$, we obtain a
finite Borel decomposition $Z_i=X_1^i\cup\ldots\cup X_{N_i}^i$ such
that each $X_j^i$ is a subset of some $Y_k^m$, for each $1\leq k\leq
n$.  Therefore, for each $1\leq m\leq N_i$ and any $g\in\mathcal F$,
there exists a $1\leq k \leq n$ such that $\mathcal A_k\llcorner
X^i_m$ has $g$-speed $\rho$.  In particular, \eqref{item:vecrefinespeed}
is satisfied and so $X=\cup_{i,m} X_m^i$ is a decomposition of the
required form.
\end{proof}

Finally, we use this refinement to show that the gradient obtained
from such Alberti representations is an $\epsilon$-derivative at
almost every point.

\begin{lemma}\label{lem:univgivesapproxder}
Let $f\colon X\to\mathbb R$ be Lipschitz.  Then there exists an
$M\in\mathbb R$ such that, for every $\epsilon>0$ and almost every
$x_0\in U$, there exists a $\nabla f(x_0)\in\mathbb R^n$ with
\[\Lip(f-\nabla f(x_0)\cdot\varphi,x_0)\leq M\sqrt{\epsilon}.\]
\end{lemma}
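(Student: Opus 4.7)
The target function $g_{x_0} := f - \nabla f(x_0)\cdot\varphi$ depends on $x_0$, so one cannot feed it directly to the universality hypothesis. The remedy is a standard discretisation: by Lemma \ref{lem:gradientbound} any gradient $\nabla f(x_0)$ produced from $\xi$-separated curves of $\varphi$-speed $>\delta$ satisfies $\|\nabla f(x_0)\|\leq R:=n\Lip(f)/(\xi\delta\lambda)$, a bound independent of $x_0$ and of $\epsilon$. Consequently it suffices to work with a finite $\sqrt{\epsilon}$-net $\{q_1,\dots,q_L\}$ of $\overline B(0,R)\subset\mathbb R^n$ and track errors of order $\sqrt{\epsilon}$.

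Given $\epsilon>0$, apply Lemma \ref{lem:univrefine} to the finite family $\mathcal F=\{f-q_j\cdot\varphi:1\leq j\leq L\}$, taking $f$ as the extra coordinate so that $\Phi=(\varphi,f)\colon X\to\mathbb R^{n+1}$, and demanding cone width $\epsilon$. This produces a finite Borel partition $X=X_1\cup\dots\cup X_N$ together with, on each $X_i$, refined $\xi$-separated Alberti representations $\mathcal A_1'^{(i)},\dots,\mathcal A_n'^{(i)}$ of $\mu\llcorner X_i$, each of $\varphi$-speed strictly greater than $\delta$ and each in the $\Phi$-direction of a cone of width $\epsilon$ about a unit axis $v_k^{(i)}=(a_k^{(i)},b_k^{(i)})\in\mathbb S^{n}\subset\mathbb R^n\times\mathbb R$; moreover, for every $j\in\{1,\dots,L\}$ there is an index $k(i,j)$ for which $\mathcal A_{k(i,j)}'^{(i)}$ has $(f-q_j\cdot\varphi)$-speed $\rho$. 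By Proposition \ref{prop:curvefullmeas}, for almost every $x_0\in X_i$ one selects curves $\gamma_1,\dots,\gamma_n\in\Gamma(X)$ through $x_0$ realising the direction, separation and speed properties of these representations. Define $\nabla f(x_0)\in\mathbb R^n$ as the gradient of $f$ at $x_0$ with respect to $\varphi$ and $\gamma_1,\dots,\gamma_n$ in the sense of Definition \ref{def:gradient}; this is well posed because $\xi$-separation of the Alberti representations forces $(\varphi\circ\gamma_k)'(0)$ to be linearly independent.

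Now fix such an $x_0\in X_i$ and pick $q_j$ with $\|q_j-\nabla f(x_0)\|\leq\sqrt{\epsilon}$. Let $k=k(i,j)$ and set $s_k=\|(\Phi\circ\gamma_k)'(0)\|$; the cone condition yields
\[
\|(\varphi\circ\gamma_k)'(0)-s_k a_k^{(i)}\|+|(f\circ\gamma_k)'(0)-s_k b_k^{(i)}|\leq C_1\sqrt{\epsilon}\,s_k
\]
and the defining relation $(f\circ\gamma_k)'(0)=\nabla f(x_0)\cdot(\varphi\circ\gamma_k)'(0)$ then gives $|b_k^{(i)}-\nabla f(x_0)\cdot a_k^{(i)}|\leq C_1\sqrt{\epsilon}(1+R)$. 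Writing $(f-q_j\cdot\varphi)\circ\gamma_k)'(0)$ in terms of $v_k^{(i)}$ and the error and combining with $|q_j-\nabla f(x_0)|\leq\sqrt{\epsilon}$ produces
\[
|(f-q_j\cdot\varphi)\circ\gamma_k)'(0)|\leq C_2\sqrt{\epsilon}\,(\Lip\varphi+\Lip f)\Lip(\gamma_k,0),
\]
since $s_k\leq(\Lip\varphi+\Lip f)\Lip(\gamma_k,0)$. On the other hand the $(f-q_j\cdot\varphi)$-speed $\rho$ of $\mathcal A_{k(i,j)}'^{(i)}$ provides the lower bound $\rho\Lip(f-q_j\cdot\varphi,x_0)\Lip(\gamma_k,0)\leq|(f-q_j\cdot\varphi)\circ\gamma_k)'(0)|$, so $\Lip(f-q_j\cdot\varphi,x_0)\leq M_1\sqrt{\epsilon}$ for $M_1$ depending only on $\rho,\Lip\varphi,\Lip f$. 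The triangle inequality $\Lip(f-\nabla f(x_0)\cdot\varphi,x_0)\leq\Lip(f-q_j\cdot\varphi,x_0)+\|q_j-\nabla f(x_0)\|\Lip\varphi$ then yields the desired bound $M\sqrt{\epsilon}$ with $M:=M_1+\Lip\varphi$, a constant independent of $\epsilon$.

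The main obstacle is precisely the $x_0$-dependence of $\nabla f(x_0)$, which prevents a direct appeal to universality; the discretisation via the a priori bound on $\|\nabla f(x_0)\|$ from Lemma \ref{lem:gradientbound} is what turns the problem into finitely many applications of $\rho$-universality, while the $\sqrt{\epsilon}$ scaling emerges because the cone-width estimates contribute errors of order $\sqrt{\epsilon}$ (not $\epsilon$), and these must be matched with the mesh size of the net.
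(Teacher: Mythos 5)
Your proposal is correct and follows essentially the same route as the paper's proof: discretise the a priori ball $\overline B(0,n\Lip f/\xi\delta\lambda)$ given by Lemma \ref{lem:gradientbound}, refine via Lemma \ref{lem:univrefine} with $\Phi=(\varphi,f)$ and cones of width $\epsilon$, select curves by Proposition \ref{prop:curvefullmeas}, and play the cone-width upper bound (of order $\sqrt{\epsilon}$) against the $\rho$-speed lower bound for the net point nearest $\nabla f(x_0)$. The only cosmetic differences are your use of a $\sqrt{\epsilon}$-net rather than an $\epsilon$-net and your phrasing of the cone estimate through the cone axis rather than by comparing $(\Phi\circ\gamma)'$ with the appended vector $\mathbf{\nabla f}(x_0)$; both yield the same $M\sqrt{\epsilon}$ bound.
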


\begin{proof}
Fix $\epsilon>0$ and let $\mathcal Q$ be a finite $\epsilon$-net of
$\overline B(0,\Lip f n/\xi\delta\lambda)\subset\mathbb R^n$ and
\[\mathcal F = \{f-D\cdot\varphi:D\in\mathcal Q\}.\]
Then by the previous Lemma there exists a countable decomposition
$U=\cup_i U_i$ such that each $\mu\llcorner U_i$ has Alberti
representations $\mathcal A'_1,\ldots,\mathcal A'_n$ satisfying the
properties of the previous Lemma for $\epsilon$, $f$ and $\mathcal F$.
We fix such a $U_i$ and define $\Phi\colon X\to\mathbb R^{n+1}$ to be
the Lipschitz function obtained by appending $f$ to $\varphi$ and let
$C_1,\ldots,C_n\subset\mathbb R^{n+1}$ have width $\epsilon$ such that
each $\mathcal A_k$ is in the $\Phi$-direction of $C_k$.

By combining Proposition \ref{prop:curvefullmeas} and Lemma
\ref{lem:gradientbound}, for almost every $x_0\in U_i$ there exist
$\gamma_1,\ldots,\gamma_n\in\Gamma(X)$ in the $\Phi$-direction of
$C_1,\ldots,C_n$ respectively such that the gradient $\nabla f(x_0)$ of $f$
at $x_0$ with respect to $\varphi$ and $\gamma_1,\ldots,\gamma_n$
satisfies
\[\|\nabla f(x_0)\|\leq n\Lip(f,x_0)/\xi\delta\lambda.\]
In particular, there exists a
$D\in\mathcal Q$ with $\|D-\nabla f(x_0)\|<\epsilon$.  We write
$\mathbf D$ and $\mathbf{\nabla f}(x_0)\in\mathbb R^{n+1}$ for the
vectors obtained by appending $-1$ to $D$ and $\nabla f(x_0)$
respectively.

Suppose that $v,v'\in\mathbb R^{n+1}$ belong to a cone of width
$\epsilon$.  Then
\[\left\|\frac{v}{\|v\|}-\frac{v'}{\|v'\|}\right\|\leq
  \sqrt{\epsilon(2-\epsilon)}\]
and so, for any $a\in\mathbb R^{n+1}$,
\[|a\cdot v|\leq (|a\cdot
v'|/\|v'\|+\sqrt{\epsilon(2-\epsilon)})\|v\|.
\]
For almost every $x_0\in U$, $\nabla f(x_0)$ is the gradient of $f$
with respect to $\varphi$ and $\gamma_1,\ldots,\gamma_n$, and so
$\mathbf{\nabla f}(x_0)\cdot (\Phi\circ\gamma_i)'(0)=0$ for each
$1\leq i \leq n$.  Therefore, if $\widetilde\gamma\in\Gamma(X)$ is in
the $\Phi$-direction of some $C_i$ with $\widetilde\gamma(t_0)=x_0$
such that $(\Phi\circ\widetilde\gamma)'(t_0)$ exists, then
\[
|\mathbf{\nabla f}(x_0)\cdot (\Phi\circ\widetilde\gamma)'(t_0)| \leq
\sqrt{\epsilon(2-\epsilon)}) \|\mathbf{\nabla f}(x_0)\|\|
(\Phi\circ\widetilde\gamma)'(t_0)\|.
\]
In particular,
\[|\mathbf{D}\cdot (\Phi\circ\widetilde\gamma)'(t_0)| \leq
\sqrt{\epsilon(2-\epsilon)} \|\mathbf{D}\|
\|(\Phi\circ\widetilde\gamma)'(t_0)\| +2\epsilon
(\Phi\circ\widetilde\gamma)'(t_0).\]
However, for each $1\leq i \leq
n$, almost every $\widetilde\gamma\in\mathcal A_i$ is of this form.
Further, since $D\cdot\varphi-f\in\mathcal F$, one of the $\mathcal
A'_k$ has $D\cdot\varphi-f$ speed $\rho$.  Therefore,
\[\rho \Lip(D\cdot\varphi-f,x_0)\leq
\sqrt{\epsilon(2-\epsilon)}\|\mathbf
D\|\Lip\Phi+2\epsilon\Lip\Phi\]
for almost every $x_0\in U$.  Finally, since
\[\|\mathbf D\|\leq 1+\|\nabla f(x_0)\|\leq 1+n\Lip f/\xi\delta\lambda,\]
we have
\[\Lip(f-\nabla f(x_0)\cdot \varphi,x_0)\leq
(\Lip\varphi+\Lip f)(2\epsilon+\sqrt{\epsilon(2-\epsilon)}(1+n\Lip
f/\xi\delta\lambda))\]
almost everywhere in $U$, as required.
\end{proof}

Using this construction we may give our first characterisation of Lipschitz
differentiability spaces.  We now work without the fixed quantities
given in Notation \ref{not:albchar}.

\begin{theorem}\label{thm:albchar}
A metric measure space $(X,d,\mu)$ is a Lipschitz differentiability
space if and only if there exists a countable Borel decomposition
$X=\cup_i U_i$ such that each $\mu\llcorner U_i$ has a finite universal
collection of Alberti representations.

In this case, for each $i\in\mathbb N$ let $\varphi_i\colon X\to\mathbb
R^{n_i}$ be Lipschitz such that the Alberti representations of
$\mu\llcorner U_i$ are $\varphi_i$-independent.  Then each
$(U_i,\varphi_i)$ is a chart and the derivative of a Lipschitz
function $f\colon X\to\mathbb R$ is given by any gradient of $f$ with
respect to $\varphi_i$ at almost every point.
\end{theorem}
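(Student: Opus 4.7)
The plan is to prove the two implications separately and then to identify the derivative with any gradient for the reverse direction.

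For the reverse implication, suppose the decomposition $X=\cup_i U_i$ exists with each $\mu\llcorner U_i$ carrying a $\rho_i$-universal collection $\mathcal A_1,\ldots,\mathcal A_{n_i}$; I fix $i$ and write $(U,\varphi)=(U_i,\varphi_i)$. First I would verify that $(U,\varphi)$ may, up to a $\mu$-null set, be assumed to be a $\lambda$-structured chart with the representations $\xi$-separated and of $\varphi$-speed strictly greater than a fixed $\delta>0$. The $\varphi$-independence together with Proposition \ref{prop:curvefullmeas} produces, almost everywhere in $U$, Lipschitz curves $\gamma_k^{x_0}$ with linearly independent $(\varphi\circ\gamma_k^{x_0})'(0)$; this forces $\Lip(v\cdot\varphi,x_0)>0$ for every $v\in\mathbb S^{n_i-1}$, so Lemma \ref{lem:decompstruct} allows the reduction to structured charts.

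I would then apply Lemma \ref{lem:univgivesapproxder} with $\epsilon=1/k^2$ to produce, for almost every $x_0\in U$, approximate gradients $\nabla_k f(x_0)$ with $\Lip(f-\nabla_k f(x_0)\cdot\varphi,x_0)\leq M/k$, whose norms are uniformly controlled by Lemma \ref{lem:gradientbound}. The decisive step is a Cauchy-type argument: for any $k,k'$ the triangle inequality yields $\Lip((\nabla_k f(x_0)-\nabla_{k'} f(x_0))\cdot\varphi,x_0)\leq M/k+M/k'$, and the structured chart estimate $\Lip(v\cdot\varphi,x_0)\geq\lambda\|v\|$ (direct from Definition \ref{defn:structuredchart} and Lemma \ref{lem:discuniq}) promotes this to $\|\nabla_k f(x_0)-\nabla_{k'} f(x_0)\|\leq(M/k+M/k')/\lambda$. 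Thus the $\nabla_k f(x_0)$ are Cauchy and converge to some $Df(x_0)$, and a final triangle inequality gives $\Lip(f-Df(x_0)\cdot\varphi,x_0)=0$, so $Df(x_0)$ is a derivative in the sense of Definition \ref{def:chart}. Uniqueness of $Df(x_0)$ follows again from Lemma \ref{lem:discuniq}, so $(U,\varphi)$ is a chart. Moreover, any gradient $\nabla f(x_0)$ satisfies $\nabla f(x_0)\cdot(\varphi\circ\gamma_k)'(0)=(f\circ\gamma_k)'(0)=Df(x_0)\cdot(\varphi\circ\gamma_k)'(0)$ along $n_i$ curves with linearly independent $\varphi$-velocities, forcing $\nabla f(x_0)=Df(x_0)$.

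For the forward implication, I would combine Lemma \ref{lem:decompstruct} and Theorem \ref{thm:spanrep} to extract a countable Borel decomposition $X=\cup_i U_i$ in which each $(U_i,\varphi_i)$ is a structured chart and $\mu\llcorner U_i$ has $n_i$ $\varphi_i$-independent Alberti representations. Applying Corollary \ref{cor:refine}, these are refined into representations $\mathcal A_1,\ldots,\mathcal A_{n_i}$ in the $\varphi_i$-direction of thin cones $C(w_k,\theta)$ of fixed $\varphi_i$-speed strictly greater than $\delta$, where $w_1,\ldots,w_{n_i}$ is an orthonormal basis of $\mathbb R^{n_i}$. To check universality, fix Lipschitz $f\colon X\to\mathbb R$; at almost every $x_0\in U_i$ the derivative $Df(x_0)$ exists, and there is a least index $k$ with $|Df(x_0)\cdot w_k|\geq\|Df(x_0)\|/\sqrt{n_i}$. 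Letting $X_k$ denote the corresponding Borel piece, for almost every curve $\gamma\in\mathcal A_k\llcorner X_k$ through $x_0$ one has $(f\circ\gamma)'(t_0)=Df(x_0)\cdot(\varphi_i\circ\gamma)'(t_0)$; combining the thinness of the cone, the speed bound $\|(\varphi_i\circ\gamma)'(t_0)\|\geq\delta\Lip(\varphi_i,x_0)\Lip(\gamma,t_0)$, and the pointwise estimate $\Lip(f,x_0)\leq\|Df(x_0)\|\Lip(\varphi_i,x_0)$ yields $|(f\circ\gamma)'(t_0)|\geq\rho\Lip(f,x_0)\Lip(\gamma,t_0)$ for some fixed $\rho>0$ depending only on $\delta,\theta,n_i$. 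This is exactly the universality condition.

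The main obstacle lies in the convergence step of the reverse direction: one must upgrade the pointwise almost-everywhere approximate derivatives supplied by Lemma \ref{lem:univgivesapproxder} (valid only for each $\epsilon$ separately, and a priori chosen measurably only per $\epsilon$) into a single $\mu$-a.e. convergent sequence whose limit is a Borel-measurable derivative. This hinges crucially on the uniformity of $\lambda$ in the $\lambda$-structured chart, which converts the infinitesimal estimates on $f-\nabla_k f(x_0)\cdot\varphi$ into genuine norm estimates on $\nabla_k f(x_0)$, and requires a careful measurable selection to ensure pointwise rather than merely approximate convergence.
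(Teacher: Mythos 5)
Your proposal is correct and follows essentially the same route as the paper's proof: necessity via Proposition \ref{prop:curvefullmeas}, reduction to structured charts, and Lemma \ref{lem:univgivesapproxder} applied along $\epsilon\to 0$; sufficiency via Theorem \ref{thm:spanrep}, Corollary \ref{cor:refine}, and the derivative-based speed estimate on a finite decomposition. The only (harmless) variations are that you extract the limit of the approximate gradients by a Cauchy argument using $\Lip(v\cdot\varphi,x_0)\geq\lambda\|v\|$ where the paper simply passes to a convergent subsequence of the bounded sequence, and you use thin cones about an orthonormal basis where the paper uses $\xi$-separated cones.
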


\begin{proof}
We first show that the condition is necessary for a metric measure
space to be a Lipschitz differentiability space.  For any $i\in\mathbb
N$ let $\varphi_i\colon X\to\mathbb R^{n_i}$ be Lipschitz such that the
Alberti representations of $\mu\llcorner U_i$ described in the
hypotheses are $\varphi_i$-independent. Then by applying Proposition
\ref{prop:curvefullmeas}, for almost every $x_0\in U_i$ there exist
$\gamma_1,\ldots,\gamma_{n_i}\in\Gamma(X)$ such that the
$(\varphi\circ\gamma_i)'(0)$ form a basis of $\mathbb R^{n_i}$ and, for
each $1\leq i \leq n_i$, $\gamma_i^{-1}(x_0)=0$ is a density point of
$\dom\gamma_i$.  Therefore, for almost every $x_0\in U_i$, there exists
a $\lambda(x_0)>0$ such that, for any $v\in\mathbb S^{n_i-1}$,
\[\limsup_{X\ni x\to x_0} \frac{|(\varphi(x)-\varphi(x_0))\cdot
  v|}{d(x,x_0)}\geq \lambda(x_0).\] In particular, by Lemma
\ref{lem:discuniq}, any possible derivative of a function at $x_0$ is
unique.

Now let $\delta,\xi>0$ such that the Alberti representations of
$\mu\llcorner U_i$ are $\xi$-separated with respect to $\varphi_i$ and
have $\varphi_i$-speed strictly greater than $\delta$.  By taking a
further decomposition (and allowing for a possible $\mu$-null subset
of $X$), we may suppose that $(U_i,\varphi_i)$ is
$\lambda$-structured, for some $\lambda>0$. Then we are in the
situation described by Notation \ref{not:albchar} and so, for any
Lipschitz $f\colon X\to\mathbb R$, by Lemma
\ref{lem:univgivesapproxder} there exists an $M>0$ such that, for any
$\epsilon>0$ and almost every $x_0\in U$, there exists a $\nabla
f(x_0)\in\mathbb R^{n_i}$ with $\|\nabla f(x_0)\|\leq n_i\Lip
f/\xi\delta\lambda$ and
\[\Lip(f-\nabla f(x_0)\cdot \varphi_i,x_0)\leq M\sqrt{\epsilon}.\]
Applying this with some sequence $\epsilon_m\to 0$, for almost every
$x_0\in U_i$ we obtain a bounded sequence of such $\nabla f(x_0)$ and
so, after passing to a convergent subsequence, we obtain a
$Df(x_0)\in\mathbb R^{n_i}$ that is a derivative of $f$ at $x_0$ with
respect to $(U_i,\varphi_i)$.  Further, by Corollary
\ref{cor:tangentcurves}, this derivative is given by any gradient of $f$
with respect to $\varphi_i$.

Conversely, let $(U,\varphi)$ be an $n$-dimensional,
$\lambda$-structured chart in a Lipschitz differentiability space.
Then by Theorem \ref{thm:spanrep} and Corollary \ref{cor:refine},
there exists a countable Borel decomposition $U=\cup_i U_i$ and for
every $i\in\mathbb N$ a $\delta>0$ such that, for each $i\in\mathbb
N$, $\mu\llcorner U_i$ has $n$ $\varphi$-independent Alberti
representations with $\varphi$-speed $\delta$.

Fix such a $U_i$ and suppose such representations $\mathcal
A_1,\ldots,\mathcal A_n$ of $\mu\llcorner U_i$ are in the
$\varphi$-direction of $\xi$-separated cones $C_1,\ldots,C_n$.  Then
for any Lipschitz $f\colon X\to\mathbb R$ and almost every $x_0\in
U_i$, $Df(x_0)$ exists.  Moreover, there exists a $1\leq j\leq n$ such
that, for any $v\in C_j$,
\[\xi\|Df(x_0)\|\|v\|/n \leq |Df(x_0)\cdot v|.\]
For $1\leq j\leq n$ let $V_j$ be the set of $x_0$ that satisfy this for
$C_j$.  Therefore, if $x_0\in V_j$ and $\gamma\in\Gamma(X)$ with
$\gamma(t_0)=x_0$, $\|(\varphi\circ\gamma)'(t_0)\|\geq\delta
\Lip(\varphi,x_0)\Lip(\gamma,t_0)$ and $(\varphi\circ\gamma)'(t_0)\in
C_j$ then
\begin{align*}
(f\circ\gamma)'(t_0) &= |Df(x_0)\cdot(\varphi\circ\gamma)'(t_0)|\\
&\geq \|Df(x_0)\|\|(\varphi\circ\gamma)'(t_0)\|\\
&\geq \delta\Lip(f,x_0)\Lip(\varphi,x_0)\Lip(\gamma,t_0)\\
&\geq \delta\lambda \Lip(f,x_0)\Lip(\gamma,t_0).
\end{align*}
That is, $\mathcal A_j\llcorner V_j$ has $f$-speed
$\delta\lambda$.  Therefore, the $\mathcal A_j$ are universal.
\end{proof}

%%% Local Variables: 
%%% mode: latex
%%% TeX-master: "structure"
%%% End: 

%Characterisations
\section{Characterisations via null sets}\label{sec:char}
We now present other characterisations that prescribe a class of sets
that determine if a metric measure space is a Lipschitz
differentiability space.  Such sets will be the singular sets found
earlier when constructing Alberti representations of a measure.
Therefore, in a metric measure space where such sets have measure
zero, there exist many Alberti representations.  Further, if these
representations are sufficiently different, for almost every
$x_0$, particular points of the Lipschitz curves obtained from the
representations form a separated subset of balls centred at $x_0$.
By combining this with a doubling condition on the metric
space, we obtain a bound on the total number of such different
Alberti representations.  In turn, this leads to the existence of a
derivative of a Lipschitz function at almost every point.

We begin by giving a method that constructs a chart structure in
a metric measure space.

\begin{lemma}\label{lem:boundgivesdiff}
Let $(X,d,\mu)$ be a metric measure space, $Y\subset X$ Borel and
$N\in\mathbb N$.  Then the following are equivalent:
\begin{itemize}\item For any Lipschitz $\psi\colon X\to\mathbb
  R^{N+1}$, for almost every $x_0\in Y$ there exists an
  $a(x_0)\in\mathbb S^N$ such that
\[\Lip(a(x_0)\cdot\psi,x_0)=0.\]
\item There exists a countable Borel decomposition $Y=\cup_i U_i$ and
  Lipschitz functions $\varphi_i\colon X\to\mathbb R^{n_i}$ such that
  each $(U_i,\varphi_i)$ is a chart of dimension at most $N$, with
  respect to which any Lipschitz $f\colon X\to\mathbb R$ is
  differentiable at almost every point of $U_i$.
\end{itemize}
\end{lemma}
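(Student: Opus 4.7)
The direction (ii) $\Rightarrow$ (i) is a short linear algebra observation: at almost every $x_0 \in U_i$, the $N+1$ component derivatives $D\psi_j(x_0) \in \mathbb R^{n_i}$ of any Lipschitz $\psi \colon X \to \mathbb R^{N+1}$ live in a space of dimension $n_i \leq N$ and hence are linearly dependent. Taking $a(x_0) \in \mathbb S^N$ to be the dependence coefficients and using linearity of the derivative then yields $\Lip(a(x_0) \cdot \psi, x_0) = 0$, which is the required conclusion.

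For (i) $\Rightarrow$ (ii) the plan is to induct on $N$. The base case $N = 0$ reduces, via $\mathbb S^0 = \{\pm 1\}$, to $\Lip(\psi, x_0) = 0$ almost everywhere on $Y$ for every Lipschitz $\psi \colon X \to \mathbb R$, which by Corollary \ref{cor:zerodim} forces $\mu(Y) = 0$ and makes (ii) trivial. For the inductive step I call $(Z, \varphi)$ with $Z \subset Y$ Borel and $\varphi \colon X \to \mathbb R^N$ Lipschitz \emph{$N$-nondegenerate} when $\Lip(v \cdot \varphi, x_0) > 0$ for every $v \in \mathbb S^{N-1}$ and every $x_0 \in Z$; the set of such $x_0$ is Borel since $x_0 \mapsto \Lip(f, x_0)$ is Borel for each Lipschitz $f$ and the infimum over $\mathbb S^{N-1}$ reduces to a countable one by continuity in $v$.

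The main construction is then a standard measure exhaustion producing a countable disjoint family $\{(Z_i, \varphi_i)\}$ of $N$-nondegenerate pairs with $\mu(\bigcup_i Z_i)$ maximal; set $Y_\infty = Y \setminus \bigcup_i Z_i$. I claim each $(Z_i, \varphi_i)$ is then a chart of the required kind: for any Lipschitz $f$, applying (i) to $(\varphi_i, f) \colon X \to \mathbb R^{N+1}$ produces for a.e.\ $x_0 \in Z_i$ some $(a', a_{N+1}) \in \mathbb S^N$ with $\Lip(a' \cdot \varphi_i + a_{N+1} f, x_0) = 0$; $N$-nondegeneracy rules out $a_{N+1} = 0$, so $-a'/a_{N+1}$ is a derivative of $f$ at $x_0$, and uniqueness follows from Lemma \ref{lem:discuniq}. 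Meanwhile, maximality of the exhaustion means no $N$-nondegenerate pair on $Y_\infty$ has positive measure, which is precisely hypothesis (i) for $Y_\infty$ with $N$ replaced by $N-1$; the inductive hypothesis then decomposes $Y_\infty$ into charts of dimension at most $N - 1$, and concatenating with the $(Z_i, \varphi_i)$ yields (ii) for $Y$.

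The hard part will be the bookkeeping, in particular the Borel measurability of the $N$-nondegenerate set and the justification of the measure exhaustion cleanly within the Borel $\sigma$-algebra; both I expect to handle routinely via separability of $\mathbb S^{N-1}$ and the standard argument that if a supremum $M$ of $\mu$-masses in a class closed under disjoint countable unions is positive, then successive halving of the deficit produces a maximising countable disjoint family.
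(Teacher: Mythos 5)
Your proof is correct and rests on the same key mechanism as the paper's own argument: apply the hypothesis to the concatenated map $(\varphi,f)\colon X\to\mathbb R^{N+1}$, use nondegeneracy of $\varphi$ to guarantee the last coefficient is nonzero and solve for $Df(x_0)$, invoke Lemma \ref{lem:discuniq} for uniqueness, and finish by measure exhaustion. The only difference is organizational -- the paper selects, inside each positive-measure subset, a nondegenerate pair of \emph{maximal} dimension and exhausts once, whereas you peel off the dimension-$N$ pieces first and recurse on the remainder with parameter $N-1$ -- and both versions are valid.
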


\begin{proof}
First suppose that the second statement is true, $(U_i,\varphi_i)$ a
chart of the decomposition and $\psi\colon X\to\mathbb R^{N+1}$
Lipschitz.  Then for almost every $x_0\in U_i$ and each $1\leq j \leq N+1$,
the derivative of $\psi_j$ exists at $x_0$ and belongs to $\mathbb
R^{n_i}$, for $n_i<N+1$.  Therefore, if we write $D\psi$ for the
matrix whose columns are the $D\psi_i(x_0)$, there exists an
$a(x_0)\in\mathbb S^{N}$ such that $D\psi\cdot a(x_0)=0$.  Then by the
definition of a derivative, $\Lip(a(x_0)\cdot\psi,x_0)=0$.

Now suppose that the first statement holds and let $U\subset Y$ be a
Borel set of positive measure.  First observe that either any
Lipschitz function $\psi\colon X\to\mathbb R$ satisfies
$\Lip(\varphi,x_0)=0$ for almost every $x_0\in U$ in which case $U$
is a chart of dimension zero (as described in Corollary
\ref{cor:zerodim}) or there exists a Borel set $U'\subset U$ of
positive measure and a Lipschitz $\varphi\colon X\to\mathbb R$ such that
$\Lip(\varphi, x_0)>0$ for every $x_0\in U'$.

By our hypotheses on $Y$, there exists a maximal $n\in\mathbb N$ such
that there exists a Lipschitz $\varphi\colon X\to\mathbb R^n$ and a
$U'\subset U$ of positive measure with
$\Lip(v\cdot\varphi,x_0)>0$ for every $x_0\in U'$ and every
$v\in\mathbb S^{n-1}$.  Further, we have $n\leq N$.  Then for such an
$n$, $\varphi$ and $U'$, for any Lipschitz $f\colon X\to\mathbb R$ and
almost every $x_0\in U'$ there exists a $Df(x_0)\in\mathbb R^n$ such
that $\Lip(f-Df(x_0)\cdot\varphi,x_0)=0$.  Moreover, by Lemma
\ref{lem:discuniq}, the condition $\Lip(v\cdot\varphi,x_0)>0$ for
every $v\in\mathbb S^{n-1}$ is equivalent to the uniqueness of such a
$Df(x_0)$.

Therefore, within any $U$ of positive measure, there exists
a chart of positive measure with dimension less than $N$, with respect
to which any real valued Lipschitz function is differentiable almost
everywhere.  Since $\mu$ is finite we obtain a countable decomposition
of $U$ into charts with respect to which Lipschitz functions are
differentiable almost everywhere.
\end{proof}

We now introduce additional properties of metric measure spaces that
will allow us to satisfy the hypotheses of Lemma
\ref{lem:boundgivesdiff}.

\begin{definition}\label{def:ptwisedoubling}
We say that a metric measure space $(X,d,\mu)$ is \emph{pointwise
  doubling} if
\[\limsup_{r\to 0} \frac{\mu(B(x_0,r))}{\mu(B(x_0,r/2))}<\infty\]
for almost every $x_0\in X$.

Further, for $0<\delta<1$ and $\eta\geq 1$, we define
$M(\delta,\eta)=\eta^{2-\log_2 \delta/5}$ and $D_\eta$ to be the set
of Borel subsets $Y$ of $X$ such that, for each $x_0\in Y$,
$0<\delta<1$ and $r>0$, there exist $x_1,\ldots,x_M\in Y$ with
\[B(x_0,r)\cap Y\subset \bigcup_{i=1}^M B(x_i,\delta r),\]
for some $M\in\mathbb N$ less than $M(\delta,\eta)$.
\end{definition}

\begin{lemma}\label{lem:ptwisedoubling}
For any pointwise doubling metric measure space $(X,d,\mu)$,
there exists a Borel decomposition
\[X=N\cup \bigcup_{m=1}^\infty Y_m\]
where $\mu(N)=0$ and for each $m\in\mathbb N$ there exists an
$\eta\geq 1$ such that $Y_m\in D_\eta$.
\end{lemma}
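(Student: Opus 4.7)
The plan is to extract from the pointwise doubling hypothesis a countable Borel decomposition on which the doubling condition holds uniformly at small scales, and then refine by diameter so each piece admits a uniform covering bound at \emph{all} scales $r > 0$.

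For each $k, m \in \mathbb{N}$ I would define
\[
A_{k,m} = \{x \in X : \mu(\overline B(x, r)) \leq 2^k \mu(B(x, r/2)) \text{ for every rational } r \in (0, 1/m]\}.
\]
Each $A_{k,m}$ is Borel because, for fixed $r$, $x \mapsto \mu(\overline B(x, r))$ is upper semicontinuous and $x \mapsto \mu(B(x, r/2))$ is lower semicontinuous, making the defining condition closed. A routine monotonicity argument transfers the inequality from rational $r$ to all real $r \in (0, 1/m]$, and pointwise doubling ensures $N_0 := X \setminus \bigcup_{k,m} A_{k,m}$ is $\mu$-null. Adjoin the open $\mu$-null set $X \setminus \mathrm{supp}(\mu)$ to form $N$. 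Using separability of $X$, I cover $A_{k,m} \cap \mathrm{supp}(\mu)$ by countably many balls of radius $1/(32m)$ and intersect with $A_{k,m}$ to obtain Borel sets $Y_{k,m,j}$ of diameter at most $1/(16m)$; reindexing this triply-indexed family by a bijection $\mathbb{N} \to \mathbb{N}^{3}$ produces the sequence $Y_m$ in the statement.

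The main claim is that $Y := Y_{k,m,j} \in D_\eta$ with $\eta = 2^{5k + 10}$. Fix $x_0 \in Y$, $\delta \in (0, 1)$, and $r > 0$, and let $\{x_1, \ldots, x_M\} \subset B(x_0, r) \cap Y$ be a maximal $\delta r$-separated set; by maximality the $\delta r$-balls centred at the $x_i$ cover $B(x_0, r) \cap Y$. The half-balls $B(x_i, \delta r/2)$ are pairwise disjoint and all lie in $B(x_0, 2r)$. In the small-scale case $r \leq 1/(4m)$, iterating the doubling at each $x_i \in A_{k,m}$ from radius $4r$ down to scale $\delta r/2$ (at most $L := \lceil 3 - \log_2 \delta \rceil \leq 4 - \log_2 \delta$ halvings, all within the allowed range $\leq 1/m$) together with the chain $\mu(B(x_0, 2r)) \leq \mu(\overline B(x_i, 4r))$ yields
\[
\mu(B(x_0, 2r)) \leq 2^{kL} \mu(B(x_i, \delta r/2)).
\]
Summing over $i$ by disjointness and dividing by $\mu(B(x_0, 2r)) > 0$ (positive since $x_0 \in \mathrm{supp}(\mu)$) gives $M \leq 2^{k(4 - \log_2 \delta)}$. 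In the large-scale case $r > 1/(4m)$, the diameter bound forces $B(x_0, r) \cap Y = Y \subset B(x_0, 1/(4m))$, and the same argument applied at scale $r' = 1/(4m)$ with effective ratio $\delta' = \delta r / r' > \delta$ returns the same bound (or $M = 1$ if $\delta' \geq 1$). A direct exponent comparison then confirms $2^{k(4 - \log_2 \delta)} < \eta^{2 - \log_2 \delta / 5} = M(\delta, \eta)$ for all $\delta \in (0, 1)$, with the gap in exponents equal to $6k + 20 - 2\log_2 \delta > 0$.

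The main obstacle I anticipate is the uniformity in $r$: the definition of $D_\eta$ tolerates no $r$-dependent constant, even though the doubling hypothesis gives information only at scales below $1/m$. The diameter refinement in the second step is precisely what resolves this — above the threshold the set $B(x_0, r) \cap Y$ is just $Y$, already contained in a ball at the small scale $1/(4m)$, so the small-scale covering estimate applies at that fixed radius with a more favourable effective ratio $\delta' > \delta$.
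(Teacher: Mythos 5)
Your proof is correct and follows essentially the same route as the paper: you uniformise the pointwise doubling condition on a countable Borel decomposition (your $A_{k,m}$ play the role of the paper's $X_{R,\eta}$), shrink each piece to small diameter so that the uniform doubling scale covers all radii $r$, and run the standard doubling-to-covering-number count via a maximal separated net (the paper uses the equivalent $5r$-covering formulation and intersects with balls $B(y,R/4)$ instead of a diameter refinement). The only quibble is that $\{x:\mu(\overline B(x,r))\leq 2^k\mu(B(x,r/2))\}$ is the sublevel set of an upper semicontinuous function and hence a $G_\delta$ rather than closed, but this still gives Borel measurability, which is all you need.
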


\begin{proof}
For $\eta\geq 1$ and $R>0$ let
\[X_{R,\eta}:=\{ x_0\in X : \eta\mu(B(x_0,r/2)) >
\mu(B(x_0,r))>0,\ \forall\ 0<r<R\}.
\]
For any $x_0\in X$ and $r>0$,
\[\frac{\mu(B(x_0,r))}{\mu(B(x_0,r/2))} = \lim_{r_m \nearrow r}
\frac{\mu(B(x_0,r_m))}{\mu(B(x_0,r_m/2))}
\]
and so, in the definition of $X_{R,\eta}$, it is equivalent to satisfy
the condition for rational $0<r<R$.  Further, for a fixed $r>0$,
$x_0\mapsto \mu(B(x_0,r))$ is lower semicontinuous.  Therefore, each
$X_{R,\eta}$ is a Borel set.

Now fix $\eta\geq 1$, $R>0$ and $x_0\in X_{R,\eta}$.  Then for any
$m\in\mathbb N$ and $0<r<R$,
\[\frac{\mu(B(x_0,r))}{\mu(B(x_0,2^{-m}r))} \leq \eta^m\]
and so, for any $0<\delta<1$,
\[\frac{\mu(B(x_0,r))}{\mu(B(x_0,\delta r))}\leq \eta^M,\]
for $M$ the least integer greater than $-\log_2\delta$.

Now let $0<r<R/2$.  Then
\[B(x_0,r) \cap X_{R,\eta}\subset \bigcup \{B(x,\delta
r/5):x \in B(x_0,r) \cap X_{R,\eta}\}\]
and so there exists $F\subset B(x_0,r)\cap X_{R,\eta}$ such that
\[B(x_0,r) \cap X_{R,\eta}\subset \bigcup \{B(x,\delta
r):x \in F\}\]
and such that the $B(x_0,\delta r/5)$ are disjoint.  Firstly, since
$\mu(B(x_0,2r))$ is finite and for $x\in F$ the $B(x,\delta r/5)$ are disjoint
subsets of $B(x_0,2r)$ with positive measure, $F$ is countable.  Further,
\[\eta^M \mu(B(x_0,r)) \geq \eta^M \sum_{x\in F} \mu(B(x,\delta
  r/5)) \geq \sum_{x\in F} \mu(B(x, 2r)) \geq \sum_{x\in F}
  \mu(B(x_0,r)),
\]
for $M$ the least integer greater than $2-\log_2 \delta/5$.  In
particular, the cardinality of $F$ is less than
$M(\delta,\eta)$.  Therefore, for any $y\in X$, $X_{R,\eta} \cap
B(y,R/4)$ belongs to $D_\eta$.

Finally, for $R_i\to 0$ and $\eta_i\to\infty$, the $X_{R_i,\eta_i}$
cover almost all of $X$ and so there exists a decomposition of $X$ of
the required form.
\end{proof}

Metric measure spaces in which all porous sets (see Definition
\ref{def:porous}) have measure zero are pointwise doubling (for
example, see \cite{porosityandmeasures}, Theorem 3.6).  Therefore, we
obtain the following consequence of Corollary \ref{cor:porousnull}.

\begin{corollary}[\cite{porousdiff}, Corollary 2.6]\label{cor:doubling}
Any Lipschitz differentiability space is pointwise doubling.
\end{corollary}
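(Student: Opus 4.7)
The plan is to deduce this as an immediate consequence of Corollary \ref{cor:porousnull} together with a classical link between porosity of null sets and pointwise doubling behaviour of a measure. By Corollary \ref{cor:porousnull}, every porous subset of a Lipschitz differentiability space $(X,d,\mu)$ is $\mu$-null, so it suffices to cite the known implication ``all porous sets are $\mu$-null'' $\Rightarrow$ $(X,d,\mu)$ is pointwise doubling (which is, e.g., Theorem 3.6 of \cite{porosityandmeasures}).

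For the reader's intuition I would briefly sketch why this implication holds. Suppose, towards a contradiction, that $\mu$ fails to be pointwise doubling on some set $E$ of positive measure, i.e.\ for every $x_0\in E$,
\[\limsup_{r\to 0}\frac{\mu(B(x_0,r))}{\mu(B(x_0,r/2))}=\infty.\]
One decomposes $E$ according to sequences of scales at which the doubling ratio explodes and then, at each such scale, uses the fact that almost all of $\mu(B(x_0,r))$ sits in the thin annulus $B(x_0,r)\setminus B(x_0,r/2)$ to locate, via a density-point argument, balls of controlled radius inside $B(x_0,r)$ that meet $E$ in a set of very small relative $\mu$-measure. Iterating and intersecting these configurations produces, up to a $\mu$-null set, a decomposition of $E$ into porous pieces, contradicting the hypothesis that porous sets are $\mu$-null.

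The only potential obstacle is making sure that the cited form of the porosity/doubling equivalence from \cite{porosityandmeasures} is stated for arbitrary Radon measures on complete separable metric spaces (which is the standing setup of the paper), rather than merely in Euclidean space; since the statement and proof in \cite{porosityandmeasures} are purely metric, this is routine. Given that, the corollary follows in one line from Corollary \ref{cor:porousnull}, and no further work specific to Lipschitz differentiability spaces is required.
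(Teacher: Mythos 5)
Your proposal is correct and is exactly the route the paper takes: it combines Corollary \ref{cor:porousnull} (porous sets are null in a Lipschitz differentiability space) with the cited implication from \cite{porosityandmeasures}, Theorem 3.6, that a metric measure space in which all porous sets are null is pointwise doubling. The extra sketch of the porosity/doubling implication is a nice addition but not needed; the paper simply cites it.
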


In addition, we also require concepts related to an $\widetilde
A$ set (see Definition \ref{def:atilde}).

\begin{definition}\label{def:btilde}
Let $(X,d)$ be a metric space and $\delta>0$.  We define $\widetilde
B_\delta$ to be the set of $S\subset X$ for
which there exist a countable Borel decomposition $S=\cup_i S_i$ and
Lipschitz functions $f_i\colon X\to\mathbb R$ such that, for every
$i\in\mathbb N$,
\[S_i\subset \{x_0\in X: \Lip(f_i,x_0)>0\}\]
and $\mathcal H^1(\gamma\cap S_i)=0$ for every $\gamma\in\Gamma(X)$
with $f_i$-speed $\delta$.  Further, we define $\widetilde B$ to be
the set of $S\subset X$ that belong to $\widetilde B_\delta$ for every
$\delta>0$.
\end{definition}

\begin{remark}
Note that for any $0<\delta\leq \delta '$ we have $\widetilde
B_\delta\subset \widetilde B_{\delta'}$.
\end{remark}

Observe that $\widetilde B_\delta\subset\widetilde
A(\delta,\theta,\lambda)$ for any $0<\delta,\theta,\lambda<1$ and so
$\widetilde B\subset \widetilde A$.  In particular, any $\widetilde B$
subset of a Lipschitz differentiability space has measure zero.  Also,
by Corollary \ref{cor:onerep2}, metric measure spaces in which
$\widetilde B_\delta$ sets are $\mu$-null have many Alberti
representations with speed $\delta$.  We now show how the curves
obtained from such Alberti representations interact with the doubling
properties of a metric measure space.  This is done by creating a
separated set from points belonging to curves obtained from such
representations.

\begin{lemma}\label{lem:albertibound}
Let $(X,d,\mu)$ be a metric measure space, $\varphi\colon X\to\mathbb
R^n$ Lipschitz, $Y\subset X$ Borel and $\delta>0$.  Suppose that
$\mathcal A_1,\ldots,\mathcal A_n$ are Alberti representations of
$\mu\llcorner Y$ and $w_1,\ldots,w_n\in\mathbb S^{n-1}$ such that, for
every $1\leq i\neq j\leq n$, either
\[\frac{|w_i\cdot (\varphi\circ\gamma_i)'(t_i)|}{\Lip(\gamma_i, t_i)}
> \frac{|w_i\cdot (\varphi\circ\gamma_j)'(t_j)|}{\Lip(\gamma_j, t_j)}
+ \delta \Lip (w_i\cdot \varphi, \gamma_i(t_i))>0,\]
or the corresponding inequalities with $i$ and $j$ exchanged.  Then,
for almost every $x_0\in X$ and any $r>0$, there exist
$x_1,\ldots,x_n\in B(x_0,r)$ such that
\[d(x_i,x_j)\geq \delta r- \phi(r)\]
for each $1\leq i\neq j\leq n$, where $\phi(r)/r\to 0$ as $r\to 0$.

In particular, if $Y\subset W\in D_\eta$, for some $\eta \geq 1$, then
$n\leq M(\delta/2,\eta)$.
\end{lemma}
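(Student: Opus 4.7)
The plan is: for almost every $x_0\in Y$, use Proposition \ref{prop:curvefullmeas} to select representative curves $\gamma_i\in\Gamma(X)$ from each Alberti representation $\mathcal A_i$ passing through $x_0$; then construct $x_i$ by walking along $\gamma_i$ a parameter amount $\approx r/\Lip(\gamma_i,t_i)$; and finally exploit the hypothesis inequalities via first-order expansion of $\varphi\circ\gamma_i$ and $\varphi\circ\gamma_j$ at $t_i,t_j$.

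To set up the selection, I would apply Proposition \ref{prop:curvefullmeas} to each $\mathcal A_i$ (together with the measurability observations of Lemma \ref{lem:measurability} and the remarks following Lemma \ref{lem:sumreps}) to obtain a $\mu$-conull Borel $Y'\subset Y$ and Borel selections $x_0\mapsto(\gamma_i^{x_0},t_i^{x_0})$ for $i=1,\ldots,n$ such that, at every $x_0\in Y'$, $\gamma_i^{x_0}(t_i^{x_0})=x_0$, $t_i^{x_0}$ is a density point of $(\gamma_i^{x_0})^{-1}(Y)$, both $\Lip(\gamma_i^{x_0},t_i^{x_0})>0$ and $(\varphi\circ\gamma_i^{x_0})'(t_i^{x_0})$ exist, and the hypothesised inequality holds for every pair $i\neq j$. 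Writing
\[a_i=\frac{|w_i\cdot(\varphi\circ\gamma_i^{x_0})'(t_i^{x_0})|}{\Lip(\gamma_i^{x_0},t_i^{x_0})},\quad a_{ij}=\frac{|w_i\cdot(\varphi\circ\gamma_j^{x_0})'(t_j^{x_0})|}{\Lip(\gamma_j^{x_0},t_j^{x_0})},\quad L_i=\Lip(w_i\cdot\varphi,x_0),\]
the first alternative reads $a_i-a_{ij}\geq\delta L_i$ (so in particular $a_i>0$). For $r$ small I would pick $s_i\in(\gamma_i^{x_0})^{-1}(Y)$ with $|s_i-t_i^{x_0}|\Lip(\gamma_i^{x_0},t_i^{x_0})=r+o(r)$ (possible by the density-point property), choose the sign of $s_i-t_i^{x_0}$ so that $w_i\cdot(\varphi(\gamma_i^{x_0}(s_i))-\varphi(x_0))\geq 0$, and set $x_i=\gamma_i^{x_0}(s_i)$; then $x_i\in B(x_0,r+o(r))\cap Y$. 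First-order expansion along each curve gives $w_i\cdot(\varphi(x_i)-\varphi(x_0))=a_i r+o(r)$ and $|w_i\cdot(\varphi(x_j)-\varphi(x_0))|\leq a_{ij}r+o(r)$, so subtracting and invoking the hypothesis yields the coordinate bound $|w_i\cdot(\varphi(x_i)-\varphi(x_j))|\geq\delta L_i r-o(r)$; the second alternative is handled symmetrically by exchanging the roles of $i$ and $j$.

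The main obstacle is converting this coordinate-wise bound into the metric bound $d(x_i,x_j)\geq\delta r-\phi(r)$. I plan to handle this by first performing a countable Borel partition of $Y'$ into pieces on which $L_i$ is essentially constant, and then showing that on each such piece the restriction of $w_i\cdot\varphi$ to $B(x_0,r)$ has, for $r$ small, a global Lipschitz constant within $o(1)$ of $L_i$; this yields $|w_i\cdot(\varphi(x_i)-\varphi(x_j))|\leq(L_i+o(1))d(x_i,x_j)$ and hence the desired $d(x_i,x_j)\geq\delta r-\phi(r)$ with $\phi(r)/r\to 0$. The ``in particular'' clause then follows: for a.e.\ $x_0\in Y\subset W\in D_\eta$ and $r$ small enough that $\phi(r)<\delta r/2$, the $n$ points $x_1,\ldots,x_n$ lie in $B(x_0,r)\cap Y$ and are pairwise separated by more than $\delta r/2$, so any ball of radius $\delta r/2$ contains at most one of them, and the covering property of $D_\eta$ forces $n\leq M(\delta/2,\eta)$.
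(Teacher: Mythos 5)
Your overall strategy --- select curves through $x_0$ via Proposition \ref{prop:curvefullmeas}, walk along each $\gamma_i$ a parameter increment normalised by $\Lip(\gamma_i,t_i)$, expand $\varphi\circ\gamma_i$ to first order, subtract, and invoke the hypothesis --- is exactly the paper's, and you handle the ``in particular'' clause the same way. The problem is the step you yourself isolate as ``the main obstacle''. The claim you propose to prove there, namely that for almost every $x_0$ the restriction of $w_i\cdot\varphi$ to the ball $B(x_0,r)$ has global Lipschitz constant within $o(1)$ of $L_i=\Lip(w_i\cdot\varphi,x_0)$, is false. Already on $X=\mathbb R$ with $\varphi$ a primitive of the indicator of the complement of a fat Cantor set $S$, one has $\Lip(\varphi,x_0)=0$ at every density point $x_0$ of $S$, while every ball $B(x_0,r)$ contains an interval $(a,b)$ in the complement of $S$ with $\varphi(b)-\varphi(a)=b-a$, so the global Lipschitz constant of the restriction is $1$ for every $r$. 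The pointwise Lipschitz constant at $x_0$ controls difference quotients only when one endpoint is $x_0$ itself (or a point at which you have uniform control), whereas your $x_i$ and $x_j$ are both distinct from $x_0$ and could a priori be far closer to each other than to $x_0$ --- which is precisely the situation the lemma is trying to rule out.

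The correct substitute, and what the paper uses (equation \eqref{eq:albbounddecomp}), is an Egorov--Lusin decomposition: partition $Y$ into pieces $Y_m$ on which $\Lip(w_i\cdot\varphi,\cdot)$ is continuous \emph{and} on which the $\limsup$ defining it is attained uniformly, so that for $y\in Y_m\cap B(x_0,r)$ and \emph{arbitrary} $x\in B(y,r)$ one has $|w_i\cdot(\varphi(x)-\varphi(y))|\leq(\Lip(w_i\cdot\varphi,x_0)+\epsilon)d(x,y)$. This ``one endpoint in the good set'' estimate is all you need, applied with $y=x_i$ and $x=x_j$; mere essential constancy of $L_i$ on the piece, which is all your partition provides, does not give it. For the estimate to apply you must also arrange that $x_i$ lands in the same good piece as $x_0$, which forces you to perform the partition \emph{before} the curve selection, so that $t_i$ is a density point of $\gamma_i^{-1}(Y_m)$ rather than merely of $\gamma_i^{-1}(Y)$ and you can take $s_i\in\gamma_i^{-1}(Y_m)$. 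With these two corrections your argument closes and coincides with the paper's proof.
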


\begin{proof}
Let $\mathcal Q$ be a countable dense subset of $\mathbb S^{n-1}$.
Then by standard techniques, there exists a Borel decomposition
\[Y=N\cup \bigcup_{m\in\mathbb N} Y_m\]
where $\mu(N)=0$ and for each $m\in\mathbb N$ and $q\in\mathcal Q$,
$\Lip (q\cdot\varphi)$ is continuous on each $Y_m$.  Therefore, for any
$\epsilon,\rho>0$, $m\in\mathbb N$ and $q\in\mathcal Q$, there exists
a $Z\subset Y_m$ with $\mu(Y_m\setminus Z)<\rho$ and an $R>0$ such that
\begin{equation}\label{eq:albbounddecomp}
|q\cdot(\varphi(x)-\varphi(y))|\leq (\Lip (q\cdot\varphi,
x_0)+\epsilon)d(x,y)
\end{equation}
for each $0<r<R$, $x_0\in Z$, $y\in Z\cap B(x_0,r)$ and $x\in B(y,r)$.
Therefore, by decomposing each $Y_m$ further, we may suppose that for
every $\epsilon>0$, $q\in\mathcal Q$, $m\in\mathbb N$ there exists an
$R>0$ such that \eqref{eq:albbounddecomp} holds for each $0<r<R$,
$x_0\in Y_m$, $y\in Z\cap B(x_0,r)$ and $x\in B(y,r)$.

We now work with a fixed $Y_m$.  By Proposition
\ref{prop:curvefullmeas}, for almost every $x_0\in Y_m$ there exists
$\gamma_1,\ldots,\gamma_n\in\Gamma(X)$ such that, for each $1\leq
i\leq n$, $\gamma_i^{-1}(x_0)=t_0$ is a density point of
$\gamma_i^{-1}(Y_m)$ and $\Lip(\gamma_i,t_0)=1$.  Further, there
exists $w_1,\ldots,w_n\in\mathcal Q$ such that, for each $1\leq i\neq
j\leq n$, either
\begin{equation}\label{eq:albboundeither}|w_i\cdot (\varphi\circ\gamma_i)'(t_0)| > |w_i\cdot
(\varphi\circ\gamma_j)'(t_0)| + \delta \Lip (w_i\cdot \varphi,
\gamma_i(t_0)) >0,
\end{equation}
or the corresponding inequalities with $i$ and $j$ exchanged.

Let $\epsilon>0$.  Then there exists an $R_1>0$ such that, for any
$0<r<R_1$, if $t$ belongs to each $\dom\gamma_i$ with $0<|t-t_0|<r$
then, after setting $x_k=\gamma_k(t)$,
\begin{align*}\Lip (w_i\cdot\varphi, x_0)d(x_i,x_j) &\geq |w_i\cdot(\varphi(x_i)-\varphi(x_j))|-\epsilon r\\
&\geq
  |w_i\cdot(\varphi(x_i)-\varphi(x_0))|-|w_i\cdot(\varphi(x_j)-\varphi(x_0)|-\epsilon
  r\end{align*}
for each $1\leq i, j\leq n$.  Further, there exists a
$0<R_2<R_1$ such that, for any $0<r<R_2$, $1\leq k\leq n$ and
$x_k=\gamma_k(t)$,
\[\|(\varphi\circ\gamma)'(t_0)(t_k-t_0)-\varphi(x_k)-\varphi(x_0)\|\leq \epsilon r.\]
In particular,
\[\Lip (w_i\cdot\varphi, x_0)d(x_i,x_j)\geq |w_i\cdot(\varphi\circ\gamma_i)'(t_0)(t-t_0)|-|w_i\cdot(\varphi\circ\gamma_j)'(t_0)(t-t_0)|-3\epsilon r.\]
Further, since $t_0$ is a density point of each $\dom\gamma_k$, there
exists a $0<R_3<R_2$ such that, for any $0<r<R_3$ and $1\leq k \leq n$,
there exists a $t$ in each $\dom\gamma_k$ such that $\gamma_k(t)\in
B(x_0,r)$ and $|t-t_0-r|\leq \epsilon r$.  Therefore, if
$x_k=\gamma_k(t)$ for each $1\leq k\leq n$,
\[\Lip (w_i\cdot\varphi, x_0)d(x_i,x_j)\geq r|w_i\cdot(\varphi\circ\gamma_i)'(t_0)|-r|w_i\cdot(\varphi\circ\gamma_j)'(t_0)|-5\epsilon r.\]
By exchanging $i$ and $j$ if necessary, we may apply
\eqref{eq:albboundeither} as stated so that
\[\Lip (w_i\cdot\varphi, x_0)d(x_i,x_j)\geq \delta r \Lip (w_i\cdot\varphi, x_0)-5\epsilon r.\]
Finally, also by our initial hypotheses, $\Lip
(w_i\cdot\varphi,\gamma_i(t_0))>0$ and so we deduce that there exists
$\phi\colon \mathbb R\to \mathbb R$ such that $\phi(r)/r\to 0$ as
$r\to 0$ and
\[d(x_i,x_j)\geq \delta r - \phi(r).\]
The $Y_m$ cover almost all of $Y$ and so such $x_1,\ldots,x_n$ may be
found for almost every $x_0\in Y$, as required.

Now suppose that for some $\eta\geq 1$, $Y\subset W\in D_\eta$.  Then
for any $0<\epsilon<\delta$, if $R>0$ such that $|\phi(r)|<\epsilon r$
for each $0<r<R$, the $x_i$ found above all belong to $B(x_0,r)$ but
are separated by a distance of at least $(\delta-\epsilon)r$.
Therefore no two $x_k$ can belong to the same ball of radius
$(\delta-\epsilon)r/2$.  In particular $n\leq
N((\delta-\epsilon)/2,\eta)$ for all $0<\epsilon<\delta$ and so $n\leq
N(\delta/2,\eta)$.
\end{proof}

We use this construction to satisfy the hypotheses of Lemma
\ref{lem:boundgivesdiff}.

\begin{proposition}\label{prop:firstcharts}
Let $(X,d,\mu)$ be a metric measure space, $\delta>0$ and, for
$\eta\geq 1$, let $Y$ be a Borel subset of some $W\in
D_\eta$.  Suppose that, for any Lipschitz $f\colon X\to\mathbb R$,
there exists an Alberti representation of $\mu\llcorner Y$ with
$f$-speed $\delta$.  Then, for any $N>M(\delta/2,\eta)$ and Lipschitz
$\varphi\colon X\to\mathbb R^N$, for almost every $x_0\in Y$ there
exists an $a(x_0)\in\mathbb R^N$ such that
\[\Lip(a(x_0)\cdot\varphi,x_0)=0.\]
\end{proposition}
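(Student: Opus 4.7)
The plan is a contradiction argument combining the hypothesis with Lemma~\ref{lem:albertibound}. Assume the conclusion fails: there is a Lipschitz $\varphi \colon X \to \mathbb{R}^{N}$ and a Borel $Y' \subset Y$ with $\mu(Y') > 0$ such that $\Lip(a \cdot \varphi, x_0) > 0$ for every $a \in \mathbb{S}^{N-1}$ and every $x_0 \in Y'$. Since $a \mapsto \Lip(a \cdot \varphi, x_0)$ is continuous on the compact sphere $\mathbb{S}^{N-1}$ for each fixed $x_0$, after countably decomposing $Y'$ and passing to a piece of positive measure I may assume a uniform bound $\Lip(a \cdot \varphi, x_0) \geq \lambda > 0$ for all $a \in \mathbb{S}^{N-1}$ and $x_0 \in Y'$.

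Next I would fix $N$ pairwise orthogonal unit vectors $w_1, \ldots, w_N \in \mathbb{S}^{N-1}$ together with a small $\theta > 0$ so that the cones $C(w_i, \theta)$ are pairwise disjoint. For each $i$, apply the hypothesis to $w_i \cdot \varphi$ to obtain an Alberti representation $\mathcal{A}_i$ of $\mu \restrict Y'$ with $(w_i \cdot \varphi)$-speed $\delta$. After splitting by the sign of $(w_i \cdot \varphi \circ \gamma)'$, the normalised $\varphi$-tangents of curves in $\mathcal{A}_i$ lie in the thick half-space $\{v : w_i \cdot v \geq \delta\lambda\}$, hence in a thick cone about $w_i$. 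Invoking Corollary~\ref{cor:refine} with a cover of this thick cone by thin cones (one of them being $C(w_i, \theta)$), I refine $\mathcal{A}_i$ and, by taking a joint Fubini-style piece of the $N$ countable decompositions, obtain a Borel set $Y'' \subset Y'$ of positive measure on which $N$ representations simultaneously live, each in the $\varphi$-direction of a thin cone $C(\tilde{w}_i, \theta)$ with $\tilde{w}_i \in \mathbb{S}^{N-1}$ close to $w_i$.

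Finally, apply Lemma~\ref{lem:albertibound} to the $N$ representations on $\mu \restrict Y''$ using the directions $\tilde{w}_i$ and separation parameter $\delta - \varepsilon$ for some small $\varepsilon > 0$. For a.e.\ $x_0 \in Y''$ and normalised $\varphi$-tangents $v_i$ through $x_0$ supplied by Proposition~\ref{prop:curvefullmeas}, we have $v_i \in C(\tilde{w}_i, \theta)$ and $|\tilde{w}_i \cdot v_i| \geq \delta \Lip(\tilde{w}_i \cdot \varphi, x_0)$. The thin-cone geometry yields $|\tilde{w}_i \cdot v_j| \leq \Lip(\varphi)(|\tilde{w}_i \cdot \tilde{w}_j| + \sqrt{2\theta})$ for $i \neq j$; with $\theta$ small and the $\tilde{w}_i$ kept sufficiently spread, this is at most $\varepsilon \lambda$, verifying the strict separation inequality of the Lemma with parameter $\delta - \varepsilon$. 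The Lemma then gives $N \leq M((\delta - \varepsilon)/2, \eta)$, and since $M(\cdot, \eta)$ is continuous while $N > M(\delta/2, \eta)$ holds as a strict integer-real inequality, for sufficiently small $\varepsilon$ this contradicts $N > M((\delta - \varepsilon)/2, \eta)$.

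The main obstacle will be the refinement step: ensuring the joint thin-cone decomposition has a positive-measure piece on which the refined effective directions $\tilde{w}_i$ remain close enough to the orthogonal $w_i$'s for the cross-term bounds to work uniformly. The bridge between the nonstrict speed $\delta$ supplied by the hypothesis and the strict separation inequality of Lemma~\ref{lem:albertibound} is provided by the integer-real slack in $N > M(\delta/2, \eta)$ together with continuity of $M$ in its first argument.
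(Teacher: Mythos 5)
Your overall strategy --- reduce to a uniform lower bound $\Lip(v\cdot\varphi,x_0)\geq\lambda$ on a positive-measure piece, manufacture $N$ Alberti representations whose tangent fields are mutually separated, and then contradict $N>M(\delta/2,\eta)$ via Lemma \ref{lem:albertibound} together with continuity of $M$ in its first argument --- is the right one, and it is the paper's. But the step you flag as ``the main obstacle'' is not a technicality: it is a genuine gap, and the proposal fails there. Applying the hypothesis to $w_i\cdot\varphi$ only constrains the normalised tangents of $\mathcal A_i$ to the set $\{v:|w_i\cdot v|\geq\delta\lambda\|v\|/\Lip\varphi\}$, i.e.\ (after splitting by sign) to a cone of width $1-\delta\lambda/\Lip\varphi$ about $\pm w_i$. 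For small $\delta\lambda/\Lip\varphi$ this is nearly a half-space, so when you refine into thin cones $C(\tilde w_i,\theta)$ the centre $\tilde w_i$ may land anywhere in that near-half-space; for orthogonal $w_i,w_j$ the two near-half-spaces overlap massively, and on a set of positive measure one can perfectly well end up with $\tilde w_i=\tilde w_j$. There is no mechanism keeping the $\tilde w_i$ ``sufficiently spread'', so your cross-term bound $|\tilde w_i\cdot v_j|\leq\varepsilon\lambda$ cannot be guaranteed and Lemma \ref{lem:albertibound} cannot be invoked. A secondary symptom of the same circularity: your diagonal bound $|\tilde w_i\cdot v_i|\geq\delta\Lip(\tilde w_i\cdot\varphi,x_0)$ is the speed in the direction $\tilde w_i$, whereas the hypothesis was applied to $w_i\cdot\varphi$; since $\tilde w_i$ is only determined after the refinement, that speed cannot have been requested in advance.

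The paper avoids this by making the choice of directions sequential rather than simultaneous. Having produced and refined $\mathcal A_1,\dots,\mathcal A_{m-1}$ so that, on a piece $Z_i$ of a countable decomposition, each $\mathcal A_k$ is in the $\varphi$-direction of a thin cone $C(w_k^i,\epsilon)$, one chooses the next test direction $w_m^i$ orthogonal to the already-pinned-down $w_1^i,\dots,w_{m-1}^i$ and only then applies the hypothesis to $w_m^i\cdot\varphi$. Orthogonality to the thin cones forces $|w_m^i\cdot(\varphi\circ\gamma_j)'|\leq\sqrt{\epsilon(2-\epsilon)}\,\|(\varphi\circ\gamma_j)'\|$ for $j<m$, while the freshly requested speed gives the diagonal lower bound in exactly the direction $w_m^i$; this verifies, for each pair $j<m$, one of the two asymmetric alternatives in the hypothesis of Lemma \ref{lem:albertibound} with parameter $\delta-2\sqrt{\epsilon(2-\epsilon)}/\lambda$, and letting $\epsilon\to 0$ recovers $N\leq M(\delta/2,\eta)$. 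The asymmetric form of that hypothesis (``either \dots or the corresponding inequalities with $i$ and $j$ exchanged'') exists precisely to permit this triangular construction; a symmetric, up-front choice of orthogonal $w_1,\dots,w_N$ cannot exploit it.
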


\begin{proof}
Suppose that $\varphi\colon X\to\mathbb R^N$ is Lipschitz and, for
some Borel $Z\subset Y$ of positive measure, $\Lip(v\cdot \varphi, x_0)>0$
for every $x_0\in Z$ and every $v\in\mathbb S^{N-1}$.  By taking a countable
decomposition of $Z$ if necessary, we may suppose that there exists a
$\lambda>0$ such that
\[\Lip(v\cdot\varphi,x_0)\geq \lambda \Lip(\varphi,x_0)\]
for every $v\in\mathbb S^{N-1}$ and $x_0\in Z$.  We will prove the
Proposition by proving $N\leq M(\delta,\eta)$, using the assumption
that there exists an Alberti representation of $\mu\llcorner Z$ with
$v\cdot\varphi$-speed $\delta$, for any $v\in\mathbb S^{N-1}$.

Let $\epsilon>0$ and $\mathcal A_1$ be such a representation for an
arbitrary choice of $v_1\in\mathbb S^{N-1}$.  Inductively, suppose
that $m\leq N$ and that there exists a countable Borel decomposition
$Z=\cup_i Z_i$ such that each $\mu\llcorner Z_i$ has $m-1$ Alberti
representations $\mathcal A_1^i,\ldots,\mathcal A_{m-1}^i$.  Then, by
refining the representations if necessary, we may suppose that there
exists $w_1^i,\ldots,w_{m-1}^i\in \mathbb S^{N-1}$ such that $\mathcal
A_k^i$ is in the $\varphi$-direction of $C(w_k^i,\epsilon)$ for each
$1\leq k< m$.  We choose $w_m^i$ orthogonal to each $w_j^i$ and let
$\mathcal A_m^i$ be an Alberti representation of $\mu\llcorner Z_i$
with $w_m^i\cdot\varphi$-speed $\delta$.  Then $w_m^i$ satisfies
\[w_m^i\cdot(\varphi\circ\gamma_j)'(t)\leq
\frac{\sqrt{\epsilon(2-\epsilon)}}{\lambda}\Lip(\varphi,\gamma_j(t_j))\Lip(\gamma_j,t_j)\]
for almost every $\gamma_j\in\mathcal A_j^i$ and almost every
$t_j\in\dom\gamma_j$, for each $1\leq j<m$.  In particular,
\[\frac{w_m^i\cdot(\varphi\circ\gamma_m)'(t_m)}{\Lip(\gamma_m,t_m)} >
\frac{w_m^i\cdot(\varphi\circ\gamma_j)'(t_j)}{\Lip(\gamma_j,t_j)} + \left(\delta-\frac{2\sqrt{\epsilon(2-\epsilon)}}{\lambda}\right)\Lip(w_m^i\cdot\varphi,\gamma_m(t_m))\]
for every $1\leq j<m$, almost every $\gamma_m\in\mathcal A_m^i$ and
$\gamma_j\in\mathcal A_j^i$ and almost every $t_m\in\gamma_m$ and
$t_j\in\gamma_j$.

By repeating this process $N-1$ times, we obtain $N$ Alberti
representations that satisfy the hypotheses of
Lemma \ref{lem:albertibound} and so $N\leq
M(\delta/2-\sqrt{\epsilon(2-\epsilon)}/\lambda,\eta)$.  This is
true for every $\epsilon>0$ and so $N\leq M(\delta/2,\eta)$, as required.
\end{proof}

Using this result we may also bound the dimension of charts in Lipschitz
differentiability spaces.

\begin{corollary}\label{cor:dimensionbound}
Let $(U,\varphi)$ be a chart in a Lipschitz differentiability space
and $\eta\geq 1$.  Suppose that $Y\subset U$ has
positive $\mu$ measure, is contained within some $W\in D_\eta$ and
that any $\widetilde B_\delta$ subset of $Y$ is $\mu$-null.  Then the
dimension of $(U,\varphi)$ is bounded above by $M(\delta/2,\eta)$.
\end{corollary}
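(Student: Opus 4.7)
The plan is to reduce to Proposition \ref{prop:firstcharts} by showing that the null-set hypothesis on $\widetilde B_\delta$ yields Alberti representations of $\mu\llcorner Y$ of prescribed $f$-speed for every Lipschitz $f$. Applied with the chart map $\varphi$ itself as the test function, the proposition will force the dimension $n$ of $(U,\varphi)$ to satisfy $n\leq M(\delta/2,\eta)$, since otherwise it would contradict the uniqueness of derivatives guaranteed on a chart.

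First I would verify, for every Lipschitz $f\colon X\to\mathbb R$ and every $\delta'<\delta$, that $\mu\llcorner Y$ admits an Alberti representation of $f$-speed $\delta'$. Split $Y=Y_0\cup Y_1$ with $Y_0=\{x\in Y:\Lip(f,x)=0\}$ and $Y_1=Y\setminus Y_0$. On $Y_0$ the $f$-speed inequality is automatic, so Theorem \ref{thm:spanrep} applied in the ambient Lipschitz differentiability space and restricted to $Y_0$ supplies an Alberti representation of $\mu\llcorner Y_0$ that trivially has $f$-speed $\delta'$. On $Y_1$ apply Corollary \ref{cor:onerep2} with $\varphi=\psi=f$, $n=1$ and $\delta_1=\delta'$: it yields a Borel decomposition $Y_1=A\cup S$, an Alberti representation of $\mu\llcorner A$ with $f$-speed $\delta'$, and a set $S$ meeting every Lipschitz curve of $f$-speed strictly greater than $\delta'$ in an $\mathcal H^1$-null set. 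Since $\delta>\delta'$, every curve of $f$-speed $\delta$ has $f$-speed strictly greater than $\delta'$, so the one-term decomposition of $S$ with $f_1=f$ exhibits $S\in\widetilde B_\delta$; by hypothesis $\mu(S)=0$. Lemma \ref{lem:sumreps} then combines the two pieces into the desired representation of $\mu\llcorner Y$.

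Next I would argue by contradiction. Assume $n>M(\delta'/2,\eta)$ for some $\delta'<\delta$. Applying Proposition \ref{prop:firstcharts} with parameter $\delta'$ and the chart map $\varphi\colon X\to\mathbb R^n$ itself as the test function produces, for almost every $x_0\in Y$, a nonzero $a(x_0)\in\mathbb R^n$ with $\Lip(a(x_0)\cdot\varphi,x_0)=0$. However, since $(U,\varphi)$ is a chart in a Lipschitz differentiability space, uniqueness of derivatives together with Lemma \ref{lem:discuniq} yields $\Lip(v\cdot\varphi,x_0)>0$ for every $v\in\mathbb S^{n-1}$ and almost every $x_0\in U$; since $\mu(Y)>0$, this is a contradiction, so $n\leq M(\delta'/2,\eta)$. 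Letting $\delta'\nearrow\delta$ and using continuity and monotonicity of $\delta\mapsto M(\delta/2,\eta)=\eta^{2-\log_2(\delta/10)}$ then yields $n\leq M(\delta/2,\eta)$.

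The main technical obstacle is reconciling the strict inequality in the conclusion of Corollary \ref{cor:onerep2} (``$\psi_i$-speed strictly greater than $\delta_i$'') with the non-strict inequality in the definition of $\widetilde B_\delta$ (``$f_i$-speed $\delta$''). This mismatch forces one to produce Alberti representations of $f$-speed only $\delta'<\delta$ and to recover the sharp bound $M(\delta/2,\eta)$ by the limiting argument above, rather than applying Proposition \ref{prop:firstcharts} directly at $\delta$.
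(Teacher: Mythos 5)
Your proof is correct and follows the same route as the paper's: the paper's own argument is simply that Lemma \ref{lem:discuniq} gives $\Lip(v\cdot\varphi,x_0)>0$ for every $v\in\mathbb S^{n-1}$ at almost every point of the chart, which Proposition \ref{prop:firstcharts} (applied with $\psi=\varphi$) is incompatible with once $n>M(\delta/2,\eta)$. The one place you go beyond the paper is in verifying the hypothesis of Proposition \ref{prop:firstcharts}, i.e.\ that nullity of $\widetilde B_\delta$ subsets of $Y$ actually produces, for every Lipschitz $f$, an Alberti representation of $\mu\llcorner Y$ with prescribed $f$-speed; the paper leaves this implicit here (it is the same step used in the implication \eqref{item:btilde}$\Rightarrow$\eqref{item:speedreps} of Theorem \ref{thm:doublingchar}). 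Your observation about the mismatch between the strict inequality in the singular set of Corollary \ref{cor:onerep2} and the non-strict one in Definition \ref{def:btilde} is a real point that the paper glosses over, and your fix --- running the argument at every $\delta'<\delta$ and letting $\delta'\nearrow\delta$ using the continuity and monotonicity of $M(\cdot,\eta)$ together with the integrality of $n$ --- is a clean way to recover the stated bound $M(\delta/2,\eta)$.
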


\begin{proof}
Suppose that $(U,\varphi)$ is a chart of dimension $n$.  Then by Lemma
\ref{lem:discuniq}, $\Lip(v\cdot\varphi,x_0)>0$ for all
$v\in\mathbb S^{n-1}$ and almost every $x_0\in U$.  Therefore, by
Proposition \ref{prop:firstcharts}, $n\leq M(\delta/2,\eta)$.
\end{proof}

Using the previous construction we may give our characterisation of
Lipschitz differentiability spaces via null sets.

\begin{theorem}\label{thm:doublingchar}
For $(X,d,\mu)$ a metric measure space, the following are equivalent:
\begin{enumerate}
\item \label{item:diffspace} $(X,d,\mu)$ is a Lipschitz
  differentiability space.
\item \label{item:atilde} Any $\widetilde A$ subset of $X$ is
  $\mu$-null and $X$ is pointwise doubling.
\item \label{item:btilde} Any $\widetilde B$ subset of $X$ is
  $\mu$-null and $X$ is pointwise doubling.
\item \label{item:speedreps} There exists a countable Borel
  decomposition $X=\cup_i X_i$ and sequences $\eta_i \geq 1$ and
  $\delta_i>0$ such that:
\begin{itemize}\item For any
  Lipschitz $f\colon X\to\mathbb R$ and $i\in\mathbb N$, there exists
  an Alberti representation of $\mu\llcorner X_i$ with $f$-speed
  $\delta_i$.
\item For every $i\in\mathbb N$ there exists a $W\in D_{\eta_i}$ such
  that $\mu(X_i\setminus W)=0$.
\end{itemize}
\end{enumerate}
\end{theorem}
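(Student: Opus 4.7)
The plan is to close the cycle $(1)\Rightarrow(2)\Rightarrow(3)\Rightarrow(4)\Rightarrow(1)$. The implications $(1)\Rightarrow(2)$ and $(2)\Rightarrow(3)$ are essentially immediate: the first combines Theorem \ref{thm:atildenull} with Corollary \ref{cor:doubling}, and the second uses the inclusion $\widetilde B \subset \widetilde A$ recorded in the remark following Definition \ref{def:btilde}.

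For $(4)\Rightarrow(1)$, I fix $i$ and apply Proposition \ref{prop:firstcharts} with $Y=X_i$, $\delta=\delta_i$ and $\eta=\eta_i$: the hypothesis that $\mu\llcorner X_i$ admits an Alberti representation of $f$-speed $\delta_i$ for every Lipschitz $f\colon X\to\mathbb R$ yields, for any Lipschitz $\psi\colon X\to\mathbb R^{N+1}$ with $N>M(\delta_i/2,\eta_i)$, a direction $a(x_0)\in\mathbb S^N$ along which $\Lip(a(x_0)\cdot\psi,x_0)=0$ at almost every $x_0\in X_i$. Lemma \ref{lem:boundgivesdiff} then supplies a countable decomposition of $X_i$ into charts of dimension at most $M(\delta_i/2,\eta_i)$ with respect to which every real-valued Lipschitz function is differentiable almost everywhere; collecting these charts over $i$ exhibits $X$ as a Lipschitz differentiability space.

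The heart of the argument is $(3)\Rightarrow(4)$. Lemma \ref{lem:ptwisedoubling} first gives, modulo a $\mu$-null set, a countable decomposition $X=\bigcup_m Y_m$ with $Y_m\in D_{\eta_m}$. Fix $m$ and a candidate $\delta_m>0$. For any Lipschitz $f$, I iterate Corollary \ref{cor:onerep2} in both the positive and the negative $f$-direction with parameters $\delta_m\cdot 2^{-k}$, obtaining a nested decomposition $Y_m=\bigcup_k A_f^{(k)}\cup S_f^{(\infty)}$. Each $\mu\llcorner A_f^{(k)}$ carries an Alberti representation of $f$-speed $\delta_m\cdot 2^{-k}$, while the residual $S_f^{(\infty)}\cap\{\Lip(f,\cdot)>0\}$ has $\mathcal H^1$-null intersection with every curve of positive $f$-speed. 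The trivial one-piece decomposition paired with the single function $f$ then witnesses that this residual lies in $\widetilde B_\delta$ for every $\delta>0$, so it belongs to $\widetilde B$ and is $\mu$-null by hypothesis. To secure the uniform $\delta_i$ required by (4), I further refine each $Y_m$ into a countable Borel partition $Y_m=\bigcup_i X_i$ using a measurable-selection argument which, for each piece $X_i$, fixes a single speed $\delta_i>0$ that simultaneously witnesses an Alberti representation for every Lipschitz $f$.

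The main obstacle is precisely this uniformisation step. The iterative construction naturally delivers representations whose effective $f$-speeds depend on $f$ through the first level $k$ at which $A_f^{(k)}$ absorbs a positive-measure portion of $Y_m$; extracting a single $\delta_i$ that works across all Lipschitz functions on a fixed piece requires a careful stratification of $Y_m$ that respects both the Borel structure underlying $\widetilde B_\delta$-membership and the $D_{\eta_i}$-doubling inherited from $Y_m$.
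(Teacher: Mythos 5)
Your cycle $(1)\Rightarrow(2)\Rightarrow(3)\Rightarrow(4)\Rightarrow(1)$ is the paper's, and the implications $(1)\Rightarrow(2)$, $(2)\Rightarrow(3)$ and $(4)\Rightarrow(1)$ are handled exactly as in the text. The problem is $(3)\Rightarrow(4)$, and specifically the step you yourself flag as ``the main obstacle'': it is a genuine gap, not a routine technicality. Your per-function iteration of Corollary \ref{cor:onerep2} produces, for each Lipschitz $f$, a decomposition $Y_m=\bigcup_k A_f^{(k)}$ (modulo a null set) on whose pieces the achieved $f$-speed is $\delta_m 2^{-k}$; both the pieces and the speeds depend on $f$. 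Statement \eqref{item:speedreps} demands a single decomposition and a single $\delta_i$ per piece valid for \emph{all} Lipschitz $f$ simultaneously. There is no countable family of functions over which one could intersect the decompositions, and $f\mapsto A_f^{(k)}$ is not the kind of map to which a selection theorem applies; no ``measurable-selection argument'' that would close this is indicated, and I do not see one.

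The paper avoids uniformising per-$f$ data altogether by reversing the quantifiers. Since $\widetilde B_{1/i}$ is closed under countable unions, for each $i$ one may take $N_i\in\widetilde B_{1/i}$ of maximal measure among Borel $\widetilde B_{1/i}$ subsets of $X$ and set $X_i=X\setminus N_i$; then \emph{every} $\widetilde B_{1/i}$ subset of $X_i$ is $\mu$-null. Because membership of $\widetilde B_{1/i}$ is itself quantified over arbitrary Lipschitz functions, this single property of $X_i$ delivers the uniform speed: for any Lipschitz $f$, the singular set produced by Corollary \ref{cor:onerep2} on $X_i$ (applied in the positive and negative $f$-directions) becomes, after discarding $\{\Lip(f,\cdot)=0\}$ where the speed condition is vacuous, a $\widetilde B_{1/i}$ subset of $X_i$ and hence is null, so $\mu\llcorner X_i$ itself carries an Alberti representation with $f$-speed $1/i$. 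The leftover $\bigcap_i N_i$ lies in $\widetilde B$ and is null by hypothesis, and Lemma \ref{lem:ptwisedoubling} then refines each $X_i$ into pieces contained in some $W\in D_\eta$. Replacing your uniformisation step by this exhaustion argument repairs the proof.
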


\begin{proof}
We will prove the implications $\eqref{item:diffspace}\Rightarrow
\eqref{item:atilde}\Rightarrow\eqref{item:btilde}\Rightarrow
\eqref{item:speedreps}\Rightarrow \eqref{item:diffspace}$.  Firstly, by
Theorem \ref{thm:atildenull}, any $\widetilde A$ subset of a Lipschitz
differentiability space is $\mu$-null.  Further, by Corollary \ref{cor:doubling}, any Lipschitz differentiability space is
pointwise doubling, giving the first implication.  The second
implication is true since $\widetilde B\subset \widetilde A$ for any
metric measure space.

To prove $\eqref{item:btilde}\Rightarrow\eqref{item:speedreps}$
observe that, for any $\delta>0$, $\widetilde B_\delta$ is closed
under taking countable unions.  Therefore there exists a countable
Borel decomposition $X=\cup_i X_i\cup N$ where $N\in\widetilde B$
and such that, for each $i\in\mathbb N$, every $\widetilde B_{1/i}$
subset of $X_i$ has measure zero.  In particular, given a Lipschitz
function $f\colon X\to\mathbb R$, we may apply Corollary
\ref{cor:onerep2} to obtain an Alberti representation of $\mu\llcorner
X_i$ with $f$-speed $1/i$.  Further, by Lemma
\ref{lem:ptwisedoubling}, for each $i\in\mathbb N$ there exists a
countable Borel decomposition $X_i=\cup_j (X_i^j \cup N_i)$ where $N_i$
is $\mu$-null and for each $j\in\mathbb N$ there exists a $W\in
D_{1/j}$ such that $X_i^j\subset W$.  Therefore, the decomposition
$X=\cup_{i,j} (X_i^j \cup N_i)$ is of the required form.

For the final implication, by applying Lemma
\ref{lem:boundgivesdiff} and Proposition \ref{prop:firstcharts}, we see
that $(X,d,\mu)$ is a Lipschitz differentiability space.
\end{proof}

Finally, as mentioned above, metric measure spaces in which porous
sets have measure zero are pointwise doubling.  Moreover,
by Corollary \ref{cor:porousnull}, porous sets in Lipschitz
differentiability spaces have measure zero.  Therefore, we obtain an
additional characterisation of Lipschitz differentiability spaces
entirely via null sets.

\begin{theorem}\label{thm:nullchar}
For $(X,d,\mu)$ a metric measure space, the following are equivalent:
\begin{itemize}
\item $(X,d,\mu)$ is a Lipschitz differentiability space.
\item Any $\widetilde A$ and any porous set in $X$ is $\mu$-null.
\item Any $\widetilde B$ and any porous set in $X$ is $\mu$-null.
\end{itemize}
\end{theorem}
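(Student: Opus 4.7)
The plan is to deduce Theorem \ref{thm:nullchar} directly from the already established Theorem \ref{thm:doublingchar} by replacing the hypothesis \emph{pointwise doubling} with the hypothesis \emph{porous sets have measure zero}. For the three conditions in Theorem \ref{thm:nullchar}, I will prove the cycle (1) $\Rightarrow$ (2) $\Rightarrow$ (3) $\Rightarrow$ (1), using only earlier results.

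First, the implication (1) $\Rightarrow$ (2) is already assembled in the excerpt: Theorem \ref{thm:atildenull} says that $\widetilde A$ subsets of a Lipschitz differentiability space are $\mu$-null, and Corollary \ref{cor:porousnull} says that porous subsets of a Lipschitz differentiability space are $\mu$-null. The implication (2) $\Rightarrow$ (3) is immediate from the inclusion $\widetilde B \subset \widetilde A$, which is observed just after Definition \ref{def:btilde}.

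The only implication that requires a new argument is (3) $\Rightarrow$ (1), and here the single ingredient is the fact, cited just before Corollary \ref{cor:doubling} from \cite{porosityandmeasures}, Theorem 3.6, that a metric measure space in which every porous set has measure zero is pointwise doubling. Assuming (3), every porous set is $\mu$-null, and so by that cited result $(X,d,\mu)$ is pointwise doubling; combined with the hypothesis that every $\widetilde B$ set is $\mu$-null, condition \eqref{item:btilde} of Theorem \ref{thm:doublingchar} is satisfied, hence $(X,d,\mu)$ is a Lipschitz differentiability space.

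There is no real obstacle: the theorem is a repackaging of Theorem \ref{thm:doublingchar}, trading the pointwise doubling hypothesis for the stronger-looking but in context equivalent statement that porous sets are null. The only non-trivial fact used is the external input from \cite{porosityandmeasures}, which makes the two hypotheses interchangeable under the presence of the $\widetilde A$ or $\widetilde B$ condition (and unconditionally so in one direction, via Corollary \ref{cor:porousnull}).
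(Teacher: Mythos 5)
Your proposal is correct and is exactly the argument the paper intends: the theorem is stated immediately after the remark that porous-null implies pointwise doubling (via \cite{porosityandmeasures}) and that Corollary \ref{cor:porousnull} gives the converse direction inside a Lipschitz differentiability space, so Theorem \ref{thm:nullchar} follows from Theorem \ref{thm:doublingchar} by this substitution. Your cycle (1) $\Rightarrow$ (2) $\Rightarrow$ (3) $\Rightarrow$ (1), using Theorem \ref{thm:atildenull}, the inclusion $\widetilde B\subset\widetilde A$, and then Theorem \ref{thm:doublingchar}, matches this reasoning precisely.
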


%constructing arbitrary representations in spaces
\section{Arbitrary Alberti representations}\label{sec:arbreps}
We now extend the results from the previous sections to find Alberti
representations in the direction of an arbitrary cone $C$ in a chart
$(U,\varphi)$ of a Lipschitz differentiability space $(X,d,\mu)$.  As
before, we will produce this representation using Corollary
\ref{cor:onerep2} and so we are required to show that any Borel
$S\subset U$ that satisfies $\mathcal H^1(\gamma\cap S)=0$ for any
$\gamma\in\Gamma(X)$ in the $\varphi$-direction of $C$ has measure
zero.  This is achieved by giving a decomposition $S=\cup_i S_i$ such
that each $S_i$ satisfies $\mathcal H^1(\gamma\cap S_i)=0$ for any
$\gamma\in\Gamma(X)$ in the $\varphi$-direction of a cone $C_i$ that
defines the direction of an existing Alberti representation.

For this we will need to deduce properties of the derivative of a
function obtained from Lemma \ref{lem:firstfunlds}.  In particular, we
require a bound on the directional derivative of the function in the
direction $w$, for $w$ as in the hypotheses of the Lemma.  This is
possible for any $x_0$ for which there exists $x_m\to x_0$ such that
$\varphi(x_m)-\varphi(x_0)$ is parallel to $w$.

To begin we concatenate the curves obtained from our existing Alberti
representations, in suitable ratios, to show the existence of such a
sequence.  Recall the notion of separated Alberti representations
given in Definition \ref{def:separated}.

\begin{lemma}\label{lem:pointtangents}
Let $(X,d,\mu)$ be a metric measure space, $\varphi\colon X\to\mathbb
R^n$ Lipschitz and $\delta,\xi>0$.  Suppose that $X$ has $n$
$\xi$-separated Alberti representations with speed strictly greater
than $\delta$.  Then there exists an $\eta>0$ such that, for any
measurable $S\subset X$ and for almost every $x_0\in S$, given any
$v\in\mathbb S^{n-1}$ there exists $S\ni x_m\to x_0$ with
\[\limsup_{m\to\infty}\left\|\frac{\varphi(x_m)-\varphi(x_0)}{d(x_m,x_0)}-v \frac{\|\varphi(x_m)-\varphi(x_0)\|}{d(x_m,x_0)}\right\|=0\]
and
\[\limsup_{m\to\infty}\frac{\|\varphi(x_m)-\varphi(x_0)\|}{d(x_m,x_0)}\geq\eta\Lip(\varphi,x_0).\]
\end{lemma}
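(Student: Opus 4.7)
The plan is to combine the $n$ Alberti-representation curves at $x_0$ by walking sequentially along each of them, producing points $x_m\in S$ whose $\varphi$-displacements approximate any prescribed direction. Begin by applying Proposition \ref{prop:curvefullmeas} to each of the $n$ $\xi$-separated representations $\mathcal A_1,\ldots,\mathcal A_n$, with the Borel function $f$ recording $(\Lip(\gamma,\cdot),(\varphi\circ\gamma)')$ and $B$ carving out the direction and speed conditions. This produces a $\mu$-full-measure $G_1\subset S$ at each of whose points $x$ there exist curves $\gamma_1^x,\ldots,\gamma_n^x$ with $\xi$-separated $\varphi$-derivatives $v_i^x:=(\varphi\circ\gamma_i^x)'(0)$ satisfying $\|v_i^x\|\geq\delta\Lip(\varphi,x)\Lip(\gamma_i^x,0)$ and with $0$ a density point of $(\gamma_i^x)^{-1}(S)$. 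Iterating this selection with $M$ replaced successively by $G_k$, and intersecting, I pass to a full-measure $G\subset S$ for which $0$ is also a density point of $(\gamma_i^x)^{-1}(G)$. I further refine the Alberti representations via Corollary \ref{cor:refine} so that each $\mathcal A_i$ is supported on curves whose $\varphi$-derivative lies in an arbitrarily narrow cone $C(w_i,\epsilon)$ around a fixed $w_i\in\mathbb S^{n-1}$, and decompose $G$ along dyadic scales of $\Lip(\varphi,\cdot)$ so that this quantity is comparable to $\Lip(\varphi,x_0)$ on the piece being used.

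Given $v\in\mathbb S^{n-1}$ and a good $x_0\in G$, expand $v=\sum_{i=1}^n\lambda_i w_i$ with $|\lambda_i|\leq 1/\xi$ coming from $\xi$-separation of the $w_i$. For each scale $s>0$, define $y_0:=x_0$ and inductively $y_k:=\gamma_k^{y_{k-1}}(\tau_k)$, where $\tau_k$ is chosen so that $\tau_k \|v_k^{y_{k-1}}\| = s\lambda_k+o(s)$ as $s\to 0$, and so that $\tau_k\in(\gamma_k^{y_{k-1}})^{-1}(G)$; the density-point property built into $G$ permits both constraints simultaneously, uniformly in $k$. Set $x_m:=y_n$ for a sequence $s=s_m\to 0$.

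To verify the conclusion, differentiability of $\varphi\circ\gamma_k^{y_{k-1}}$ at $0$ together with the thin-cone refinement gives $\varphi(y_k)-\varphi(y_{k-1})=\tau_k v_k^{y_{k-1}}+o(s)=s\lambda_k w_k+o(s)$, so summing yields $\varphi(x_m)-\varphi(x_0)=s_m v+o(s_m)$. For the distance, $d(y_k,y_{k-1})\leq|\tau_k|\Lip(\gamma_k^{y_{k-1}},0)+o(s)$, and the speed condition $\|v_k^{y_{k-1}}\|\geq\delta\Lip(\varphi,y_{k-1})\Lip(\gamma_k^{y_{k-1}},0)$ together with the dyadic comparability of $\Lip(\varphi,\cdot)$ bounds this by a constant times $s_m|\lambda_k|/\Lip(\varphi,x_0)$. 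Summing in $k$, $d(x_m,x_0)\leq C s_m/\Lip(\varphi,x_0)$ for some $C=C(n,\xi,\delta)$, and hence $(\varphi(x_m)-\varphi(x_0))/d(x_m,x_0)$ converges to a positive multiple of $v$ with norm at least $\Lip(\varphi,x_0)/C$, supplying both conclusions with $\eta:=1/C$.

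The main obstacle is the simultaneous control of the intermediate points $y_k$: they must remain in $G$ so that fresh curves are available at the next step, and these new curves must themselves have derivatives near the fixed $w_k$ independently of the base point. The first requirement is engineered by the nested refinement yielding $G$ via iterated use of Proposition \ref{prop:curvefullmeas}; the second by the pre-construction narrowing of the Alberti cones via Corollary \ref{cor:refine}. A secondary subtlety is that $\Lip(\varphi,y_{k-1})$ need not match $\Lip(\varphi,x_0)$, which is handled by the dyadic decomposition on $\Lip(\varphi,\cdot)$ so that these quantities are comparable throughout the chain.
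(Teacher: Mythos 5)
Your overall strategy---expanding $v$ in the $\xi$-separated directions $w_i$ and concatenating short excursions along curves from each of the $n$ representations, then using $\xi$-separation to bound $d(x_m,x_0)$ above by a constant times $s/\Lip(\varphi,x_0)$---is exactly the one the paper uses, and the final bookkeeping is right. The gap is in how you guarantee that the intermediate points $y_k$ admit usable curves. First, the set $G$ you build by iterating Proposition \ref{prop:curvefullmeas} and intersecting does not have the self-referential property you ascribe to it: a parameter can be a density point of each $\gamma^{-1}(G_k)$ without being a density point of $\gamma^{-1}\bigl(\bigcap_k G_k\bigr)$ (density points are not preserved under countable intersections), and in any case the curve selected at stage $k+1$ need not coincide with the one selected at stage $k$. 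Second, and more seriously, even granting such a $G$, the estimate $\varphi(y_k)-\varphi(y_{k-1})=\tau_k v_k^{y_{k-1}}+o(s)$ is not uniform: the modulus in the first-order expansion of $\varphi\circ\gamma_k^{y_{k-1}}$ at $0$, and the scale below which $0$ behaves like a density point of $(\gamma_k^{y_{k-1}})^{-1}(G)$, both depend on the base point $y_{k-1}$, which itself moves with $s$. As $s\to 0$ these thresholds may collapse faster than $s$, so the chain cannot be closed; your dyadic decomposition on $\Lip(\varphi,\cdot)$ addresses comparability of that quantity but not this uniformity.

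The paper resolves both problems with the scale-parametrised sets $G^i_R(\cdot)$: for fixed $R$ one collects the points admitting a curve in $\Gamma^i$ for which the density, first-order approximation, cone and speed estimates all hold \emph{uniformly} for parameters within $5R/\|(\varphi\circ\gamma)'(t_0)\|$ of $t_0$, and then composes these operations, $S^0_R=S$ and $S^{i+1}_R=G^{i+1}_R(S^i_R)$. A point $x_0\in S^n_R$ then supports the entire chain at every scale $r<R/\max_i\lambda_i$ with error constants depending only on $R$ and $\epsilon$, and since each $G^i_R(\cdot)$ increases to a set of full measure as $R\searrow 0$, the union $\bigcup_m S^n_{1/m}$ has full measure in $S$. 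You need some device of this kind---quantitative, scale-uniform good sets composed $n$ times, with the scale sent to zero only at the end---rather than a single qualitative full-measure set.
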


\begin{proof}
Let $S\subset X$ be measurable, $w\in\mathbb S^{n-1}$ and
$0<\epsilon<1$.  By refining if necessary, we may suppose that the
above Alberti representations are in the $\varphi$-direction of
$\xi$-separated cones $C_1=C(w_1,\epsilon), \ldots,
C_n=C(w_n,\epsilon)$.  Further, for each $1\leq i \leq n$ let
$C'_i\supset C_i$ be open cones that are also $\xi$-separated.

For $1\leq i\leq n$ let $\Gamma^i$ be the set of $\gamma\in
\Gamma(X)$ in the $\varphi$-direction of $C_i$ with speed $\delta$.
Further, for each $0<R<\epsilon$, let $G^i_{R}(S)$ be the set of
$x_0\in S$ for which there exist a $\gamma\in\Gamma^i$ and a
$t_0\in\dom\gamma$ with $\gamma(t_0)=x_0$ such that, for every
$0<r<R/\|(\varphi\circ\gamma)'(t_0)\|$ and every $t\in \dom\gamma$
with $|t-t_0|\leq 5R/\|(\varphi\circ\gamma)'(t_0)\|$, the following
conditions hold:
\begin{enumerate}
\item \label{item:arbseqdensity} $\mathcal L^1(\gamma^{-1}(S)\cap B(t_0,2r))\geq 4r(1-\epsilon)$.
\item \label{item:arbseqderapprox}$\|\varphi(\gamma(t))-\varphi(\gamma(t_0))-(\varphi\circ\gamma)'(t_0)\|\leq \epsilon \|(\varphi\circ\gamma)'(t_0)\||t-t_0|$.
\item \label{item:arbseqspeedbound}$\|(\varphi\circ\gamma)'(t_0)\||t-t_0|\leq 2\|\varphi(\gamma(t))-\varphi(\gamma(t_0))\|$.
\item \label{item:arbseqincone}$\varphi(\gamma(t))-\varphi(\gamma(t_0))\in C'_i$.
\item \label{item:arbseqspeed}$\|(\varphi\circ\gamma)'(t_0)\||t-t_0|\geq \delta \Lip(\varphi,\gamma(t_0))d(\gamma(t),\gamma(t_0))/2$.
\end{enumerate}
Since $\mu$ has an Alberti representation in the
$\varphi$-direction of each $C_i$ with speed $\delta$, by Proposition
\ref{prop:curvefullmeas} each $G^i_R(S)$ is measurable and
monotonically increases to a set of full measure in $S$ as $R\searrow 0$.

Now let $x_0\in G^i_{R}(S)$, let $\gamma\in\Gamma^i$ be as in the
definition of $G^i_R$ for $x_0$ and write $w=\sum_i \lambda_i w_i$.
Then for any $0<r<R/\max \lambda_i$,
\[r\lambda_i/\|(\varphi\circ\gamma)'(t_0)\|\leq
R/\|(\varphi\circ\gamma)'(t_0)\|\]
and so, by \eqref{item:arbseqdensity}, there exists a $t\in\dom\gamma$ with
\[0\leq t-t_0-\frac{r\lambda_i}{\|(\varphi\circ\gamma)'(t_0)\|}\leq \frac{4r\epsilon}{\|(\varphi\circ\gamma)'(t_0)\|}\leq 4\epsilon(t-t_0).\]
In particular
\[|t-t_0|\leq 5R/\|(\varphi\circ\gamma)'(t_0)\|\]
and
\[\left|\|(\varphi\circ\gamma)'(t_0)\|-\frac{r\lambda_i}{|t-t_0|}\right|\leq 4\epsilon.\]
Note also that, since $(\varphi\circ\gamma)'(t_0)$ belongs to $C_i$,
\[\left\|\frac{(\varphi\circ\gamma)'(t_0)}{\|(\varphi\circ\gamma)'(t_0)\|}- w_i \right\| \leq \sqrt{2\epsilon}.\]
Therefore, by the triangle inequality, \eqref{item:arbseqderapprox}
and \eqref{item:arbseqspeedbound},
\begin{align*}\|\varphi(\gamma(t))-\varphi(\gamma(t_0))-r\lambda_i w_i\|&\leq 10\sqrt{\epsilon}\|(\varphi\circ\gamma)'(t_0)\||t-t_0|\\
&\leq 20\sqrt{\epsilon}\|\varphi(\gamma(t))-\varphi(\gamma(t_0))\|
\end{align*}
Moreover, by \eqref{item:arbseqspeed},
\[\delta\Lip(\varphi,\gamma(t_0))d(\gamma(t),\gamma(t_0))\leq 5R\leq 5\epsilon.\]
That is, for every $x_0\in G^i_R$, there exists an $x\in S$ with
\begin{equation}\label{eq:tangentcurve1}\|\varphi(x)-\varphi(x_0)-r\lambda_i w_i\|\leq 20\sqrt{\epsilon}\|\varphi(x)-\varphi(x_0)\|\end{equation}
and
\begin{equation}\label{eq:tangentcurve2}(x,x_0)\leq 5\epsilon/\delta\Lip(\varphi,\gamma(t_0)).\end{equation}

Now define $S_R^0=S$ and for each $0\leq i < n$ define
\[S_{R}^{i+1}=G^{i+1}_{R}\left(S^i_R\right).\]
Then for any $x_0\in S^n_R$ and $0\leq i <n$ there exists $x_i\in
S^{n-i}_R$ such that the relations \eqref{eq:tangentcurve1} and
\eqref{eq:tangentcurve2} hold between $x_{i}$ and
$x_{i+1}$.  Therefore
\[\|\varphi(x_n)-\varphi(x_0)-r w\|\leq 20\sqrt{\epsilon}\sum_{0\leq i<n} \|\varphi(x_i)-\varphi(x_{i+1})\|\]
and so, by the triangle inequality,
\[\|\varphi(x_n)-\varphi(x_0)-\|\varphi(x_n)-\varphi(x_0)\| w\|\leq
40\sqrt{\epsilon}\sum_{0\leq i<n}  \|\varphi(x_i)-\varphi(x_{i+1})\|.
\]
However each $\gamma$ above was chosen so that
$\varphi(x_{i})-\varphi(x_{i-1})\in C'_i$ for each $0\leq i <n$.
Since the $C'_i$ form a collection of $\xi$-separated cones,
\begin{align*}\|\varphi(x_0)-\varphi(x_n)\|&=\left\|\sum_{0\leq i <n} \varphi(x_i)-\varphi(x_{i+1})\right\|\\
&\geq \xi\max_{0\leq i < n}\|\varphi(x_i)-\varphi(x_{i+1})\|\\
&\geq \xi\delta \Lip(\varphi,x_0)\max_{0\leq i <n} d(x_i,x_{i+1})/4>0.\end{align*}
Therefore
\[\|\varphi(x_n)-\varphi(x_0)-\|\varphi(x_n)-\varphi(x_0)\| w\|\leq 40\sqrt{\epsilon} n\Lip\varphi d(x_n,x_0)/\xi\]
and by the triangle inequality
\[\|\varphi(x_n)-\varphi(x_0)\|\geq\xi\delta\Lip(\varphi,x_0) d(x_n,x_0)/4n.\]
Note also that we must have $x_0\neq x_n$ and
\[d(x_n,x_0)\leq \sum_{0\leq i < n} d(x_i,x_{i+1})\leq 5n\epsilon.\]

Finally, $\cup_{m\in\mathbb N}S^n_{1/m}$ is a set of full measure in
$S$ and so for almost every $x_0\in S$ there exists an $x_n\in S$ with
the above properties, for a given $0<\epsilon<1$.  Taking a
countable intersection over $\epsilon\in (0,1)\cap\mathbb Q$ completes
the proof for $\eta=\xi\delta/n$.
\end{proof}

\begin{corollary}\label{cor:thinfunct}
Let $(U,\varphi)$ be a $\lambda$-structured chart in a Lipschitz
differentiability space and for some $\xi,\delta>0$, let $V\subset U$
be Borel such that $\mu\llcorner V$ has $n$ $\xi$-separated Alberti
representations with speed strictly greater than $\delta$.  Suppose
that, for some $w\in\mathbb S^{n-1}$ and $0<\epsilon<1$, $S\subset V$ is
closed and satisfies $\mathcal H^1(\gamma\cap S)=0$ for any
$\gamma\in\Gamma(X)$ in the $\varphi$-direction of $C(w,\epsilon)$. Then
there exists an $\eta>0$ and, for any $\zeta>0$, a
$(K(\epsilon)+1+\zeta)\Lip\varphi$-Lipschitz function $f\colon U\to\mathbb R$
such that
\begin{enumerate}
\item For every $x_0\in S$ and $x\in U$ with $(\varphi(x)-\varphi(x_0))\cdot w\geq 0$,
\[f(x)-f(x_0)\geq (\varphi(x)-\varphi(x_0))\cdot w-\zeta.\]
\item \label{item:fundercon}For almost every $x_0\in S$
\[Df(x_0)\cdot w\leq \zeta/\eta\lambda.\]
\end{enumerate}
\end{corollary}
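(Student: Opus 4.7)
The plan is to produce $f$ directly from Lemma \ref{lem:firstfunlds} and then bound the directional derivative $Df(x_0)\cdot w$ at almost every $x_0\in S$ by feeding tangent-like sequences into the local upper Lipschitz estimate produced by that lemma. The tangent-like sequences will come from Lemma \ref{lem:pointtangents}, whose hypotheses are satisfied by $V$ with its $n$ $\xi$-separated Alberti representations of speed strictly greater than $\delta$.

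First I would apply Lemma \ref{lem:firstfunlds} with its $\theta$ set to our $\epsilon$ and with free parameters $\delta',\alpha>0$ to be chosen shortly. The corollary's hypothesis places no restriction on the speed of admissible curves, so it implies the lemma's hypothesis for any $\delta'>0$. The output is a $(1+K(\epsilon)+\delta'+\alpha)\Lip\varphi$-Lipschitz function $f$ and a radius $\rho>0$ satisfying the lower bound
\[ f(x)-f(x_0)\geq (\varphi(x)-\varphi(x_0))\cdot w-\alpha \]
for $x_0\in S$, $x\in U$, $(\varphi(x)-\varphi(x_0))\cdot w\geq 0$, together with the local upper bound
\[ |f(y)-f(z)|\leq K(\epsilon)\|\mathcal P(\varphi(y)-\varphi(z))\|+(\delta'+\alpha)\Lip\varphi\, d(y,z) \]
for all $y,z\in B(x_0,\rho)$ and $x_0\in S$. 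Taking $\delta'+\alpha\leq \min(\zeta,\zeta/\Lip\varphi)$ will secure property (1) and the stated Lipschitz constant.

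Next I would apply Lemma \ref{lem:pointtangents} to $V$ to obtain $\eta>0$ such that, for almost every $x_0\in S$, the choice $v=w$ yields a sequence $S\ni x_m\to x_0$ with $\varphi(x_m)-\varphi(x_0)$ asymptotically parallel to $w$ and $\limsup_m \|\varphi(x_m)-\varphi(x_0)\|/d(x_m,x_0)\geq \eta\Lip(\varphi,x_0)\geq \eta\lambda$; the final inequality follows from the $\lambda$-structured chart property, which forces $\Lip(\varphi,x_0)\geq \lambda$ on a full-measure subset of $U$. At any such $x_0$ where $Df(x_0)$ also exists (almost everywhere on $S$ since $(U,\varphi)$ is a chart in a Lipschitz differentiability space), the near-parallelism together with $\mathcal P w=0$ gives $\|\mathcal P(\varphi(x_m)-\varphi(x_0))\|=o(d(x_m,x_0))$, so eventually the upper bound reduces to
\[ |f(x_m)-f(x_0)|\leq (\delta'+\alpha)\Lip\varphi\, d(x_m,x_0)+o(d(x_m,x_0)). \]
On the other hand, differentiability of $f$ combined with asymptotic parallelism, and the boundedness of $\|Df(x_0)\|$ supplied by Lemma \ref{lem:nondiffcond}, gives
\[ f(x_m)-f(x_0)=\|\varphi(x_m)-\varphi(x_0)\|\,(Df(x_0)\cdot w)+o(d(x_m,x_0)). \]
Dividing by $d(x_m,x_0)$ and passing to the limsup along a subsequence realising the maximal ratio yields $\eta\lambda\,(Df(x_0)\cdot w)\leq (\delta'+\alpha)\Lip\varphi$, and the choice of $\delta'+\alpha$ delivers $Df(x_0)\cdot w\leq \zeta/(\eta\lambda)$.

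The main obstacle I expect is producing the tangent-like sequences within $S$ itself rather than merely within $U$ or $X$: only endpoints lying in or close to $S$ benefit from the local upper bound of Lemma \ref{lem:firstfunlds}, and without near-parallelism of $\varphi(x_m)-\varphi(x_0)$ to $w$ the $K(\epsilon)\|\mathcal P(\cdot)\|$ term (whose coefficient blows up as $\epsilon\to 0$) would swamp the estimate. Lemma \ref{lem:pointtangents} does exactly this work, which is why $\eta$ ends up depending on $\xi,\delta,n$ through the Alberti-representation hypothesis on $V$ rather than on any geometry of $S$. The rest is a careful accounting of three infinitesimal error terms.
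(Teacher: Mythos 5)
Your proposal is correct and follows essentially the same route as the paper: apply Lemma \ref{lem:firstfunlds} (with the free parameters chosen of order $\zeta$) to build $f$ with the lower bound and the local upper bound involving $K(\epsilon)\|\mathcal P(\cdot)\|$, then feed the asymptotically $w$-parallel sequences from Lemma \ref{lem:pointtangents} into that upper bound and into the differentiability of $f$ to conclude $\eta\lambda\, Df(x_0)\cdot w\leq\zeta$. The only implicit point (shared with the paper) is that the final inequality is trivial when $Df(x_0)\cdot w<0$, so one may assume it is nonnegative when multiplying by the limsup of $\|\varphi(x_m)-\varphi(x_0)\|/d(x_m,x_0)$.
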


\begin{proof}
Let $f\colon U\to\mathbb R$ be the Lipschitz function obtained from an
application of Lemma \ref{lem:firstfunlds} (for
$\delta=\zeta\min\{1,1/\operatorname{diam}U,1/\Lip\varphi\}$), so
that $f$ automatically satisfies all of the required properties (for
$2\zeta$) except for \eqref{item:fundercon}.  Further, there exists
a $\rho>0$ such that, for every $x_0\in S$ and $y,z\in B(x_0,\rho)$,
\[|f(y)-f(z)|\leq K(\epsilon)\|\mathcal P(\varphi(y)-\varphi(z))\|+\zeta d(y,z)\]
for $K(\epsilon)=(1-\epsilon)/\sqrt{\epsilon(2-\epsilon)}$ and $\mathcal
P\colon \mathbb R\to\mathbb R$ the orthogonal projection onto the
plane orthogonal to $w$ passing through the origin.

Now let $\eta>0$ such that the conclusion of Lemma
\ref{lem:pointtangents} holds.  Then for almost every $x_0\in V$,
$Df(x_0)$ exists and there exist $U\ni x_m\to x_0$ with
$\|\varphi(x_m)-\varphi(x_0)\|\geq \eta d(x_m,x_0)$ such that
\[\frac{\|\varphi(x_m)-\varphi(x_0) -\|\varphi(x_m)-\varphi(x_0)\|w\|}{d(x_m,x_0)}\to 0.\]
In particular, by the triangle inequality,
\[\limsup_{m\to\infty}\frac{\|\mathcal
    P(\varphi(x_m)-\varphi(x_0))\|}{d(x_m,x_0)} \leq \|\mathcal P(w)\|
  \limsup_{m\to\infty}\frac{\|\varphi(x_m)-\varphi(x_0)\|}{d(x_m,x_0)} =0\]
and so
\[\limsup_{m\to\infty}\frac{|f(x_m)-f(x_0)|}{d(x_m,x_0)}\leq \zeta.\]
Therefore
\begin{align*}\eta\Lip(\varphi,x_0)Df(x_0)\cdot w &\leq \limsup_{m\to\infty} \frac{\|\varphi(x_m)-\varphi(x_0)\|}{d(x_m,x_0)}Df(x_0)\cdot w\\
&\leq\limsup_{m\to\infty}\frac{Df(x_0)\cdot(\varphi(x_m)-\varphi(x_0))}{d(x_m,x_0)}\\
&= \limsup_{m\to\infty}\frac{|f(x_m)-f(x_0)|}{d(x_m,x_0)}\\
&\leq \zeta.
\end{align*}
Since $(U,\varphi)$ is a $\lambda$-structured chart,
$\Lip(\varphi,x_0)\geq \lambda$ and so dividing by $\eta\lambda$
completes the proof.
\end{proof}

Given a set $S$ as in the hypotheses of Corollary \ref{cor:thinfunct},
we will decompose it using the following Lemma.  In our application,
the $D_m$ in the hypotheses will be the derivatives of Lipschitz
functions obtained from Corollary \ref{cor:thinfunct}.

\begin{lemma}\label{lem:decompder}
Let $(X,d,\mu)$ be a metric measure space, $U\subset X$ Borel and
$w\in\mathbb S^{n-1}$.  Suppose that, for some $\beta \geq 0$ and
$E>0$, there exists a sequence of measurable functions $D_m\colon
X\to\mathbb R^n$ with essential supremum $E$ such that
\[\limsup_{m\to\infty}D_m(x)\cdot w +\beta \leq w\cdot w=1\]
for almost every $x\in U$.  Suppose further that, for some $\xi>0$ and
$\xi$-separated $w_1,\ldots,w_k\in\mathbb S^{n-1}$, $w$ belongs to the
convex cone of the $w_i$.  Then there exists a Borel decomposition
\[S=N\cup\bigcup_{j\in\mathbb N} S_j\]
with $\mu(N)=0$ and, for every $j\in\mathbb N$, a $1\leq i
\leq n$ such that, for every $0<\theta<1$,
\[\lim_{m\to\infty}\frac{1}{m}\sum_{1\leq k\leq
  m}D_k(x)\cdot v + \frac{\|v\|\beta\xi}{n} \leq w\cdot v +
\sqrt{2\theta}(E+1)\|v\|\]
uniformly for $x\in S_j$ and $v\in C(w_i,\theta)$.
\end{lemma}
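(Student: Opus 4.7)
The plan is to combine an algebraic fact about the coefficients in the $\xi$-separated expansion of $w$ with a compactness-plus-selection argument for the Cesàro averages $\bar D_m(x) := \frac{1}{m}\sum_{k=1}^m D_k(x)$, and then transfer the resulting pointwise information to a uniform statement on cones via a standard Egorov + cone-geometry bookkeeping. First, since $w$ lies in the convex cone spanned by the $w_i$, I write $w = \sum_{i=1}^k \lambda_i w_i$ with $\lambda_i \geq 0$. Applying the $\xi$-separation condition to this identity together with $\|w\|=1$ gives $\max_i \lambda_i < 1/\xi$, and therefore $\sum_i \lambda_i < k/\xi \le n/\xi$. This inequality on $\sum \lambda_i$ is exactly what will convert a mean estimate of size $\beta$ against $w$ into an individual-coordinate estimate of size $\beta\xi/n$ against some $w_i$.

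Next, I would pass from the $\limsup$ hypothesis on $D_m \cdot w$ to a genuine pointwise limit for the Cesàro averages. The $\bar D_m$ are uniformly bounded (by $E$) in $L^\infty(U;\mathbb{R}^n)$, and averaging preserves the $\limsup$ bound, so $\limsup_m \bar D_m(x)\cdot w \leq 1-\beta$ a.e. Using weak-$*$ compactness together with a diagonal selection (or, equivalently, Banach--Saks in $L^2$ followed by a further pointwise extraction), I pass to a subsequence, still denoted $\bar D_m$, with $\bar D_m \to D_\infty$ pointwise a.e.\ for some bounded measurable $D_\infty\colon U\to\mathbb{R}^n$ satisfying $D_\infty(x)\cdot w \leq 1-\beta$ a.e. For almost every such $x$,
\begin{equation*}
\sum_{i=1}^k \lambda_i\bigl(D_\infty(x)\cdot w_i - w\cdot w_i\bigr) = D_\infty(x)\cdot w - 1 \leq -\beta,
\end{equation*}
so the minimum over $i$ of the bracketed quantity is at most $-\beta/\sum_i \lambda_i < -\beta\xi/n$. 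I select a measurable index $i(x)\in\{1,\dots,k\}$ realising this minimum.

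The remaining steps are routine. Partition $U$ by the value of $i(x)$ and apply Egorov's theorem inside each piece to produce countably many Borel subsets $S_j$ on which $\bar D_m \to D_\infty$ uniformly; label each $S_j$ by its associated index $i$. For $v\in C(w_i,\theta)$ write $v = \|v\|\,w_i + v'$ with $\|v'\|\leq \sqrt{2\theta}\,\|v\|$; then
\begin{equation*}
\bar D_m(x)\cdot v - w\cdot v = \|v\|\bigl(\bar D_m(x)\cdot w_i - w\cdot w_i\bigr) + (\bar D_m(x)-w)\cdot v',
\end{equation*}
whose second term is controlled by $(E+1)\sqrt{2\theta}\,\|v\|$ and whose first term, by the uniform convergence on $S_j$, has limit at most $-\|v\|\beta\xi/n$. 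Rearranging yields the required uniform estimate.

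The main obstacle is the compactness-plus-selection step: for the original sequence $\bar D_m(x)$ need not converge (oscillating bounded sequences are consistent with the hypothesis), so a pointwise limit $D_\infty$ only exists after passage to a suitable subsequence, along which the conclusion's ``$\lim$'' is to be interpreted. Once $D_\infty$ is in hand, the $\xi$-separation provides exactly the quantitative slack required to pick the correct $i(x)$ measurably, and the rest of the argument is cone geometry and Egorov.
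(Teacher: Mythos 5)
Your proposal is correct and follows essentially the same route as the paper: a Banach--Saks/weak-compactness argument to get a.e.\ convergence of the Ces\`aro averages to a limit $g$ with $g\cdot w+\beta\leq 1$, the bound $\lambda_i\leq 1/\xi$ on the coefficients of $w=\sum_i\lambda_i w_i$ to extract an index $i$ with $g\cdot w_i+\beta\xi/n\leq w\cdot w_i$, an Egorov decomposition for uniformity, and the same $\sqrt{2\theta}(E+1)$ cone estimate. You also correctly flag that the ``$\lim$'' in the conclusion is only obtained along a subsequence of the $D_m$, a point the paper's own proof passes over silently.
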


\begin{proof}
Since each $D_m$ is bounded, there exists a $g\in L^2(U)$ such that,
after passing to a subsequence, $D_m\to g$ weakly.  Then, by the
Banach-Saks theorem, there exists a further subsequence such that the
functions
\[\widetilde D_m:=\frac{1}{m}\sum_{1\leq j\leq m} D_j\]
converge pointwise almost everywhere to $g$.  Moreover, since
\[\limsup_{m\to\infty} D_m(x)\cdot w +\beta\leq w\cdot w,\]
$g(x)\cdot w +\beta\leq w\cdot w$ almost everywhere.

Let
\[S=N\cup\bigcup_{j\in\mathbb N} S_j\]
be a Borel decomposition of $S$ where $N$ is $\mu$-null and such that
$\widetilde D_m\to g$ uniformly on each $S_j$.  Further, since the
$w_i$ are $\xi$-separated and $w$ lies in the convex cone of the
$w_i$, there exists $0\leq \lambda_i\leq 1/\xi$ with $w=\sum_i\lambda_i
w_i$.  Therefore, for each $x_0$ in some $S_j$, there exists a $1\leq
i\leq n$
with
\[g\cdot w_i + \beta\xi/n \leq w\cdot w_i.\]
We may therefore decompose each $S_j$ into the sets
\[S_j^i=\{x_0\in S_j: g\cdot w_i + \beta\xi/n \leq w\cdot w_i\}\]
for $1\leq i \leq k$.  By taking a further decomposition, we may
suppose that each $S_j^i$ is compact.

Finally, for any $0<\theta<1$, $j\in\mathbb N$, $1\leq i\leq n$ and
$v\in C(w_i,\theta)$,
\begin{align*}g(x)\cdot v +\|v\|\beta\xi/n &= \|v\|(g(x)\cdot w_i +\beta\xi/n) + g(x)\cdot(v-\|v\| w_i)\\
&\leq \|v\|(w\cdot w_i \sqrt{2\theta}E)\\
&\leq w\cdot v + \sqrt{2\theta}(E+1)\|v\|.\end{align*}
Since $\widetilde D_m\to g$ uniformly on each $S_j^i$, this
decomposition is of the required form.
\end{proof}

Finally, we will show that a set with certain properties intersects a
Lipschitz curve $\gamma$ in a set of measure zero by applying the
following Lemma to the domain of $\gamma$.

\begin{lemma}\label{lem:nondiffonr}
Let $S\subset \mathbb R$ be measurable and $L>0$.  Suppose that there
exists a sequence of $L$-Lipschitz functions $f_m\colon S\to\mathbb R$
and measurable functions $\Phi,\Psi\colon S\to \mathbb R$ such that,
for almost every $t_0\in S$, $\Phi(t_0)<\Psi(t_0)$ and:
\begin{enumerate}\item \label{item:rdiff1}There exists $m_j\to\infty$ such that, for every $t\in S$ with $t\geq t_0$,
\[f_{m_j}(t)-f_{m_j}(t_0)\geq \Psi(t_0)(t-t_0)-1/{m_j}.\]
\item \label{item:rdiff2}There exists an $M\in\mathbb N$ such that, for every $m\geq M$,
\[Df_m(t_0)\leq \Phi(t_0).\]
\end{enumerate}
Then $S$ is Lebesgue null.
\end{lemma}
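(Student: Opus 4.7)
The plan is to suppose $\mathcal L^1(S)>0$ and derive a contradiction with the Lebesgue density theorem. I would first uniformize via countable Borel decomposition: by the measurability of $\Phi$, $\Psi$ and of the threshold $M$ in condition~(2), one reduces to the case that there exist constants $a\in\mathbb R$, $\epsilon_0>0$ and $M_0\in\mathbb N$ with
\[\Phi(t_0)\le a,\quad \Psi(t_0)\ge a+4\epsilon_0,\quad Df_m(t_0)\le a\text{ for all }m\ge M_0,\]
uniformly for $t_0\in S$. Each $f_m$ is then extended to an $L$-Lipschitz function $\tilde f_m\colon\mathbb R\to\mathbb R$ by McShane's theorem; at Lebesgue density points of $S$ the classical derivative $\tilde f_m'$ coincides with $Df_m$, so $\tilde f_m'\le a$ almost everywhere on $S$ for every $m\ge M_0$, while $\tilde f_m'\in[-L,L]$ in general.

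Next I would fix a Lebesgue density point $t_0\in S$ at which~(1) holds and set $B=\{m\ge M_0:\text{(1) holds at }t_0\text{ with this }m\}$, which is infinite by~(1). Inserting the Lipschitz bound $f_m(t)-f_m(t_0)\le L(t-t_0)$ into the inequality of~(1) with any fixed $t\in S\cap(t_0,\infty)$ and letting $m\to\infty$ through $B$ forces $\Psi(t_0)\le L$, so $L-a\ge 4\epsilon_0>0$. For any $m\in B$ and $t\in S\cap(t_0,\infty)$, the fundamental theorem of calculus applied to $\tilde f_m$, together with the bounds $\tilde f_m'\le a$ on $S$ and $\tilde f_m'\le L$ in general, gives
\[\tilde f_m(t)-\tilde f_m(t_0)=\int_{t_0}^t\tilde f_m'(s)\,ds\le a\,\mathcal L^1(S\cap[t_0,t])+L\bigl((t-t_0)-\mathcal L^1(S\cap[t_0,t])\bigr).\]
Combined with the lower bound from~(1) this rearranges to
\[(L-a)\,\mathcal L^1(S\cap[t_0,t])\le (L-\Psi(t_0))(t-t_0)+\tfrac{1}{m}.\]

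Setting $\alpha=(L-\Psi(t_0))/(L-a)$, the hypothesis $\Psi(t_0)\ge a+4\epsilon_0$ yields $\alpha\le 1-4\epsilon_0/(L-a)<1$; dividing by $(L-a)(t-t_0)$ and letting $m\to\infty$ through $B$ with $t$ fixed produces $\mathcal L^1(S\cap[t_0,t])/(t-t_0)\le\alpha$ for every $t\in S$ with $t>t_0$. Since $t_0$ is a Lebesgue density point of $S$ one can choose $r_n\to 0^+$ with $t_0+r_n\in S$ and $\mathcal L^1(S\cap[t_0,t_0+r_n])/r_n\to 1$, contradicting the bound $\le\alpha<1$. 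Hence $S$ is Lebesgue null. The main technical point is the correct interpretation of $Df_m(t_0)$, which is a priori only defined on $S$: once each $f_m$ is extended to $\mathbb R$ by McShane and the derivative of the extension is identified with $Df_m$ at density points, condition~(2) controls the integrand on $S$, condition~(1) supplies the endpoint lower bound, and the gap $\Psi-\Phi\ge 4\epsilon_0$ survives as the factor $\alpha<1$ needed to violate the density of $S$ at $t_0$.
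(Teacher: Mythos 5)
Your proof is correct and follows essentially the same route as the paper's: uniformize $\Phi$, $\Psi$ and the threshold $M$ on a positive-measure subset, work with a Lipschitz extension of each $f_m$, and at a (right-)density point play the fundamental-theorem-of-calculus upper bound (derivative $\le a$ on $S$, $\le L$ off $S$) against the lower bound from hypothesis (1) to contradict the Lebesgue density theorem. The only cosmetic difference is that you extract a density upper bound $\alpha<1$ at $t_0$, whereas the paper fixes a density lower bound first and concludes $t=t_0$ for all nearby $t\in S$.
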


\begin{proof}
Suppose that such an $S$ has positive Lebesgue measure.  Then there
exists an $M\in\mathbb N$, $\alpha<\beta\in\mathbb R$ and $S'\subset
S$ of positive measure such that \eqref{item:rdiff1} and
\[Df_m(t_0)\leq \Phi(t_0) \leq \alpha <\beta \leq \Psi(t_0)\]
for all $t_0\in S'$ and $m\geq M$.  Let $t_0$ be a density point of
$S'$ and $R>0$ such that, for every $t\in(t_0,t_0+R)$,
\[\frac{\mathcal L^1(S'\cap (t_0,t))}{t}\geq 1-(\beta-\alpha)/2L.\]
Then, for any $m\geq M$ and $t\in(t_0,t_0+R)\cap S$,
\begin{align*}f_m(t)-f_m(t_0) &= \int_{(t_0,t)\cap S'}Df_m+\int_{(t_0,t)\setminus S'}Df_m\\
&\leq \alpha (t-t_0) +(t-t_0)L(\beta-\alpha)/2L.\\
&\leq (\beta+\alpha)(t-t_0)/2.
\end{align*}
However, if $m_j\geq M$ such that \eqref{item:rdiff1} holds for $t_0$,
then for any $t\in (t_0,t_0+R)\cap S$,
\[(\beta+\alpha)(t-t_0)/2 \geq f_{m_j}(t)-f_{m_j}(t_0) \geq \beta(t-t_0) -1/{m_j}.\]
In particular, $|t-t_0|\leq 2/(\beta-\alpha)m_j$ for $m_j\to \infty$ and so $t=t_0$,
contradicting our assumption that $t_0$ is a density point of $S$.
\end{proof}

We now apply these results to a chart in a Lipschitz differentiability
space.

\begin{theorem}\label{thm:arbrep}
Let $(U,\varphi)$ be an $n$-dimensional chart in a Lipschitz
differentiability space $(X,d,\mu)$, $w\in\mathbb S^{n-1}$ and
$0<\epsilon<1$.  Then there exists an Alberti representation of $\mu
\llcorner U$ in the $\varphi$-direction of $C(w,\epsilon)$.
\end{theorem}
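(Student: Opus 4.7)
The plan is to apply Corollary \ref{cor:onerep2} with cone $C(w,\epsilon)$, obtaining a Borel decomposition $U=A\cup S$ such that $\mu\llcorner A$ admits an Alberti representation in the $\varphi$-direction of $C(w,\epsilon)$ and $\mathcal H^{1}(\gamma\cap S)=0$ for every $\gamma\in\Gamma(X)$ in the $\varphi$-direction of $C(w,\epsilon)$. By Lemma \ref{lem:sumreps} it will suffice to show $\mu(S)=0$. Suppose not. Using Lemma \ref{lem:decompstruct}, Theorem \ref{thm:spanrep}, and Corollary \ref{cor:refine}, I restrict to a $\lambda$-structured chart and a Borel $V\supset S$ with $\mu\llcorner V$ carrying $n$ $\xi$-separated Alberti representations of $\varphi$-speed greater than $\delta$ in directions $C(w_{i},\theta')$ with $\theta'$ as small as needed.

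For each $m\in\mathbb{N}$, Corollary \ref{cor:thinfunct} with $\zeta=1/m$ supplies uniformly Lipschitz $f_{m}$ satisfying, on $S$, the macroscopic lower bound $f_{m}(x)-f_{m}(x_{0})\geq(\varphi(x)-\varphi(x_{0}))\cdot w-1/m$ whenever $(\varphi(x)-\varphi(x_{0}))\cdot w\geq 0$, together with the microscopic upper bound $Df_{m}(x_{0})\cdot w\leq 1/(m\eta\lambda)$ for almost every $x_{0}\in S$. Writing $w=\sum c_{i}w_{i}$ in the basis $\{w_{i}\}$ and setting $\tilde w_{i}=\operatorname{sgn}(c_{i})w_{i}$, the $\tilde w_{i}$ remain $\xi$-separated with $w$ in their positive cone, and by reversing parameterizations where $c_{i}<0$ I obtain Alberti representations in the $\varphi$-direction of $C(\tilde w_{i},\theta)$ for any $\theta$. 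With $D_{m}:=Df_{m}$, $\beta=1$, and $E:=\sup_{m}\|D_{m}\|_{\infty}<\infty$ (by Lemma \ref{lem:nondiffcond}), Lemma \ref{lem:decompder} produces a Borel decomposition $S=N\cup\bigcup_{j}S_{j}$ with $\mu(N)=0$ and an index $i(j)$ for each $j$ such that, uniformly on $S_{j}$ for $v\in C(\tilde w_{i(j)},\theta)$,
\[
\lim_{m\to\infty}\frac{1}{m}\sum_{k=1}^{m}D_{k}(x)\cdot v\ \leq\ w\cdot v\ -\ \frac{\|v\|\xi}{n}\ +\ \sqrt{2\theta}(E+1)\|v\|,
\]
and I fix $\theta$ small enough that the last term is absorbed into $\|v\|\xi/(2n)$.

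On any $S_{j}$ with $\mu(S_{j})>0$, set $g_{m}:=\frac{1}{m}\sum_{k=1}^{m}f_{k}$. For $\mathbb{P}$-almost every $\gamma$ in the refined representation $\mathcal A_{i(j)}$ (in the $\varphi$-direction of $C(\tilde w_{i(j)},\theta)$) and almost every $t_{0}\in\gamma^{-1}(S_{j})$, the $\varphi$-speed condition gives $v:=(\varphi\circ\gamma)'(t_{0})$ with $\|v\|>0$, and after an Egorov restriction the displayed bound yields $(g_{m}\circ\gamma)'(t_{0})\leq w\cdot v-\|v\|\xi/(3n)$ for all $m\geq M$. Averaging the macroscopic lower bound over $k\leq m$ gives $g_{m}(\gamma(t))-g_{m}(\gamma(t_{0}))\geq(\varphi(\gamma(t))-\varphi(\gamma(t_{0})))\cdot w-(1+\log m)/m$ whenever $(\varphi(\gamma(t))-\varphi(\gamma(t_{0})))\cdot w\geq 0$. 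When $v\cdot w>0$, this sign condition holds for $t>t_{0}$ close to $t_{0}$, and Lemma \ref{lem:nondiffonr} applied to $g_{m}\circ\gamma$ with $\Psi(t_{0})=v\cdot w-\eta'$ and $\Phi(t_{0})=w\cdot v-\|v\|\xi/(3n)$ (satisfying $\Psi>\Phi$ since $\|v\|\xi/(3n)-\eta'>0$) forces $\mathcal L^{1}(\gamma^{-1}(S_{j})\cap(t_{0},\infty))=0$; the case $v\cdot w<0$ is symmetric via reversed parameterization. Since $\gamma$ is bi-Lipschitz, $\mu_{\gamma}(S_{j})=0$ for $\mathbb{P}$-almost every $\gamma$, contradicting $\mu(S_{j})=\int\mu_{\gamma}(S_{j})\,d\mathbb P>0$.

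The main obstacle is the borderline regime $v\cdot w\approx 0$, which arises precisely when $\tilde w_{i(j)}$ is nearly orthogonal to $w$: the macroscopic sign condition may fail on both sides of $t_{0}$, blocking a direct appeal to Lemma \ref{lem:nondiffonr}. The resolution I expect is to exploit that in this regime the upper bound becomes strongly negative, $(g_{m}\circ\gamma)'(t_{0})\leq -\|v\|\xi/(3n)$, and to integrate this across a density-one neighborhood of $t_{0}$ in $\gamma^{-1}(S_{j})$ while combining with the $\lambda$-structured chart property to select off-curve witnesses $y\to\gamma(t_{0})$ satisfying $(\varphi(y)-\varphi(\gamma(t_{0})))\cdot w\geq\lambda d(y,\gamma(t_{0}))/2$, so that the macroscopic lower bound remains applicable and yields the desired incompatibility.
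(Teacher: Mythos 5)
Your architecture is the same as the paper's: Corollary \ref{cor:onerep2} reduces the theorem to showing that the singular set $S$ is null; the functions of Corollary \ref{cor:thinfunct} are averaged via Lemma \ref{lem:decompder}, composed with curves from the existing $\xi$-separated representations, and fed into Lemma \ref{lem:nondiffonr}. (Your preliminary sign-flipping so that $w$ lies in the positive cone of the $\tilde w_i$ is legitimate: reversing every curve of a representation in the direction of $C$ gives one in the direction of $-C$, and $\xi$-separation is preserved.) The gap is exactly where you place it, and neither of your workarounds closes it. For $v\cdot w<0$ the ``symmetric via reversed parameterization'' step fails: reversing $\gamma$ replaces $v=(\varphi\circ\gamma)'(t_0)$ by $-v$, but the estimate from Lemma \ref{lem:decompder} is one-sided — it bounds $\frac{1}{m}\sum_k D_k(x)\cdot v$ from \emph{above} only for $v\in C(\tilde w_{i(j)},\theta)$, and $-v$ does not lie in that cone — so after reversal you have no upper bound on the derivative of $g_m\circ\gamma$, which is what hypothesis \eqref{item:rdiff2} of Lemma \ref{lem:nondiffonr} requires. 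The borderline regime $v\cdot w\approx 0$ is left as a conjecture, and the proposed ``off-curve witnesses'' do not interact with Lemma \ref{lem:nondiffonr}, which is a statement purely about functions on $\gamma^{-1}(S_j)\subset\mathbb R$.

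The resolution is that no case analysis on the sign of $v\cdot w$ is needed, for two reasons you did not exploit. First, the lower bound $f(x)-f(x_0)\geq(\varphi(x)-\varphi(x_0))\cdot w-\zeta$ of Corollary \ref{cor:thinfunct} in fact holds for \emph{all} $x,x_0$: in the proof of Lemma \ref{lem:firstfunct} the two estimates $f(x)\geq(\varphi(x)-\varphi(P(x)))\cdot w-\epsilon$ and $f(x_0)\leq(\varphi(x_0)-\varphi(P(x_0)))\cdot w+\epsilon\|x_0-P(x_0)\|$ never use the hypothesis $(\varphi(x)-\varphi(x_0))\cdot w\geq 0$, so you may use the unrestricted form, and this is what the argument in Section \ref{sec:arbreps} does. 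Second, Lemma \ref{lem:nondiffonr} tolerates negative $\Psi$. Taking $\Phi(t_0)=w\cdot v-\frac{\xi}{2n}\|v\|$ and $\Psi(t_0)=w\cdot v-\frac{\xi}{4n}\|v\|$ — keeping $\|v\|$ rather than passing to $|w\cdot v|$, and using the $\varphi$-speed $\delta$ together with a further decomposition to bound $\|v\|$ below — one gets $\Psi(t_0)-\Phi(t_0)=\frac{\xi}{4n}\|v\|>0$ and, on an Egorov set and for $t_0\leq t\leq t_0+R$, the bound $(\varphi(\gamma(t))-\varphi(\gamma(t_0)))\cdot w\geq\Psi(t_0)(t-t_0)$ directly from differentiability of $\varphi\circ\gamma$ at $t_0$, irrespective of the sign or size of $w\cdot v$. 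With these two observations your argument closes and coincides with the proof in the text.
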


\begin{proof}
Since any chart in a Lipschitz differentiability space has a countable
decomposition into a $\mu$-null set and structured charts, and since
we may combine Alberti representations using Lemma \ref{lem:sumreps},
it suffices
to prove the result for $(U,\varphi)$ a $\lambda$-structured chart,
for some $\lambda>0$.  Further, by Corollary \ref{cor:onerep2}, there
exists a decomposition $U=A\cup S$ where $\mu\llcorner A$ has an
Alberti representation in the $\varphi$-direction of $C(w,\epsilon)$
and $S$ satisfies $\mathcal H^1(\gamma\cap S)=0$ for any
$\gamma\in\Gamma(X)$ in the $\varphi$-direction of $C(w,\epsilon)$.
Finally, by Theorem \ref{thm:spanrep}, there exists a countable
decomposition $U=\cup_m U_m$ such that each $\mu\llcorner U_m$ has $n$
$\varphi$-independent Alberti representations.  By refining these
representations if necessary, we may suppose that for any $m\in\mathbb
N$ there exist $w_1,\ldots,w_n\in\mathbb S^{n-1}$ and
$0<\theta,\xi,\delta<1$ such that the Alberti representations of
$\mu\llcorner U_m$ are in the $\varphi$-direction of $\xi$-separated
cones $C(w_1,\theta),\ldots,C(w_n,\theta)$, with speed $\delta$.  By
refining these representations further, we may suppose that
\[\frac{\sqrt{2\theta}(2+K(\epsilon)+\lambda)}{\lambda}\leq \frac{\xi}{4n}\]
for $K(\epsilon)=(1-\epsilon)/\sqrt{\epsilon(2-\epsilon)}$.
Therefore, we are required to show that, for any $m\in\mathbb N$, any
compact $S'\subset S\cap U_m$ is $\mu$-null.

We apply Corollary \ref{cor:thinfunct} (with $\zeta=1/m$) to obtain a sequence of
$(2+K(\epsilon))$-Lipschitz functions $f_m\colon U\to\mathbb R$ such that:
\begin{itemize}\item For every $x_0\in S'$ and $x\in U$ with $(\varphi(x)-\varphi(x_0))\cdot w\geq 0$,
\[f_m(x)-f_m(x_0)\geq (\varphi(x)-\varphi(x_0))\cdot w-1/m.\]
\item For almost every $x_0\in S'$,
\[\limsup_{m\to\infty} Df_m(x_0)\cdot w + 1\leq w\cdot w.\]
\end{itemize}
Note that, since $(U,\varphi)$ is a $\lambda$-structured chart, by
Lemma \ref{lem:nondiffcond},
\[\|Df_m(x_0)\|\leq (2+K(\epsilon))/\lambda.\]
Therefore, by Lemma \ref{lem:decompder}, there exists a countable
Borel decomposition
\[S=N\cup\bigcup_{j\in\mathbb N} S_j\]
where $\mu(N)=0$ and for each $j\in\mathbb N$ there exists a $1\leq
i \leq n$ and an $M\in\mathbb N$ such that, for every $m\geq M$ and
$x_0\in S_j$,
\begin{equation}\label{eq:arbrep}\frac{1}{m}\sum_{1\leq k\leq
    m}Df_k(x_0)\cdot v +\frac{\xi}{n} \|v\|\leq w\cdot v +\frac{\xi}{2n} \|v\|\end{equation}
for all $v\in C(w_i,\theta)$.  Define, for each $m\in\mathbb N$
\[F_m=\frac{1}{m}\sum_{1\leq k\leq m} f_k.\]
For each $j\in\mathbb N$ and almost every $x_0\in S_j$ each
$Df_k(x_0)$ exists and so
\[DF_m=\frac{1}{m}\sum_{1\leq k\leq m}Df_k.\]
Further, for every $x_0\in S$ and $x\in U$ with
$(\varphi(x)-\varphi(x_0))\cdot w\geq 0$,
\[F_m(x)-F_m(x_0)\geq(\varphi(x)-\varphi(x_0))\cdot w-1/m.\]
We fix $j\in\mathbb N$ and let $1\leq i\leq n$ and $M\in\mathbb N$ such that
\eqref{eq:arbrep} holds for every $m\geq M$ and $x_0 \in S_j$.  It suffices to prove
that $\mu(S_j)=0$.  Given the above Alberti representations of
$\mu\llcorner U_m$, it suffices to prove that $\mathcal H^1(\gamma\cap
S_j)=0$ for any $\gamma\in\Gamma(X)$ in the $\varphi$-direction of
$C(w_i,\theta)$.

To show this, fix $\gamma\in\Gamma(X)$ in the $\varphi$-direction of
$C(w_i,\theta)$ and define, for each $m\in\mathbb N$, the
$(2+K(\epsilon))\Lip\varphi\Lip\gamma$-Lipschitz function
\[g_m=F_m\circ\gamma\colon \dom\gamma\to\mathbb R.\]
Then, for any $t_0\in\dom\gamma$, if $(\varphi\circ\gamma)'(t_0)$
exists and $\gamma(t_0)\in S_j$,
\[Dg_m(t_0)=DF_m(\gamma(t_0))\cdot (\varphi\circ\gamma)'(t_0).\]
Therefore, if $(\varphi\circ\gamma)'(t_0)\in C(w_i,\theta)$,
\begin{align*} Dg_m(t_0) &\leq w\cdot(\varphi\circ\gamma)'(t_0) - \frac{\xi}{2n}\|(\varphi\circ\gamma)'(t_0)\|\\ 
&\leq w\cdot(\varphi\circ\gamma)'(t_0) -
  \frac{\xi}{2n}|w\cdot(\varphi\circ\gamma)'(t_0)|\\
&:= \Phi(t_0).
\end{align*}
Also, for any $t,t_0\in\dom\gamma$,
\[g_m(t)-g_m(t_0) \geq (\varphi(\gamma(t))-\varphi(\gamma(t_0)))\cdot
w -1/m.\]

Suppose that $\gamma^{-1}(S_j)$ has positive measure.   Then there exists an $R>0$ and a
$T\subset\gamma^{-1}(S_j)$ of positive measure such that, for every
$t_0\in T$ and $t\in\dom\gamma$ with $|t-t_0|\leq R$,
$(\varphi\circ\gamma)'(t_0)\in C(w_i,\theta)$ and
\begin{align*}(\varphi(\gamma(t))-\varphi(\gamma(t_0)))\cdot w &\geq
\left(w\cdot(\varphi\circ\gamma)'(t_0)-
\frac{\xi}{4n} |w\cdot(\varphi\circ\gamma)'(t_0)|\right)(t-t_0)\\
&:=\Psi(t_0)(t-t_0).\end{align*}
We choose $s\in T$ such that $T':=T\cap B(s,R)$ has positive measure.
Then for almost every $t_0\in T'$, $\Psi(t_0)>\Phi(t_0)$ and:
\begin{itemize}\item For any $m\geq M$ and every $t\in T'$ with $t\geq t_0$,
\[g_m(t)-g_m(t_0)\geq \Psi(t_0)(t-t_0)-1/m.\]
\item For every $m\geq M$,
\[Dg_m(t_0)\leq \Phi(t_0).\]
\end{itemize}
Therefore, by Lemma \ref{lem:nondiffonr}, $T'$ is Lebesgue null, a
contradiction.
\end{proof}

We may use this Theorem to improve our description of the local
structure of a Lipschitz differentiability space.

\begin{corollary}
Let $(U,\varphi)$ be an $n$-dimensional chart in a Lipschitz
differentiability space $(X,d,\mu)$.  Then for almost every $x\in U$
and any cone $C\subset \mathbb R^n$, there exists a
$\gamma^x\in\Gamma(X)$ such that $(\gamma^x)^{-1}(x)=0$ is a density point
of $(\gamma^x)^{-1}(U)$ and such that $(\varphi\circ\gamma^x)'(0)\in C$.
\end{corollary}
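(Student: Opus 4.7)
The plan is to combine Theorem~\ref{thm:arbrep}, which already gives an Alberti representation in the $\varphi$-direction of any prescribed cone, with Proposition~\ref{prop:curvefullmeas}, which extracts a specific good curve through almost every point of the space. The only subtlety is that ``any cone $C$'' is an uncountable quantifier, so I must first reduce to a countable collection of cones and then intersect the full-measure sets obtained from each.

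First, I would fix a countable dense subset $Q\subset\mathbb S^{n-1}$ and consider the countable family of cones $\{C_k\}_{k\in\mathbb N}=\{C(w,1/m):w\in Q,\ m\in\mathbb N\}$. The key elementary observation is that for any cone $C=C(w,\theta)$ one can find some $C_k$ with $C_k\subset C$: choose $w_k\in Q$ with $\|w_k-w\|<\theta/4$ and $1/m<\theta/2$, so that any unit vector $v$ with $v\cdot w_k\geq 1-1/m$ satisfies $v\cdot w\geq 1-\theta/2-\theta/4>1-\theta$. For each fixed $k$, Theorem~\ref{thm:arbrep} (applied inside the chart $(U,\varphi)$) gives an Alberti representation $\mathcal A_k$ of $\mu\llcorner U$ in the $\varphi$-direction of $C_k$.

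Next I would feed each $\mathcal A_k$ into Proposition~\ref{prop:curvefullmeas} with target space $\mathbb R^n\cup\{\infty\}$, Borel set $B=C_k$, and the Borel function $f\colon A(X)\to\mathbb R^n\cup\{\infty\}$ given componentwise by $f(x,\gamma)=(\varphi\circ\gamma)'(\gamma^{-1}(x))$ where defined and $\infty$ otherwise, whose measurability is provided by Lemma~\ref{lem:measurability}. By construction, for almost every $\gamma\in\mathcal A_k$ and almost every $t\in\dom\gamma$ one has $(\varphi\circ\gamma)'(t)\in C_k\setminus\{0\}\subset B$, so the hypotheses of the proposition are met. It follows that the set
\[
P_k:=\{x\in U:\exists\,\gamma\in\Gamma(X),\ (x,\gamma)\in DP(U),\ (\varphi\circ\gamma)'(\gamma^{-1}(x))\in C_k\}
\]
is of full $\mu$-measure in $U$. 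Setting $E=\bigcap_{k\in\mathbb N}P_k$ (still of full measure), every $x\in E$ simultaneously admits such curves for every cone $C_k$.

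Finally, for $x\in E$ and an arbitrary cone $C$, I would pick $k$ with $C_k\subset C$ and let $\gamma$ be the curve provided by $x\in P_k$; setting $t_0=\gamma^{-1}(x)$ and reparametrising $\gamma^x(t):=\gamma(t+t_0)$ on the compact set $\dom\gamma-t_0$ yields an element of $\Gamma(X)$ with $(\gamma^x)^{-1}(x)=0$ a density point of $(\gamma^x)^{-1}(U)$ and $(\varphi\circ\gamma^x)'(0)=(\varphi\circ\gamma)'(t_0)\in C_k\subset C$, as required. Since every real work has already been done in the earlier sections, there is no genuine obstacle: the only thing to watch is the choice of the countable cofinal family $\{C_k\}$ and the routine verification that the Borel function $f$ satisfies the hypothesis of Proposition~\ref{prop:curvefullmeas} up to a null set in $\mathcal A_k$.
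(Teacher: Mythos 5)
Your proposal is correct and follows exactly the route the paper intends (the corollary is stated without proof as an immediate consequence of Theorem \ref{thm:arbrep}, in the same way that Corollary \ref{cor:tangentcurves} follows from Theorem \ref{thm:spanrep} via Proposition \ref{prop:curvefullmeas}). The reduction to a countable cofinal family of cones and the componentwise use of Lemma \ref{lem:measurability} are the right routine details to supply, and you supply them correctly.
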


We also obtain another characterisation of Lipschitz differentiability
spaces corresponding to arbitrary Alberti representations.

\begin{definition}
For a metric measure space $(X,d,\mu)$ we define $\widetilde C$ to be
the set of Borel $S\subset X$ for which there exist
$0<\epsilon,\eta<1$ and, for every $\delta>0$, there exist an
$n\in\mathbb N$, a Lipschitz $\psi\colon X\to\mathbb R^n$ and a cone
$C\subset \mathbb R^n$ of width $\epsilon$ such that:
\begin{itemize}
\item For every $x_0\in S$,
\[\Lip(v\cdot\psi,x_0)>\eta \Lip(\psi,x_0).\]
\item For every $\gamma\in\Gamma(X)$ in the $\psi$-direction of $C$
  with $\psi$-speed $\delta$, $\mathcal H^1(\gamma\cap S)=0$.
\end{itemize}
\end{definition}

\begin{theorem}
For a metric measure space $(X,d,\mu)$ the following are equivalent:
\begin{itemize}
\item $(X,d,\mu)$ is a Lipschitz differentiability space.
\item Every $\widetilde C$ subset of $X$ is $\mu$-null and $X$ is
  pointwise doubling.
\item Every $\widetilde C$ subset of $X$ and every porous subset of
  $X$ is $\mu$-null.
\end{itemize}
\end{theorem}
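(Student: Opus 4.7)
The plan is to establish the equivalence via the cycle (i) $\Rightarrow$ (ii) $\Leftrightarrow$ (iii) $\Rightarrow$ (i). The equivalence (ii) $\Leftrightarrow$ (iii) reduces to the fact that pointwise doubling is equivalent to nullity of porous sets: the direction ``porous null $\Rightarrow$ pointwise doubling'' is the reference \cite{porosityandmeasures} already used in Corollary \ref{cor:doubling}, and the converse follows from the Lebesgue differentiation theorem since, at a porous point $x_0$ of a Borel set $S$, doubling forces $\limsup_{r\to 0} \mu(S \cap B(x_0,r))/\mu(B(x_0,r)) < 1$, precluding $x_0$ from being a density point of $S$.

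For (i) $\Rightarrow$ (ii), Corollary \ref{cor:doubling} supplies pointwise doubling, and it remains to show that every $\widetilde C$ subset of an LDS is $\mu$-null. Fix $S \in \widetilde C$ with parameters $\epsilon, \eta$; via Lemma \ref{lem:decompstruct} and Lemma \ref{lem:sumreps}, reduce to $S$ contained in a structured chart $(U, \varphi)$ of dimension $n$. The hypothesis $\Lip(v \cdot \psi_\delta, x_0) > \eta \Lip(\psi_\delta, x_0)$ forces the almost-everywhere Jacobian $D\psi_\delta(x_0) \colon \mathbb R^n \to \mathbb R^{m_\delta}$ to be surjective (so $m_\delta \leq n$), with smallest singular value bounded below in terms of $\eta$ and $\Lip \varphi$. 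My strategy is to construct, for some $\delta > 0$ from the $\widetilde C$ definition, an Alberti representation of $\mu \llcorner S$ in the $\psi_\delta$-direction of $C_\delta$ with $\psi_\delta$-speed $\delta$; the $\widetilde C$ hypothesis and Lemma \ref{lem:measurablerep} then yield $\mu(S) = 0$. The representation is built by decomposing $S$ into countably many Borel pieces on which $D\psi_\delta$ approximates a fixed surjective linear map $A$, selecting on each piece a closed $\varphi$-cone $C' \subset \mathbb R^n$ with $A(C' \setminus \{0\}) \subset C_\delta^\circ$, applying Theorem \ref{thm:arbrep} (refined via Corollary \ref{cor:refine} to secure sufficient $\varphi$-speed) to get a $\varphi$-representation in direction $C'$, transporting via $(\psi_\delta \circ \gamma)'(t) = D\psi_\delta(\gamma(t))\, (\varphi \circ \gamma)'(t)$, and combining through Lemma \ref{lem:sumreps}.

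For (ii) $\Rightarrow$ (i), I invoke Theorem \ref{thm:doublingchar}: under pointwise doubling, it suffices to show every $\widetilde B$ subset of $X$ is $\mu$-null. Given $S \in \widetilde B$, for each $k \in \mathbb N$ fix a Borel decomposition $S = \bigcup_i S_i^{1/k}$ witnessed by Lipschitz $f_i^{1/k}$. The minimal-index selectors $j_k(x) = \min\{i : x \in S_i^{1/k}\}$ define a Borel map $\sigma = (j_k)_k \colon S \to \mathbb N^{\mathbb N}$, and each fibre $T_\sigma = \sigma^{-1}(\{\sigma\})$ lies in $\widetilde C$: fix $\epsilon = \eta = 1/2$ and cone $[0, \infty)$, and for each $\delta > 0$ choose $k$ with $1/k \leq \delta$, setting $\psi_\delta = f_{\sigma_k}^{1/k}$; then any $\gamma$ with $\psi_\delta$-speed $\delta$ has $f_{\sigma_k}^{1/k}$-speed $\geq 1/k$, so $\mathcal H^1(\gamma \cap T_\sigma) \leq \mathcal H^1(\gamma \cap S_{\sigma_k}^{1/k}) = 0$. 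Thus $\mu(T_\sigma) = 0$ for every $\sigma$.

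The main obstacle is passing from fibre-wise nullity to $\mu(S) = 0$ across the uncountable partition $S = \bigsqcup_\sigma T_\sigma$. My plan is a disintegration argument along $\sigma$ combined with pointwise doubling: the pushforward $\sigma_* \mu$ must be purely atomic, because any nonatomic part would, on a pointwise doubling piece $X_i \in D_{\eta_i}$ supplied by Lemma \ref{lem:ptwisedoubling}, produce via the already-established ``intersection'' consequences of $\widetilde C$ null (namely that for any Lipschitz $\psi$ and cone $C$ the set $\bigcap_\delta S_{(\psi, C, \delta)}$ lies in $\widetilde C$ and is hence $\mu$-null) conditional measures $\mu_\sigma$ whose aggregate would force positive mass onto a $\widetilde C$-null set, a contradiction. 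Atomicity reduces $S$ to a countable union of null fibres, yielding $\mu(S) = 0$ and completing the proof.
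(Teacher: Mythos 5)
Your architecture is sensible and the routine parts are fine: (ii)$\Leftrightarrow$(iii) is indeed just the equivalence between pointwise doubling and nullity of porous sets, and your observation that each fibre $T_\sigma$ lies in $\widetilde C$ is correct. However, both of the substantive implications have genuine gaps.

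For (i)$\Rightarrow$(ii), fixing a single $\delta$ and trying to build a representation of $\mu\llcorner S$ in the $\psi_\delta$-direction of $C_\delta$ with $\psi_\delta$-speed \emph{exactly} $\geq\delta$ runs into a quantifier circularity. Theorem \ref{thm:arbrep} gives a representation in an arbitrary $\varphi$-direction but with no speed; Corollary \ref{cor:refine} upgrades this only to ``speed $>1/k$ on the $k$-th piece of a countable decomposition'', with no lower bound uniform over the pieces, and the decomposition depends on $\psi_\delta$, hence on $\delta$. So you cannot choose $\delta$ small enough in advance, and a representation with $\psi_\delta$-speed $\delta_1<\delta$ does not contradict the $\widetilde C$ hypothesis, since the speed-$\delta_1$ clause of that hypothesis is witnessed by a \emph{different} pair $(\psi_{\delta_1},C_{\delta_1})$. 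The paper escapes this by never fixing $\delta$: it applies Lemma \ref{lem:preatildenull} to pull each condition back to the chart function $\varphi$, producing for each $m$ a cone $C_m$ of a width bounded away from $1$ such that $\mathcal H^1(\gamma\cap S)=0$ for $\gamma$ in the $\varphi$-direction of $C_m$ with speed $1/m$; compactness of the sphere then yields a single cone $C$ contained in infinitely many $C_{m_j}$, so that $\mathcal H^1(\gamma\cap S)=0$ for \emph{every} $\gamma\in\Gamma(X)$ in the $\varphi$-direction of $C$ with no speed restriction at all (decompose $\dom\gamma$ by speed), and only then is Theorem \ref{thm:arbrep} invoked. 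That diagonalisation over $\delta=1/m$ is the missing idea.

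For (ii)$\Rightarrow$(i), the fibre argument does not close. You correctly identify that the partition $S=\bigsqcup_\sigma T_\sigma$ is uncountable, but the proposed fix --- that $\sigma_*\mu$ on $\mathbb N^{\mathbb N}$ is purely atomic --- is unsupported and is not a consequence of anything established. There is no single $\widetilde C$-null set onto which a nonatomic part would ``force positive mass'': the union of uncountably many $\widetilde C$ sets need not be in $\widetilde C$ (the definition demands one $\psi_\delta$ per $\delta$ for the whole set), and showing that this union is a countable union of $\widetilde C$ sets is exactly the problem you started with. The paper's route is different: it reduces to showing that any $\widetilde A$ set admits a \emph{countable} decomposition into $\widetilde C$ sets, which is possible because the cones occurring in the $\widetilde A$ data all have a fixed width $1-\theta$, so finitely many cones of width $(1-\theta)/2$ suffice to sit inside all of them; a pigeonhole over which of these finitely many cones recurs for infinitely many $m$ (so that the speed requirement $1/m$ can be dropped in the limit) produces a finite decomposition into $\widetilde C$ pieces, and Theorem \ref{thm:doublingchar} then finishes. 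You need some device of this kind --- exploiting the uniform cone width and monotonicity in $\delta$ to collapse the dependence on $\delta$ to finitely or countably many choices --- rather than a disintegration along $\sigma$.
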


\begin{proof}
These two conditions are sufficient since there exists a countable
decomposition of any $\widetilde A$ subset of
$X$ into $\widetilde C$ sets.  Therefore, if any $\widetilde C$ subset
of $X$ is $\mu$-null, so is any $\widetilde A$ subset.

Conversely, for any $S\in \widetilde C$ that is contained within a
structured chart of a Lipschitz differentiability space,
by Lemma \ref{lem:preatildenull} there exists a
$0<\epsilon<1$ and a sequence of cones $C_m$ of width $\epsilon$ such
that $\mathcal H^1(\gamma\cap S)=0$ for any $\gamma\in\Gamma(X)$ in
the $\varphi$-direction of $C_m$ with speed $1/m$.  Then there exists
a cone $C$ of width $\epsilon/2$ and $m_j\to\infty$ such that $C\subset
C_{m_j}$ for each $j\in\mathbb N$.  Therefore $\mathcal H^1(\gamma\cap
S)=0$ for any $\gamma\in\Gamma(X)$ in the $\varphi$-direction of $C$
and so, by Theorem \ref{thm:arbrep}, $S$ is $\mu$-null.
\end{proof}

\section{Relationship with other works}\label{sec:cheeger}
As an example of the use of our theory, we give an alternate proof of
Cheeger's Differentiation Theorem.  We begin by introducing the notion
of a Poincar\'e inequality in a metric measure space.

\begin{definition}\label{defn:poincare}
Let $(X,d)$ be a metric space and $f\colon X\to\mathbb R$ Lipschitz.
We say that a measurable function $\rho\colon X\to\mathbb R$ is an
\emph{upper gradient of $f$} if, for any $\gamma\in\Pi(X)$ that is
parametrised by arc length,
\[|f(\gamma_e)-f(\gamma_s)|\leq \int_{\dom\gamma}\rho\circ\gamma \text{ d}\mathcal L^1\]
where $\gamma_e$ and $\gamma_s$ are the end points of $\gamma$.

Further, for $p,P\geq 1$ we say that a metric measure space
$(X,d,\mu)$ satisfies the $p$-Poincar\'e inequality with constant $P$ if,
for every closed ball $B=\overline B(x_0,r)\subset X$, $\mu(B)>0$ and
\[\dashint_B |f-f_B|\leq Pr \left(\dashint_{PB} \rho^p\right)^{1/p}.\]
Here
\[f_B=\dashint_B f := \frac{1}{\mu(B)}\int_B f.\]

Finally, for $C\geq 1$, we say that a metric measure space $(X,d,\mu)$
is \emph{$C$-doubling} if
\[\mu(\overline B(x_0,r)) \leq C\mu(\overline B(x_0,r/2))\]
for every $x_0\in X$ and every $r>0$.
\end{definition}

\begin{theorem}[Cheeger \cite{cheeger-diff}]\label{thm:cheeger}
Any $C$-doubling metric measure space $(X,d,\mu)$ that satisfies the
$p$-Poincar\'e inequality with constant $P$ is a Lipschitz
differentiability space.  Moreover, each chart is of a dimension
bounded above by an integer depending only upon $C$ and $P$,
independent of the metric measure space.
\end{theorem}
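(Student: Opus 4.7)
The plan is to apply Theorem \ref{thm:doublingchar}, specifically the implication from condition (4) to condition (1). Two ingredients are required: (a) a single $\eta = \eta(C)$ and a Borel set $W\in D_{\eta}$ with $\mu(X\setminus W)=0$; and (b) for every Lipschitz $f\colon X\to\mathbb R$ an Alberti representation of $\mu$ with $f$-speed bounded below by a constant $\delta = \delta(C,P,p)>0$. Ingredient (a) is immediate from the global $C$-doubling hypothesis by the packing argument in the proof of Lemma \ref{lem:ptwisedoubling}: one may take $W=X$ and $\eta=C$, so that $X\in D_{\eta}$ outright.

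For (b), we invoke Corollary \ref{cor:onerep2} in the one-dimensional case with $\varphi=\psi=f$ and the cone $C=[0,\infty)\subset\mathbb R$: this yields a Borel decomposition $X = A\cup S$ such that $\mu\llcorner A$ has an Alberti representation with $f$-speed strictly greater than $\delta$, while $\mathcal H^{1}(\gamma\cap S)=0$ for every $\gamma\in\Gamma(X)$ with $f$-speed strictly greater than $\delta$. The problem therefore reduces to showing $\mu(S)=0$ for some $\delta=\delta(C,P,p)$ independent of $f$.

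The main geometric content, and the main obstacle, is the following classical assertion: there exist constants $\delta,\Lambda>0$ depending only on $C$, $P$, $p$ such that for $\mu$-a.e.\ $x_{0}$ with $\operatorname{Lip}(f,x_{0})>0$ there is a $\gamma \in \Gamma(X)$ with $\gamma^{-1}(x_0)$ a density point of $\operatorname{Dom}\gamma$ and $|(f\circ\gamma)'|\geq \delta\operatorname{Lip}(f,\gamma(\cdot))\operatorname{Lip}(\gamma,\cdot)$ a.e. This is precisely the Heinonen--Koskela curve-family characterization of Poincar\'e: applied at scale $r$, it provides a rectifiable curve $\gamma_r\subset B(x_{0},\Lambda r)$ of length at most $\Lambda r$ joining $x_{0}$ to a point $y_{r}$ where $|f(y_{r})-f(x_{0})|\gtrsim \operatorname{Lip}(f,x_{0})\,r$ and along which $\operatorname{Lip} f$ (which is an upper gradient) is close to its average. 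Passing to a limit via Arzel\`a--Ascoli (the doubling condition guarantees local compactness of the relevant curve space) and taking scales $r\to 0$, one extracts a limit curve $\gamma$ through $x_{0}$ with the desired $f$-speed. This step is where all the analytic content of the Poincar\'e inequality is spent, and the constants $\delta,\Lambda$ depend only on $C,P,p$.

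Once such a $\gamma$ exists through $\mu$-a.e.\ $x_{0}\in\{\operatorname{Lip}(f,\cdot)>0\}$, a standard selection argument (analogous to Proposition \ref{prop:curvefullmeas}) produces a curve $\gamma^{x_{0}}$ with $\gamma^{x_{0}}(0)=x_{0}$ for a.e.\ $x_{0}\in S$, and each such $\gamma^{x_{0}}$ has $f$-speed $>\delta$; since $x_{0}$ is a density point of $(\gamma^{x_{0}})^{-1}(X)$, positivity of $\mu(S\cap\{\operatorname{Lip}(f,\cdot)>0\})$ would force $\mathcal H^{1}(\gamma^{x_{0}}\cap S)>0$ for a positive-measure set of $x_{0}$, contradicting the defining property of $S$. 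On the complement $\{\operatorname{Lip}(f,\cdot)=0\}$ any null Alberti representation suffices. Thus $\mu(S)=0$ and (b) is established. Theorem \ref{thm:doublingchar} then produces the Lipschitz differentiability structure. The dimension bound follows from Corollary \ref{cor:dimensionbound}: every $\widetilde B_{\delta}$ subset of a chart has measure zero by (b), so each chart dimension is at most $M(\delta/2,\eta)$, a quantity depending only on $C,P,p$.
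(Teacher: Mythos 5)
Your reduction is sound in outline (global doubling gives $X\in D_{\eta}$ with $\eta=\eta(C)$; Corollary \ref{cor:onerep2} reduces everything to showing $\mu(S)=0$ for the singular set with a uniform $\delta$; Corollary \ref{cor:dimensionbound} then bounds the chart dimension), but the step where the Poincar\'e inequality is actually spent has two genuine gaps. First, the ``classical assertion'' you invoke --- that through $\mu$-a.e.\ point with $\Lip(f,x_0)>0$ there passes a bi-Lipschitz curve along which $|(f\circ\gamma)'|\geq\delta\Lip(f,\gamma(\cdot))\Lip(\gamma,\cdot)$ a.e.\ --- is not something the Poincar\'e inequality hands you by an Arzel\`a--Ascoli argument. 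The inequality controls \emph{averages of upper gradients over curve families}, not pointwise derivatives of $f$ along individual curves, and derivatives do not pass to uniform limits: a limit of curves $\gamma_r$ satisfying $|f(y_r)-f(x_0)|\gtrsim\Lip(f,x_0)r$ at their endpoints need not have $(f\circ\gamma)'$ bounded below anywhere. Making this work is essentially the Cheeger--Kleiner theorem discussed in Section \ref{sec:cheeger} of the paper, a substantial result in its own right that cannot be waved through. Second, even granting that assertion, your concluding contradiction is a non sequitur: a curve $\gamma^{x_0}$ merely \emph{passing through} $x_0\in S$ gives $\mathcal H^1(\gamma^{x_0}\cap S)\geq 0$ and nothing more, since a single point is $\mathcal H^1$-null. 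To contradict the defining property of $S$ you would need $x_0$ to be a density point of $(\gamma^{x_0})^{-1}(S)$ --- not of $(\gamma^{x_0})^{-1}(X)$ --- and there is no reason the constructed curve spends positive time in $S$. The passage from ``curves through a.e.\ point'' to ``$\mu(S)=0$'' is exactly what the Gliksberg--K\"onig--Seever machinery (Lemma \ref{lem:generalrep}) is for, and it cannot be replaced by a pointwise selection argument.

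The paper's proof avoids constructing curves altogether. It first extracts from doubling plus Poincar\'e (via Proposition 4.3.3 of \cite{keith} and Lebesgue differentiation) the pointwise bound $\rho(x_0)\geq C''\Lip(f,x_0)$ a.e.\ for \emph{every} upper gradient $\rho$ of $f$, with $C''=C''(C,P)$. Setting $\delta=C''/2$, it then observes that if $S\subset\{\Lip(f,\cdot)>0\}$ satisfies $\mathcal H^1(\gamma\cap S)=0$ for all curves of $f$-speed $\delta$, then $(f\circ\gamma)'\leq\delta\Lip(f,\gamma(\cdot))\Lip(\gamma,\cdot)$ a.e.\ on $\gamma^{-1}(S)$ for every curve, so the function equal to $\delta\Lip(f,\cdot)$ on $S$ and $\Lip(f,\cdot)$ elsewhere is an upper gradient of $f$; on $S$ this contradicts the lower bound unless $\mu(S)=0$. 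This shows every $\widetilde B_\delta$ set is null and Theorem \ref{thm:nullchar} finishes. If you want to salvage your route, the fix is to replace your third and fourth paragraphs with this upper-gradient argument; it is both elementary and the only place the constants $C,P,p$ enter.
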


\begin{proof}Since a $C$-doubling metric measure space is a $D_\eta$
set, for some $\eta\geq 1$ depending only upon $C$, by Theorem \ref{thm:nullchar}
and Corollary \ref{cor:dimensionbound} it suffices to prove the
existence of a $\delta>0$ depending only upon $C$ and $P$ such that
any $\widetilde B_\delta$ subset of $X$ is $\mu$-null.

Firstly, by Proposition 4.3.3 of \cite{keith}, for any $C$-doubling
metric measure space, there exists a $C'$ depending only upon $C$ such
that, for any Lipschitz $f\colon X\to\mathbb R$,
\[\Lip (f,x_0)\leq C'\lim_{r\to 0} \frac{1}{r}\dashint_{\overline B(x_0,r)}|f-f_B|\]
for almost every $x_0\in X$.  Therefore, if $\rho$ is an upper gradient
of $f$, by using the Poincar\'e inequality and applying Lebesgue's
differentiation theorem to $\rho$, we see that there exists a $C''>0$
depending only upon $C$ and $P$ such that
\begin{equation}\label{eq:cheeger}\rho(x_0)\geq C'' \Lip (f,x_0)
\end{equation}
for almost every $x_0\in X$.  We set $\delta=C''/2$.

Now suppose that $f\colon X\to\mathbb R$ is Lipschitz and $S\subset
\{x_0:\Lip(f,x_0)>0\}$ satisfies $\mathcal H^1(\gamma\cap S)=0$ for
every $\gamma\in\Gamma(X)$ with $f$-speed $\delta$.  Then for any
$\gamma\in\Gamma(X)$ and almost every $t_0\in\gamma^{-1}(S)$,
\[(f\circ\gamma)'(t_0)\leq \delta \Lip (f,\gamma(t_0))\Lip(\gamma,t_0).\]
In particular this is true for any $\gamma\in \Pi(X)$ that is
parametrised by arc length and so
\[\rho(x)=\begin{cases} \delta\Lip(f,x) & x\in S\\
\Lip(f,x) & \text{otherwise}\end{cases}\] is an upper gradient of $f$.
However, for almost every $x\in S$, by equation \eqref{eq:cheeger},
\[\delta\Lip(f,x) = \rho(x)\geq 2\delta \Lip(f,x)>0.\]
Therefore $S$ must be $\mu$-null.  In particular, any
$\widetilde B_\delta$ subset of $X$ is $\mu$-null, as required.
\end{proof}

As pointed out in the introduction, the existence of an Alberti
representation of any doubling Lipschitz differentiability space
$(X,d,\mu)$ that satisfies the Poincar\'e inequality may be deduced
from a theorem of Cheeger and Kleiner.  To see this, suppose that $S$
is compact, contained within a chart and satisfies $\mathcal
H^1(\gamma \cap S)=0$ for every $\gamma \in\Pi(X)$.  Then by applying
\cite{cheegerkleiner-rnp}, Theorem 4.2 (and adopting it's terminology)
with $f$ a component of the chart map and the negligible set
$N=\{(\gamma,t) : \gamma(t) \in S\}$, we see that the minimal upper
gradient of $f$ equals zero almost everywhere in $S$.  However, the
minimal upper gradient of $f$ equals $\Lip(f,.)>0$ almost everywhere
and so $S$ must be $\mu$-null.  An application of Lemma
\ref{lem:generalrep} gives the required Alberti representation.  In
fact, almost every curve in this Alberti representation is defined on
an interval.

In \cite{keith}, Keith introduced the Lip-lip condition (see below)
on a metric measure space and showed that any doubling metric measure
space with a Lip-lip condition is a Lipschitz differentiability space
(via the Poincar\'e inequality).  We now use our theory to give an
alternate proof of this fact and to prove the converse statement.
This gives a characterisation of Lipschitz differentiability spaces
via the Lip-lip condition.

\begin{definition}
We say that a metric measure space $(X,d,\mu)$ satisfies a
\emph{Lip-lip} condition if there exists a countable Borel
decomposition $X=\cup_i X_i$ and for each $i\in\mathbb N$ a
$\delta_i>0$ such that, for any Lipschitz $f\colon X\to\mathbb R$,
\begin{align*}\delta_i\Lip(f,x_0) &\leq \liminf_{r\to
  0}\sup\left\{\frac{|f(x)-f(x_0)|}{r}:0<d(x,x_0)<r\right\}\\
&:=\operatorname{lip}(f,x_0)\end{align*}
for almost every $x_0\in X_i$.
\end{definition}

\begin{theorem}
A metric measure space $(X,d,\mu)$ satisfies a Lip-lip condition if
and only if every $\widetilde B$ subset of $X$ is $\mu$-null.
\end{theorem}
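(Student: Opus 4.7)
The plan is to base both implications on a pointwise principle: for any Lipschitz $f\colon X\to\mathbb{R}$ and any $\delta>0$, the set
\[E_{f,\delta}:=\{x_0\in X:\Lip(f,x_0)>0\text{ and }\operatorname{lip}(f,x_0)<\delta\Lip(f,x_0)\}\]
lies in $\widetilde B_\delta$ (as a single piece, witnessed by $f$). Indeed, if $\gamma\in\Gamma(X)$ with $f$-speed $\delta$ passed through $x_0\in E_{f,\delta}$ at a density point $t_0$ of $\dom\gamma$ where $(f\circ\gamma)'(t_0)$ exists, then picking $t\in\dom\gamma$ with $d(\gamma(t),x_0)$ comparable to an arbitrarily small $r$ forces $|(f\circ\gamma)'(t_0)|\leq\operatorname{lip}(f,x_0)\Lip(\gamma,t_0)<\delta\Lip(f,x_0)\Lip(\gamma,t_0)$, contradicting the $f$-speed hypothesis. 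Hence $\gamma^{-1}(E_{f,\delta})$ admits no density points and, by splitting into compact bi-Lipschitz restrictions, $\mathcal H^1(\gamma\cap E_{f,\delta})=0$.

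For the forward direction, fix $S\in\widetilde B$ and a piece $X_i$ of the Lip-lip decomposition with constant $\delta_i$; we show $\mu(S\cap X_i)=0$. For each $m$, $S\in\widetilde B_{1/m}$ gives a Borel decomposition $S=\bigcup_j S_{j,m}$ with Lipschitz witnesses $f_{j,m}$, and Lemma \ref{lem:firstfunlds} (applied with $n=1$ and $w=\pm 1$) produces auxiliary Lipschitz functions $g_{j,m}$ that are nearly $(1/m)$-Lipschitz on a neighbourhood of $S_{j,m}$ while still reproducing $f_{j,m}$ up to an additive error elsewhere. Combining these for fixed $m$ in the manner of Lemma \ref{lem:aptolds}, and then combining across $m$ via Lemma \ref{lem:prenondiff} with $r_{i(k)}\ll R_{i(k)}$, produces a Lipschitz $F\colon X\to\mathbb{R}$ for which, on a full-measure subset of $S$, $\Lip(F,x_0)\geq\beta-\alpha$ (with $\beta>0$ obtained from a uniform lower bound on $\Lip(f_{j,m},\cdot)$ after pre-decomposing on level sets) and $\operatorname{lip}(F,x_0)\leq\alpha$ (by the annular estimate combined with the scale ratio $r_{i(k)}/R_{i(k)}\to 0$) for any prescribed $\alpha>0$. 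Choosing $\alpha<\delta_i\beta/(1+\delta_i)$ contradicts the Lip-lip inequality $\operatorname{lip}(F,x_0)\geq\delta_i\Lip(F,x_0)$ on $S\cap X_i$, so $\mu(S\cap X_i)=0$.

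For the backward direction, the bridge shows $E_f:=\bigcap_{\delta>0}E_{f,\delta}=\{\operatorname{lip}(f,\cdot)=0,\Lip(f,\cdot)>0\}\in\widetilde B$ for every single Lipschitz $f$, so each $E_f$ is $\mu$-null. To upgrade to the uniform Lip-lip decomposition, I argue by contrapositive: were Lip-lip to fail, the set $T:=\{x_0\in X:\inf_f\operatorname{lip}(f,x_0)/\Lip(f,x_0)=0\}$ (infimum over Lipschitz $f$ with $\Lip(f,x_0)>0$) would have positive $\mu$-measure, and for each $\delta>0$ it sits inside the analytic set $\bigcup_f E_{f,\delta}$. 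A Jankov--von Neumann measurable selection from the Borel set $\{(x_0,f):x_0\in E_{f,\delta}\}$ (with $f$ drawn from the compact Polish space of $L$-Lipschitz functions on a compact piece of $X$) followed by a Lusin-type partition into Borel sub-pieces should exhibit $T\in\widetilde B_\delta$ for every $\delta$, placing $T\in\widetilde B$ and contradicting the hypothesis. The hard part will be this final step: since $\operatorname{lip}(f,x_0)/\Lip(f,x_0)$ is not continuous in $f$ under sup-norm convergence, the representative witnessing functions on each piece of the Lusin partition must be chosen with extra care to genuinely verify the $\widetilde B_\delta$ condition.
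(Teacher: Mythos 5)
Your first implication (Lip-lip forces $\widetilde B$ sets to be null) follows the paper's route exactly: build $F$ via Lemma \ref{lem:firstfunlds} and Lemma \ref{lem:prenondiff} with $r_{i(k)}/R_{i(k)}\to 0$, so that $\Lip(F,\cdot)\geq\beta-\alpha$ while the annular bound plus the vanishing scale ratio give $\operatorname{lip}(F,\cdot)\leq\alpha$ on most of $S$; this is fine (and your ``pre-decomposition'' caveat is at the same level of detail as the paper's own one-line reduction). Your bridge observation --- that $E_{f,\delta}=\{\Lip(f,\cdot)>0,\ \operatorname{lip}(f,\cdot)<\delta\Lip(f,\cdot)\}$ lies in $\widetilde B_\delta$ with the single witness $f$, because at a density point of $\dom\gamma$ one has $|(f\circ\gamma)'(t_0)|\leq\operatorname{lip}(f,\gamma(t_0))\Lip(\gamma,t_0)$ --- is also correct, and it is a cleaner statement than what the paper isolates.

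The gap is in how you deploy the bridge for the other implication. Your plan is to show that $T=\{\inf_f\operatorname{lip}(f,\cdot)/\Lip(f,\cdot)=0\}$ lies in $\widetilde B_\delta$ by covering it with the uncountable family $\{E_{f,\delta}\}_f$ and then extracting a countable Borel partition via a Jankov--von Neumann selection and a Lusin-type refinement. This does not close: the $\widetilde B_\delta$ condition demands, on each piece $S_j$, a \emph{single} witnessing function $g_j$ for which $\mathcal H^1(\gamma\cap S_j)=0$ whenever $\gamma$ has $g_j$-speed $\delta$, and since $\Lip(\cdot,x_0)$ and $\operatorname{lip}(\cdot,x_0)$ are not continuous in $f$ under uniform convergence, a piece on which the selected witnesses are merely close in sup norm to some $g_j$ need not lie in $E_{g_j,\delta}$, so the bridge gives you nothing for $g_j$. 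You flag this yourself, but it is not a technicality to be smoothed over --- it is the step where the argument fails. The fix is to abandon the selection entirely and choose the decomposition \emph{before} quantifying over $f$: since $\widetilde B_\delta$ is closed under countable unions and Borel subsets, an exhaustion argument (exactly as in the proof of Theorem \ref{thm:doublingchar}, $\eqref{item:btilde}\Rightarrow\eqref{item:speedreps}$) yields a decomposition $X=\bigcup_i X_i\cup N$ with $N\in\widetilde B$, hence $\mu(N)=0$, and with every $\widetilde B_{1/i}$ subset of $X_i$ null. Now for each fixed Lipschitz $f$ and each $i$, your bridge says $E_{f,1/i}\cap X_i\in\widetilde B_{1/i}$, so it is null, which is precisely the Lip-lip inequality with $\delta_i=1/i$ on $X_i$; no uniformity over $f$ is needed because the $X_i$ were fixed in advance. (The paper reaches the same decomposition and then argues instead through Corollary \ref{cor:onerep2} and Proposition \ref{prop:curvefullmeas}, producing a curve fragment through almost every point with $(f\circ\gamma)'(t_0)\geq\tfrac{1}{i}\Lip(f,x_0)\Lip(\gamma,t_0)$ and reading off $\operatorname{lip}$ from the density point; your bridge is the contrapositive of that step and shortcuts the Alberti machinery.)
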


\begin{proof}
First suppose that any $\widetilde B$ subset of $X$ is $\mu$-null.
Then by Theorem \ref{thm:nullchar}, there exists a countable Borel
decomposition $X=\cup_i X_i$ and for each $i\in\mathbb N$ a
$\delta_i>0$ such that, for any Lipschitz $f\colon X\to\mathbb R$ and
every $i\in\mathbb N$, there exists an Alberti representation of
$\mu\llcorner X_i$ with speed $\delta_i$.  In particular, for every
$i\in\mathbb N$ and almost every $x_0\in X_i$, there exist
$\gamma\in\Gamma(X)$ and $t_0$ a density point of $\dom\gamma$ such that
$\gamma(t_0)=x_0$ and
\[\delta_i \Lip(f,x_0) \leq (f\circ\gamma)'(t_0).\]
However, since $t_0$ is a density point of $\dom\gamma$,
\[ (f\circ\gamma)'(t_0) \leq \operatorname{lip}(f,x_0)\]
and so $X$ satisfies a Lip-lip condition.

Conversely, suppose that $S\subset X$ belongs to $\widetilde B$, so
that $S$ belongs to $\widetilde A$.  Then for any $\epsilon>0$ we may
apply Lemma \ref{lem:firstfunlds} to construct a sequence of functions
that satisfy the hypotheses of Lemma \ref{lem:prenondiff} on some
Borel $S'\subset S$ with $\mu(S')\geq \mu(S)-\epsilon$.  Therefore, by
applying Lemma \ref{lem:prenondiff} with positive sequences
$R_i,r_i\to 0$ such that $r_i/R_i\to 0$ as $i\to\infty$, for any
$\delta>0$ we obtain a Lipschitz function $f\colon X\to\mathbb R$ with
\[\delta \Lip(f,x_0)> \operatorname{lip}(f,x_0)\]
for almost every $x_0\in S'$.  In particular, if $X$ satisfies a
Lip-lip condition, $S'$ and hence $S$ are $\mu$-null.
\end{proof}

By combining this Theorem with Theorem \ref{thm:nullchar}, we obtain
the following Corollary.  Very shortly after the first preprint of
this paper appeared, Gong gave a second, independent proof of this
Corollary in \cite{gong-liplip}.

\begin{corollary}
A metric measure space $(X,d,\mu)$ is a Lipschitz differentiability
space if and only if it satisfies a Lip-lip condition and is pointwise
doubling.
\end{corollary}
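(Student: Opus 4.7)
The plan is to deduce the corollary immediately by combining the Lip-lip characterisation just proved with one of the null-set characterisations of Lipschitz differentiability spaces from Section \ref{sec:char}. Concretely, the previous theorem shows that $(X,d,\mu)$ satisfies a Lip-lip condition if and only if every $\widetilde B$ subset of $X$ is $\mu$-null, and Theorem \ref{thm:doublingchar} (the equivalence \eqref{item:diffspace}$\Leftrightarrow$\eqref{item:btilde}) shows that $(X,d,\mu)$ is a Lipschitz differentiability space if and only if every $\widetilde B$ subset is $\mu$-null and $X$ is pointwise doubling. Substituting the first equivalence into the second yields the stated biconditional.

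For the forward direction I would argue as follows: if $(X,d,\mu)$ is a Lipschitz differentiability space, then by Theorem \ref{thm:doublingchar} every $\widetilde B$ subset of $X$ is $\mu$-null and $X$ is pointwise doubling; applying the preceding theorem, the nullity of all $\widetilde B$ subsets gives the Lip-lip condition. For the reverse direction, if $X$ satisfies a Lip-lip condition, the preceding theorem gives that every $\widetilde B$ subset of $X$ is $\mu$-null, and combined with the assumed pointwise doubling hypothesis, Theorem \ref{thm:doublingchar} yields that $X$ is a Lipschitz differentiability space.

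There is no real obstacle here: both non-trivial implications have already been established in the body of the paper, and this corollary simply records the formal combination. The only point worth emphasising is that pointwise doubling is not automatic from the Lip-lip condition alone in this generality, which is why it must appear as an independent hypothesis in the statement; this matches the role played by pointwise doubling in Theorem \ref{thm:doublingchar} and parallels the use of porous-null-ness in Corollary \ref{cor:doubling}.
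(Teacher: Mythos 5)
Your proof is correct and is essentially the argument the paper intends: the corollary is the formal combination of the Lip-lip $\Leftrightarrow$ ``every $\widetilde B$ set is null'' theorem with the null-set characterisation of Lipschitz differentiability spaces. If anything, your citation of Theorem \ref{thm:doublingchar}\eqref{item:btilde} is the more apt reference (the paper's text points to Theorem \ref{thm:nullchar}, but it is the pointwise-doubling version that matches the corollary's hypotheses), so no further comment is needed.
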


\bibliography{../../references,../../mypapers}\bibliographystyle{amsalpha}

\providecommand{\bysame}{\leavevmode\hbox to3em{\hrulefill}\thinspace}
\providecommand{\MR}{\relax\ifhmode\unskip\space\fi MR }
% \MRhref is called by the amsart/book/proc definition of \MR.
\providecommand{\MRhref}[2]{%
  \href{http://www.ams.org/mathscinet-getitem?mr=#1}{#2}
}
\providecommand{\href}[2]{#2}
\begin{thebibliography}{MMPZ03}

\bibitem[ACP]{acp-structurenullsets}
Giovanni Alberti, Marianna Cs\"ornyei, and David Preiss, \emph{Structure of
  null sets, differentiability of {L}ipschitz functions, and other problems},
  preprint.

\bibitem[ACP10]{acp-diffoflip}
Giovanni Alberti, Marianna Cs{\"o}rnyei, and David Preiss,
  \emph{Differentiability of {L}ipschitz functions, structure of null sets, and
  other problems}, Proceedings of the {I}nternational {C}ongress of
  {M}athematicians. {V}olume {III} (New Delhi), Hindustan Book Agency, 2010,
  pp.~1379--1394. \MR{2827846 (2012g:26016)}

\bibitem[AGS11]{ags-calculusheatflow}
L.~Ambrosio, N.~Gigli, and G.~Savar\'e, \emph{Calculus and heat flow in metric
  measure spaces and applications to spaces with {R}icci bounds from below},
  arXiv \textbf{1106.2090} (2011), 78.

\bibitem[AK00]{ambker-rectifiablesets}
Luigi Ambrosio and Bernd Kirchheim, \emph{Rectifiable sets in metric and
  {B}anach spaces}, Math. Ann. \textbf{318} (2000), no.~3, 527--555.
  \MR{1800768 (2003a:28009)}

\bibitem[Alb93]{alberti-rankone}
Giovanni Alberti, \emph{Rank one property for derivatives of functions with
  bounded variation}, Proc. Roy. Soc. Edinburgh Sect. A \textbf{123} (1993),
  no.~2, 239--274. \MR{1215412 (94g:49100)}

\bibitem[BS13]{porousdiff}
David Bate and Gareth Speight, \emph{Differentiability, porosity and doubling
  in metric measure spaces}, Proc. Amer. Math. Soc. \textbf{141} (2013), no.~3,
  971--985. \MR{3003689}

\bibitem[Che99]{cheeger-diff}
Jeff Cheeger, \emph{Differentiability of {L}ipschitz functions on metric
  measure spaces}, Geom. Funct. Anal. \textbf{9} (1999), no.~3, 428--517.
  \MR{1708448 (2000g:53043)}

\bibitem[CK09]{cheegerkleiner-rnp}
Jeff Cheeger and Bruce Kleiner, \emph{Differentiability of {L}ipschitz maps
  from metric measure spaces to {B}anach spaces with the {R}adon-{N}ikod\'ym
  property}, Geom. Funct. Anal. \textbf{19} (2009), no.~4, 1017--1028.
  \MR{2570313 (2011c:30138)}

\bibitem[Gon11]{gong-posmeas}
Jasun Gong, \emph{Rigidity of derivations in the plane and in metric measure
  spaces}, arXiv \textbf{1110.4282} (2011), 33.

\bibitem[Gon12]{gong-liplip}
\bysame, \emph{The {L}ip-lip condition on metric measure spaces}, arXiv
  \textbf{1208.2869} (2012), 31.

\bibitem[Hei01]{heinonen}
Juha Heinonen, \emph{Lectures on analysis on metric spaces}, Universitext,
  Springer-Verlag, New York, 2001. \MR{1800917 (2002c:30028)}

\bibitem[Kec95]{classicaldescset}
Alexander~S. Kechris, \emph{Classical descriptive set theory}, Graduate Texts
  in Mathematics, vol. 156, Springer-Verlag, New York, 1995. \MR{1321597
  (96e:03057)}

\bibitem[Kei04]{keith}
Stephen Keith, \emph{A differentiable structure for metric measure spaces},
  Adv. Math. \textbf{183} (2004), no.~2, 271--315. \MR{2041901 (2005g:46070)}

\bibitem[KM11]{kleinermackay-diffstructures}
Bruce Kleiner and John Mackay, \emph{Differentiable structures on metric
  measure spaces: A primer}, arXiv \textbf{1108.1324} (2011), 23.

\bibitem[MMPZ03]{porosityandmeasures}
M.~Eugenia Mera, Manuel Mor{\'a}n, David Preiss, and Lud{\v{e}}k
  Zaj{\'{\i}}{\v{c}}ek, \emph{Porosity, {$\sigma$}-porosity and measures},
  Nonlinearity \textbf{16} (2003), no.~1, 247--255. \MR{1950786 (2003m:28003)}

\bibitem[Rud08]{rudin-unitball}
Walter Rudin, \emph{Function theory in the unit ball of {$\mathbb C\sp n$}},
  Classics in Mathematics, pp.~xiv+436, Springer-Verlag, Berlin, 2008.
  \MR{2446682 (2009g:32001)}

\end{thebibliography}
\end{document}